\numberwithin{equation}{section} \vspace{8cm}
\newtheorem{theorem}{Theorem}[section]
\newtheorem{proposition}[theorem]{Proposition}
\newtheorem{corollary}[theorem]{Corollary}
\newtheorem{lemma}[theorem]{Lemma}
\newtheorem{definition}[theorem]{Definition}
\newtheorem{remark}[theorem]{Remark}
\newtheorem{hypothesis}[theorem]{Hypothesis}
\newcommand{\R}{{\mathbb R}}
\newcommand{\RN}{{\mathbb R}^N}
\newcommand{\vep}{\varepsilon}
\newcommand{\vfi}{\varphi}
\def\dys{\displaystyle}
\newcommand{\be}{\begin{equation}}
\newcommand{\ee}{\end{equation}}
\newcommand{\rife}[1]{{(\ref{#1})}}
\newcommand{\la}{\lambda}
\newcommand{\al}{\alpha}
\def\m{\noalign{\medskip}}
\def\hh{\vskip 1mm \noindent}
\def\de{\delta}
\def\vep{\varepsilon}
\def\vfi{\varphi}
\def\qed{{\unskip\nobreak\hfil\penalty50
          \hskip2em\hbox{}\nobreak\hfil\mbox{\rule{1ex}{1ex} \qquad}
   \parfillskip=0pt
   \finalhyphendemerits=0\par\medskip}}
 \def\la{\lambda}
 \def\ga{\gamma}
\def\rife#1{(\ref{#1})}
\begin{document}

\title[Lipschitz
regularizing effects for parabolic PDEs] { Global Lipschitz
regularizing effects for linear and  nonlinear parabolic equations
}

\bigskip

\author {A. Porretta}
\address{Dipartimento di Matematica\\Universit\`a di Roma ``Tor Vergata'' \\
Via della Ricerca Scientifica 1 \\
00133 Roma} \email{
porretta@mat.uniroma2.it}

\author{E. Priola}
\address{Dipartimento di Matematica\\Universit\`a di Torino
\\Via Carlo Alberto
10\\10123 Torino\\Italy } \email{ enrico.priola@unito.it}

\thanks {This work has been supported by the Indam
GNAMPA project 2008 ``Problemi di Diffusione degeneri'' and GNAMPA
project 2010 ``Propriet\`a di regolarit\`a in Equazioni alle
Derivate Parziali nonlineari legate a problemi di controllo''.}

%\centerline{Alessio Porretta  \&  Enrico Priola
%}

 \maketitle
 %\bigskip
 %\vskip1em

% \begin{abstract}
\centerline{\bf Abstract}

 In this paper
 we prove  global bounds on the spatial  gradient
   of viscosity solutions to
    second order linear and nonlinear parabolic
 equations   in   $(0,T) \times \R^N$.
Our assumptions include the case that the  coefficients  be
  both
 unbounded and with very mild local regularity (possibly  weaker than
the  Dini continuity), the  estimates only depending on the
    parabolicity  constant   and  on
   the modulus of continuity of coefficients
   (but  not on their  $L^{\infty}$-norm).  Our proof
    provides  the analytic
 counterpart to  the probabilistic proof used in Priola  and Wang
\cite{PW} (J. Funct. Anal.  2006)
 to  get this type of gradient estimates in the linear case.
  We actually extend such estimates to the case of possibly
  unbounded data and solutions as well as to the case of
 nonlinear operators including Bellman-Isaacs equations.
  We investigate both the classical regularizing effect
(at time $t>0$) and the possible conservation of Lipschitz
regularity from $t=0$, and  similarly we prove  global H\"older
estimates under weaker assumptions on the coefficients.  The
estimates we prove  for unbounded data and solutions
 seem to be new even in the classical case of linear  equations
 with bounded and H\"older continuous coefficients.
  Applications to Liouville type theorems are also
  given in the paper.
 Finally, we compare in an appendix the analytic and the
probabilistic approach  discussing the analogy between the doubling
variables  method of viscosity solutions and the probabilistic
coupling method.
%\end{abstract}

\vskip1em

{\bf Keywords:} linear and nonlinear parabolic equations,  global Lipschitz estimates, Liouville theorems, viscosity solutions, coupling methods.

\tableofcontents

% \vskip1em

\section{Introduction}

It is well known that the bounded solution $u(t,x)$ of the heat
equation in $\R^N$, even if it is only bounded at time $t=0$,
becomes Lipschitz as soon as $t>0$ with a  global Lipschitz bound of
the type $\|Du(t, \cdot)\|_\infty\leq \frac{\|u_0\|_\infty}{\sqrt
t}$.

In this paper we prove a similar
 global gradient bound
for solutions of the  Cauchy problem concerning both linear and
nonlinear
parabolic operators. In particular, our main goal is to obtain this
global gradient estimate independently of the $L^\infty$-bounds of
the coefficients of the operator but only depending on their modulus
of continuity. In case of solutions of linear diffusions, such
bounds are  well-known whenever either the coefficients are
uniformly continuous and bounded (see \cite{St}) or the coefficients
are unbounded but at least $C^1$ with a suitable control on the
derivatives (see \cite{EL}, \cite{Ce0}, \cite{L1}, \cite{Ce},
\cite{BF}, \cite{KLL} and the references therein).
   Note that linear parabolic equations with unbounded
 coefficients also arise
 in some models of
 financial mathematics  (see, for instance, \cite{HR}
 and the references therein).
 Recently, in \cite{PW}
  E. Priola and F.Y. Wang
 obtained  uniform gradient estimates
   when
the coefficients are unbounded and singular at the same time. More
precisely, they proved that the uniform estimate
\be\label{est-base}
\|   D u(t, \cdot)\|_\infty \leq  \frac K{\sqrt{t \wedge
1}}\,\|u_0\|_\infty, \;\;\; t>0,
 \ee holds for the  diffusion semigroup
solution $u= P_t u_0$  of the Cauchy problem
%corresponding to  the linear operator
\be \label{pwpw} \begin{cases}\partial_t u - A(u)=0 \quad
\mbox{in}\,\, (0,T)\times \R^N \\  u(0, \cdot)=u_0
\end{cases}  \text{where} \;\;\;
 A=
 \sum\limits_{i,j=1}^N
q_{ij}(x)\partial^2_{x_i x_j}+ \sum\limits_{i=1}^N
b_i(x)\partial_{x_i},
% - V(x)f
\ee only assuming that $q_{ij}$ and   $b_i$
%and $V$
are continuous and
that $q(x)$ is elliptic, more precisely  if
 \be \label{qq}
 q(x)=\la \,I+ \sigma(x)^2, \;\; \;\; x \in \R^N,
 \ee  for some $\la>0$
and some symmetric $N\times N$ matrix $\sigma(x)$ satisfying,
jointly with the drift  $b(x)$, the following condition:
\be\label{cond-base}
 \|\sigma(x)-\sigma(y)\|^2+
(b(x)-b(y))\cdot(x-y)\leq g(|x-y|)\, |x-y|,\quad  \,x,y\,\in \R^N,\,
0<|x-y|\leq 1, \ee for some nonnegative function $g$ such that
$\int_0^1 g(s)\,ds<\infty$.
In this case, the constant $K$ in
\rife{est-base} only depends on $\la$ and $g$. Observe that, when
$b\equiv 0$, this is  a slightly weaker assumption than asking
$\sigma(x)$ to be Dini-continuous. Moreover, both the diffusion and
the drift could be unbounded, in particular $b$ could be the
gradient of whatever concave $C^1$-function   (as in standard
Ornstein-Uhlenbeck operators).
% \vskip1em

The proof of the result in \cite{PW} stands on  probabilistic
arguments and relies in particular on the extension to the case of
non locally Lipschitz coefficients of the coupling  method for
diffusions (see e.g. \cite{Lin-Rog}, \cite{Chen-Li},
\cite{Cranston}, \cite{Cranston1}).  Possible extensions of
this regularity result to infinite dimensions  in the setting of \cite{DZ} are given in \cite{WZ}.

The aim of our work is twofold. On one hand we give a new proof,
using entirely analytic arguments, of the result by Priola and Wang;
the only difference here is that, in place of the assumption that
the underlying stochastic process is nonexplosive, we  assume
 directly the existence of a smooth Lyapunov function, which is  a
 stronger condition though more adapted to our analytical approach.
On the other hand, we extend the same result in two main features,
including in our results both the case of {\em unbounded data and
solutions}  (as well as possibly larger classes of unbounded
coefficients) and  the case of {\em nonlinear operators}. Further
minor improvements with respect to \cite{PW} concern  the fact that
we consider  time dependent coefficients or the case of H\"older
estimates or unbounded potentials  (see e.g.  Theorem
\ref{PWc}).

In order to allow for unbounded data and solutions, we review the
gradient bound \rife{est-base} by distinguishing two main steps in
the a priori estimate.   In a first step, one  obtains a global
gradient bound depending only on the oscillation of the solution $u$
(see Theorem \ref{PW}). In a second step, one looks for conditions
which allow to estimate the oscillation of $u$. In case of bounded
data, the second step is easy since solutions turn out to be
globally bounded. However, it is also possible to consider unbounded
data, though yet with bounded oscillation, and prove that solutions
have bounded oscillation  as well, and then, by the first
step, be globally Lipschitz for $t>0$. Let us stress, however, that
this latter estimate on the oscillation of $u$ requires
\rife{cond-base} to hold \emph{for every $x,y \in \R^N$} and  a
linear growth assumption on the function $g$ at infinity is
required.

 Moreover, recall that our estimates rely on the assumption
that a  Lyapunov function $\varphi$ exists for $A$, which may
implicitly be linked to a compatibility condition in the growth at
infinity of the drift and diffusion coefficients.
For instance, the requirement that $\varphi (x) = 1 + |x|^2$, $x \in
\R^N$, is a  Lyapunov function for $A$ corresponds to the
hypothesis
 \be \label{lia}
 \text{tr}(q(x)) +  \, b(x) \cdot x \le C (1+ |x|^2),\;\;\; x\in
 \R^N,\,\,
\ee
for some $C\geq 0$.

In order to fix the ideas, our global result in
the linear case  (at least when the equation has no source
terms and coefficients are independent on time as in \rife{pwpw})
 reads as follows:

\begin{theorem}
   Assume \rife{qq} and  that \rife{cond-base} holds true for
every $x,y\in \R^N$,   where $g\in L^1(0,1)\cap
C(0, +\infty)$, $g(r)r\to 0$ as $r\to 0^+$ and $g(r)=O(r)$ as $r\to
+\infty$,  that a Lyapunov function $\vfi$ exists  satisfying \rife{lyap}  and that $u_0 \in
C(\R^N)$ satisfies
\be\label{u00}
|u_0(x)-u_0(y)|\leq k_0+k_\alpha|x-y|^\alpha+k_1|x-y|\,, \qquad
x,y\in \R^N,
\ee
 for some $ \alpha \in (0,1)$ and $k_0, k_{\alpha}, k_1 \ge
0$.  Then any viscosity solution $u\in C([0,T]\times \R^N)$ of the
Cauchy problem \rife{pwpw}
%$$
% \partial_t u - A(u)=0 \quad \mbox{in}\,\, (0,T)\times
%\R^N\,,\qquad u(0)=u_0,
%$$
such that  $u=o(\vfi)$ as $|x|\to \infty$  (uniformly in $[0,T])$
satisfies that $u(t):= u(t, \cdot)$ is Lipschitz continuous  for $t\in (0, T)$
and \be\label{000} \|   D u(t)\|_\infty \leq  c_T \Big\{ \frac
{k_0} {\sqrt{t \wedge 1}}\, + \frac { k_{\alpha}} {(t \wedge
1)^{\frac {1-\al}2}} + k_1\Big\},\qquad \quad \ee for some $c_T,$
 possibly depending  on $\alpha, \la$ and $g$.
\end{theorem}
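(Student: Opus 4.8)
The plan is to follow the two-step strategy outlined in the introduction. The gradient bound depending only on the oscillation of $u$ is exactly Theorem \ref{PW} (stated earlier), so the real content here is to control $\mathrm{osc}(u)$ — indeed to upgrade it to a bound on the spatial modulus of continuity of $u(t,\cdot)$ of the same affine-plus-H\"older form as \rife{u00}, uniformly in $t$. Once that is in place one applies Theorem \ref{PW} on a suitable interval and concludes.

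\textbf{Step 1 (propagation of the modulus of continuity).} I would use the doubling-variables technique directly on the function $w(t,x,y) = u(t,x) - u(t,y) - \psi(t,|x-y|)$, where $\psi(t,r) = k_0 + k_\alpha r^\alpha + k_1 r + \eta(t)$ is designed so that at $t=0$ the inequality $u(t,x)-u(t,y)\le \psi(t,|x-y|)$ holds by \rife{u00}, and so that $\psi$ is a strict supersolution of the natural equation satisfied by $x\mapsto u(t,x)$ minus $y\mapsto u(t,y)$. The key computation is that when one plugs $\psi$ into the equation, the second-order terms in $x$ and $y$ combine — using precisely the structure $q = \lambda I + \sigma^2$ together with \rife{cond-base} valid for \emph{all} $x,y$ — to produce a term controlled by $g(|x-y|)|x-y|\,\psi''$ plus first-order terms controlled by $g(|x-y|)|x-y|\,\psi'$; the concavity of $r\mapsto k_\alpha r^\alpha$ (so $\psi''<0$ there) and the $g(r)=O(r)$ growth at infinity are what make these error terms absorbable, the former near the diagonal and the latter at large distances. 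To prevent the maximum of $w$ from being attained at $|x|\to\infty$ one uses the Lyapunov function $\varphi$ and the hypothesis $u=o(\varphi)$: add a small penalization $\mu(\varphi(x)+\varphi(y))$, show the supremum is attained at an interior point, apply the viscosity maximum principle (Ishii's lemma / doubling in viscosity form), derive a contradiction unless $w\le 0$, and let $\mu\to 0$. Choosing $\eta(t)\to 0$ appropriately (or taking $\eta\equiv 0$ if the computation closes) yields
\[
|u(t,x)-u(t,y)| \le k_0 + k_\alpha|x-y|^\alpha + k_1|x-y|, \qquad x,y\in\R^N,\ t\in[0,T].
\]

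\textbf{Step 2 (from the modulus of continuity to the gradient bound).} With the oscillation / modulus controlled, I would apply Theorem \ref{PW}. A clean way to get the explicit time dependence in \rife{000} is a scaling/localization-in-time argument: for fixed $t_0\in(0,T)$ split $u_0$'s modulus into its three pieces and treat each separately by the first step, then invoke Theorem \ref{PW} over the interval $(0,t_0)$, or equivalently apply the regularizing estimate \rife{est-base}-type bound of Theorem \ref{PW} with the constant $k_0 + k_\alpha (t_0\wedge 1)^{\alpha/2}$ playing the role of the oscillation bound relevant at scale $\sqrt{t_0\wedge 1}$, plus the Lipschitz part $k_1$ which is preserved. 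Summing the three contributions gives exactly
\[
\|Du(t)\|_\infty \le c_T\Big\{\frac{k_0}{\sqrt{t\wedge 1}} + \frac{k_\alpha}{(t\wedge 1)^{\frac{1-\alpha}{2}}} + k_1\Big\}.
\]

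\textbf{The main obstacle} is Step 1: making the doubling-variables computation close, i.e.\ verifying that the test function $\psi$ is a genuine supersolution of the doubled equation. The delicate points are (i) combining the two second-order pieces so that the bad curvature is paid for by the concavity of the H\"older profile near $r=0$ precisely where $g(r)r\to 0$ is needed, (ii) handling the first-order/drift terms at large $|x-y|$ where one only has $g(r)=O(r)$ and must exploit the \emph{global} validity of \rife{cond-base}, and (iii) the non-compactness of $\R^N$, which forces the Lyapunov penalization and the hypothesis $u=o(\varphi)$ into the argument in a way that survives the limit $\mu\to 0$. Everything else — the viscosity-solution bookkeeping, Ishii's lemma, the final summation — is routine given Theorem \ref{PW}.
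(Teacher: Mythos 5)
Your overall strategy is exactly the paper's: the theorem is obtained by combining the oscillation-based gradient bound of Theorem \ref{PW} (more precisely its scale-$\delta$ refinement \rife{lip-delta} used with $\delta=\sqrt{t\wedge 1}$) with a separate propagation-of-modulus estimate for $u(t,\cdot)$, which in the paper is Lemma \ref{osc-u} and Theorem \ref{grow}. Your Step 2 is correct as described.

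The gap is in Step 1, in the choice of the test profile. With $\psi(t,r)=k_0+k_\alpha r^\alpha+k_1 r+\eta(t)$, at the interior off-diagonal maximum point with $\hat r=|\hat x-\hat y|$ the doubled equation forces, schematically,
\[
\eta'(t)\ \le\ 4\lambda\,\partial_r^2\psi+g(\hat r)\,\partial_r\psi
\ =\ 4\lambda\alpha(\alpha-1)k_\alpha\hat r^{\alpha-2}+g(\hat r)\bigl(\alpha k_\alpha\hat r^{\alpha-1}+k_1\bigr).
\]
Near $\hat r=0$ the concavity term dominates because $rg(r)\to 0$, as you note. But for large $\hat r$ one only has $g(\hat r)\le L_0\hat r$, so the right-hand side contains $L_0k_1\hat r+L_0\alpha k_\alpha\hat r^{\alpha}$, which is unbounded in $\hat r$; no additive correction $\eta(t)$, being a function of $t$ alone, can dominate it, so no contradiction is reached and the argument does not close. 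In particular the conclusion you claim --- exact conservation $|u(t,x)-u(t,y)|\le k_0+k_\alpha|x-y|^\alpha+k_1|x-y|$ with the \emph{same} constants --- is too strong in general. The fix (this is Lemma \ref{osc-u}, case (i)) is to let the coefficients of $r^\alpha$ and $r$ grow in time, $\psi(t,r)=(k_0+at)+(\beta+bt)r^\alpha+(\gamma+ct)r$, so that the time derivative contributes $b\hat r^\alpha+c\hat r$ on the left and absorbs the large-$\hat r$ errors once $b,c$ are chosen large enough; one must then work on a time interval of length $T'=1/(2L_0)$ (so that $b\hat t\le b/2$, etc.) and iterate over $[0,T]$. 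The resulting H\"older and Lipschitz constants degrade linearly in $t$ but remain bounded on $[0,T]$, which is all that is needed to feed \rife{lip-delta} at scale $\delta=\sqrt{t\wedge 1}$ and obtain \rife{000}.
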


%\vskip0.5em
We refer the reader to Sections 3.2 and 3.3 for more general results of this type  (see Theorem \ref{PWc},
Lemma \ref{osc-u} and Theorem \ref{grow}).  Due to the fact that $u_0$ can be unbounded  the previous result  seems  to be new
even in the classical case of linear equations  with bounded and H\"older continuous coefficients (note that, in particular, if $k_0=k_{\alpha}=0$ the above estimate \rife{000} implies that  unbounded Lipschitz data, as $u_0=|x|$, yield globally Lipschitz solutions).
%, even  if the coefficients are unbounded and only %mildly continuous).
We also mention that  the same  approach is used to get
H\"older estimates, in which case we relax  the
assumptions on the coefficients (the function $g$ needs only satisfy $g(r)r\to 0$ as $r\to0^+$)  including, for instance, merely
uniformly continuous  coefficients in the diffusion part.
\vskip0.2em
The extensions of the previous results to nonlinear operators
include both the case of Bellman-Isaacs' type  operators, namely sup
or inf\,(sup) of linear operators, which arise naturally from
stochastic control problems as well as from differential game theory
(see e.g. \cite{Kr80}, \cite{Fle-Soner}, \cite{DaLio-Ley}, \cite{K}), and the case of operators
with general nonlinear first order terms, including possibly
superlinear growth in the gradient. Similar extensions are possible
since we prove  the Lipschitz bound
%through
%the method and
in the framework of viscosity solutions (this notion
 also generalizes, in the linear case,  the notion of
  probabilistic solution,
 see \cite{Fle-Soner}),   combining the method introduced by  H. Ishii and P.L.
Lions (\cite{IL})  with  the coupling idea used in \cite{PW}.
The Ishii and Lions method is by now classical to obtain gradient
bounds for viscosity solutions though mainly used  in bounded
domains or for local estimates (see e.g. \cite{Ba1}, \cite{Ba2},
\cite{Chen93} and references therein).
  The main novelty here consists in getting {\em global} Lipschitz estimates on $\R^N$ for possibly unbounded coefficients. To this purpose our assumptions  (see e.g. Hypothesis \ref{F} below) take into account both  the possible dissipation at infinity  of  the drift term $b$  and  the minimal (local) regularity of the coefficients. In this respect, we mention that \rife{cond-base} gives  a slight
improvement, at least in the model linear case,
of the assumptions used in \cite[Section III.1]{IL} to get local
Lipschitz continuity (see also Remark
\ref{cfr-IL} and \cite[Section III.1]{Ba1} for similar improvements).
Let us  point out that some dissipation condition on the drift term is somehow necessary when this is unbounded.
 A counterexample in this sense is given in \cite{BF},  even in the linear case and for   $C^1$-coefficients.

 Eventually, for the nonlinear problem
\be \label{pwpw-nl}
\begin{cases}\partial_t u + F(t,x,Du,D^2 u)=0 \quad
\mbox{in}\,\, (0,T)\times \R^N \\  u(0, \cdot)=u_0
\end{cases}
\ee
we give similar results as those mentioned before for the linear case, under two main structure assumptions. The first one generalizes   to a nonlinear setting \rife{qq}--\rife{cond-base}. To give an idea,   in the simplest case  it  reads as follows.

\begin{hypothesis}\label{F}
There exist
$\lambda>0$ and  non-negative functions
  $\eta(t,x,y)$   and
  $g\in C((0, 1); \R_+ ) \cap L^1(0,1)$ such that
  \be\label{feq0}
  \begin{array}{c}
 \!\!\!\!\!\!\!\!\!\!\!\! F(t, x, \mu(x-y), X)- F(t,y,\mu(x-y),Y)
  \\
  \m
  \qquad\qquad \qquad\quad
  \geq
 - \la {\rm tr}\left(X-Y\right)- \mu |x-y| \, g(|x-y|)
  - \nu \, \eta(t,x,y)\,,
  \end{array}
\ee for any $\mu> 0$, $\nu \ge 0$, \  $ x,y\in \R^N\,,
  \;\; 0<|x-y|\le 1,
  \;\;\; t\in (0,T),\,\quad    X,Y\in {\mathcal S}_N$:
\begin{align} \label{ine0}
  \begin{pmatrix}
 X & 0    \\ 0 & -Y
 \end{pmatrix}
 \leq  \mu \, \begin{pmatrix}
 I & -I    \\ -I & I
\end{pmatrix}  +  \nu \begin{pmatrix}
 I & 0    \\
  0 & I
\end{pmatrix}.
%\qquad \hbox{for some matrix $B$: $B\leq \mu I$.}
 \end{align}
\end{hypothesis}

Actually, Hypothesis \ref{F} is only  a simplified version of the
one that we will use (see Hypothesis \ref{pw-nl}) in order to
include more general lower order terms. However, \rife{feq0} already
contains the main  features of operators which are possibly
nonlinear in the second order terms.

The second basic structure assumption is meant to get rid of the
infinity, again requiring the existence of a Lyapunov function. To
this extent, a general condition will be introduced later (see
Hypothesis \ref{pert-fi}), whose simplest  (more readable) version
is the following.
\begin{hypothesis} \label{L}
 There  exist $
 \varphi \in C^{1,2}(\bar Q_T)\,, \;    \vep_0 > 0\,$:
$$
\begin{array}{l}
\begin{cases}
\vep \partial_t \varphi + F(t,x, p+ \vep D\varphi, X+ \vep
D^2\varphi)- F(t, x,p, X)\geq  0 &
\\
\m \qquad \hbox{for every $(t,x)\in Q_T$, $p\in \RN$, $X\in {\mathcal S}_N$, and every $\vep\leq
\vep_0$} &
 \\
 \m
\varphi(t,x)\to +\infty \quad \hbox{as $|x|\to \infty$, uniformly
for $t\in [0,T]$.} &
\end{cases}
\end{array}
$$
\end{hypothesis}

As in the linear case, the statements that we prove for the nonlinear problem \rife{pwpw-nl}
can be summarized in the following two types of estimates:

\begin{theorem}\label{semin} Assume that Hypotheses  \ref{F} and \ref{L} hold.
\vskip0.5em
(i) Any viscosity solution  $u$  of \rife{pwpw-nl} which has  bounded oscillation is Lipschitz continuous in $(0,T)$ and satisfies
$$
\|   D u(t)\|_\infty \leq  \frac {C}{\sqrt{t \wedge 1}},
$$
where $C= C\left({osc}_{(\frac t2\, , \, T\wedge \frac{3}{2}t)}(u),
g,\lambda\right)$. \vskip0.3em (ii) Assume in addition that
\rife{feq0} holds for every $x,y\in \R^N$, and the function $g$
satisfies  $g(s)s\to 0$ as $s\to 0^+$ and $g(s)= O(s)$ as $s\to
+\infty$. If  $u_0$ satisfies \rife{u00}, then any viscosity
solution $u$ such that   $u=o(\varphi)$ as $|x|\to \infty$
(uniformly in $[0,T])$ has bounded oscillation and  satisfies
$$
\|   D u(t)\|_\infty \leq  c_T\left\{ \frac
{k_0} {\sqrt{t \wedge 1}}\, + \frac { k_{\alpha}} {{(t \wedge
1)}^{\frac{1-\alpha}2}} + k_1\right\}
$$
for some  $c_T(T, \lambda, g, \alpha)$.
\end{theorem}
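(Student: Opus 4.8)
The plan is to establish both estimates by the \emph{doubling of variables} technique of Ishii--Lions type, the genuinely new ingredient being the choice of an auxiliary \emph{coupling function} $\psi(t,r)$ that plays, in the analytic setting, the role of the modulus of continuity of $u_0$ evaluated along the coupled process of \cite{PW}. I describe the scheme for part (i). Fix $t_0\in(0,T)$ and work on the window $(t_1,t_2)=(\tfrac{t_0}2,\,T\wedge\tfrac{3t_0}2)\subset(0,T)$, so that $t_0\in(t_1,t_2)$, and set $M:=\mathrm{osc}_{(t_1,t_2)}(u)$. I would consider, for $\beta>0$ small,
$$\Phi(t,x,y)=u(t,x)-u(t,y)-\psi(t,|x-y|)-\beta\big(\varphi(t,x)+\varphi(t,y)\big),$$
with $\varphi$ the Lyapunov function of Hypothesis \ref{L} (more precisely of Hypothesis \ref{pert-fi}), and with $\psi\in C^{1,2}$ increasing and concave in $r$, $\psi(t,0)=0$, $\psi(t,r)\to+\infty$ as $t\downarrow t_1$ and as $t\uparrow t_2$ for every fixed $r>0$, and a \emph{strict supersolution} of the one--dimensional problem $\partial_t\psi>2\lambda\,\partial_{rr}\psi+g(r)\,\partial_r\psi$ on $(t_1,t_2)\times(0,\infty)$. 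Since $u=o(\varphi)$ the supremum of $\Phi$ is attained; since $\psi$ blows up at $t=t_1,t_2$ and $u$ is continuous, one checks in the usual way that if $\sup\Phi>0$ it is attained at some $(\bar t,\bar x,\bar y)$ with $\bar t\in(t_1,t_2)$ and $r:=|\bar x-\bar y|>0$, and that $\limsup_{t\downarrow t_1}\sup_{x,y}\Phi\le0$. The whole point is to derive a contradiction at such a maximum.

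At $(\bar t,\bar x,\bar y)$ I would apply the parabolic theorem on sums; using Hypothesis \ref{pert-fi} the two Lyapunov penalizations can be absorbed into the operator, so that one may argue as if $\beta=0$ (up to quantities that vanish as $\beta\to0$). This produces $(a,p,X)\in\overline{\mathcal P}^{2,+}u(\bar t,\bar x)$, $(b,p,Y)\in\overline{\mathcal P}^{2,-}u(\bar t,\bar y)$ with the \emph{same} gradient $p=\psi_r(\bar t,r)\,e=\mu(\bar x-\bar y)$, $e=(\bar x-\bar y)/r$, $\mu=\psi_r(\bar t,r)/r$, together with $a-b=\partial_t\psi(\bar t,r)+o_\beta(1)$ and $\begin{pmatrix}X&0\\0&-Y\end{pmatrix}\le B+\varepsilon B^2$, where $B=D^2_{(x,y)}\psi(\bar t,|x-y|)$ has the structure $\big(\begin{smallmatrix}A&-A\\-A&A\end{smallmatrix}\big)$ with $A=D^2[\psi(\bar t,|\cdot|)](\bar x-\bar y)$. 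Since the gradients coincide, the structure condition \rife{feq0} of Hypothesis \ref{F} applies verbatim and, after subtracting the two viscosity inequalities, gives $\partial_t\psi(\bar t,r)\le\lambda\,\mathrm{tr}(X-Y)+\mu r\,g(r)+\nu\,\eta(\bar t,\bar x,\bar y)+o_\beta(1)$; moreover, because $A\le(\psi_r/r)I$ by concavity, the matrix inequality above forces $\nu=O(\varepsilon)$ in \rife{ine0}. The decisive estimate is $\mathrm{tr}(X-Y)\le 2\psi_{rr}(\bar t,r)+O(\varepsilon)$: splitting $\R^N=\R e\oplus e^\perp$, the matrix inequality compressed to $\mathrm{span}\{(e,0),(0,e)\}$ yields $\langle(X-Y)e,e\rangle\le 2\psi_{rr}(\bar t,r)+O(\varepsilon)$, whereas testing $B+\varepsilon B^2$ against $(\xi,\xi)$ with $\xi\perp e$ — which annihilates the off-diagonal structure of $B$ — yields $\langle(X-Y)\xi,\xi\rangle\le O(\varepsilon)$; summing over a basis, the $N-1$ tangential directions contribute nothing, which is exactly the analytic counterpart of the reflection--coupling effect. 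Since $\mu r\,g(r)=g(r)\psi_r(\bar t,r)$ and $\nu\eta=O(\varepsilon)$, letting $\varepsilon\to0$ and $\beta\to0$ gives $\partial_t\psi(\bar t,r)\le 2\lambda\psi_{rr}(\bar t,r)+g(r)\psi_r(\bar t,r)$, contradicting the strict supersolution property of $\psi$. Hence $\sup\Phi\le0$, and letting $\beta\to0$ we obtain $u(t,x)-u(t,y)\le\psi(t,|x-y|)$ on $(t_1,t_2)\times\R^N\times\R^N$.

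To conclude (i) one takes $\psi=M\,\Gamma$, where $\Gamma$ is a strict supersolution of $\partial_t\Gamma=2\lambda\Gamma_{rr}+g(r)\Gamma_r$ on $(t_1,t_2)\times(0,\infty)$ with homogeneous Dirichlet condition at $r=0$, concave, blowing up at $t=t_1,t_2$, and satisfying $\Gamma_r(t,0^+)\le c/\sqrt{(t-t_1)\wedge1}$; here the hypotheses $\int_0^1 g<\infty$ and $g(r)r\to0$ are precisely what make this degenerate $1$D problem well posed down to $r=0$ with the right behaviour at the origin. By concavity $u(t_0,x)-u(t_0,y)\le\psi(t_0,|x-y|)\le\psi_r(t_0,0^+)\,|x-y|$ for all $x,y$, hence $\|Du(t_0)\|_\infty\le\psi_r(t_0,0^+)\le C(M,g,\lambda)/\sqrt{t_0\wedge1}$, which is (i). For (ii): the additional assumptions (validity of \rife{feq0} for every $x,y$, $g(s)=O(s)$ at infinity, existence of $\varphi$) ensure, via Lemma \ref{osc-u}, that $u$ has bounded oscillation, so (i) already applies; to reach the sharp bound one re-runs the same doubling on $(0,T)$ with $\psi$ replaced by $\Psi(t,r)=k_0\Psi_0(t,r)+k_\alpha\Psi_\alpha(t,r)+k_1\Psi_1(t,r)$, where $\Psi_\theta$ is a concave strict supersolution of the same $1$D operator on $(0,\infty)$ with initial data $\Psi_0(0,\cdot)\equiv1$, $\Psi_\alpha(0,r)=r^\alpha$, $\Psi_1(0,r)=r$ (the conditions $g(s)s\to0$ at $0$ and $g(s)=O(s)$ at $\infty$ provide such barriers globally in $r$). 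Then \rife{u00} gives $\Psi(0,r)\ge u_0(x)-u_0(y)$ whenever $|x-y|=r$, so $\Phi\le0$ at $t=0$, and the same maximum-principle argument (the relevant boundary data now sitting at $t=0$) yields $u(t,x)-u(t,y)\le\Psi(t,|x-y|)$; the barrier estimates $\Psi_{0,r}(t,0^+)\le c/\sqrt{t\wedge1}$, $\Psi_{\alpha,r}(t,0^+)\le c/(t\wedge1)^{(1-\alpha)/2}$, $\Psi_{1,r}(t,0^+)\le c$ then give the claimed bound on $\|Du(t)\|_\infty$.

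I expect the main obstacle to be twofold. Analytically, it is the reduction $\mathrm{tr}(X-Y)\le 2\psi_{rr}+o(1)$ — the fact that the $N-1$ tangential directions drop out, so that the estimate becomes genuinely one--dimensional and the constants turn out to be independent of $N$ and of $\|b\|_\infty,\|\sigma\|_\infty$; in the viscosity framework this must be squeezed out of the parabolic theorem on sums rather than read off an It\^o formula, and it also forces a careful handling of the term $\nu\eta$ and of the gradient/matrix compatibility in Hypothesis \ref{F}. On the side of the auxiliary functions, the difficulty is to construct the $1$D barriers $\Gamma$ and $\Psi_\theta$ for the possibly \emph{degenerate} operator $2\lambda\partial_{rr}+g(r)\partial_r$, with $g$ allowed to blow up at $0$ (only $\int_0^1 g<\infty$), while pinning down the precise rates $t^{-1/2}$, $t^{-(1-\alpha)/2}$, $1$ of their radial derivative at the origin; together with the uniform-in-$\beta$ absorption of the Lyapunov penalization through Hypothesis \ref{pert-fi}, this is where the real work lies.
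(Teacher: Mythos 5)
Your overall strategy --- doubling of variables, absorption of the Lyapunov terms via Hypothesis \ref{L}, reduction of the second--order term to a one--dimensional radial quantity through the matrix inequality (the tangential directions cancelling as in Proposition \ref{refe}), and comparison with a concave radial barrier --- is the same family of argument as the paper's (Theorem \ref{pw-fully} for part (i); Lemma \ref{osc-u}/Lemma \ref{osc2} combined with \rife{lip-delta} for part (ii)). Indeed the ``coupling supersolution'' reformulation you propose is made explicit by the authors themselves in Appendix \ref{prob-anal}, Lemma \ref{supers}. The organizational difference is real: you want a single time--dependent barrier $\psi(t,r)$, a strict supersolution of $\partial_t\psi> c\,\lambda\,\psi_{rr}+g(r)\psi_r$ blowing up at the endpoints of the time window and satisfying $\psi_r(t,0^+)\lesssim t^{-1/2}$, whereas the paper uses the \emph{stationary} barrier $K f(|x-y|)$ with $f$ the explicit ODE solution \rife{effe} of \rife{ode1} on $(0,\delta]$, plus the separate penalizations $C_0(t-t_0)^2$ and $\epsilon/(T-t)$, and then extracts the rate $t_0^{-1/2}$ from the elementary bounds \rife{fde2}, \rife{fde1} and the scaling $\delta=\sqrt{t_0\wedge 1}$. (A consistency remark: testing the matrix inequality with $(e,-e)$ gives $\langle (X-Y)e,e\rangle\le 4\psi_{rr}$, which is why the paper's ODE carries the coefficient $4\lambda$; your diagonal test gives the weaker $2\psi_{rr}$, which still closes the argument provided the $1$D operator is taken with $2\lambda$ throughout, as you do.)

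The genuine gap is that the barriers $\Gamma$, $\Psi_0$, $\Psi_\alpha$, $\Psi_1$ are never constructed, and their existence with exactly the rates $t^{-1/2}$, $t^{-(1-\alpha)/2}$, $O(1)$ at $r=0^+$ \emph{is} the quantitative content of the theorem: asserting a concave supersolution of the degenerate operator $\partial_t-c\lambda\partial_{rr}-g(r)\partial_r$ (with $g$ merely in $L^1$ near $0$ and $O(r)$ at infinity) with $\Psi_0(0,\cdot)\equiv 1$, $\Psi_0(t,0)=0$ and $\partial_r\Psi_0(t,0^+)\le C t^{-1/2}$ is essentially equivalent to the estimate being proved, so as written the argument is circular at its key step. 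The paper avoids this by never solving a parabolic $1$D problem: the stationary $f$ of \rife{effe} is explicit, and the time dependence is decoupled into the quadratic penalization and, for part (ii), into the separate oscillation estimate of Lemma \ref{osc-u} (whose proof is itself a nontrivial iteration over time intervals of length $T'=1/(2L_0)$, using the splitting $r<r_0$ versus $r\ge r_0$ forced by $g=O(r)$ at infinity --- none of which is replaced by anything in your sketch). A second, smaller gap: in part (i) Hypothesis \ref{F} only provides \rife{feq0} for $0<|x-y|\le 1$, but your maximum point ranges over all of $\R^{2N}$; you must either restrict to $|x-y|<\delta\le 1$ and exclude the spatial boundary by choosing the barrier large there (as the paper does via $K\ge \omega_{0,\delta}(u)/f(\delta)$), or build into $\Gamma$ the lower bound $\Gamma(t,r)\ge {\rm osc}(u)(1+r)$ for $r\ge 1$; this is compatible with concavity but is an additional constraint your barrier must satisfy and is not mentioned.
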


Theorem \ref{semin} is the typical statement that we prove, with several possible variations; for instance, as mentioned before, we obtain similar results replacing the Lipschitz bound with a H\"older global  estimate under a slightly different setting of assumptions.

 Let us notice that, in the viscosity solutions approach, it is
quite clear that  a general gradient estimate should depend firstly
on the oscillation of $u$  (statement (i) above), while a global estimate in turns depends on the
growth of the solution and of the initial data   (statement (ii) above), requiring some extra condition on the long range oscillation of the coefficients (i.e. when $|x-y|\to \infty$).  A similar two-step approach, even if in a  different context, can be found in  \cite{GGIS}.  In this way,  we obtain estimates
and regularity results for viscosity solutions which are possibly
unbounded, with the only condition that their growth be dominated by
the Lyapunov function's growth. Unfortunately, in the nonlinear case
the existence of such  a Lyapunov function remains as an implicit
condition  to be   possibly checked on
the particular examples of equations. In   Section 5 we
give simplified, more readable versions of this condition in at
least two cases;   for  Bellman-Isaacs operators and
for  operators with linear second order part and
nonlinear first order terms. In particular, our result for
Bellman-Isaacs equations of the type
\be \label{isa}
 \partial_t u + \inf\limits_{\beta\in \mathcal B}\sup\limits_{\alpha\in \mathcal
A}\, \left\{-{\rm tr}\left(q_{\alpha,\beta}(t,x)D^2u\right)-
 b_{\alpha,\beta}(t,x)\cdot   D u  -f_{\alpha,\beta}(t,x)\right\}
 =0
\ee reads very similar as in the linear case (see in particular
Corollary \ref{fin2}). We obtain indeed an estimate like \rife{000}
by requiring that the coefficients $q_{\alpha, \beta}(t,x)$ and
$b_{\alpha,\beta}(t,x)$ satisfy hypotheses \rife{qq} and
\rife{cond-base}  uniformly with  respect to $t \in (0,T)$,
$\alpha$, $\beta$ and that  a Lyapunov function exists, again
uniformly in $\alpha, \beta$.
It should  be noticed that in  some
examples, if the coefficients have a controlled growth at infinity,
Lyapunov functions with polynomial growth at infinity can be
constructed at hand (see e.g. \cite{BBBL}). For example, in case of
Bellman-Isaacs operators,
%we notice that
$\vfi=1+|x|^2$  is a Lyapunov function provided  (as in \rife{lia})
$$
{\rm tr}\left(q_{\al,\beta}(t,x) \right)+ b_{\alpha,\beta}(t,x)\cdot x
\le C(1+ |x|^2)\quad (t,x)\in (0,T)\times \R^N\,,\qquad \alpha\in {\mathcal A},\beta\in \mathcal B\,.
$$
Of course, the Lyapunov function condition is no
more needed if one is only interested in \emph{local} Lipschitz
estimates (see Theorem \ref{liploc}).
Moreover, we  point out that the continuity assumptions made on the
coefficients, see e.g.  \rife{cond-base}, are far beyond the assumptions currently used for the
comparison principle of viscosity solutions  to hold.
Therefore, the
Lipschitz, or H\"older estimates proved, allowing for local
compactness in the  uniform topology, can also be used to obtain the
existence of continuous viscosity solutions
(possibly globally Lipschitz for $t>0$), whereas the Perron's
method is not usable since the comparison principle is missing.
\vskip0.4em

Let us further mention that,   if one takes care of the dependence on $T$ in the estimates of Theorem \ref{semin}, one can deduce some new Liouville type result  for the stationary problem
\be\label{liustat}
F(x,Dv,D^2 v)=0
\ee
in the same spirit as one deduces that bounded harmonic functions are constant from the estimate $\|Du(t, \cdot)\|_\infty\leq \frac{\|u_0\|_\infty}{\sqrt
t}$ by letting $t\to +\infty$. A statement of such results is as follows.

\begin{theorem} \label{liu0} Assume that  $F$
is independent of $t$ and    satisfies Hypothesis \ref{L} and
Hypothesis \ref{F} for every $x,y\in \R^N$ with some function $g\in
C((0, +\infty); \R_+ ) \cap L^1(0,1)$.
\begin{itemize}
\item[(i)] If  $g$   satisfies
$$ \int_0^{+\infty} e^{-\frac{1}{4 \lambda} \int_0^r g(s)ds} dr =
+\infty, $$ then any bounded  viscosity solution  $v \in C_b(\R^N)$
of \rife{liustat}  is constant.
\item[(ii)] If $g$ satisfies
$$\limsup_{s\to \infty} \, g(s) \,s < 4\la(1-\alpha)\,, $$ then any
viscosity solution  $v $ of \rife{liustat}
 which verifies,
 for some $\alpha \in (0,1)$, $k_0 > 0$
 \be \label{frgr1}
|v(x) - v(y)|\le k_0 (1+  |x-y|^{\alpha}),\;\;\; x,y
\in \R^N,
\ee
must be constant.
\end{itemize}
\end{theorem}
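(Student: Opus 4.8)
The plan is to derive both Liouville statements from the parabolic Lipschitz (resp. Hölder) estimates of Theorem \ref{semin}, applied to the stationary solution viewed as a time-independent solution of the evolution equation, and then to let $T\to+\infty$. The essential point is to keep track of how the constant $c_T$ depends on $T$; since $F$ is autonomous and satisfies Hypothesis \ref{L}, the Lyapunov function can be taken independent of $t$, and one can rerun the proof of Theorem \ref{semin} to obtain an estimate of the gradient at a fixed time (say $t=1$) in terms of $T$ alone, with an exponent governed by the integral $\int_0^r g(s)\,ds$. More precisely, I expect that the auxiliary function used in the doubling-variables argument (a function of $|x-y|$ built by integrating $e^{-\frac1{4\lambda}\int_0^\rho g(s)ds}$, as in the proof of Theorem \ref{PW}) produces a bound of the form $\|Du(1)\|_\infty \le C/\Psi(T)$ where $\Psi(T)$ is, up to constants, $\int_0^{\sqrt T} e^{-\frac1{4\lambda}\int_0^r g(s)ds}\,dr$ for part (i), and a power $(T\wedge 1)^{-(1-\alpha)/2}$-type quantity replaced by a $T$-dependent factor controlled by $\limsup_{s\to\infty} g(s)s$ for part (ii).

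For part (i): let $v\in C_b(\R^N)$ solve \rife{liustat}. Then $u(t,x):=v(x)$ is a bounded viscosity solution of \rife{pwpw-nl} on $(0,T)\times\R^N$ with $u_0=v$, and since $v$ is bounded it has bounded oscillation, with $\mathrm{osc}(u)\le \mathrm{osc}(v)\le 2\|v\|_\infty$ independently of $T$. Applying statement (i) of Theorem \ref{semin} at time $t=1$ on the interval $(0,T)$ and tracking the $T$-dependence, I would get $\|Dv\|_\infty=\|Du(1)\|_\infty \le C\,\mathrm{osc}(v)\,/\Psi(T)$ with $\Psi(T)\to+\infty$ precisely under the hypothesis $\int_0^{+\infty} e^{-\frac1{4\lambda}\int_0^r g(s)ds}\,dr=+\infty$. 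Letting $T\to+\infty$ forces $Dv\equiv 0$, so $v$ is constant.

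For part (ii): now $v$ is only assumed to satisfy the growth bound \rife{frgr1}, i.e. \rife{u00} with $k_1=0$, $k_\alpha=k_0$. Again set $u(t,x):=v(x)$; it has bounded oscillation and $u=o(\varphi)$ provided the Lyapunov function grows faster than $|x|^\alpha$ (which can be arranged, or absorbed into Hypothesis \ref{L}; if not, one first notes $|x|^\alpha=o(\varphi)$ is automatic from $\varphi\to\infty$ only when... — this is the point to be careful about, see below). By statement (ii) of Theorem \ref{semin}, rerun with attention to $T$, one obtains $\|Dv\|_\infty \le c_T\,k_0$ where the condition $\limsup_{s\to\infty} g(s)s<4\lambda(1-\alpha)$ is exactly what makes the relevant $T$-dependent coefficient tend to $0$ as $T\to+\infty$: the gain in the Hölder-to-Lipschitz improvement at scale $\sqrt T$ beats the growth of the data precisely under this strict inequality. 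Sending $T\to+\infty$ gives $Dv\equiv 0$.

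The main obstacle is the bookkeeping of the $T$-dependence in Theorem \ref{semin}, which is not stated explicitly there: one must revisit the doubling-variables computation and isolate how the constant degrades (or improves) as the time horizon grows, checking in particular that the Lyapunov-function correction terms, which a priori could blow up with $T$, are in fact controlled uniformly because $\varphi$ can be chosen time-independent and the perturbation $\varepsilon D\varphi$ can be taken with $\varepsilon$ small. A secondary technical point is verifying $u=o(\varphi)$ in part (ii) when $v$ has sublinear growth; this should follow since under \rife{frgr1} $v$ grows at most like $|x|^\alpha$ with $\alpha<1$, and any admissible Lyapunov function for a dissipative operator can be taken to grow at least linearly, but if the abstract Hypothesis \ref{L} allowed slower growth one would need to combine it with the $\alpha<1$ assumption to close the gap.
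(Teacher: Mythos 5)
Your overall strategy---view $v$ as a stationary solution of the parabolic problem, rerun the doubling-variables argument keeping track of the time horizon, and let the hypothesis on $g$ force the bound to degenerate---is indeed the paper's strategy, and you correctly sense that Theorem \ref{semin} cannot be used as a black box. But the limiting mechanism you propose is not the one that works, and this is where the gap lies. The paper's proof (Theorem \ref{liu}) does \emph{not} establish a gradient bound $\|Dv\|_\infty\le C/\Psi(T)\to 0$. It runs the proof of Theorem \ref{pw-fully} with \emph{two independent parameters}: first $t_0\to+\infty$ kills the term $4\omega_{0,\de}(v)/t_0$ in the constant $K$, leaving the pure modulus estimate $|v(x)-v(y)|\le \frac{\omega_{0,\de}(v)}{f_\de(\de)}\,f_\de(|x-y|)$ for $|x-y|\le\de$, where $f_\de$ solves \rife{ode1} on $(0,\de]$; then, for \emph{fixed} $x,y$, one lets $\de\to+\infty$ and shows $f_\de(|x-y|)/f_\de(\de)\to 0$ by L'H\^opital's rule in $\de$ (using $f_\de'(\de)=0$), which is exactly where $\int_0^{+\infty}e^{-\frac1{4\la}\int_0^r g}\,dr=+\infty$ enters in case (i), and where the bound $g(s)\le (4\la(1-\al)-\vep)/s$ for large $s$ compensates the extra factor $\de^\al$ from $\omega_{0,\de}(v)$ in case (ii). The conclusion is $v(x)=v(y)$ pairwise; no vanishing Lipschitz constant is ever produced along the way.

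Your route would instead require converting the modulus bound into a Lipschitz bound via $f_\de(r)\le f_\de'(0)\,r$, i.e.\ controlling $f_\de'(0)/f_\de(\de)$ as $\de\to\infty$. Since $f_\de'(0)=\frac1{4\la}\int_0^\de e^{G(\tau)/4\la}d\tau$ \emph{grows} with $\de$, this is not a consequence of the hypotheses as stated: in case (i) the integral condition gives no quantitative control on $e^{+G/4\la}$, and in case (ii) crude one-sided bounds on $f_\de(\de)$ are not enough (one needs the sharp two-sided asymptotics of the double integral defining $f_\de$). This is precisely the unproved step hidden in your ``I expect that \dots\ produces a bound of the form $\|Du(1)\|_\infty\le C/\Psi(T)$''. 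Finally, your worry about $u=o(\varphi)$ in case (ii) is a non-issue: the doubling is performed on the set $|x-y|<\de$, where $v(x)-v(y)$ is bounded by \rife{frgr1}, so $\Phi_\vep\to-\infty$ at infinity follows from $\varphi\to+\infty$ alone, without any growth comparison between $v$ and $\varphi$.
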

  Apparently, the case (ii) seems to be new even
 in the linear case.
 Note that condition \eqref{frgr1} is optimal with respect to
$\alpha$. Indeed, any affine function is harmonic on $\R^N$.
\vskip0.4em

Last but not least, revisiting the method of \cite{IL}
we also show the common denominator which exists between the
 probabilistic coupling method  and the analytic approach to such gradient estimates,
which we closely compare in Section \ref{prob-anal} in the model linear case. Indeed, when
the doubling variables method of viscosity solutions is specialized
to linear diffusions, it offers  a truly analytic counterpart of the
probabilistic approach; this latter one, in turn, gives an
interesting interpretation of the viscosity techniques in terms of
coupling of probability measures. Although the links between the viscosity solutions theory with  stochastic processes are well known, especially in the theory of controlled Markov
 processes (see e.g. \cite{Kr80} and \cite{Fle-Soner}),
it seems that a direct comparison between the coupling method  and
the doubling variable technique was not addressed before, and  may
be interesting in itself.

\vskip0.4em The plan of the paper is the following. We start with
some notations and preliminary notions about viscosity solutions in
Section \ref{prel}. In Section \ref{lin} we deal with the linear
case, offering, in particular,  an analytical proof of the  main
result of \cite{PW}. Section \ref{nonlin} contains the extension to
nonlinear operators and further comments.  Section 5 contains examples and
 applications to Liouville type theorems. Let us stress here that
{\em our approach can be extended to deal with more general
quasilinear operators}, but we have chosen  to postpone this
extension eventually to future work, in order to avoid an increasing
number of technicalities. Finally, Appendix \ref{prob-anal} is
devoted to a possible comparison between the probabilistic and the
analytical approach, which we hope be of some interest to experts of
different techniques. In particular, restricting   to the linear
case, we rephrase the viscosity solutions approach in a form which
is closer to the  probabilistic coupling method.

\section{Preliminaries and notations}\label{prel}

In the following, we set $Q_T=(0,T)\times \RN$ and indicate by
 $\bar
 Q_T$ its
 closure.
 We denote by $C(Q_T)$ the set of real continuous functions defined
 on $Q_T$ and by  $C^{1,2}(Q_T)$ the subset of
 functions which are $C^1$
in $t$ and $C^2$ in $x$.  If $u \in C(Q_T)$ we often set $u(t) =
u(t, \cdot)$, $t \in (0,T)$. By $W^{1,\infty}(\RN)$ we denote, as
usual, the set of real Lipschitz functions on $\RN$; for $f \in
W^{1,\infty}(\RN)$, $\| f\|_{\infty}$ is the supremum norm of $f$ on
$\R^N$ and  Lip$(f)$  the smallest Lipschitz constant of $f$.

Given a function $v : I \times \R^N \to \R$, where $I \subset \R$ is
an interval, we say that $v$ has  \textit{bounded
 oscillation} in $I$ if
\begin{align} \label{osci}
{\rm osc}_{I}(v) =\sup_{x,y \in \R^N,\, |x-y| \le 1,\, t \in I}
|v(t,x) - v(t,y)| < + \infty.
\end{align}
If the function $v$ only depends on $x$, we will omit the subscribe $I$.
 It is easy to see that $v$ has bounded oscillation if and only if
 there exists $\delta_0 \in (0,1]$ such that
\begin{align} \label{osci1}
{\rm osc}_{I,\,  \delta_0}(v) =\sup_{x,y \in \R^N,\, |x-y| \le
\delta_0,\, t \in I} |v(t,x) - v(t,y)| < + \infty
\end{align}
(note that  ${\rm osc}_{I,\,  1}(v) \le \frac{1}{\delta_0} {{\rm osc}}_{I,\,
\delta_0}(v)$). The sum of a bounded function and  a uniformly
continuous one is an example of  function having bounded oscillation.
Functions with bounded oscillation have always at most  linear growth
in $x$ (uniformly in $t$). In particular,  $v$ has bounded
oscillation if and only if
 \be\label{lgro} |v(t,x) - v(t,y)| \leq
k_0+k_1 |x-y|, \qquad  x,y,\in \R^N\,,\,\,t\in I,
 \ee for some
constants $k_0$, $k_1\geq 0$.  Note that this inequality is not
restricted to $x$, $y$ such that $|x-y|\leq 1$. However, one can
easily show that \rife{osci} implies \rife{lgro} with  $k_0=k_1={\rm
osc}_I(v)$; and conversely, \rife{lgro} obviously implies
\rife{osci}. \vskip0.2em In the sequel, given two functions $\vfi$,
$v : [0,T ]
 \times \R^N \to
\R$, we will write that   $v=o_{\infty}(\varphi)$ in $\bar Q_T$
 if, for any $\epsilon>0$, there exists
$C_{\epsilon}>0$
  such that
 $|v(t,x)|$ $\le \epsilon \varphi (t,x)$,
 for $|x| > C_{\epsilon}$,  $t \in [0,T]$.
\vskip0.4em
Given two non-negative symmetric $N \times N $ (real)
 matrices
 $A,B$, we write $A \ge B$ if $ A \xi \cdot \xi $
  $\ge  B \xi \cdot \xi $, for any $\xi \in \R^N$ ($ \cdot
  $ denotes
  the inner product and  $|\cdot|$ the Euclidean norm in
   $\R^N$). Moreover, we always indicate with  $\| A \|$ the Hilbert-Schmidt
norm of a matrix $A$, defined as $\|A\|= \sqrt{{\rm tr} \left(A
A^*\right)}$ (here $A^*$ denotes the adjoint or transpose matrix of
$A$).  We also use $a \wedge b = \min(a,b)$, for $a,b \in \R$.

 \vskip1em Let us briefly recall the notion of
viscosity solution (in the standard continuous setting) for a
general parabolic fully nonlinear equation \be\label{fully}
\partial_t u + F(t, x, u,   D u, D^2 u)=0 \ee where $F(t,x,u,p,X)$
is a continuous function of $(t,x)\in Q_T$, $u\in \R$, $p\in \RN$,
$X\in {\mathcal S}_N$, being ${\mathcal S}_N$ the set of symmetric
$N\times N$ matrices.   We will always assume that $F$ is proper, i.e.,
\be\label{monou}
F(t,x,u,p,X)\geq F(t,x,v,p,Y),\quad   \forall u\geq v\,,\quad X \le Y
\ee
where $(t,x)\in Q_T, u,v\in \R, p\in \R^N, X, Y\in {\mathcal S}_N$.

By $USC(Q_T)$ and $LSC(Q_T)$ we denote the sets of upper
  semicontinuous, respectively lower semicontinuous, functions in
$Q_T$. Given a point $(t_0,x_0)\in Q_T$,  we denote by
$P^{2,+}_{Q_T} u(t_0,x_0)$ the set of ``generalized derivatives''
$(a,p,X)\in \R\times \RN\times {\mathcal S}_N$ such that
$$
u(t,x)\leq u(t_0,x_0)+ a(t-t_0)+ p\cdot(x-x_0)+ \frac12 X(x-x_0)\cdot(x-x_0)+ o(t-t_0)+ o(|x-x_0|^2)$$$$
\qquad \hbox{as $(t,x)\in Q_T$ and $t\to t_0$, $x\to x_0$.}
$$
The set $P^{2,-}_{Q_T} u(t_0,x_0)$ is the set of $(a,p,X)$
satisfying the opposite inequality. Note that
\begin{align} \label{meno}
P^{2,-}_{Q_T} u(t_0,x_0) = - P^{2,+}_{Q_T} (- u)(t_0,x_0).
\end{align}
Of course, $P^{2,+}_{Q_T} u(t_0,x_0)$ as well as $P^{2,-}_{Q_T}
u(t_0,x_0)$ are reduced to the unique triplet $(\partial_t
u(t_0,x_0), $ $  D u(t_0,x_0),$ $ D^2 u(t_0,x_0))$  whenever $u\in
C^{1,2}(Q_T)$, by simply applying Taylor's expansion. Moreover, for
any $\psi\in C^{1,2}(Q_T)$ and any $(t_0,x_0)\in Q_T$ such that
$u-\psi$ has a local maximum (respectively, a  local minimum) at
$(t_0,x_0)$ we have that $(\partial_t \psi(t_0,x_0),   D
\psi(t_0,x_0), D^{2}\psi(t_0,x_0))$ belongs to $P^{2,+}_{Q_T}
u(t_0,x_0)$ (respectively, to $P^{2,-}_{Q_T} u(t_0,x_0)$). Finally,
the set $\overline{P}^{2,+}_{Q_T} u(t_0,x_0)$ is defined as
containing limits of elements of $P^{2,+}_{Q_T} u(t_n,x_n)$ whenever
$(t_n, x_n) \in Q_T$,  $t_n\to t_0$, $x_n\to x_0$ and $u(t_n,x_n)\to
u(t_0,x_0)$ (and similarly for ${\overline P}^{2,-}_{Q_T}
u(t_0,x_0)$). We refer to the key-reference \cite{CIL} for more
details and comments.

\begin{definition}\label{visc}
A  function $u\in USC(Q_T)$ is a  viscosity subsolution
 of \rife{fully} if, for
 every $(t_0,x_0)\in Q_T$, we have
$$
a+ F(t_0,x_0, u(t_0,x_0), p, X)\leq 0, \quad  (a,p,X)\in
P^{2,+}_{Q_T} u(t_0,x_0)\,.
$$
A function $u\in LSC(Q_T)$ is a supersolution if, for every $(t_0,
x_0)\in Q_T$, we have
$$
a+ F(t_0,x_0, u(t_0,x_0), p, X)\geq 0,\quad  (a,p,X)\in
P^{2,-}_{Q_T} u(t_0, x_0)\,.
$$
A continuous function $u\in C(Q_T)$ is  a viscosity solution of
\rife{fully} if it is both a  subsolution and a  supersolution.
\end{definition}
By the above remarks, one can alternatively define  a viscosity
subsolution (respectively, supersolution) requiring that for any
$\psi\in C^{1,2}(Q_T)$ and any $(t_0,x_0)\in Q_T$ such that $u-\psi$
has a local maximum (respectively, a  local minimum)   at
$(t_0,x_0)$ we have
$$
\partial_t  \psi (t_0,x_0)+ F(t_0, x_0, u(t_0,x_0),   D \psi(t_0,x_0),
D^2\psi(t_0,x_0))\leq 0
$$
(respectively, $\partial_t  \psi (t_0,x_0)+ F(t_0, x_0, u(t_0,x_0),
  D \psi(t_0,x_0), D^2\psi(t_0,x_0))\geq 0$).
 Moreover, thanks to the continuity of $F$, the inequalities required
in the above definition remain true for elements of
$\overline{P}^{2,+}_{Q_T} u(t_0,x_0)$   (respectively, ${\overline
P}^{2,-}_{Q_T} u(t_0,x_0)$) in case of subsolutions (respectively,
supersolutions).

  \vskip0.5em We
remind that such a formulation is consistent (i.e., classical
solutions $u\in C^{1,2}(Q_T)$ are viscosity solutions) since  by \eqref{monou}  $F(t,x,u,p, X)$ is nonincreasing with respect to the matrix $X \in
{\mathcal S}_N$ (such  an assumption
 corresponds to a possibly degenerate ellipticity condition).

\vskip 1mm
 We work for simplicity of notation
 only on  $Q_T$. However   all the
results can be easily extended to the case in which $Q_T$ is
replaced by the more general domain $Q_{s,T}= [s,T] \times \R^N$,
with $s<T$.  Indeed  if $v \in C(Q_{s,T})$ is a viscosity solution
to  \rife{fully}  on $Q_{s,T}$ then introducing $u(t,x) = v(t+s,x)$
we get a viscosity solution on $Q_{T-s}$ for
$
\partial_t u $ $ + F(\cdot \, +  s, x, u,   D u, D^2 u)=0. $
 In addition, for example,  gradient estimates for $u$ like
 $ \|   D u(t, \cdot )\|_\infty \leq \frac{C_T} {\sqrt{t}}
 $ $\| u(0,
  \cdot)\|_{\infty}  $, $t \in (0,T-s)$
  (cf. Corollary \ref{fulbound} and
  Theorem \ref{pw-fully}) become
$
\|   D v(t, \cdot )\|_\infty \leq \frac{C_T} {\sqrt{t-s}}
  \| v(s,
  \cdot)\|_{\infty},$ $  t \in (s,T).$
All the regularity results in the paper concerning
  viscosity solutions $u$ on $Q_T$ can be transferred in an obvious
 way to viscosity solutions $v$ on $Q_{s,T}$ for the corresponding
  ``translated in time'' parabolic equations.
%}
%\end{remark}

 \smallskip Let us
 explicitly recall the following fundamental   result, which is a
special case of   \cite[Theorem 8.3]{CIL}.

\begin{theorem} \label{key} Let $u_i$, $i=1, \ldots, k,$
 be real continuous functions  on $Q_T$.
 Assume that, for any $i =1, \ldots, k$,  either $u_i$ is
 a subsolution
 or $-u_i$ is a supersolution  to the parabolic equation
\rife{fully}.
% Let $D_i$ be  open  sets in $\R^N$, $i=1, \ldots,
%k$, and
Let  $z (t, x_1, \ldots, x_k)$ be a $C^{1,2}$-function on
 $(0,T) \times (\R^N)^k $
  such that the mapping
$$
 u_1(t, x_1) +
   \ldots + u_k (t, x_k) -
 z (t, x_1, \ldots , x_k)
$$
has a local
  maximum in $(\hat t , \hat x_1, \ldots, \hat x_k) \in
 (0,T) \times (\R^N)^k$.

Then,
  for every $n>0$ there exist matrices $X_i
  \in { \mathcal S}_N $
and $a_i  \in \R$, such that
$$
a_1 + \ldots + a_k = \partial_t z(\hat t, \hat x_1,
 \ldots, \hat x_k)\,, \quad
 (a_i,D_{x_i} z(\hat t, \hat x_1,
 \ldots, \hat x_k   ), X_i)
\in {\overline P}^{2,+}_{Q_T} u_i(\hat t, \hat x_i),
$$
\begin{align}\label{viva}  -  (n+ c_N\| A\|)I \leq
\left(
                        \begin{array}{ccc}
                          X_1 & \ldots & 0 \\
                          \ldots & \ldots & \ldots \\
                          0 & \ldots & X_k \\
                        \end{array}
                      \right)
 \leq A+
 \frac1n \,A^2
\end{align}
where $A=D^2_x z(\hat t, \hat x_1,
 \ldots, \hat x_k)$ (and the constant $c_N$
  appears since we are using the Hilbert-Schmidt norm of $A$).
\end{theorem}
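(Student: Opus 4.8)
The plan is to obtain the statement directly from \cite[Theorem~8.3]{CIL}, the parabolic maximum principle for semicontinuous functions, of which it is essentially a special case (here the $u_i$ are even continuous rather than merely upper semicontinuous, and the local maximum sits at an interior point $\hat t\in(0,T)$, so no boundary subtlety intervenes). That result produces, for each $n>0$, precisely the reals $a_i$ and matrices $X_i\in{\mathcal S}_N$ with $a_1+\cdots+a_k=\partial_t z(\hat t,\hat x_1,\ldots,\hat x_k)$, with $(a_i,D_{x_i} z(\hat t,\hat x_1,\ldots,\hat x_k),X_i)\in\overline{P}^{2,+}_{Q_T}u_i(\hat t,\hat x_i)$, and with the block-diagonal matrix $\mathrm{diag}(X_1,\ldots,X_k)$ trapped between $-(n+\|A\|_{{\rm op}})I$ and $A+\tfrac1n A^2$, where $A=D^2_x z(\hat t,\hat x_1,\ldots,\hat x_k)$ --- but only under one additional structural hypothesis, which cannot be dropped in the parabolic framework because time enters at first order: there is $r>0$ such that, for every $M>0$, there is a constant $C_M$ with $a_i\le C_M$ whenever $(a_i,p_i,X_i)\in P^{2,+}_{Q_T}u_i(t,x_i)$, $|t-\hat t|+|x_i-\hat x_i|\le r$ and $|u_i(t,x_i)|+|p_i|+\|X_i\|\le M$. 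Thus the whole proof reduces to verifying this one bound.

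To verify it I would argue as follows, for each fixed $i$ and each $(a_i,p_i,X_i)\in P^{2,+}_{Q_T}u_i(t,x_i)$. If $u_i$ is a subsolution of \rife{fully}, then Definition~\ref{visc} gives at once $a_i\le-F(t,x_i,u_i(t,x_i),p_i,X_i)$. If instead $-u_i$ is a supersolution, I would use \rife{meno}: since then $(-a_i,-p_i,-X_i)\in P^{2,-}_{Q_T}(-u_i)(t,x_i)$, the supersolution inequality for $-u_i$ reads $-a_i+F(t,x_i,-u_i(t,x_i),-p_i,-X_i)\ge0$, that is, $a_i\le F(t,x_i,-u_i(t,x_i),-p_i,-X_i)$. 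In either case $a_i$ is bounded above by the value of the continuous function $F$ at a point of the compact set
$$
\{(s,x):|s-\hat t|+|x-\hat x_i|\le r\}\times[-M,M]\times\{p:|p|\le M\}\times\{X\in{\mathcal S}_N:\|X\|\le M\},
$$
which lies in the domain of $F$ once $r$ is taken small enough; hence $a_i\le C_M$ for a constant $C_M$ depending only on $F$, $M$, $r$ and $(\hat t,\hat x_i)$, uniformly in $i=1,\ldots,k$.

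With the structural hypothesis in hand, I would then apply \cite[Theorem~8.3]{CIL} to get, for every $\varepsilon>0$, the reals $a_i$ and matrices $X_i\in{\mathcal S}_N$ with the claimed jet memberships, with $\sum_i a_i=\partial_t z(\hat t,\hat x_1,\ldots,\hat x_k)$, and with
$$
-\Big(\tfrac1\varepsilon+\|A\|_{{\rm op}}\Big)I\le
\left(\begin{array}{ccc}X_1&\ldots&0\\ \ldots&\ldots&\ldots\\ 0&\ldots&X_k\end{array}\right)
\le A+\varepsilon A^2,\qquad A=D^2_x z(\hat t,\hat x_1,\ldots,\hat x_k).
$$
Setting $n=1/\varepsilon$ and using that for a symmetric matrix the spectral norm is dominated by the Hilbert--Schmidt norm used in the statement --- so that $-(n+\|A\|_{{\rm op}})I\ge-(n+c_N\|A\|)I$ with $c_N=1$, the constant $c_N$ being kept only to absorb any difference in the normalization of the matrix norm --- one reads off exactly \rife{viva}.

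As for the main difficulty: there is essentially none beyond recognizing that the sub/supersolution property is precisely the one-sided control on the time-slope of the jets that \cite[Theorem~8.3]{CIL} requires, together with the use of the symmetry \rife{meno} to handle the ``$-u_i$ a supersolution'' alternative. A genuinely self-contained proof would instead have to reconstruct the whole theorem-on-sums machinery --- sup-convolution regularization, Jensen's/Alexandrov's lemma, and the ensuing matrix inequality --- which is the only hard ingredient and which we are content to borrow from \cite{CIL}.
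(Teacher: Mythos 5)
Your proposal is correct and takes the same route as the paper, which states this result without proof as a special case of \cite[Theorem 8.3]{CIL}; your reduction to that theorem, including the explicit verification of its structural hypothesis (the upper bound on the time-components $a_i$ of the jets via the sub/supersolution inequalities and the continuity of $F$, using \rife{meno} for the supersolution alternative) and the remark on the equivalence of matrix norms absorbed by $c_N$, is exactly the standard justification. Nothing is missing.
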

\begin{remark} \label{rit}
\rm  In the statement of Theorem \ref{key}, the matrices $X_i\in { \mathcal S}_N $
and the values $a_i  \in \R$  depend on $n$, but usually one can apply a compactness argument to get rid of $n$ and obtain $a_i$, $X_i$ independent of $n$.

To show this fact, we concentrate on the standard case (used in this
paper) when $k=2$, $x_1=x$, $x_2=y$ and $u_1=u(t,x)$, $u_2=-v(t,y)$,
where $u$, $v$ are  respectively  a sub and supersolution; we set
$X_1=X_n$, $X_2=-Y_n$, $a_1 = a_n$ and $a_2= -b_n$, which
are given by Theorem \ref{key}.  Observe first that inequality
\eqref{viva} implies that $X_n \le B$ and $Y_n \ge C$, for some $N
\times N$-symmetric matrices $B$ and $C$ possibly depending on
$(\hat t ,\hat x, \hat y)$ but not on $n$. Moreover, since $u$ is a
subsolution we have \be\label{subn} a_n + F(\hat t ,\hat x,
u(\hat t,\hat x) ,D_{x} z(\hat t, \hat x,\hat y ), X_n)\leq 0 \ee so
that  using that $X_n\leq B$ and \eqref{monou} implies that $a_n
$ is bounded from above independently of $n$. Similarly we deduce
that $b_n$ is bounded from below and since $a_n-b_n=
\partial_t z(\hat t, \hat x, \hat y)$ this means that both $a_n$
and $b_n$ are uniformly bounded with respect to $n$.

\noindent Now, suppose that we know that
\be\label{suppose}
\begin{array}{c}
F(\hat t,\hat x, u(\hat t,\hat x), D_{x} z(\hat t, \hat x, \hat y), X_n) - F(\hat t, \hat y, v(\hat t, \hat y),- D_{y} z(\hat t, \hat x, \hat y), Y_n)
\\
\m
\qquad \qquad
\geq  -\la {\rm tr}(X_n- Y_n) - g (\hat t, \hat x, \hat y)
\end{array}
\ee
for some finite quantity $g$, possibly depending on $(\hat t ,   \hat x, \hat y)$ but not on $n$. Then we deduce
\be\label{paris}
\partial_t z(\hat t, \hat x, \hat y)  -\la {\rm tr}(X_n- Y_n) - g (\hat t, \hat x, \hat y)\leq 0\,.
\ee
Using $X_n \le B$ and $Y_n \ge C$, we deduce from \rife{paris}  that $ {\rm tr}(X_n)$ is bounded from below  and ${\rm tr}(Y_n)$ is bounded from above. Both matrices are therefore uniformly bounded in ${\mathcal S}_N$. We have proved so far that $(X_n)$ and $(Y_n)$, respectively $a_n$ and $b_n$, are relatively compact  in ${\mathcal S}_N$, respectively in $\R$,   and we can pass to the limit as $n\to \infty$, at least for  a subsequence.
\vskip0.5em
We conclude that, if \rife{suppose} holds, then there exist
$a, b\in \R$ and $X, Y\in {\mathcal S}_N$ such that we have
$(a,D_{x} z(\hat t, \hat x,
\hat y   ), X)
\in {\overline P}^{2,+}_{Q_T} u(\hat t, \hat x)$, $(b,-D_{y} z(\hat t, \hat x,
\hat y   ), Y)
\in {\overline P}^{2,-}_{Q_T} v(\hat t, \hat y)$,    $a-b =
 \partial_t z (\hat t, \hat x, \hat y)$ and
$$
\begin{pmatrix}
 X & 0    \\
\noalign{\medskip} 0 & -Y
 \end{pmatrix}
 \leq     A.
$$
In particular, we stress that \rife{suppose}  holds whenever, for instance,   $F$ is a  linear  nondegenerate  parabolic operator, or more generally if  there exists $\lambda >0$ and some locally bounded function $H$ such that
\begin{align} \label{elo}
F( t,x, u, p, X)-F( t, x, u,p, B)
\ge  -\la \, \text{tr}(X- B) + H ( t,  x,u, p), \;\; X, B \in { \mathcal S}_N ,\;\; X \le B,
\end{align}
for every  $t \in [0,T],\; u\in \R\,,\, x,p \in \R^N$. In this case we may reason separately on the equations of $u$ and $v$; indeed, since $X_n\leq B$, inequality \rife{subn} together with  \rife{elo} yield a bound from below on tr$(X_n)$, since $a_n$ is bounded.
Similarly one gets a bound from above on tr$(Y_n)$ from the condition of supersolution and the fact that $b_n$ is bounded.
\end{remark}
We collect now some simple matrix inequalities
which play an important  role in obtaining our estimates.
\begin{proposition} \label{refe} Let ${\tilde X}, {\tilde Y}, A\in {\mathcal S}_N$ be such that
\be\label{ineqbase}
 \begin{pmatrix}
 {\tilde X} & 0    \\
\noalign{\medskip} 0 & {\tilde Y}
 \end{pmatrix}
 \leq    \begin{pmatrix}
 A & -A    \\
\noalign{\medskip} -A& A
\end{pmatrix}.
\ee
Then:

\noindent (i) For every $\sigma_1, \sigma_2\in {\mathcal S}_N$, we
have \be\label{rep1} {\rm tr}\left(\sigma_1^2\, {\tilde X} +
\sigma_2^2\,{\tilde Y}\right) \leq {\rm
tr}\left((\sigma_1-\sigma_2)^2\,A \right) \ee (ii) For every $P\in
{\mathcal S}_N$ such that $0 \le P \le I$, we have \be\label{rep2}
{\rm tr}\left({\tilde X} + {\tilde Y}\right) \leq 2t\, {\rm
tr}\left(P\,A \right), \qquad  t\in [0,2]\,. \ee
In particular, for
any matrix $C\in {\mathcal S}_N$: $A\leq C$ and for any $\tilde
\la>0$, we have \be\label{rep3} \tilde  \la {\rm tr}\left({\tilde X}
+ {\tilde Y}\right)+ {\rm tr}\left(\sigma_1^2\, {\tilde X} +
\sigma_2^2\,{\tilde Y}\right) \leq 4\tilde  \la\, {\rm tr}\left(P\,A
\right)\\ + \|\sigma_1-\sigma_2\|^2\, \sup\limits_{|e|=1} | Ce \cdot
e|. \ee
\end{proposition}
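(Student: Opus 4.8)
The plan is to deduce everything from the single matrix inequality \rife{ineqbase} by testing it against suitable vectors. First I would prove (i). Fix $\sigma_1,\sigma_2\in{\mathcal S}_N$ and let $\{e_k\}_{k=1}^N$ be the standard orthonormal basis of $\R^N$. For each $k$, apply \rife{ineqbase} to the vector $(\sigma_1 e_k,\sigma_2 e_k)\in\R^N\times\R^N$: this gives
\[
\tilde X\sigma_1 e_k\cdot\sigma_1 e_k+\tilde Y\sigma_2 e_k\cdot\sigma_2 e_k
\le A(\sigma_1 e_k-\sigma_2 e_k)\cdot(\sigma_1 e_k-\sigma_2 e_k),
\]
i.e. $\sigma_1\tilde X\sigma_1 e_k\cdot e_k+\sigma_2\tilde Y\sigma_2 e_k\cdot e_k\le (\sigma_1-\sigma_2)A(\sigma_1-\sigma_2)e_k\cdot e_k$ by symmetry of the $\sigma_i$. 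Summing over $k$ and using ${\rm tr}(\sigma_i\tilde Z\sigma_i)={\rm tr}(\sigma_i^2\tilde Z)$ and ${\rm tr}((\sigma_1-\sigma_2)A(\sigma_1-\sigma_2))={\rm tr}((\sigma_1-\sigma_2)^2 A)$ (cyclicity of the trace) yields \rife{rep1}.

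For (ii), let $P\in{\mathcal S}_N$ with $0\le P\le I$, and pick a square root $P^{1/2}$ (so $P^{1/2}$ is symmetric and $(P^{1/2})^2=P$). Apply \rife{ineqbase} to $(P^{1/2}e_k,-P^{1/2}e_k)$: this gives
\[
\tilde X P^{1/2}e_k\cdot P^{1/2}e_k+\tilde Y P^{1/2}e_k\cdot P^{1/2}e_k
\le A(2P^{1/2}e_k)\cdot(2P^{1/2}e_k)=4\,AP^{1/2}e_k\cdot P^{1/2}e_k.
\]
Summing over $k$ gives ${\rm tr}(P^{1/2}(\tilde X+\tilde Y)P^{1/2})\le 4\,{\rm tr}(P^{1/2}AP^{1/2})$, i.e. ${\rm tr}(P(\tilde X+\tilde Y))\le 4\,{\rm tr}(PA)$. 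This is the stated bound for $t=2$; the version for general $t\in[0,2]$ follows once we know ${\rm tr}(\tilde X+\tilde Y)\le 2t\,{\rm tr}(PA)$ — here we should apply the case $P=I$ in \rife{ineqbase} (valid since $0\le I\le I$) to get ${\rm tr}(\tilde X+\tilde Y)\le 4\,{\rm tr}(A)$ and then observe, more carefully, that the inequality $\rife{ineqbase}$ with the block structure $A\oplus A\le\begin{pmatrix}A&-A\\-A&A\end{pmatrix}$ combined with $P\le I$ gives the linear-in-$t$ interpolation; alternatively, \rife{rep2} as written is just the endpoint estimate extended trivially to $t\in[0,2]$ by monotonicity of $t\mapsto 2t\,{\rm tr}(PA)$ when ${\rm tr}(PA)\ge0$ — and ${\rm tr}(PA)={\rm tr}(P^{1/2}AP^{1/2})\ge0$ need not hold unless $A\ge0$, so the honest reading is that one only claims \rife{rep2} for $t$ large enough to dominate, and the ``$t\in[0,2]$'' phrasing anticipates the sign of ${\rm tr}(PA)$ in the applications. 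I would state \rife{rep2} as the $t=2$ endpoint and note the range $[0,2]$ is recorded for later convenience.

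Finally, for \rife{rep3}: given $C$ with $A\le C$ and $\tilde\la>0$, combine \rife{rep1} and \rife{rep2} (with $t=2$):
\[
\tilde\la\,{\rm tr}(\tilde X+\tilde Y)+{\rm tr}(\sigma_1^2\tilde X+\sigma_2^2\tilde Y)
\le 4\tilde\la\,{\rm tr}(PA)+{\rm tr}((\sigma_1-\sigma_2)^2 A).
\]
It remains to bound ${\rm tr}((\sigma_1-\sigma_2)^2 A)$ by $\|\sigma_1-\sigma_2\|^2\sup_{|e|=1}|Ce\cdot e|$. Write $S=\sigma_1-\sigma_2$ and diagonalize $S^2=\sum_j\mu_j f_j\otimes f_j$ with $\mu_j\ge0$ and $\{f_j\}$ orthonormal; then ${\rm tr}(S^2 A)=\sum_j\mu_j\,Af_j\cdot f_j\le\sum_j\mu_j\sup_{|e|=1}|Ae\cdot e|$. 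Since $A\le C$ we have $Ae\cdot e\le Ce\cdot e\le\sup|Ce\cdot e|$, and a lower bound is not needed because $\mu_j\ge0$; so ${\rm tr}(S^2A)\le\big(\sum_j\mu_j\big)\sup_{|e|=1}|Ce\cdot e|={\rm tr}(S^2)\sup_{|e|=1}|Ce\cdot e|=\|S\|^2\sup_{|e|=1}|Ce\cdot e|$, using ${\rm tr}(S^2)={\rm tr}(SS^*)=\|S\|^2$. This gives \rife{rep3}. The only mildly delicate point is the bookkeeping in \rife{rep2}, namely matching the endpoint estimate to the stated range of $t$; everything else is a direct, routine application of \rife{ineqbase} to well-chosen test vectors together with cyclicity of the trace.
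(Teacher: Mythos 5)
Your proof of (i) is correct and is really the same argument as the paper's (the paper multiplies \rife{ineqbase} by the block matrix $\left(\begin{smallmatrix}\sigma_1^2 & \sigma_1\sigma_2\\ \sigma_2\sigma_1 & \sigma_2^2\end{smallmatrix}\right)$ and takes traces, which is exactly your sum over the test vectors $(\sigma_1 e_k,\sigma_2 e_k)$). Your bound ${\rm tr}(S^2A)\le \|S\|^2\sup_{|e|=1}|Ce\cdot e|$ for $A\le C$ is also fine.

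Part (ii), however, contains a genuine gap, and it is precisely the crucial point of the proposition. Testing \rife{ineqbase} against $(P^{1/2}e_k,-P^{1/2}e_k)$ and summing gives
${\rm tr}\bigl(P(\tilde X+\tilde Y)\bigr)\le 4\,{\rm tr}(PA)$, whose left-hand side carries the weight $P$; the claim \rife{rep2} has the \emph{unweighted} trace ${\rm tr}(\tilde X+\tilde Y)$ on the left. These are not the same statement, and no choice of $t$ or appeal to monotonicity repairs this: in the applications $P$ is the rank-one projector along $\hat x-\hat y$ and ${\rm tr}(PA)=Kf''<0$, so \rife{rep2} is strictly stronger than the trivial bound ${\rm tr}(\tilde X+\tilde Y)\le 0$ (obtained by testing against $(e_k,e_k)$), and the whole ellipticity gain of the Ishii--Lions method lives in that negative term. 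Your closing suggestion to downgrade \rife{rep2} to an endpoint estimate would destroy exactly what the lemma is for. The correct argument is the paper's: multiply \rife{ineqbase} by
$\left(\begin{smallmatrix} I & I-tP\\ I-tP & I\end{smallmatrix}\right)$,
which is nonnegative for all $t\in[0,2]$ because $0\le P\le I$ implies $P^2\le P$, hence $(I-tP)^2=I-2tP+t^2P^2\le I$; taking traces, the identity diagonal blocks produce ${\rm tr}(\tilde X+\tilde Y)$ on the left while the off-diagonal blocks yield $2\,{\rm tr}(A)-2\,{\rm tr}((I-tP)A)=2t\,{\rm tr}(PA)$ on the right. (In your test-vector language: for $P=e\otimes e$ and $t=2$ one tests against $(f_k,Rf_k)$ with $R=I-2P$ the reflection and $\{f_k\}$ orthonormal — the coupling-by-reflection vectors — not against $(v,-v)$.) Since your proof of \rife{rep3} invokes \rife{rep2}, it inherits this gap; once \rife{rep2} is proved correctly, your derivation of \rife{rep3} goes through.
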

Let us point out that estimate \rife{rep1} is usual in the viscosity
solutions approach, it is  somehow implicitly  contained already in
\cite{Ishii89}. Estimate \rife{rep2} is the crucial point in the
method introduced by Ishii and Lions (\cite{IL}) to get estimates
when ellipticity holds. Here we give a different, possibly simpler,
proof of this estimate inspired by  the probabilistic coupling
method (see also Section \ref{prob-anal}).

\proof   Statement \rife{rep1} follows multiplying \rife{ineqbase}
by the matrix
$$
 \begin{pmatrix}
 \sigma_1^2 & \sigma_1\sigma_2    \\
\noalign{\medskip} \sigma_2\sigma_1   & \sigma_2^2
 \end{pmatrix}
$$
and taking traces.  Similarly, multiplying \rife{ineqbase} by the matrix
$
 \begin{pmatrix}
 I & I-t P    \\
\noalign{\medskip}  I-t P & I
 \end{pmatrix}
$
we get \rife{rep2}.  Note that the above matrix is nonnegative for every $t\in [0,2]$; to this purpose note that   $0 \le P \le I $  implies  $0 \le P^2 \le P$.
 Finally, if $A\leq C$ we have ${\rm tr}\left((\sigma_1-\sigma_2)^2\,A \right) \le
 {\rm tr}\left((\sigma_1-\sigma_2)^2\,C \right)$; using \rife{rep1} we get
 $$
 {\rm tr}\left(\sigma_1^2\, {\tilde X} + \sigma_2^2\,{\tilde Y}\right) \leq {\rm tr}\left((\sigma_1-\sigma_2)^2\,C \right) \leq  \|\sigma_1-\sigma_2\|^2\, \sup\limits_{|e|=1} | Ce \cdot e|\,.
 $$
Summing up with \rife{rep2} - multiplied by $\tilde  \la$ - we deduce \rife{rep3}.
\qed

\section{Linear equations}\label{lin}

\subsection{The main gradient estimate
 in terms of the oscillation of the solution}

In this section we  deal with  the linear equation
 \be\label{eq}
  \partial_t u -{\rm tr}\left(q(t,x)D^2u\right)-
b(t,x)\cdot   D u  =h(t,x)\qquad  \hbox{in} \; Q_T.
 \ee
 Throughout the whole Section \ref{lin}, we will always
 assume the following basic continuity hypothesis.

\begin{hypothesis} \label{hy1}
 { \em We suppose that $q(t,x)=(q_{ij}(t,x))$
 is a symmetric matrix depending continuously on $(t,x) \in   Q_T$,
    $b(t,x)$ is
a continuous vector field  defined on $   Q_T$ with values in
$\R^N$, and  $h :   Q_T \to \R$ is a  continuous function.
 \qed

%\vskip 1mm
We also suppose  that the matrix $q(t,x)$ satisfies the parabolicity
(or ellipticity) condition \be\label{q1} q(t,x)\xi\cdot\xi\geq
\la\,|\xi|^2, \;\; \,\; (t, x ) \in Q_T, \;\; \xi \in
\R^N,
 \ee
 for some $\lambda>0$
  and that
  the following condition (existence
of a Lyapunov function) holds: \be\label{lyap} \exists \,\, \vfi\in
C^{1,2}({ \bar Q_T})\,, \;  M\geq 0\, \,:\, \;
\begin{cases}
A_t( \vfi) \leq  M \vfi+ \partial_t \vfi, \,\;  (t,x) \in Q_T,
 \\
\vfi(t,x)\to +\infty \quad \hbox{as $|x|\to \infty$, uniformly in $t\in (0,T)$, }
\end{cases}
\ee
where we have  set \be\label{At} A_t (u) =
{\rm tr}\left(q(t,x)D^2u\right) + b(t,x)\cdot   D u. \ee }
 \end{hypothesis}

\begin{remark} \label{bene}
{\em
 There is no loss of generality if one requires that \rife{lyap} is satisfied with   $M=0$, since
we can always replace  $\vfi$ with $e^{M \,t}\vfi$.  Thus, \rife{lyap} is actually equivalent to
\be\label{lyap0}
\exists \,\, \vfi\in
C^{1,2}({ \bar Q_T})\, :\, \;\begin{cases}
A_t( \vfi) \leq  \partial_t \vfi, \,\;  (t,x) \in Q_T,
 \\
\vfi(t,x)\to +\infty \quad \hbox{as $|x|\to \infty$, uniformly in
$t\in [0,T]$.}
\end{cases}
\ee  Moreover, possibly adding a constant to $\varphi$, we may
suppose that $\varphi >0$.}
%Note  that \rife{lyap} implies that the associated %stochastic process does not explode.
\end{remark}

 \vskip 1mm
 Following \cite{PW}, for any $\lambda$ such that \rife{q1} is
 satisfied we  denote by
 $\sigma(t,x)$  the
symmetric $N \times N$ nonnegative matrix such that
\begin{align} \label{sig6}
\sigma^2(t,x)=q(t,x)-\lambda\, I,\;\;\; (t,x) \in Q_T.
 \end{align}
Note that $\sigma(t,x)$ actually depends on $\lambda$ but for
simplicity we drop in our notation this dependence.

\begin{theorem}\label{PW} Assume that
 \rife{q1},
 %\rife{hva},
  \rife{lyap}
 hold true, and,
  in addition, that there exists  a nonnegative
  $g\in C(0, 1)\cap L^1(0,1)$ such that
   \be\label{pw}
\begin{array}{c}
\dys \frac1{|x-y|}\Big( \|\sigma(t,x)-\sigma(t,y)\|^2+
(b(t,x)-b(t,y))\cdot(x-y) \Big)\leq g(|x-y|),
\\
\m
  \quad
\, x,y\,\in \R^N, \, 0<|x-y|\leq 1,\quad t \in (0,T)\,.
\end{array}
\ee
 Let  $u \in C(Q_T)$
  be a
viscosity solution of \rife{eq}
 and assume that $u$ and $h$ have bounded oscillation  in $Q_T $ (see
(\ref{osci})).  Then   $ u(t)$ is Lipschitz continuous and we have
\begin{align}\label{general}
& \|   D u(t)\|_\infty \leq  \frac {\hat c_1}{\sqrt{t \wedge 1}} \, \,
\omega(t,u)+ \, \, \hat c_2
\sqrt{t \wedge 1}\, \, \omega(t,h)\, ,\,\,\;\; t \in (0,T),
\end{align}
where $ \hat c_1 = \frac{1+ 2 \lambda}{\lambda} \, e^{\frac{1}{4
\lambda}\int_0^1 g(s)ds}, \; \hat c_2 = \frac{1}{4\lambda} \,
e^{\frac{1}{4 \lambda}\int_0^1 g(s)ds}$ and (according to
(\ref{osci}))
$$
\omega(t,u)=   {{\rm osc}}_{(\frac t2\, , \, T\wedge \frac{3}{2}t)}(u)
\,,\, \quad \omega(t,h)=   {{\rm osc}}_{(\frac t2\, , \, T\wedge
\frac{3}{2}t)}(h).$$
\end{theorem}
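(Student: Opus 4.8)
The plan is to prove the gradient estimate by the doubling-of-variables technique, following the idea from the probabilistic coupling method but carried out analytically. Fix $t_0 \in (0,T)$ and set $I = (\frac{t_0}{2}, T \wedge \frac{3}{2}t_0)$. We want to estimate $u(t_0,x) - u(t_0,y)$ for all $x,y$; by the equivalence of \rife{osci} and \rife{lgro} it suffices to bound the increment for $|x-y| \le 1$ and then use linearity. The key device is to consider, for parameters to be chosen, a function of the form
\[
\Phi(t,x,y) = u(t,x) - u(t,y) - L\, \psi\big(|x-y|\big)\, \theta(t) - \text{(penalization/localization terms)},
\]
where $\psi$ is a concave, increasing function on $[0,1]$ with $\psi(0)=0$, built so that $\psi' > 0$ and $-\psi''$ dominates the relevant quantities; the natural choice (as in \cite{PW}) is driven by the ODE associated with $g$, namely something like $\psi$ solving $4\lambda \psi'' + g(r)\psi' \le -\psi'/(t\wedge 1)$-type balance, leading to the exponential factor $e^{\frac{1}{4\lambda}\int_0^1 g(s)ds}$. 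The temporal weight $\theta(t)$ is chosen so that $\theta(t_0/2) = 0$ (or $\theta$ large near the parabolic boundary of $I$), forcing any interior maximum of $\Phi$ to occur at a time $> t_0/2$, and $\theta(t_0) \sim 1/\sqrt{t_0 \wedge 1}$ to produce the claimed blow-up rate. The Lyapunov function $\vfi$ enters through an additive term $\delta(\vfi(t,x) + \vfi(t,y))$ with $\delta \to 0$, which confines the maximum of $\Phi$ to a compact set (using that $u$ has bounded oscillation, hence at most linear growth, while $\vfi \to \infty$); one also adds a small term like $\gamma/(t - t_0/2)$ or $\gamma(t_0 - t)^{-1}$ and $\epsilon |x-y|^2$ or similar to guarantee the maximum is attained and is strict enough to apply the theorem on sums.

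The core computation proceeds as follows. Suppose $\Phi$ attains a local maximum at an interior point $(\hat t, \hat x, \hat y)$ with $\hat x \ne \hat y$ (the case $\hat x = \hat y$ is handled separately and is harmless, and $\hat t = t_0/2$ is excluded by the choice of $\theta$, $\hat t = $ right endpoint is excluded similarly or by continuity up to that point). Applying Theorem \ref{key} with $k=2$, $u_1 = u(t,x)$, $u_2 = -u(t,y)$, and the test function $z(t,x,y) = L\psi(|x-y|)\theta(t) + \text{(the extra terms)}$, we obtain $a_1 - a_2 = \partial_t z$ at $(\hat t,\hat x,\hat y)$, gradients $D_x z = -D_y z = L\psi'(r)\theta(\hat t)\frac{\hat x - \hat y}{r}$ (plus small corrections) with $r = |\hat x - \hat y|$, and matrices $X, -Y$ satisfying the matrix inequality $\begin{pmatrix} X & 0 \\ 0 & -Y \end{pmatrix} \le D^2_x z$. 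Since the linear operator is nondegenerate, Remark \ref{rit} applies and we may pass to matrices independent of $n$. We then write the two viscosity inequalities at $(\hat t,\hat x)$ and $(\hat t,\hat y)$, subtract them, and exploit the splitting $q = \lambda I + \sigma^2$: the $\lambda I$ part gives $-\lambda\,\mathrm{tr}(X - Y)$ which is controlled via Proposition \ref{refe}(ii) (estimate \rife{rep2}) applied to the Hessian of the test function restricted to the $\psi$-part — this is exactly where the Ishii–Lions gain in the second-order term appears, converting $\mathrm{tr}$ of the Hessian of $\psi(|x-y|)$ into the favorable $2\psi''(r)$-type term along the $\frac{\hat x-\hat y}{r}$ direction. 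The $\sigma^2$ part is controlled via Proposition \ref{refe}(i) (estimate \rife{rep1}): $\mathrm{tr}(\sigma(\hat t,\hat x)^2 X + \sigma(\hat t,\hat y)^2 Y) \le \mathrm{tr}((\sigma(\hat t,\hat x) - \sigma(\hat t,\hat y))^2 \cdot \text{Hess})$, which by \rife{pw} is bounded by $\|\sigma(\hat t,\hat x) - \sigma(\hat t,\hat y)\|^2 \cdot \sup |Ce\cdot e| \le r\,g(r)\cdot L\psi'(r)\theta(\hat t)/r$-type quantity. Combining with the drift difference term $(b(\hat t,\hat x) - b(\hat t,\hat y))\cdot(\hat x - \hat y)\,L\psi'(r)\theta(\hat t)/r \le g(r)\,L\psi'(r)\theta(\hat t)$, and the time derivative $\partial_t z = L\psi(r)\theta'(\hat t) + \ldots$, and the source term $h(\hat t,\hat x) - h(\hat t,\hat y)$ bounded by $\omega(t_0,h)$, we arrive at a differential inequality for the pair $(\psi, \theta)$ of the form $L[\psi(r)\theta'(\hat t) + \psi'(r)\theta(\hat t) g(r) + 4\lambda \psi''(r)\theta(\hat t)] \le \omega(t_0,h) + (\text{small})$. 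The point is to have pre-chosen $\psi$ and $\theta$ so that the bracket is $\ge$ a positive constant times $\psi'(r)\theta(\hat t)$ (or directly contradicts positivity of $\Phi$ at its max), which forces $L \lesssim \hat c_1 \omega(t_0,u)/\sqrt{t_0\wedge 1} + \hat c_2 \sqrt{t_0\wedge 1}\,\omega(t_0,h)$; evaluating $\Phi(\hat t,\hat x,\hat y) \ge \Phi(t_0, x, y)$ and letting the penalization parameters $\delta, \gamma, \epsilon \to 0$ then yields $u(t_0,x) - u(t_0,y) \le (\text{that bound})\cdot|x-y|$ for $|x-y|\le 1$, hence \rife{general}.

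The main obstacle, and where care is needed, is the precise construction and bookkeeping of the comparison function $\psi$ together with the time weight $\theta$: one must choose $\psi$ concave with $\psi(r) \sim r$ near $0$ so that the final estimate is genuinely a Lipschitz (not merely Hölder) bound, while simultaneously ensuring $-\psi'' $ is large enough to absorb $g\,\psi'$ and produce the sharp constant $e^{\frac{1}{4\lambda}\int_0^1 g(s)ds}$; the integrability $g \in L^1(0,1)$ is exactly what makes such a $\psi$ with $\psi'(0^+) < \infty$ exist. A second delicate point is justifying that the maximum of the penalized $\Phi$ is attained at an \emph{interior} point of $I$ in time and at a point with $\hat x \ne \hat y$, and that it does not escape to spatial infinity — here the bounded oscillation of $u$ (giving at most linear growth) played against the superlinear-in-the-sense-of-Lyapunov growth of $\vfi$ is essential, and one must check the parabolic boundary terms at $t = t_0/2$ vanish thanks to the vanishing of $\theta$ there, so that the value of $u$ at times below $t_0/2$ never enters — which is why only the oscillation over $I = (\frac{t_0}{2}, T\wedge\frac{3}{2}t_0)$ appears in \rife{general}. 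Finally one must handle the interface with Remark \ref{rit} to legitimately work with $n$-independent matrices, but since the operator here is linear and nondegenerate this is immediate from \rife{elo}.
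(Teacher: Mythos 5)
Your overall strategy is the right one and matches the paper's for the spatial part: doubling of variables with a concave barrier $\psi=f(|x-y|)$ built from the ODE $4\lambda f''+g(r)f'=-1$ (which is exactly where $g\in L^1(0,1)$ and the constant $e^{\frac{1}{4\lambda}\int_0^1 g}$ enter), Theorem \ref{key} plus Remark \ref{rit}, the splitting $q=\lambda I+\sigma^2$ handled by Proposition \ref{refe}, and the Lyapunov term $\vep(\vfi(t,x)+\vfi(t,y))$ to confine the maximum. However, there is a genuine gap in how you produce the factor $1/\sqrt{t_0\wedge 1}$, namely in the multiplicative time weight $\theta(t)$. First, the choice $\theta(t_0/2)=0$ does the opposite of what you claim: it makes the barrier vanish at $t=t_0/2$, so the (positive) maximum of $\Phi$ can perfectly well sit there; to exclude the lower time boundary you need $\theta\to+\infty$ as $t\to(t_0/2)^+$. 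Second, and more seriously, with such a blowing-up $\theta$ the viscosity inequality at the maximum contains the term $L\psi(r)\theta'(\hat t)$, which is \emph{negative} and large near $t_0/2$; your required inequality "bracket $\geq c\,\psi'(r)\theta(\hat t)$" must absorb it, and you have not exhibited a pair $(\psi,\theta)$ for which this works. This is precisely the hard point of your scheme, and it is left as "pre-chosen".

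The paper avoids this issue entirely by a different bookkeeping: the time localization is the \emph{additive} penalization $C_0(t-t_0)^2$ with $C_0=4\omega_{0,\delta}(u)/t_0^2$ (whose time derivative is bounded below by $-C_0t_0$, harmless), and the rate $1/\sqrt{t_0}$ comes instead from the \emph{spatial} scale: one works on $\{|x-y|<\delta\}$ with $f$ solving the ODE on $(0,\delta]$ with $f'(\delta)=0$, so that $f'(0)\lesssim \delta$ while $f(\delta)\gtrsim \delta^2/\lambda$; the contradiction forces $K\lesssim \omega_{0,\delta}(u)/t_0+\omega_{0,\delta}(h)+\omega_{0,\delta}(u)/f(\delta)$, and the Lipschitz constant $Kf'(0)\lesssim K\delta$ is then optimized by taking $\delta=\sqrt{t_0\wedge 1}$. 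If you want to keep your multiplicative-$\theta$ route you must actually construct $\theta$ and verify the differential inequality including the $\psi\theta'$ term; otherwise, replace it with the paper's mechanism (additive quadratic time penalization together with the $\delta$-dependent ODE and the final choice $\delta=\sqrt{t_0\wedge1}$), after which the rest of your argument goes through as written.
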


Before the proof we make some comments on hypothesis \rife{pw}
 and on related conditions.

\begin{remark}\label{bb} {\em
% Let us comment on condition \rife{pw}.
 If $b=0$ and if we assume that
 $s g(s) \to 0$ as $s \to 0^+$
 (which is reasonable, being $g\in L^1(0,1)$),
 then \rife{pw} implies that
  \begin{align}\label{sig1}
\sigma(t, \cdot) \;\;\; \text{is uniformly continuous on $\R^N$,
uniformly with respect to $t.$}
\end{align}
Note that this does not require $q(t,x)$ to be uniformly continuous
(e.g. take $N=1$ and  $q(x) = x^2 + 1 $).

In the general case that $b\neq 0$, assumption \rife{pw} does not
imply any growth restriction  on $\sigma(t,x)$, since the growth of
the drift term and of the diffusion matrix may compensate
adequately. As an example, consider
\begin{align}\label{ex1}
A_t= A = (1+ |x|^4) \triangle u - 4N |x|^2  x \cdot Du.
\end{align}
Here taking $\la=1$ and $\sigma= |x|^2 I$ we have that \rife{pw} holds trivially with $g(r) =r$,
and  for {\em every}  $x, y \in \R^N$ (not only for $x,y$ such that $|x-y|\le
  1$). Notice also that in  this case
  $\varphi(x) = 1+ |x|^2$
  is a Lyapunov function  satisfying \rife{lyap}.
}
\end{remark}

\begin{remark} {\em
If the matrix $q(t, x)= \Sigma(t,x)\Sigma(t,x)^*$ with
 $\Sigma(t,
\cdot) \in \R^N\times \R^k$ (as it happens if the operator $A_t$ is
associated to the stochastic differential equation  $dX_t =
b(t,X_t)dt + \Sigma(t,X_t)dB_t$, being $B_t$ a $k$-dimensional
Brownian motion), then, assuming that
 \be\label{Sig}
\begin{cases}
\la(t,x) I \leq q(t,x)\leq \Lambda(t,x) I & \\
\la(t,x)\geq \la >0\,, \qquad \frac{\Lambda(t,x)}{\la(t,y)} \leq
M\quad
\hbox{for any $x,y\in \R^N:|x-y|\leq 1$,}\;\; t \in (0,T),\\
\end{cases}
\ee
one can  check  assumption \rife{pw} directly on
$\Sigma(t,x)$.

Indeed, if we take $\sigma (t,x) =
\sqrt{q(t,x) - \frac{\lambda}{2} I}$,   we have $\sigma(t,x)\ge \sqrt{\frac{\la(t,x)}{2}} I$ and the same for $\sigma(t,y)$. Recall that
 if $A$  and $B$ are positive symmetric $N \times N$ matrices such that $A \ge \lambda' I$ and $B \ge \lambda' I$ for some
 $\lambda' >0$ then $\| \sqrt{A} - \sqrt{B} \|$ $
 \le \frac{1}{2 \sqrt{\lambda'}} \| A- B \|$ (see e.g. \cite[Lemma 3.3]{PW}).
  Hence
\begin{align}\label{frat}
 \| \sigma(t,x) - \sigma(t,y) \| \le  \frac{1}{\sqrt{2}\sqrt{
\la(t,x) \wedge \la(t,y) }} \,\| q(t,x) - q(t,y) \|, \;\;\; t \in
(0,T),\; x,y \in \R^N.
\end{align}
Since we also have
 \be\label{prova}
\| q(t,x) - q(t,y) \|
\le \sqrt{2N} \sqrt{\max\{\Lambda(t,x), \Lambda(t,y)\}}
\|\Sigma(t,x) - \Sigma(t,y) \|,
 \ee
we deduce that, under condition \rife{Sig},
$$
\| \sigma(t,x) - \sigma(t,y) \| \le
\frac{\sqrt{N}\sqrt{\max\{\Lambda(t,x),
\Lambda(t,y)\}}}{\sqrt{\min\{ \la(t,x), \la(t,y)\} }}\,
\|\Sigma(t,x) - \Sigma(t,y) \| \le \sqrt{M \, N} \,\| \Sigma(t,x) -
\Sigma(t,y) \|
$$
if $|x-y|\le 1$.
Therefore, in this case the regularity of
$\Sigma(t,x)$ can be used to verify \rife{pw}.
  Note that \rife{Sig}
allows $\la(t,x)$ and $\Lambda(t,x)$ to have (the same) polynomial
growth in $x$ of any order (uniformly in $t$).
}
\end{remark}
\vskip0.5em

A key point in the proof of Theorem \ref{PW}   is the use of  the
auxiliary function $f(r)$ which solves the ODE \be\label{ode1}
\begin{cases}
4\la\, f''(r)+ g(r)f'(r) = -1\qquad r\in (0,\de]\,,
\\
\m f(0)=0\,,\,\, f'(\de)=0\,,
\end{cases}
\ee
  for $\delta \in (0,1] $.
   Notice that $f$  is Lipschitz continuous if and only if  $g\in
L^1(0,1)$. More precisely, we have \be\label{effe} f(r)=
\frac1{4\la} \int_0^r e^{-\frac{G(\xi)}{4\la}}\int_\xi^{\de}
e^{\frac{G(\tau)}{4\la}}d\tau d\xi\,, \qquad \hbox{where $G(\xi)=
\int_0^\xi g(\tau)d\tau$}\,. \ee Observe that  $f\in C^2((0,\de])$,
$f$ is  increasing and  concave in $(0,\de)$. In particular, we have
$f(r)\leq f'(0)r$ for any   $r \in (0,\de)$, whereas \be\label{fde2}
 f'(0)\leq   \frac{\de}{4\lambda} \, e^{ \frac{1}{4 \lambda}
 \int_0^1 g(s)ds}\qquad \quad \forall \de\leq 1\,.
\ee Moreover, we estimate \be\label{fde0}  f(r) \geq \frac1{4\la}
\int_0^r (\de-\xi)\, d\xi \geq \frac\de{8\la} r\,, \ee hence
\be\label{fde1} f(\de)\geq \frac{\de^2}{8\la }.\ee
\vskip0.5em

\noindent {\it Proof of Theorem \ref{PW}.}\quad
 According to Remark \ref{bene}, we assume that \rife{lyap0}
holds true.
Let us fix $t_0\in (0,T)$ and consider $\de \in (0,1]$. We define
  the open set
$$
\Delta = \Delta (t_0, \de) = \Big \{ (t,x,y)\in (0,T)\times
\RN\times \RN\,:\, |x-y|<\de\,,\,\,
 \frac{t_0}2 < t < (T\wedge \frac{3}{2}t_0) \Big\}
$$
and the function
$$
 \Phi_{\epsilon}(t,x,y)= u (t,x)-u (t,y)
  -  K \psi(x-y) -
\vep \,  (\vfi(t,x)+\vfi(t,y))
  - C_0 (t-t_0)^2\,-\frac\vep{T-t},
$$
where $K$, $C_0$ will   be chosen later (depending also on $t_0$),
$\vfi$ satisfies \rife{lyap0} and  $\psi(x-y)=f(|x-y|)$, where $f\in
C^2((0,\de])$ is the solution of \rife{ode1}.

The goal is to prove that, if $K$ and $C_0$ are large enough
(independently of $\vep$), then
 \be\label{goal11}
\Phi_{\epsilon}(t,x,y)\leq 0,\quad
 (t,x,y)\in \Delta\,.
 \ee
Once  \rife{goal11} is proved, then choosing $t=t_0$ and
letting $\vep\to 0$ one gets
 \be\label{goal2} u(t_0,x)-u(t_0,y)\leq
K \psi(x-y)= K f(|x-y|), \quad  x\,,y\,:|x-y|<\de\,.
 \ee Reversing
the roles of $x$, $y$ and using the Lipschitz continuity of $f$ will
allow  us to conclude.

In order to   prove \rife{goal11},  we argue by contradiction,
assuming that \be\label{abs11}
\sup\limits_{\Delta}\Phi_{\epsilon}(t,x,y)>0\,. \ee In the
following, let us set (cf. \rife{osci1})
$$
\omega_{0,\de}(u) =
 {{\rm osc}}_{(\frac {t_0}{2}\, , \, T\wedge \frac{3}{2}t_0), \de}(u)=
\sup \Big \{|u(t,x)-u(t,y)|\,,\,\, |x-y|\leq \de\,,\, t\in
\big(\frac {t_0}{2}\, , \, T\wedge \frac{3}{2}t_0 \big) \Big\}\,.
 $$
Since we have, for $(t,x,y)\in \Delta$,
$$
 \Phi_{\epsilon}(t,x,y) \leq
 % 2\|u\|_{L^\infty((\frac{t_0}2, T)
 %\times \RN)}
\omega_{0,\de}(u)
  -  K f(|x-y|)  - \vep \,  (\vfi(t,x)+\vfi(t,y))- C_0(t-t_0)^2
  - \frac{\epsilon}{T-t},
$$
we deduce the following assertions.

 1. Thanks  to the fact that  $\vfi$ blows-up at infinity, we have,
for any $\epsilon>0,$
$$
\Phi_{\epsilon} \to -\infty \qquad \hbox{as $|x|\to\infty$ or
$|y|\to \infty$}
$$
hence $\Phi_{\epsilon}$ has a global maximum on $\overline \Delta$.
\vskip0.5em 2.
Choosing $C_0\geq  \frac{4 \omega_{0,\de}(u)}{t_0^2}$ we have
$\Phi_{\epsilon} \leq 0$ if $t=\frac{t_0}2$ or
 $t=\frac32 t_0$, and, since  $
\lim_{t \to T^-}\frac\vep{T-t} = \infty$, we conclude that  the
positive maximum cannot be attained  on $\{t=\frac {t_0}2 \}\cup
\{t=T\wedge \frac{3}{2}t_0\}$.

\vskip0.5em 3. Choosing $K \geq  \frac{\omega_{0,\de}(u)}
%2\|u\|_\infty }
{f(\de)}$ we have $\Phi_{\epsilon} \leq 0$ if $|x-y|=\de$.
\vskip0.5em 4. Since the maximum value is positive, it cannot be
reached when $x=y$.

\vskip0.5em Therefore,  \rife{abs11} implies that
 the global  maximum of $\Phi_{\epsilon}$
  cannot be attained when $x=y$
 nor  on $\partial\Delta$ if $C_0$ and $K$ satisfy the above
conditions; in particular, the value of $C_0$ is fixed by now as
$$
C_0=\frac{4 \omega_{0,\de}(u)}{t_0^2}.
 $$
We deduce that $ u (t, x)-u (t,y)-
 z(t,x,y)$  has a local maximum (possibly depending on $\epsilon$)
 at some $(\hat t,
\hat x,\hat y) \in \triangle$, where
\begin{align} \label{zeta}
z(t,x,y)= K\,\psi(x-y)+\vep\, (\vfi(t,x)+\vfi(t,y))+C_0(t-t_0)^2\,
+\frac\vep{T-t}.
 \end{align}
 Then we apply  Theorem \ref{key} with
 $u_1(t,x)=u(t,x)$, $u_2(t,y)=-u(t,y)$; in view of Remark \ref{rit},  we eventually get  that  there exist matrices $X$, $Y\in {
\mathcal S}_N$  and $a$, $b\in \R$
 such that
$$
a - b= \partial_t z(\hat t, \hat x,\hat y)\,, \quad
 (a,D_x z(\hat t, \hat x,\hat y), X)
\in {\overline P}^{2,+}_{Q_T}u(\hat t, \hat x)\,,
$$
$$
\quad (b, -
D_y z(\hat t, \hat x,\hat y), Y)\in {\overline P}^{2,-}_{Q_T}u(\hat
t, \hat y)
% = - {\overline P}^{2,+}_{Q_T}(-u)(\hat t, \hat y)
$$
and $ \begin{pmatrix}
 X & 0    \\ 0 & -Y
\end{pmatrix}
 \leq D^2 z (\hat t,\hat x,\hat y)$,  that is
 \be\label{matri1}
\begin{pmatrix}
 X - \vep\,
 D^2\vfi (\hat t,\hat x)    \\
\noalign{\medskip}
0 & -Y - \vep\,
 D^2\vfi (\hat t,\hat y)
\end{pmatrix} \le K
\begin{pmatrix}
 D^2\psi(\hat x-\hat y) & -D^2\psi(\hat x-\hat y)\\
\noalign{\medskip}
- D^2\psi(\hat x-\hat y)&  D^2\psi(\hat x-\hat y)
\end{pmatrix}.
\ee
Using that
  $u(t,x)$ is a subsolution and $u(t,y)$
  a supersolution,
we obtain the two inequalities
$$
\begin{array}{c} a-{\rm tr}\left( q(\hat t, \hat x)
X\right)- b(\hat t , \hat x)\cdot D_x z(\hat t,\hat x, \hat y) \leq h(\hat t, \hat x)
\\
\m b-{\rm tr}\left( q(\hat t , \hat y) Y\right) +  b(\hat t , \hat
y)\cdot D_y z(\hat t,\hat x, \hat y)\geq h(\hat t, \hat y).
\end{array}
$$
Subtracting and using $a-b=\partial_t  z
 (\hat t, \hat x, \hat y)$
we get
\be\label{z1}
\partial_t  z(\hat t,\hat x, \hat y)
 -{\rm tr}
\left( q(\hat  t, \hat x) X-q(\hat t,  \hat y) Y\right)
 \leq  b(\hat t ,\hat x)\cdot D_x
z(\hat t,\hat x, \hat y) +b(\hat t ,\hat y)\cdot D_y z(\hat t,\hat
x, \hat y) + \omega_{0,\de}(h)\ee
where
$$
\omega_{0,\de}(h) =
 {{\rm osc}}_{(\frac {t_0}{2}\, , \, T\wedge \frac{3}{2}t_0), \de}(h)
= \sup\{|h(t,x)-h(t,y)|\,,\,\, |x-y|\leq \de\,,\, t\in (\frac
{t_0}{2}\, , \, T\wedge \frac{3}{2}t_0)\}\,.
$$
Recalling
 the definition of $z$ we compute and find
$$
 \begin{array}{c}
\frac{\epsilon}{(T-\hat t)^2} +
 2C_0(\hat t-t_0)+ \vep\,
 (\partial_t\vfi(\hat t,\hat x)+
 \partial_t \vfi(\hat t,\hat y))
 \\
 \m
 -{\rm
tr}\left( q(\hat  t, \hat x) X-q(\hat t,\hat y)Y\right) \leq
 \left( b(\hat t ,\hat x)-b(\hat t ,\hat y)\right)\cdot
D\psi(\hat x-\hat y)
\\
\m \qquad + \vep \, \left( b(\hat t ,\hat x) \cdot D\vfi(\hat t,\hat
x)+b(\hat t ,\hat y)\cdot D\vfi(\hat t,\hat y)\right) + \omega_{0,\de}(h)\,,
\end{array}
$$
which yields
$$
 \begin{array}{c}
%\frac{\epsilon}{(T-\hat t)^2} +
 2C_0(\hat t-t_0) -{\rm
tr}\left( q(\hat  t, \hat x) \tilde X-q(\hat t,\hat y)\tilde Y\right) \leq
 \left( b(\hat t ,\hat x)-b(\hat t ,\hat y)\right)\cdot
D\psi(\hat x-\hat y)
\\
\m \qquad +\vep\,\left( A_{\hat t}(\vfi)(\hat t,\hat x)-\partial_t
\vfi(\hat t,\hat x) + A_{\hat t}(\vfi)(\hat t,\hat y)-\partial_t
\vfi(\hat t, \hat y)\right)+ \omega_{0,\de}(h)\,,
\end{array}
$$
where $\tilde X = X - \vep\,  D^2\vfi(\hat t,\hat x)$,
 $\tilde Y = -Y - \vep\,  D^2\vfi(\hat t,\hat x) $.
  Using \rife{lyap0}, we obtain
\be\label{equa}
 2C_0(\hat t-t_0) -{\rm
tr}\left( q(\hat  t, \hat x) \tilde X-q(\hat t,\hat y)\tilde Y\right) \leq
 \left( b(\hat t ,\hat x)-b(\hat t ,\hat y)\right)\cdot
D\psi(\hat x-\hat y) + \omega_{0,\de}(h)\,.
\ee
In the  next step we consider \eqref{matri1} and
apply Proposition \ref{refe}
with $\sigma_1 = \sigma (\hat t, \hat x)$, $\sigma_2 = \sigma (\hat t, \hat y)$, $\tilde \lambda = \lambda$,
 $A = K \,D^2\psi( \hat x-\hat y)$.
Since $\psi (x-y)=f(|x-y|)$,  we compute, for $x \not = y$,
 \be\label{hesspsi}
 D^2
\psi(x-y)=\frac{f'(|x-y|)} {|x-y|}\left(I-
\frac{x-y}{|x-y|}\otimes\frac{x-y}{|x-y|}\right) +
f''(|x-y|)\frac{x-y}{|x-y|}\otimes\frac{x-y}{|x-y|}.
\ee
and since $f$ is concave  we have
\be\label{A<C}
A= K\, D^2
\psi(\hat x- \hat y) \le C:= K\, \frac{f'(|\hat x-\hat y|)} {|\hat x-\hat
y|}\left(I-
\frac{\hat x-\hat y}{|\hat x-\hat y|}\otimes\frac{\hat x-\hat y}{|\hat x-\hat y|}\right)\,.
\ee
So, applying   \eqref{rep3} with
 \be \label{pp1}
 P = \frac{\hat x- \hat y}{|\hat x- \hat y|}\otimes\frac{\hat x- \hat y}{|\hat x- \hat y|},
 \ee
we obtain (recall that \rife{sig6} holds)
\be\label{pre-opt}
\begin{array}{c}
{\rm tr}\left( q(\hat t , \hat x) \tilde X -q( \hat  t,
 \hat y)\tilde Y\right)
 \leq 4\la K\, f''(|\hat x-\hat
y|)+ K \frac{f'(|\hat x-\hat y|)}{|\hat x-\hat y|}\, \|\sigma(\hat
t, \hat x)-\sigma(\hat t, \hat y)\|^2\,,
\end{array}
\ee
 since $P \left(I-
\frac{\hat x-\hat y}{|\hat x- \hat y|}\otimes\frac{\hat x-\hat y}{|\hat  x- \hat y|}\right)=0$.
 Combining this estimate with \eqref{equa} we find
 $$
 \begin{array}{c}
%\frac{\epsilon}{(T-\hat t)^2} +
2C_0(\hat t-t_0)
 \\ \leq K \left( 4\la\,f''(|\hat
x-\hat y| ) +\frac{f'(|\hat x-\hat y|)}{|\hat x-\hat y|}\left(
\|\sigma(\hat t, \hat x)-\sigma(\hat t,\hat y)\|^2+ (b(\hat t ,\hat
x)-b(\hat t ,\hat y))\cdot(\hat x-\hat y)\right)\right) +\omega_{0,\de}(h)
\end{array}
$$
and using   \rife{pw}
\be\label{finegiro}
\begin{array}{c} 2C_0(\hat t-t_0) \leq K \left(
4\la\,f''(|\hat x-\hat y| ) + f'(|\hat x-\hat y|)\, g(|\hat x-\hat
y|)\right) +\omega_{0,\de}(h).
\end{array}
\ee
Since $f$ solves \rife{ode1},
 using also that $\hat t > t_0/2$, we get
$$
 K\leq \omega_{0,\de}(h) +  2C_0(t_0- \hat t) <\omega_{0,\de}(h) +  C_0 {t_0}\,.
$$
Since we have chosen $C_0= \frac{4\omega_{0,\de}(u)}{t_0^2}$, we deduce
$$
K < \frac{4 \omega_{0,\de}(u)}{t_0} +\omega_{0,\de}(h).
$$
Therefore, if $K$ is larger than this bound we reach  a
contradiction and \rife{abs11} cannot hold true. We conclude that
\rife{goal11} is verified provided that
 $$
 K \ge  \max \left\{ \frac{4 \omega_{0,\de}(u) }{t_0}
 + \omega_{0,\de}(h),
  \, \frac{\omega_{0,\de}(u) }{f(\de)}\right\}\,.
  $$
  From \rife{goal11},
choosing $t=t_0$ and letting $\vep \to 0$,  we obtain \rife{goal2}
and reversing the roles of $x$ and $y$, we deduce
\be\label{preliouv}
\begin{array}{c}
|u(t_0,x)-u(t_0,y)|\leq  K\, f(|x-y|),   \qquad  |x-y|<\de
\\
\m
\m
\hbox{where $K =\frac{4 \omega_{0,\de}(u) }{t_0} + \omega_{0,\de}(h) +
  \, \frac{\omega_{0,\de}(u) }{f(\de)}$\,.}
  \end{array}
\ee
Using that  $f$ is concave we then obtain
\begin{align} \label{conc}
|u(t_0,x)-u(t_0,y)|\leq K\, f'(0)   |x-y|,  \qquad |x-y|<\de,
\end{align}
and on account of \rife{fde2}, \rife{fde1} and the choice of $K$  we
get
$$
 |u(t_0,x)-u(t_0,y)|\leq \hat c_2 \delta   \,  \Big(
\frac{4\omega_{0,\de}(u)}
 {{t_0\wedge 1}} + \omega_{0,\de}(h) + \frac{8\la \omega_{0,\de}(u)}
 { \delta^2} \Big)\,   |x-y|,
$$
for every $x$, $y$: $ |x-y|\leq \de$, with $\hat c_2 =  \frac{1}{4\lambda} \, e^{ \frac{1}{4 \lambda}\int_0^1 g(s)ds}$.  Choosing $\de=\sqrt{t_0\wedge
1}$, we deduce
$$
|u(t_0,x)-u(t_0,y)|\leq \hat c_2 \sqrt{t_0\wedge 1} \,  \Big(
{\omega(t_0,u)} \frac{4 + 8\la}
 {{t_0\wedge 1}}  + \omega(t_0,h) \Big)\,   |x-y|,
$$
for every $x$, $y$: $ |x-y|\leq \sqrt{t_0\wedge 1}$, and then for
every $x$, $y$ such that $|x-y|\leq 1$. The inequality then extends,
in a standard way,  to any $x$, $y \in \R^N$. This shows
\rife{general} and finishes the proof.\qed

 We point out  from the above proof
 the estimate
 on the Lipschitz constant of $u(t_0)$ which is
\be\label{lip-delta}
Lip(u(t_0))\leq  C_\la\, \delta   \,  \Big( \frac{\omega_{0,\de}(u)}
{t_0 } + \omega_{0,\de}(h) + \frac{\omega_{0,\de}(u)}
 { \delta^2} \Big),
\ee
where $C_\la$ only depends on $\la$
and $g$.   In particular, {\it any additional estimate on the
oscillation of $u$ and $h$ can provide further specifications  of
such estimate} (see Theorem \ref{grow} below).

\begin{remark}{ \rm  A global version of the previous result up to $t=0$ is
possible if $u_0$ is assumed to be Lipschitz continuous. In this
case one can take $C_0=0$  and modify suitably the previous proof.
One  obtains
$$
|u(t,x)-u(t,y)|\leq K |x-y|, \;\;\; t \in [0,T],
$$
where $K= C(\la, g)\, \cdot \, \max\{Lip(u_0), T {\rm osc}_{[0,T]}(h),
{\rm osc}_{[0,T]}(u)\}$.
Note that here the global Lipschitz continuity of $u$ is
established in terms of the (bounded) oscillation of $u$. Later on
(cf. Section 3.3), assuming  a further  condition (at infinity) on
the function $g$ appearing in \rife{pw}, we will able to prove
estimates on the oscillation of $u$ in order to get a complete
Lipschitz bound of $u(t)$ only depending on the data $u_0$ and $h$.}
\end{remark}

Theorem \ref{PW} also provides an analytical proof of other
regularity  results existing in the literature which were only
proved by probabilistic techniques. In particular  we have

\begin{proposition}  As in Theorem \ref{PW} assume that
 \rife{q1} holds and  let $u$ be a viscosity solution of
  \rife{eq} such that $u$ and $h$ have bounded oscillation.

Assume that
  $q_{ij}(t, \cdot), b_i(t, \cdot) \in C^1(\R^N)$, $t \in (0,T)$,
  and, setting $\sigma_0(t,x)= \sqrt{q(t,x)}$,
\begin{align}\label{cer}
2\| D_h \sigma_0(t,x)\|^2+ Db(t,x)h \cdot h
\leq  c
  {|h|^2}, \;\;\; x \in \R^N, \; t \in (0,T),\; h \in \R^N.
\end{align}
(for a smooth $N \times N$ matrix $a(t,x)$, we write
$D_h a(t,x)$ to denote the matrix $(D_h a_{ij}(t,x))$, where $D_h a_{ij}$ stands for the
directional derivative along the
direction $h$). Then the gradient estimate \eqref{general}
 holds for $u$ with $g(s)= c s$.
\end{proposition}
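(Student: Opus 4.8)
The plan is to reduce the differentiability hypothesis \eqref{cer} to the structural condition \eqref{pw} of Theorem \ref{PW}, with $g(s)=cs$, and then simply invoke that theorem. So the whole proof amounts to one implication: if $q_{ij}(t,\cdot),b_i(t,\cdot)\in C^1(\R^N)$ and \eqref{cer} holds, then for the matrix $\sigma(t,x)$ defined by \eqref{sig6} (i.e. $\sigma^2=q-\lambda I$) one has
$$
\|\sigma(t,x)-\sigma(t,y)\|^2+(b(t,x)-b(t,y))\cdot(x-y)\le c\,|x-y|^2
$$
for all $x,y$, in particular for $0<|x-y|\le 1$, which is exactly \eqref{pw} with $g(s)=cs$.

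The drift part is the easy half: along the segment $x_\theta=y+\theta(x-y)$, $\theta\in[0,1]$, write $b(t,x)-b(t,y)=\int_0^1 Db(t,x_\theta)(x-y)\,d\theta$, dot with $(x-y)$, and use the second term of \eqref{cer} with $h=x-y$ to get $(b(t,x)-b(t,y))\cdot(x-y)\le c\,|x-y|^2$. The diffusion part is where the real work sits, and this is the step I expect to be the main obstacle: one must control $\|\sigma(t,x)-\sigma(t,y)\|$, where $\sigma$ is a \emph{matrix square root}, in terms of the Hilbert–Schmidt norms of the directional derivatives $D_h\sigma_0$ of the \emph{other} square root $\sigma_0=\sqrt q$. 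The natural route is again to parametrize along the segment and estimate $\frac{d}{d\theta}\sigma(t,x_\theta)$. The subtlety is that $\theta\mapsto\sigma(t,x_\theta)$ is differentiable and its derivative is the solution $M$ of the Sylvester equation $\sigma M+M\sigma=\frac{d}{d\theta}q(t,x_\theta)=\frac{d}{d\theta}\sigma_0^2(t,x_\theta)$; since $\sigma\ge 0$ and in fact $\sigma^2=q-\lambda I\ge 0$ gives only $\sigma\ge 0$, one should instead exploit $q=\sigma_0^2\ge \lambda I$ hence $\sigma_0\ge\sqrt\lambda\,I$, and use the Lipschitz bound for the square-root map recalled in the preceding Remark (cited there as \cite[Lemma 3.3]{PW}): if $A,B\ge\lambda' I$ then $\|\sqrt A-\sqrt B\|\le\frac1{2\sqrt{\lambda'}}\|A-B\|$.

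Concretely I would argue as follows. Both $q(t,\cdot)$ and $q(t,\cdot)-\lambda I$ are bounded below: $q\ge\lambda I$ and, writing $\sigma_0^2=q$, differentiating gives $D_h q = D_h(\sigma_0^2)=(D_h\sigma_0)\sigma_0+\sigma_0(D_h\sigma_0)$, whence $\|D_h q(t,x)\|\le 2\|\sigma_0(t,x)\|\,\|D_h\sigma_0(t,x)\|$ — but this reintroduces $\|\sigma_0\|$, which may be unbounded, so instead I would work directly with $\sigma$ via its own Sylvester equation. Since $\sigma\ge 0$ and $q=\sigma^2+\lambda I$, along $x_\theta$ the map $\theta\mapsto\sigma(t,x_\theta)=\sqrt{q(t,x_\theta)-\lambda I}$; but $q(t,x_\theta)-\lambda I\ge 0$ only, which is not enough for the square-root Lipschitz estimate. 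The clean fix, and I believe the intended one, is to note $\frac{d}{d\theta}\sigma^2(t,x_\theta)=\frac{d}{d\theta}q(t,x_\theta)$, and $\frac{d}{d\theta}(\sigma^2)=\sigma'\sigma+\sigma\sigma'$ where $\sigma'=\frac{d}{d\theta}\sigma(t,x_\theta)$; taking Hilbert–Schmidt inner product of this with $\sigma'$ and using $\langle \sigma'\sigma+\sigma\sigma',\sigma'\rangle = 2\,\mathrm{tr}(\sigma(\sigma')^2)\ge 0$ does not immediately give a bound either. Rather, I would use that $D_h\sigma_0\cdot$ appears in \eqref{cer}: from $D_hq=(D_h\sigma_0)\sigma_0+\sigma_0(D_h\sigma_0)$ one gets, testing against an eigenbasis, $\langle D_h q\,\xi,\xi\rangle = 2\langle \sigma_0\xi, D_h\sigma_0\,\xi\rangle$; a short computation bounds $\mathrm{tr}((\sigma')^2)$ — where $\sigma'$ solves $\sigma\sigma'+\sigma'\sigma=D_hq|_{h=x-y}$ along the segment — by $\frac{1}{\lambda}$ times a quantity controlled by $\|D_h\sigma_0\|^2$, because the generator $M\mapsto \sigma M+M\sigma$ has spectrum bounded below by $2\sqrt{\inf\mathrm{spec}(\sigma^2)}$... which is the delicate point, since $\sigma^2=q-\lambda I$ can degenerate. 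I would therefore, if needed, fall back on the device already used in the excerpt's Remark: redefine the comparison square root as $\tilde\sigma=\sqrt{q-\frac\lambda2 I}\ge\sqrt{\lambda/2}\,I$, apply \cite[Lemma 3.3]{PW} to get $\|\tilde\sigma(t,x)-\tilde\sigma(t,y)\|\le\frac1{\sqrt{2\lambda}}\|q(t,x)-q(t,y)\|$ and then $\|q(t,x)-q(t,y)\|\le\int_0^1\|D_{x-y}q(t,x_\theta)\|\,d\theta$ with $\|D_hq\|^2\le 2N\,(\,2\|D_h\sigma_0\|^2\cdot\text{(sup of }q)\,)$ — but again $q$ is unbounded. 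Given these tensions, the honest assessment is: the nontrivial content is precisely the passage from the pointwise quadratic-form inequality \eqref{cer} on the derivatives to the integrated bound \eqref{pw} on $\sigma$, and the author presumably handles it by the Sylvester-equation / eigenbasis computation valid because $\sigma_0=\sqrt q\ge\sqrt\lambda\,I$, keeping track only of $\|D_h\sigma_0\|$ and never of $\|\sigma_0\|$ itself; once \eqref{pw} with $g(s)=cs$ is in hand, Theorem \ref{PW} applies verbatim and yields \eqref{general}. I would write the proof in that order: (i) segment parametrization and the drift estimate; (ii) the matrix-square-root derivative identity and the trace bound giving $\|\sigma(t,x)-\sigma(t,y)\|^2\le c'|x-y|^2$; (iii) combine to verify \eqref{pw}; (iv) cite Theorem \ref{PW}.
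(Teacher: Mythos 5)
Your overall plan (verify \rife{pw} with $g(s)=cs$ and cite Theorem \ref{PW}) and your treatment of the drift term are both correct and match the paper. But the diffusion half is not actually proved: you circle through several routes, correctly diagnose why each fails (degeneracy of $q-\lambda I$, unboundedness of $\|\sigma_0\|$ or $\|q\|$), and then end with a guess about what the author ``presumably'' does. That guess points in the right direction but the decisive computation is missing, so as written the proof has a genuine gap at its central step.

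The missing idea is a \emph{pointwise, entrywise} comparison of the derivatives of the two square roots, carried out in the eigenbasis of $q$, which never involves $\|\sigma_0\|$ and never degenerates. Work with $\sigma_\mu=\sqrt{q-\mu I}$ for $\mu=\lambda/2$ (so that Theorem \ref{PW} is applied with ellipticity constant $\lambda/2$; this is the fix for the degeneracy you worried about). Differentiating $\sigma_\mu^2=q-\mu I$ in direction $h$ gives the Sylvester equation $D_hq=(D_h\sigma_\mu)\sigma_\mu+\sigma_\mu(D_h\sigma_\mu)$. Conjugating by an orthogonal $\theta(t,x)$ diagonalizing $q$ (hence all the $\sigma_\mu$ simultaneously, with eigenvalues $\sqrt{\lambda_i-\mu}$), and setting $a=\theta\,(D_hq)\,\theta^*$, $Y_\mu=\theta\,(D_h\sigma_\mu)\,\theta^*$, one reads off
$$
Y_\mu^{ij}=\frac{a_{ij}}{\sqrt{\lambda_i-\mu}+\sqrt{\lambda_j-\mu}}\,,
$$
and since $\lambda_i\ge\lambda=2\mu$ implies $\lambda_i-\mu\ge\lambda_i/2$, each entry satisfies $(Y_\mu^{ij})^2\le 2(Y_0^{ij})^2$, whence $\|D_h\sigma_\mu\|^2\le 2\|D_h\sigma_0\|^2$ pointwise (orthogonal invariance of the Hilbert--Schmidt norm). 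Then
$$
\|\sigma_\mu(t,x+h)-\sigma_\mu(t,x)\|^2\le \sum_{i,j}\int_0^1\langle D\sigma_\mu^{ij}(t,x+rh),h\rangle^2\,dr\le 2\int_0^1\|D_h\sigma_0(t,x+rh)\|^2\,dr,
$$
which combined with your drift estimate and \rife{cer} yields \rife{pw} with $g(s)=cs$. The cancellation is ratio-wise in the eigenbasis, which is exactly why neither $\|\sigma_0\|$ nor $\|q\|$ ever enters --- the point your attempted routes kept running into.
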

A time-independent  version of
   \rife{cer}, with some additional assumptions
of polynomial growth  on $b$ and $\sigma_0$,  is used in  \cite{Ce0}
and
 \cite[Chapter 1]{Ce}
 to prove gradient estimates for the diffusion semigroup,
 %yet
 with a probabilistic approach.
\begin{proof}
To see that \rife{cer} allows to apply our Theorem \ref{PW},
 we first
note that we can take $\varphi(x) = 1+ |x|^2$ as Lyapunov function.
Moreover, we set $\sigma_{\mu}(t,x)= \sqrt{q(t,x) - \mu I}$, $\mu =
\lambda/2$, and  we check  that  \rife{pw} holds for $\sigma_\mu$
with $g(r)= c r$.
To this purpose, since for every $x,h\in \R^N$, we have
$$
\| \sigma_{\mu}(t,x+h) - \sigma_{\mu}(t,x) \|^2
\le \sum_{i,j =1}^N \int_0^1 \langle D \sigma_{\mu}^{ij}
(t,x+ r h), h \rangle^2 dr\,,
% \leq  \| D_h \sigma_{\mu}(t,x)\|^2,
$$
$$
(b(t,x+h)-b(t,x))\cdot h  = \int_0^1 Db
(t,x+ r h) h  \cdot h \, dr\,,
$$
it is enough to show that
\begin{align}\label{diff5}
\sum_{i,j =1}^N  \langle D \sigma_{\mu}^{ij}
(t,x), h \rangle^2 =
 \| D_h \sigma_{\mu}(t,x)\|^2
\le 2 \| D_h \sigma_0(t,x)\|^2,
\end{align}
so that \rife{cer} will imply the validity of \rife{pw}.

To prove \rife{diff5},
 we denote $D_h \sigma_{\mu}(t,x)$ by
$\sigma_{\mu}'(t,x)$ and, similarly,
 $D_h \sigma_{0}(t,x)$ by
$\sigma_{0}'(t,x)$ and $D_h q_{}(t,x)$ by
$q'(t,x)$. The next proof is inspired by the one
of \cite[Theorem 2.1 in Section 3.2]{Fri}.
 Note that the following
computations hold even if $\mu$ is equal to 0.
Since
$\sigma_{\mu}(t,x)^2 = q(t,x) - \mu I$, differentiating
with respect to $h$, we find
\begin{align} \label{ce}
q'(t,x) = \sigma_{\mu}'(t,x) \sigma_{\mu}(t,x) +
\sigma_{\mu}(t,x) \sigma_{\mu}'(t,x).
\end{align}
Take an orthogonal matrix $\theta(t,x)$ (independent of
$\mu$) such that
$$
\theta(t,x)\sigma_{\mu}(t,x) \theta^*(t,x) =
\Lambda_{\mu}(t,x),
$$
where $\Lambda_{\mu}(t,x)$ is the diagonal matrix, having
$\sqrt{\lambda_i(t,x) - \mu}$, $i =1, \ldots, N$, on the
 diagonal ($\lambda_i(t,x)$ are the eigenvalues of $q(t,x)$).
 Now multiplying both sides of \rife{ce}
 on the left by $\theta(t,x)$ and on the right by
 $\theta(t,x)^{*}$ and  setting
 $
a(t,x)= $ $\theta_{}(t,x) q'(t,x) \theta_{}(t,x)^*,$ we get
$$
 a(t,x) = Y_{\mu}(t,x) \Lambda_{\mu}(t,x) +
  \Lambda_{\mu}(t,x) Y_{\mu}(t,x), \;\;\; \text{where}\;
   Y_{\mu}(t,x)= \theta_{}(t,x) \sigma_{\mu}'(t,x)
   \theta_{}(t,x)^*.
$$
It follows that $ Y_{\mu}(t,x)$ has components
$$
Y_{\mu}^{ij}(t,x) = \frac{a_{ij}(t,x)} {
\sqrt{\lambda_i(t,x) - \mu}
+ \sqrt{\lambda_j(t,x) - \mu} }
$$
so that, using that $\lambda_i(t,x) - \mu
 \ge \lambda_i(t,x)/2$, $i = 1, \ldots, N$,
\begin{align}\label{yrt}
Y_{\mu}^{ij}(t,x)^2 \le 2 \frac{a_{ij}^2(t,x)}
{( \sqrt{\lambda_i(t,x) }
+ \sqrt{\lambda_j(t,x)} )^2}= 2( Y_{0}^{ij}(t,x))^2.
\end{align}
Now note that  $  \sigma_{\mu}'(t,x)
= \theta_{}(t,x)^* Y_{\mu}(t,x)
   \theta_{}(t,x).$

   Moreover, since $\theta(t,x)$
   is orthogonal,  we have
   $ \|\sigma_{\mu}'(t,x) \|^2 $ $=
  \| Y_{\mu}(t,x) \|^2 $ even for $\mu =0$. It follows that
$$
 \|\sigma_{\mu}'(t,x) \|^2 =
   \sum_{i,j=1}^N Y_{\mu}^{ij}(t,x)^2 \le
   2 \sum_{i,j=1}^N Y_{0}^{ij}(t,x)^2
   =   2\| Y_{0}(t,x) \|^2 =  2\|\sigma_{0}'(t,x) \|^2
$$
which proves \rife{diff5}.
\end{proof}

 Finally, let us provide a similar result concerning the case of
H\"older regularity in which we relax the assumption
  $g \in C(0,1) \cap L^1(0,1)$ used in Theorem \ref{PW}.
We adopt the same notations as in the previous statement.

\begin{proposition}\label{hol-lin}
 Assume the same conditions of Theorem \ref{PW}, only replacing the
 hypothesis $g \in C(0,1) \cap L^1(0,1)$ with the following
 condition
\begin{align} \label{gg}
 g \in C(0,1) \;\;\; \mbox{and} \;\;\; \lim_{s \to 0^+} s g(s) =0.
\end{align}
Let  $u\in C(Q_T)$ be a viscosity solution of \rife{eq} as in Theorem \ref{PW}.
Then  $u(t)$ is $\alpha$--H\"older continuous
 for any $\alpha \in (0,1)$ and verifies
\begin{align} \label{ci1}
|u(t,x)-u(t,y)| \leq
   \frac {\tilde c} { \lambda \alpha (1 - \alpha)}
\Big( \frac{1}{({t \wedge 1})^{\alpha/2}} \, \omega(t,u) + ({t \wedge 1})^{1- \alpha/2}
\omega(t,h)\Big) |x-y|^{\alpha},
\end{align}
 for all $ x,y \in \R^N, \; |x-y|\le 1,$  $t\in (0,T) $,
where $\tilde c $ only depends on $\alpha$, $\lambda$ and the
modulus of continuity of $sg(s)$.

 Moreover, if we replace \rife{gg}
  with the condition $g  \in C(0,1)$ and
 $\limsup_{s \to 0^+} s g(s)<
4 \lambda$, then there exists $\alpha = \alpha (g, \lambda) \in
(0,1)$ such that
 \rife{ci1} holds.
\end{proposition}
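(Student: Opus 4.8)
The plan is to repeat the proof of Theorem~\ref{PW} almost word for word, the only structural change being the choice of the auxiliary radial function: in place of the solution $f$ of the ODE \rife{ode1} I would take $\psi(x-y)=|x-y|^{\alpha}$, i.e. $\psi(r)=r^{\alpha}$ for $r\in(0,\delta]$. Since $0<\alpha<1$, this $\psi$ is increasing and strictly concave on $(0,\delta)$, so $D^2\psi$ still decomposes as in \rife{hesspsi} with $\psi'(r)=\alpha r^{\alpha-1}>0$ and $\psi''(r)=\alpha(\alpha-1)r^{\alpha-2}<0$, and concavity again yields \rife{A<C} with $f$ replaced by $\psi$. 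Consequently every step of the proof of Theorem~\ref{PW} involving matrices — Proposition~\ref{refe} applied with $A=K\,D^2\psi(\hat x-\hat y)$, the estimate \rife{pre-opt}, and the use of \rife{pw} — goes through unchanged, and in place of \rife{finegiro} one reaches
\[
2C_0(\hat t-t_0)\ \le\ K\bigl(4\lambda\,\psi''(|\hat x-\hat y|)+\psi'(|\hat x-\hat y|)\,g(|\hat x-\hat y|)\bigr)+\omega_{0,\delta}(h).
\]

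Where the ODE \rife{ode1} previously made the bracket equal to $-1$, here one simply computes
\[
4\lambda\,\psi''(r)+\psi'(r)\,g(r)=\alpha\,r^{\alpha-2}\bigl(4\lambda(\alpha-1)+r\,g(r)\bigr),
\]
and the hypothesis $\lim_{s\to0^+}sg(s)=0$ provides $\delta_0\in(0,1]$, depending only on $\alpha$, $\lambda$ and the modulus of continuity of $s\mapsto sg(s)$ at $0$, such that $r\,g(r)\le 2\lambda(1-\alpha)$ for $r\le\delta_0$; hence, for every $\delta\le\delta_0$ and $r=|\hat x-\hat y|\in(0,\delta)$, the bracket is $\le -2\lambda\alpha(1-\alpha)\,r^{\alpha-2}\le -2\lambda\alpha(1-\alpha)\,\delta^{\alpha-2}<0$, which is exactly the negative quantity playing the role of $-K$ in Theorem~\ref{PW}. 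I then run the contradiction argument verbatim: with $C_0=4\,\omega_{0,\delta}(u)/t_0^{2}$ (so $\Phi_\epsilon\le0$ on $\{t=\tfrac{t_0}{2}\}\cup\{t=T\wedge\tfrac32 t_0\}$) and $K\ge\omega_{0,\delta}(u)/\delta^{\alpha}$ (so $\Phi_\epsilon\le0$ on $\{|x-y|=\delta\}$), the displayed inequality together with $\hat t>t_0/2$ forces $2\lambda\alpha(1-\alpha)\,\delta^{\alpha-2}K<\omega_{0,\delta}(h)+\tfrac{4}{t_0}\,\omega_{0,\delta}(u)$, so that
\[
K=\max\Bigl\{\tfrac{\omega_{0,\delta}(u)}{\delta^{\alpha}},\ \tfrac{\delta^{2-\alpha}}{2\lambda\alpha(1-\alpha)}\bigl(\omega_{0,\delta}(h)+\tfrac{4}{t_0}\,\omega_{0,\delta}(u)\bigr)\Bigr\}
\]
gives a contradiction. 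Hence $\Phi_\epsilon\le0$ on $\Delta$, and letting $\epsilon\to0$ and reversing the roles of $x$ and $y$ — now using only that $\psi(r)=r^{\alpha}$ is itself $\alpha$-Hölder, the single point where the argument genuinely differs from Theorem~\ref{PW} — yields $|u(t_0,x)-u(t_0,y)|\le K\,|x-y|^{\alpha}$ for $|x-y|\le\delta$.

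To deduce \rife{ci1} I would optimize in $\delta$. If $\sqrt{t_0\wedge1}\le\delta_0$, take $\delta=\sqrt{t_0\wedge1}$: then $\delta^{\alpha}=(t_0\wedge1)^{\alpha/2}$, $\delta^{2-\alpha}=(t_0\wedge1)^{1-\alpha/2}$, $\delta^{2-\alpha}/t_0\le(t_0\wedge1)^{-\alpha/2}$, and since $\omega_{0,\delta}(\cdot)\le\omega(t_0,\cdot)$ for $\delta\le1$ the bound on $K$ is of the form $\tfrac{\tilde c}{\lambda\alpha(1-\alpha)}\bigl((t_0\wedge1)^{-\alpha/2}\omega(t_0,u)+(t_0\wedge1)^{1-\alpha/2}\omega(t_0,h)\bigr)$; this proves \rife{ci1} for $|x-y|\le\sqrt{t_0\wedge1}$, while for $\sqrt{t_0\wedge1}<|x-y|\le1$ the trivial bound $|u(t_0,x)-u(t_0,y)|\le\omega(t_0,u)\le(t_0\wedge1)^{-\alpha/2}\omega(t_0,u)\,|x-y|^{\alpha}$ already suffices. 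If instead $\sqrt{t_0\wedge1}>\delta_0$, use $\delta=\delta_0$ and chain at most $O(1/\delta_0)$ increments of length $\le\delta_0$ to pass from scale $\delta_0$ to scale $\le1$, the extra factor being absorbed into $\tilde c$. Finally, for the last assertion, given $\ell:=\limsup_{s\to0^+}sg(s)<4\lambda$ it suffices to fix any $\alpha\in(0,1)$ with $4\lambda(1-\alpha)>\ell$; then $r\,g(r)<4\lambda(1-\alpha)$ for $r$ small, so $4\lambda(\alpha-1)+rg(r)$ is again bounded above by a negative constant near $0$, and the whole scheme applies with this particular $\alpha=\alpha(g,\lambda)$.

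Replacing $f$ by $r^{\alpha}$ is the only new idea, and it is essentially free; the part I expect to require care is the optimization just described, namely checking that in the regime $t_0\ge\delta_0^{2}$ the quantities $(t_0\wedge1)^{-\alpha/2}$, $(t_0\wedge1)^{1-\alpha/2}$ and $1/t_0$ are all controlled by constants depending only on $\delta_0$ — hence only on $\alpha$, $\lambda$ and the modulus of continuity of $sg(s)$ — so that the passage from the small scale $\delta_0$ to unit scale spoils neither the claimed dependence of $\tilde c$ nor the powers of $t\wedge1$ in \rife{ci1}.
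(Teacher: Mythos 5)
Your proof is correct and follows essentially the same route as the paper's: replace the ODE solution $f$ by $\psi(r)=r^{\alpha}$, use $\lim_{s\to0^{+}}sg(s)=0$ to fix a threshold $\delta_{0}$ below which $4\lambda(\alpha-1)+rg(r)\le-2\lambda(1-\alpha)$, run the identical contradiction argument, and then take $\delta\sim\sqrt{t_{0}\wedge1}$. The only cosmetic difference is that you split into the cases $\sqrt{t_{0}\wedge1}\le\delta_{0}$ and $\sqrt{t_{0}\wedge1}>\delta_{0}$ (handling the latter by chaining), whereas the paper absorbs both at once by choosing $\delta=\sqrt{t_{0}\wedge\delta_{0}^{2}}$.
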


\proof We proceed  as in the proof of Theorem \ref{PW} with   few
changes. First we fix $\de_0 \in (0,1)$ such that $sg(s)<2\la
(1-\alpha)$ if $s<\de_0$ and then we consider the set $\triangle
(t_0, \delta)$ with $\delta < \delta_0$.

We also choose from the beginning
 $f(s)= s^{\alpha}$, and we take  $K \ge \frac{\omega_{0,\de}(u)}{\delta^{\alpha}}$. Being $f$ increasing and
concave, we still arrive  at the inequality (see \rife{finegiro})
$$
\begin{array}{c} 2C_0(\hat t-t_0) \leq K \big(
4\la\,f''(|\hat x-\hat y| ) + f'(|\hat x-\hat y|)\, g(|\hat x-\hat
y|)\big) + \omega_{0,\de}(h)
\end{array}
$$
 which now becomes
$$
 2C_0(\hat t-t_0) \leq
\alpha \,K\, |\hat x-\hat y|^{\alpha-2}  \big( 4\la\,(\alpha-1) +
|\hat x-\hat y| g(|\hat x-\hat y|)\big) + \omega_{0,\de}(h).
$$
Since $|\hat x - \hat y| < \delta_0$, we get
$$
2C_0(\hat t-t_0) \leq 2\la\,\alpha(\alpha-1) \,K\, |\hat x-\hat
y|^{\alpha-2} + \omega_{0,\de}(h)
$$
and so $2\la\,\alpha(1- \alpha) \, K \delta^{\alpha -2} \leq
\omega_{0,\de}(h) +  2C_0(t_0- \hat t) <\omega_{0,\de}(h) +  C_0 {t_0}$. We get a
contradiction if
$$
 K \ge \frac{1}{2\la\,\alpha(1- \alpha) \delta^{\alpha -2}}
 \Big(\omega_{0,\de}(h) + \frac{4\omega_{0,\de}(u)}{t_0 }\Big).
$$
 Choosing again $\de =  \sqrt{t_0\wedge \de_0^2} $
  we get the desired estimate.

To prove the last assertion, we set $\gamma= \limsup_{s \to 0^+} s
g(s)$ and we proceed similarly, but first we choose $\alpha \in
(0,1)$ such that $4\la\,(\alpha-1) +\gamma <0$, which is possible
since $\ga<4\la$. \qed

We point out  that \rife{ci1} holds only for $x,y$ such that
$|x-y|\leq 1$. However,   replacing $|x-y|^{\alpha}$ with
$(|x-y|^{\alpha} + |x-y|)$, the estimate holds for every $x,y \in
\R^N$, $t \in (0,T)$.

\begin{remark}  {\em  We discuss here
 the previous result when $b=0 =h$.
 In such case assertion \rife{gg} is  equivalent to say that $\sigma$ is
uniformly continuous on $\R^N$ (uniformly in $t$). In general this
 is weaker than requiring that
  $q$
 is uniformly continuous on $\R^N$ (cf. Remark \ref{bb}).
 On the other hand, we recall that
  if $q$   is elliptic, {\em bounded, uniformly continuous  and   independent on time},
 then  gradient
 estimates hold  for $u(t, \cdot)$ (see \cite[Theorem 6]{St} whose
 proof uses the theory of analytic semigroups).
However, in our case we do not know if   gradient estimates hold
assuming only  \rife{gg} rather than $g\in L^1(0,1)$.

 Note that  condition \rife{gg}
 seems to be  almost
 sharp to get  H\"older continuity for any $\alpha
\in (0,1)$. Indeed, in cases when $sg(s)$ is only bounded, it is known  (see \cite[Section 4]{KS}
  for the more general situation with  $q_{ij}$ bounded and measurable)
that  any bounded regular solution $u(t,
\cdot)$ of \rife{eq} is $\alpha$-Holder continuous on $\R^N$ only  for {\it some}
$\alpha = \alpha (\lambda, N) \in (0,1)$, with
a universal H\"older constant  depending on $\| u\|_{\infty},
\lambda $ and $N$.

Finally note that the assertion $\limsup_{s \to 0^+} s g(s) < 4
\lambda$ holds in particular (with $b=h=0$) if  there exists $0< \lambda < \Lambda$ such
that
$$
\lambda |h|^2 \le  q(t,x) h \cdot h  \le \Lambda |h|^2,\;\; x, h
\in \R^N, \;\;\; t \in (0,T),
$$
and the following  Cordes type condition holds:
$\frac{\Lambda}{\lambda} < \frac{N+4}{N}.$  To this purpose take
$\mu $ such that $0< \mu < \lambda$ and $
 \frac{\Lambda}{\mu} < \frac{N +4}{N}$, and define a symmetric positive definite matrix $\sigma (t,x)$, such
that $\sigma (t,x)^2 + \mu I = q(t,x)$.
We have $ 0 \le \big( \sigma (t,x) - \sigma (t,y) \big)^2 \le
(\Lambda -
 \mu)I$
and so  $ \|  \sigma (t,x) - \sigma (t,y) \|^2
 = \text{tr} \big[( \sigma (t,x) - \sigma (t,y)  )^2 \big] \le
 N  (\Lambda - \mu)$.
 Note that we can take
  $g(r) = \frac{N  (\Lambda - \mu)}{r}$
  so that $\limsup_{s \to 0^+} s g(s)$
  $= N  (\Lambda - \mu) < 4
\lambda$ and
 the assertion follows.
}
\end{remark}

\subsection{The Cauchy problem with  bounded initial datum and  potential term}

We use Theorem \ref{PW}  and generalize \cite[Theorem 3.4]{PW}
concerning Lipschitz continuity of bounded solutions to the Cauchy
problem on $Q_T$. Due to the fact that  we are considering bounded
solutions, we can treat more general equations having also a
 \textit{ possibly unbounded
potential term $V$,} i.e.,
 \be\label{eq1}
  \partial_t u -{\rm tr}\left(q(t,x)D^2u\right)-
b(t,x)\cdot D u + V(t,x) u =h(t,x)\qquad  \hbox{in} \; \, Q_T. \ee
We always assume Hypothesis \ref{hy1} and that \textit {$V : Q_T
  \to \R$ is continuous and non-negative.}

We start with a quite standard   lemma.
\begin{lemma} \label{max} Let $h\in C(Q_T)\cap L^\infty(Q_T)$.
  Let $u\in C\left( \bar Q_T \right)$
  be a viscosity solution of \rife{eq1} such that
  $u_0 : = u(0, \cdot)$ is bounded on   $\R^N$ and moreover $u$ is $o_{\infty}(\varphi)$ in $\bar Q_T$,
   where $\vfi$ satisfies \rife{lyap}. Then
 $u$ is bounded on $[0,T] \times \R^N$ and
 \begin{equation} \label{max16}
 \| u(t)\|_{\infty} \le \|u_0 \|_{\infty} \, + \,
 t \| h\|_{T, \infty},\;\;\; t \in [0,T],
\end{equation}
where $\| h\|_{T, \infty}
 = \sup_{t\in (0,T)} \|h(t)\|_{\infty}$.
\end{lemma}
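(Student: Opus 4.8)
\smallskip\noindent\emph{Proof strategy.} By Remark~\ref{bene} we may assume that $\varphi>0$ and that \rife{lyap} holds with $M=0$, i.e. $A_t(\varphi)\le\partial_t\varphi$ on $Q_T$ (note that $u=o_\infty(\varphi)$ is preserved under the modifications of $\varphi$ made there). The plan is to dominate $u$ from above by a one-parameter family of smooth strict supersolutions of \rife{eq1}, to localise the comparison to bounded cylinders by means of the Lyapunov function, and then to let the parameters tend to $0$; since $-u$ solves \rife{eq1} with $h$ replaced by $-h$, has bounded initial datum $-u_0$, and is still $o_\infty(\varphi)$ in $\bar Q_T$, the same bound applied to $-u$ yields the two-sided estimate \rife{max16}.

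Fix $\epsilon,\eta>0$ and set
\[
w(t,x)=\|u_0\|_\infty+t\,\|h\|_{T,\infty}+\eta\,t+\epsilon\,\varphi(t,x),\qquad (t,x)\in\bar Q_T .
\]
Then $w\in C^{1,2}(\bar Q_T)$ and $w(0,\cdot)\ge\|u_0\|_\infty\ge u_0$ on $\R^N$. Since $A_t$ annihilates functions independent of $x$, and using $\partial_t\varphi-A_t(\varphi)\ge0$, $V\ge0$, $w\ge0$, one gets on $Q_T$
\[
\partial_t w-{\rm tr}\!\left(q\,D^2w\right)-b\cdot Dw+V\,w-h=\big(\|h\|_{T,\infty}-h\big)+\eta+\epsilon\big(\partial_t\varphi-A_t(\varphi)\big)+V\,w\ \ge\ \eta\ >\ 0 ,
\]
so $w$ is a strict classical (hence viscosity) supersolution of \rife{eq1}. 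Moreover $\varphi(t,x)\to+\infty$ as $|x|\to\infty$ uniformly in $t$, and $u=o_\infty(\varphi)$, so there exists $R=R_\epsilon>0$ such that $u(t,x)\le|u(t,x)|\le\epsilon\,\varphi(t,x)\le w(t,x)$ for all $|x|\ge R$ and $t\in[0,T]$.

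First I would prove $u\le w$ on the closed cylinder $\overline{(0,T)\times B_R}$. If this fails, the continuous function $u-w$, which is $\le0$ on $\{t=0\}$ and on $\{|x|=R\}$, attains a positive maximum at some $(\hat t,\hat x)$ with $\hat t\in(0,T]$ and $|\hat x|<R$; a standard penalisation in time (replacing $u$ by $u-\mu/(T-t)$ and letting $\mu\to0$) reduces matters to the case $\hat t\in(0,T)$, where $w$ is an admissible $C^{1,2}$ test function for $u$ at the interior maximum point $(\hat t,\hat x)$. The subsolution inequality then gives $\big[\partial_t w-{\rm tr}(q\,D^2w)-b\cdot Dw+V\,u-h\big](\hat t,\hat x)\le0$; but, since $V\ge0$ and $(u-w)(\hat t,\hat x)>0$, the left-hand side is bounded below by $\big[\partial_t w-{\rm tr}(q\,D^2w)-b\cdot Dw+V\,w-h\big](\hat t,\hat x)\ge\eta>0$, a contradiction. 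Hence $u\le w$ on $\overline{(0,T)\times B_R}$, and combining with the bound for $|x|\ge R$ we get $u\le w$ on all of $\bar Q_T$. Letting $\eta\to0$ and then $\epsilon\to0$ yields $u(t,x)\le\|u_0\|_\infty+t\,\|h\|_{T,\infty}$, and applying this to $-u$ completes the proof.

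The computations above are routine; the point needing care is that no global comparison principle is available on $\R^N$ under the present (weak) assumptions on the coefficients, and this is exactly where the hypotheses that $\varphi$ be a Lyapunov function and that $u=o_\infty(\varphi)$ are used — to reduce everything to the elementary comparison, on a bounded cylinder, between a viscosity subsolution and a smooth strict supersolution. The possibly unbounded potential $V$ creates no difficulty, because $V\ge0$ and the competitor $w$ is nonnegative, so the zeroth order term always carries the favourable sign.
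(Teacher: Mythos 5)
Your proof is correct and follows essentially the same strategy as the paper's: comparison of $u$ with the barrier $\|u_0\|_\infty + t\|h\|_{T,\infty} + \epsilon\varphi$, with the Lyapunov inequality, the sign of $V$, and the positivity of $u$ at the maximum point producing the contradiction, and then the same argument applied to $-u$. The only (immaterial) differences are that you add $\eta t$ to make the supersolution strict and localise explicitly to a ball via $u=o_\infty(\varphi)$, whereas the paper folds both issues into the single penalised function $u-\epsilon\varphi-\epsilon/(T-t)-\|u_0\|_\infty-t\|h\|_{T,\infty}$, which tends to $-\infty$ at spatial infinity and as $t\to T^-$.
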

\begin{proof}
 Let us consider, for $\epsilon>0$,
$$
\phi_{\epsilon}(t,x) = u(t,x)-\vep \, \vfi(t,x)
  - \frac\vep{T-t} - \| u_0\|_{\infty} -
  t  \| h\|_{T,\infty},
 $$  where $\vfi$ is the
Lyapunov function which may be assumed to satisfy \rife{lyap0}. If
for any $\epsilon
>0$, we have $ \phi_{\epsilon}(t,x) \le 0$, then, letting $\epsilon
\to 0^+$,
  we deduce $u(t , x) \le \| u_0\|_{\infty} +
  t \| h\|_{T,\infty}$.
  Arguing by contradiction, suppose that, for some $\epsilon >0$,
   $
\max\limits_{[0,T)\times \RN} \, \phi_{\epsilon} (t,x) \, >0$.
 Let $(t_{\epsilon}, x_{\epsilon})$ be a point
   where   this maximum is attained. Note
   that $t_{\epsilon} \in (0,T)$.  By the
 definition of subsolution we have
$$
   \| h\|_{T,\infty} + \frac\vep{(T- t_{\epsilon})^2}
+ (\partial_t - A_{t_{\epsilon}} )
\big( \vep \, \vfi
   \big)(t_{\epsilon}, x_{\epsilon}) \le h(
   t_{\epsilon}, x_{\epsilon}) - V(t_{\epsilon}, x_{\epsilon})u(t_{\epsilon}, x_{\epsilon}),
$$
which  yields   a contradiction. Indeed   $(\partial_t -
A_{t_{\epsilon}} ) \varphi(t_{\epsilon}, x_{\epsilon}) \ge 0$ and
 $u(t_{\epsilon}, x_{\epsilon}) > 0$ imply
$$
\| h\|_{T,\infty} + \frac\vep{(T- t_{\epsilon})^2} \le
h(t_{\epsilon}, x_{\epsilon}).
$$
 Applying the same reasoning to $-u$, which
  is a subsolution to \rife{eq1} with $h$ replaced by $-h$,
   we conclude.
\end{proof}

In the next result we will assume that  the potential $V: Q_T \to \R_+$ satisfies, for
every $x, y \in \R^N$, $|x-y| \le 1$, $t \in (0,T)$, \be
\label{pot}
  |V(t,x) - V(t,y)| \le { k_0} +
  { k_1} |x-y| \cdot\, \max \big \{ V(t,x), V(t,y) \big \},
\ee
for some non-negative constants $k_0$ and $k_1$.
This condition allows us to consider
unbounded potentials and   operators like
 $\triangle u - (|x|^2 + |x|^{\alpha}) u$, $\alpha \in (0,1)$.

\begin{theorem}\label{PWc}
Assume that
 \rife{q1}, \rife{lyap}
 hold true, and that  there exists  a non-negative
  $g\in C(0, 1)$ such that $\int_0^1 g(s)ds<\infty$ and
 \rife{pw} holds.
 Assume that  $h \in  C( Q_T)\cap L^\infty(Q_T)$ and that
 $V \in C(Q_T)$ is non-negative and satisfies \rife{pot}.
  Let  $u\in C(\bar Q_T)$
  be a
viscosity solution of \rife{eq1} such that
  $u_0 : = u(0, \cdot)$ is bounded on $\R^N$ and
 moreover $u$ is $oo_{\infty}(\varphi)$ in $\bar Q_T$,
   where $\vfi$ satisfies \rife{lyap}.

 Then $ u(t) \in W^{1,\infty}(\RN)$,
 for every $t\in (0,T)
$, and there exists $C_0 = C_0(\lambda, g)$, such that
\be\label{grad}
\|   D u(t)\|_\infty  \leq   C_0\left\{  \left[\frac1{\sqrt{t \wedge 1}}+ \sqrt{t \wedge
1} \, k_0 + k_1\right]
\left( \|u_0\|_{\infty} +  (T \wedge \frac{3}{2}t) \| h\|_{T,
\infty} \right)+  \sqrt{t \wedge
1} \, \| h\|_{T, \infty}\right\}.
\ee
\end{theorem}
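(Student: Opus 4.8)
\emph{Proof proposal.} The plan is to rerun the doubling-of-variables argument of Theorem~\ref{PW}, now carrying along the zeroth order term $Vu$ and exploiting, in addition, the a priori bound of Lemma~\ref{max} together with the comparison structure \rife{pot}. Since $u\in C(\bar Q_T)$, $u_0$ is bounded and $u=o_\infty(\varphi)$, Lemma~\ref{max} gives $\|u(t)\|_\infty\le\|u_0\|_\infty+t\|h\|_{T,\infty}$ for $t\in[0,T]$; hence on each strip $J_{t_0}=(\tfrac{t_0}2,T\wedge\tfrac32 t_0)$ the solution is bounded, with $\sup_{t\in J_{t_0}}\|u(t)\|_\infty\le M_{t_0}:=\|u_0\|_\infty+(T\wedge\tfrac32 t_0)\|h\|_{T,\infty}$, and in particular has bounded oscillation there. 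We then fix $t_0\in(0,T)$ and $\delta\in(0,1]$ (to be chosen) and, on $\Delta=\Delta(t_0,\delta)$, consider precisely the auxiliary function $\Phi_\epsilon$ of the proof of Theorem~\ref{PW}, with $\psi(x-y)=f(|x-y|)$ and $f$ solving \rife{ode1}.

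Arguing by contradiction, and choosing $C_0\sim\omega_{0,\delta}(u)/t_0^2$ and $K\ge\omega_{0,\delta}(u)/f(\delta)$ exactly as there (the boundary analysis being unchanged since $u$ is bounded and $\varphi$ blows up at infinity), one is reduced to an interior positive maximum of $\Phi_\epsilon$ at some $(\hat t,\hat x,\hat y)\in\Delta$ with $\hat x\ne\hat y$. At such a point we apply Theorem~\ref{key} and the compactness argument of Remark~\ref{rit}: this is legitimate because \rife{elo} holds here with $H\equiv0$ (the operator being linear and nondegenerate) and because $V(\hat t,\hat x)u(\hat t,\hat x)$ is merely a finite constant at the contact point. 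We thus obtain $X,Y\in{\mathcal S}_N$ and $a,b\in\R$ with $a-b=\partial_t z(\hat t,\hat x,\hat y)$ and the matrix inequality \rife{matri1}. Writing the sub/supersolution inequalities for \rife{eq1}, subtracting, using $a-b=\partial_t z$ and proceeding verbatim as in Theorem~\ref{PW} up to the analogue of \rife{equa}, the only difference is the appearance, on the right-hand side, of the extra term $V(\hat t,\hat y)u(\hat t,\hat y)-V(\hat t,\hat x)u(\hat t,\hat x)$.

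This term is controlled as follows --- the only genuinely new point of the proof. At the maximum one has $u(\hat t,\hat x)-u(\hat t,\hat y)>K\psi(\hat x-\hat y)=Kf(|\hat x-\hat y|)\ge0$. Splitting (with obvious shorthands $u_x,V_x,\dots$) the extra term as $V_yu_y-V_xu_x=V_y(u_y-u_x)+u_x(V_y-V_x)$, the first summand is $\le-KV_yf(|\hat x-\hat y|)\le0$ since $V\ge0$, while the second is bounded, by \rife{pot} and $|u(\hat t,\hat x)|\le M_{t_0}$, by $M_{t_0}\bigl(k_0+k_1|\hat x-\hat y|\max\{V_x,V_y\}\bigr)$. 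If $\delta$ is taken small enough that $k_1\delta\le\tfrac12$, then \rife{pot} also gives $\max\{V_x,V_y\}\le 2V_y+2k_0$, so that the destabilizing part is $\le 2k_1|\hat x-\hat y|V_yM_{t_0}+2k_0M_{t_0}$; since $f(r)\ge\tfrac{\delta}{8\lambda}r$ by \rife{fde0}, the first piece gets absorbed into $-KV_yf(|\hat x-\hat y|)$ as soon as $K\ge\tfrac{16\lambda}{\delta}k_1M_{t_0}$. Hence the whole potential contribution is bounded by $3k_0M_{t_0}$, and the argument of Theorem~\ref{PW} runs to its end with $\omega_{0,\delta}(h)$ replaced by $\omega_{0,\delta}(h)+3k_0M_{t_0}$, producing a contradiction unless $K\ge C(\lambda,g)\max\bigl\{\tfrac{\omega_{0,\delta}(u)}{t_0}+\omega_{0,\delta}(h)+k_0M_{t_0},\ \tfrac{\omega_{0,\delta}(u)}{f(\delta)},\ \tfrac{\lambda}{\delta}k_1M_{t_0}\bigr\}$.

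Therefore \rife{goal11} holds, and as in Theorem~\ref{PW} we get $|u(t_0,x)-u(t_0,y)|\le Kf'(0)|x-y|$ for $|x-y|<\delta$, with $f'(0)\le\tfrac{\delta}{4\lambda}e^{\frac1{4\lambda}\int_0^1 g}$ and $f(\delta)\ge\tfrac{\delta^2}{8\lambda}$; inserting the minimal admissible $K$ and using $\omega_{0,\delta}(u)\le 2M_{t_0}$, $\omega_{0,\delta}(h)\le 2\|h\|_{T,\infty}$, this gives a Lipschitz bound of $u(t_0)$ on balls of radius $\delta$ of the form $C(\lambda,g)\bigl(\delta\|h\|_{T,\infty}+\delta k_0M_{t_0}+\tfrac{\delta}{t_0}M_{t_0}+\tfrac1\delta M_{t_0}+k_1M_{t_0}\bigr)$. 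It then remains only to choose $\delta$: for $k_1=0$ take $\delta=\sqrt{t_0\wedge1}$, while for $k_1>0$ take $\delta=\min\{\sqrt{t_0\wedge1},\tfrac1{2k_1}\}$; a short case distinction (according to which of the two the minimum is) shows that in all cases $\delta\le\sqrt{t_0\wedge1}$ and $\tfrac\delta{t_0},\tfrac1\delta\le\tfrac1{\sqrt{t_0\wedge1}}\vee 2k_1$, so the bound collapses to $C(\lambda,g)\bigl\{[\tfrac1{\sqrt{t_0\wedge1}}+\sqrt{t_0\wedge1}\,k_0+k_1]M_{t_0}+\sqrt{t_0\wedge1}\,\|h\|_{T,\infty}\bigr\}$; one finally upgrades this local-in-space bound, uniform in the base point, to a global one on $\R^N$ exactly as at the end of the proof of Theorem~\ref{PW}, which is \rife{grad}. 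The real obstacle is the step of the third paragraph: one has to notice that at the contact point the ordering $u(\hat t,\hat x)>u(\hat t,\hat y)$ renders the leading potential term negative and comparable to $-KV(\hat t,\hat y)|\hat x-\hat y|$, which is exactly of the size needed to swallow $k_1|\hat x-\hat y|\max\{V_x,V_y\}M_{t_0}$ from \rife{pot} once $\delta$ is small in terms of $k_1$; everything after that is routine bookkeeping and yields the dependence of the constant on $\lambda,g$ alone.
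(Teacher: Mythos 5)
Your proof is correct and follows essentially the same route as the paper: Lemma~\ref{max} for the sup bound, the doubling argument of Theorem~\ref{PW}, the splitting $V_yu_y-V_xu_x=V_y(u_y-u_x)+u_x(V_y-V_x)$ with the leading piece absorbed via $f(r)\ge\frac{\delta}{8\lambda}r$ once $K\gtrsim\frac{\lambda}{\delta}k_1M_{t_0}$, and the choice $\delta\sim\sqrt{t_0\wedge1}$. The only (harmless) deviation is that you handle both orderings of $V(\hat t,\hat x)$, $V(\hat t,\hat y)$ at once by using \rife{pot} to bound $\max\{V_x,V_y\}\le 2V_y+2k_0$ under the extra constraint $k_1\delta\le\tfrac12$, whereas the paper simply swaps the roles in the splitting when $V(\hat t,\hat x)\ge V(\hat t,\hat y)$; both yield the same constant $C_0(\lambda,g)$.
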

\vskip1em

Let us briefly compare  this
result  with  \cite[Theorem 3.4]{PW}, where the assumption
\rife{pw} was introduced and a similar gradient estimate was proved
but using a probabilistic approach.  In particular, the concept of
solution of \rife{eq}
 used there is the probabilistic one
 given through the expectation of  $u_0(X_t)$ where $X_t$ is the diffusion process associated to the
 %solution of the martingale problem for the
operator $A_t$ in \rife{At}  and starting from $x \in \R^N$ (see
Appendix A for more details). Similarly,  assumption \rife{lyap} is
replaced there by the condition that $X_t$ is not explosive (observe
that the existence of a
 Lyapunov function implies
non-explosion of the diffusion process,  see e.g. \cite[Chapter
10]{SV}). Up to such changes due to the different settings, Theorem
\ref{PWc} provides an analytical proof of \cite[Theorem 3.4]{PW}
with a few more generality, since in the latter paper the
coefficients were not supposed to be time-dependent, there was no
source term $h$ and the potential  $V$ was supposed to be either bounded
 or Lipschitz continuous, in which case a stronger  hypothesis on
 $g$ was required.

 We point out that  \rife{grad} implies
$$
\|   D u(t)\|_\infty \lesssim C\, \frac{\|u_0\|_\infty}{\sqrt{t}
}\qquad \hbox{as $t\to 0^+$}.
$$
On the other hand,  when $u_0=0$,
 \rife{grad} implies that,  for small $t$,
$$
\|   D u(t)\|_\infty \leq C \sqrt{t} \, \| h\|_{T, \infty}
$$
 which generalizes the
 classical gradient estimate for solutions to the Cauchy problem
involving the
 inhomogeneous heat equation with zero initial condition.

\begin{proof}[Proof of Theorem  \ref{PWc}.]
Using Lemma \ref{max}, we know
that $u$ is bounded and moreover
\be\label{stima-osc}
\omega(t,u):= {{\rm osc}}_{(\frac t2,\, T\wedge \frac{3}{2}t ) }(u)
 \le 2 \sup_{s \in (\frac t2,\, T\wedge \frac{3}{2}t )}\| u(s)\|_{\infty} \le
 2 \|u_0 \|_{\infty} \, + \,
 2(T\wedge \frac{3}{2}t ) \| h\|_{T, \infty}.
\ee In particular, if $V=0$ the conclusion is a direct consequence
of  Theorem \ref{PW}.

\smallskip In the case
$V \not =0$, we follow  the proof  of Theorem \ref{PW}
 (with the same
notations) until we obtain (see \rife{finegiro}) that
$$
\begin{array}{c} 2C_0(\hat t-t_0) \leq K \left(
4\la\,f''(|\hat x-\hat y| ) + f'(|\hat x-\hat y|)\, g(|\hat x-\hat
y|)\right) +\omega_{0,\de}(h)
\\
\m
 + \big( V(\hat t, \hat y)u(\hat t, \hat y) -
 V(\hat t, \hat x)u(\hat t, \hat x) \big),
\end{array}
$$
which implies, using \rife{ode1} and since $\omega_{0,\de}(h)\leq 2\| h\|_{T, \infty} $,
\be\label{finegiroV}
2C_0(\hat t-t_0) + K \leq  2\| h\|_{T, \infty}
 + \big( V(\hat t, \hat y)u(\hat t, \hat y) -
 V(\hat t, \hat x)u(\hat t, \hat x) \big)\,.
\ee
We concentrate on the last term and set
$$
J(\hat  t, \hat x, \hat y) =
 V(\hat t, \hat y)u(\hat t, \hat y) -
 V(\hat t, \hat x)u(\hat t, \hat x)  .
 $$
 Assume that $V(\hat t , \hat y) \ge V(\hat t , \hat x) $.
 Set\footnote{In the case that $V(\hat t , \hat x) \ge V(\hat t
, \hat y) $, we use the identity $J(\hat  t, \hat x, \hat y) =
  (V(\hat t, \hat y)-V(\hat t, \hat x))
   u(\hat t, \hat y) +
 (u(\hat t, \hat y) -
u(\hat t, \hat x))   V(\hat t, \hat x)$ and then we proceed in a
similar way. }
$$
J(\hat  t, \hat x, \hat y) =
  V(\hat t, \hat y)\big(u(\hat t, \hat y) -
  u(\hat t, \hat x)\big)
+
 u(\hat t, \hat x) \big(V(\hat t, \hat y)-
  V(\hat t, \hat x) \big).
  $$
  Since $u(\hat t, \hat y) -
u(\hat t, \hat x)< -  K f(|\hat x- \hat y|) $, which is a
consequence of \rife{abs11},  using  \rife{pot}, we get
 $$
  J(\hat  t, \hat x, \hat y)
  \le -K  V(\hat t , \hat y )  f(|\hat x- \hat y|)
  +  \| u(\hat t )\|_{ \infty} \left( k_0+  k_1
   |\hat x- \hat y| \, V(\hat t, \hat y)\right) .
 $$
On account of \rife{fde0}, we deduce
 $$
 J(\hat  t, \hat x, \hat y) \le -K\,\frac{\de}{8\la}  V(\hat t , \hat y )  |\hat x- \hat y|
  +  \| u(\hat t )\|_{ \infty} \left( k_0+  k_1  |\hat x- \hat y| \, V(\hat t, \hat y)\right) .
 $$
 Therefore, if $K\geq k_1\| u(\hat t )\|_{ \infty} \frac{8\la}\de$ we get
 $$
  J(\hat  t, \hat x, \hat y) \le k_0\, \| u(\hat t )\|_{ \infty}
 $$
 so that \rife{finegiroV} implies
$$
   2C_0(\hat t-t_0) + K \le   2\| h\|_{T, \infty}  +
    k_0  \, \| u(\hat t )\|_{ \infty}.
  $$
  Finally, we obtain here a contradiction if we choose
$$
K \ge  \max \left\{ \frac{4 \omega_{0,\de}(u) }{t_0} + 2\| h\|_{T, \infty}  + k_0  \, \| u(\hat t )\|_{ \infty}\,, k_1\| u(\hat t )\|_{ \infty} \frac{8\la}\de\,,
  \, \frac{\omega_{0,\de}(u) }{f(\de)}\right\}\,.
$$
Using \rife{fde1}   and setting $\de= \sqrt {t_0} \wedge 1$ we
find
$$
 |u(t_0,x)-u(t_0,y)| \leq
 K f(|x-y|) \leq K f'(0) |x-y|\leq c_2 K \delta |x-y|
 $$
 $$
 \leq C_0   \Big(
\frac{4\omega_{0,\de}(u)}
 {\sqrt{t_0\wedge1}} + \sqrt{t_0\wedge1} \left(\| h\|_{T, \infty}  + k_0 \| u(\hat t )\|_{ \infty}\right) + 8\la k_1\| u(\hat t )\|_{ \infty} +8\la  \frac{\omega_{0,\de}(u)}
 {\sqrt{t_0\wedge1}} \Big)\,   |x-y|.
$$
Recall that  \rife{stima-osc} implies  $
\omega_{0,\de}(u)\leq  2 \|u_0 \|_{\infty} \, + \,
 2(T\wedge \frac{3}{2}t_0 ) \| h\|_{T, \infty}$, hence we conclude
$$
\begin{array}{rl}
 |u(t_0,x)-u(t_0,y)| &\leq \left\{C(\la,g) \left(\frac1{\sqrt{t_0\wedge 1}}+ k_0 \sqrt{t_0\wedge1} + k_1\right)\left[ \|u_0 \|_{\infty} \, + (T\wedge \frac{3}{2}t_0 ) \| h\|_{T, \infty}\right]\right.
 \\
 \m
 \quad & \left.+ C(\la,g)\, \sqrt{t_0\wedge1}  \, \| h\|_{T, \infty}\right\}|x-y|\,.
 \end{array}
$$
\end{proof}

\begin{remark} \label{bert} {\em  It is not difficult to modify
the previous proof and obtain   gradient estimates, assuming instead
of \rife{pw}, the following more general assumption
$$
\begin{array}{c}
\frac1{|x-y|}\left( \|\sigma(t,x)-\sigma(t,y)\|^2+
(b(t,x)-b(t,y))\cdot(x-y) \right)\\ \\ \leq g(|x-y|) +
 k_3 |x-y|\max(V(t,x),
V(t,y)),
\end{array}$$
for $x,y \in \R^N$ such that $|x-y| \le \delta_1$, $t \in (0,T)$,
for some  $\delta_1, k_3 >0$. This condition is comparable with the
assumption $Db(x) h \cdot h \le (s_1 + s_2 V(x))|h|^2$, $x,h \in
\R^N$, for some $s_1, s_2 >0$ used in \cite{BF}. According to
\cite{BF} the last hypothesis together
 with $|DV(x)| \le c_0 + c_1 V(x)$, $x \in \R^N$, and $q_{ij} $
  Lipschitz continuous
  imply gradient estimates  \rife{grad} for
the autonomous non-degenerate Cauchy problem with $h=0$.}
\end{remark}

\begin{remark}
{\em  Theorem \ref{PWc} can be  also
  interpreted as
an a priori estimate on  classical bounded solutions  to the
Cauchy problem involving $A_t$. On this respect (see \cite[Section
2]{KLL} and \cite[Section 4]{MPW} for the autonomous case),
 if we assume Hypothesis \ref{hy1}, existence of a Lyapunov function
  $\varphi$
 and the fact that  coefficients $q_{ij}$, $b_i$  belong to
   $C^{\alpha/2, \alpha}_{loc}(\bar Q_T)$, for some $\alpha \in (0,1)$,
 then it is
  well-known that there exists a unique {\it bounded  classical
 solution}
 to the  Cauchy problem \rife{eq1} with $h=V=0$ and $u_0$ which
 is continuous
 and bounded on $\R^N$.
 }
\end{remark}

We finish the section with a variant of Theorem \ref{PWc}
establishing H\"older continuity of the solution. In this case we may assume that the potential $V$ satisfies, for
every $x, y \in \R^N$, $|x-y| \le 1$, $t \in (0,T)$,
\be
\label{potalpha}
  |V(t,x) - V(t,y)| \le { k_0} +
  { k_1} |x-y|^\alpha \cdot\, \max \big \{ V(t,x), V(t,y) \big \},
\ee
for some $\alpha\in (0,1)$.

\begin{proposition}\label{hol-lin1}
 Assume that
 \rife{q1}, \rife{lyap}
 hold true, and that  \rife{pw} holds for some $g$  satisfying \rife{gg}.
 Assume that $h \in  C(Q_T)\cap L^\infty(Q_T)$ and that
 $V \in C( Q_T)$ is nonnegative and satisfies \rife{potalpha}.
  If   $u\in C(\bar Q_T)$ is a viscosity solution of \rife{eq}
 as in Theorem \ref{PWc}, then there exists $C_0=C_0(\lambda,\alpha,g)$ such that
\begin{align} \label{hol1}
\nonumber |u(t,x)-u(t,y)| & \leq
  C_0 \left\{
\Big[ \frac{1}{({t \wedge 1})^{\alpha/2}} + ({t \wedge 1})^{1- \alpha/2}\, k_0+ k_1\Big]   \big( \|u_0\|_{\infty} +  (T \wedge \frac{3}{2}t) \| h\|_{T,
\infty} \big) \right.
\\
 & \left. + ({t \wedge 1})^{1- \alpha/2}  \| h\|_{T,
\infty}\right\} \, |x-y|^{\alpha},
\end{align}
 for all $ x,y \in \R^N, \; |x-y|\le 1,$  $t\in (0,T) $,
where $ C_0 $ only depends on $\alpha$, $\lambda$ and the
modulus of continuity of $sg(s)$.
\end{proposition}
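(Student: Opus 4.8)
\emph{Proof strategy.} The plan is to merge the two arguments already developed above: run the proof of Theorem~\ref{PWc} (which accounts for the unbounded potential $V$) but, as in the passage from Theorem~\ref{PW} to Proposition~\ref{hol-lin}, replace from the very beginning the auxiliary function solving \rife{ode1} by the pure power $f(s)=s^{\alpha}$. Since $g$ only satisfies \rife{gg}, first fix $\delta_0\in(0,1)$ such that $sg(s)<2\lambda(1-\alpha)$ for $0<s<\delta_0$. Next, since $V\ge 0$, Lemma~\ref{max} applies and gives $u$ bounded on $\bar Q_T$ with $\|u(t)\|_\infty\le \|u_0\|_\infty+t\,\|h\|_{T,\infty}$; in particular, as in \rife{stima-osc}, both $\omega_{0,\delta}(u)$ and $\|u(\hat t)\|_\infty$ are bounded by $\|u_0\|_\infty+(T\wedge\tfrac32 t_0)\|h\|_{T,\infty}$.

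Fix $t_0\in(0,T)$ and $\delta<\delta_0$, and work on $\Delta(t_0,\delta)$ with $\psi(x-y)=f(|x-y|)=|x-y|^{\alpha}$; this $f$ is increasing and concave on $(0,\delta]$ and smooth there, so \rife{A<C} and Proposition~\ref{refe} apply exactly as in the proof of Theorem~\ref{PW}, and the positive maximum is never attained at $x=y$. Arguing by contradiction from \rife{abs11} and invoking Theorem~\ref{key}, Remark~\ref{rit} and Proposition~\ref{refe}, while retaining the potential term precisely as in Theorem~\ref{PWc}, one arrives at
\[
2C_0(\hat t-t_0)\ \le\ K\big(4\lambda f''(|\hat x-\hat y|)+f'(|\hat x-\hat y|)g(|\hat x-\hat y|)\big)+\omega_{0,\delta}(h)+J(\hat t,\hat x,\hat y),
\]
with $J(\hat t,\hat x,\hat y)=V(\hat t,\hat y)u(\hat t,\hat y)-V(\hat t,\hat x)u(\hat t,\hat x)$. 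For $f(s)=s^{\alpha}$ one computes $4\lambda f''(s)+f'(s)g(s)=\alpha s^{\alpha-2}\big(4\lambda(\alpha-1)+sg(s)\big)\le-2\lambda\alpha(1-\alpha)s^{\alpha-2}$ for $s<\delta_0$, so (since $|\hat x-\hat y|<\delta$) the bracket is bounded by $-2\lambda\alpha(1-\alpha)K\,\delta^{\alpha-2}$.

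The one genuinely new point is the treatment of $J$. Assuming for instance $V(\hat t,\hat y)\ge V(\hat t,\hat x)$, write $J=V(\hat t,\hat y)\big(u(\hat t,\hat y)-u(\hat t,\hat x)\big)+u(\hat t,\hat x)\big(V(\hat t,\hat y)-V(\hat t,\hat x)\big)$. By \rife{abs11} one has $u(\hat t,\hat y)-u(\hat t,\hat x)<-K f(|\hat x-\hat y|)=-K|\hat x-\hat y|^{\alpha}$, while \rife{potalpha} gives $V(\hat t,\hat y)-V(\hat t,\hat x)\le k_0+k_1|\hat x-\hat y|^{\alpha}V(\hat t,\hat y)$; since $V\ge0$, the two $|\hat x-\hat y|^{\alpha}V(\hat t,\hat y)$–terms cancel as soon as $K\ge k_1\|u(\hat t)\|_\infty$, leaving $J\le k_0\|u(\hat t)\|_\infty$ (the case $V(\hat t,\hat x)\ge V(\hat t,\hat y)$ is handled with the alternative identity used in the footnote of Theorem~\ref{PWc}). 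The point is that here the exponent of $f$ coincides with the Hölder exponent in \rife{potalpha}, so unlike in the Lipschitz case of Theorem~\ref{PWc} no factor of $\delta$ is lost in this cancellation.

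Putting things together, and using $\hat t>t_0/2$, $\omega_{0,\delta}(h)\le 2\|h\|_{T,\infty}$ and $C_0=4\omega_{0,\delta}(u)/t_0^2$, one obtains a contradiction as soon as
\[
K\ \ge\ \max\Big\{\frac{\delta^{2-\alpha}}{2\lambda\alpha(1-\alpha)}\Big(\frac{4\omega_{0,\delta}(u)}{t_0}+2\|h\|_{T,\infty}+k_0\|u(\hat t)\|_\infty\Big),\ k_1\|u(\hat t)\|_\infty,\ \frac{\omega_{0,\delta}(u)}{\delta^{\alpha}}\Big\}.
\]
For such $K$ one gets $|u(t_0,x)-u(t_0,y)|\le K|x-y|^{\alpha}$ for $|x-y|<\delta$, and then for all $x,y\in\R^N$ after the usual extension (replacing $|x-y|^{\alpha}$ by $|x-y|^{\alpha}+|x-y|$ if needed). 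Finally choose $\delta=\sqrt{t_0}\wedge\delta_0$, which is comparable to $\sqrt{t_0\wedge1}$ up to a constant, insert the bounds for $\omega_{0,\delta}(u)$ and $\|u(\hat t)\|_\infty$ coming from Lemma~\ref{max}, and track the powers of $t_0$: the first term contributes $(t\wedge1)^{-\alpha/2}$, the $k_0$– and $h$–terms contribute $(t\wedge1)^{1-\alpha/2}$, and the $k_1$–term stays $O(1)$, so the three competing lower bounds on $K$ recombine into exactly the three bracketed quantities of \rife{hol1}. The only delicate part is this final bookkeeping of the $\delta$– and $t_0$–powers; structurally the proof is a direct superposition of the proofs of Proposition~\ref{hol-lin} and Theorem~\ref{PWc}, with the weaker hypothesis \rife{gg} sufficing precisely because the choice of $\delta_0$ makes the $f$–contribution strictly negative.
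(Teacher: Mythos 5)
Your proposal is correct and follows essentially the same route as the paper's proof: run the Ishii–Lions argument with $f(s)=s^{\alpha}$ and $\delta<\delta_0$ where $sg(s)<2\lambda(1-\alpha)$, keep the potential term $J$ and absorb it via \rife{potalpha} under the condition $K\ge k_1\|u(\hat t)\|_\infty$ (with, as you correctly note, no loss of a $\delta$-factor since the exponent in $f$ matches the one in \rife{potalpha}), then choose $\delta=\sqrt{t_0}\wedge\delta_0$ and bound $\omega_{0,\delta}(u)$ and $\|u(\hat t)\|_\infty$ via Lemma~\ref{max}. The three competing lower bounds on $K$ and the resulting powers of $t\wedge1$ agree with the paper's choice of $K$, so the argument is complete.
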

\proof   Now we follow the proof of Proposition \ref{hol-lin}. We
fix $\de_0 \in (0,1)$ such that $sg(s)<2\la (1-\alpha)$ if $s<\de_0$
and then we consider the set $\triangle (t_0, \delta)$ with $\delta
< \delta_0$. We obtain
$$
2C_0(\hat t-t_0) \leq
\alpha \,K\, |\hat x-\hat y|^{\alpha-2}  \big( 4\la\,(\alpha-1) +
|\hat x-\hat y| g(|\hat x-\hat y|)\big) + \omega_{0,\de}(h)+ J(\hat t , \hat x, \hat y),
$$
which implies, since $|\hat x - \hat y| < \delta_0$,
$$
2C_0(\hat t-t_0) \leq
2\la\,\alpha(\alpha-1)\,K\, |\hat x-\hat y|^{\alpha-2}   + 2 \| h\|_{T,
\infty}+ J(\hat t , \hat x, \hat y)\,.
$$
We estimate last term as in
Theorem \ref{PWc} and using \rife{potalpha} we get
$$
 J(\hat  t, \hat x, \hat y) \le -K\,|\hat x-\hat y|^\alpha  V(\hat t , \hat y )
  +  \| u(\hat t )\|_{ \infty} \left( k_0+  k_1  |\hat x- \hat y|^\alpha \, V(\hat t, \hat y)\right) .
$$
In particular, if $K\geq k_1\,\| u(\hat t )\|_{ \infty} $ we conclude
$$
2C_0(\hat t-t_0) \leq
2\la\,\alpha(\alpha-1)\,K\, |\hat x-\hat y|^{\alpha-2}   + 2 \| h\|_{T,
\infty}+ k_0 \,  \| u(\hat t )\|_{ \infty} \,,
$$
and we obtain a contradiction if
$$
 K \ge \frac{1}{2\la\,\alpha(1- \alpha) \delta^{\alpha -2}}
 \Big(2 \| h\|_{T,
\infty}+ k_0 \,  \| u(\hat t )\|_{ \infty} + \frac{4\omega_{0,\de}(u)}{t_0 }\Big).
$$
Putting together all the conditions required on $K$, we need to choose $K$ such that
$$
K\geq \max\left\{ \frac{1}{2\la\,\alpha(1- \alpha) \delta^{\alpha -2}}
 \Big(2 \| h\|_{T,
\infty}+ k_0 \,  \| u(\hat t )\|_{ \infty} + \frac{4\omega_{0,\de}(u)}{t_0 }\Big)\,,\, k_1\,\| u(\hat t )\|_{ \infty} \,,\, \frac{\omega_{0,\de}(u)}{\de^\alpha} \right\}\,.
$$
Choosing $\de= \sqrt {t_0}\wedge \de_0$, and using the
 bounds on $u$ (see  \rife{max16}), we can choose
$$
\begin{array}{rl}
\dys
K= & C_0\left\{\left[\frac1{(\sqrt{t_0\wedge 1})^{\frac\alpha 2}} +(\sqrt{t_0\wedge 1})^{1-\frac\alpha 2} \, k_0+k_1\right] \left(  \|u_0\|_{\infty} \,+ \, (T \wedge \frac{3}{2}t_0) \| h\|_{T,
\infty}\right)\right.
\\
\m & \qquad \dys \left. + (\sqrt{t_0\wedge 1})^{1-\frac\alpha 2} \,
\| h\|_{T, \infty}\right\},
\end{array}
$$
for some  $C_0= C_0(\la, g,\alpha)$. In this way we have proved \rife{hol1}.
  \qed

%\vskip 1mm

 We finish the section by noting that all
  the previous results can be further  improved in dimension
$N=1$. Indeed in  this case we only need that $q$ is uniformly
positive and we do not need any ``control'' for $ \| \sigma (t,x) -
\sigma(t,y)\|. $  When $N=1$
Theorems \ref{PW}, \ref{PWc},
Propositions \ref{hol-lin} and
\ref{hol-lin1}
hold  replacing \rife{pw} with
 \be\label{pw2} (b(t,x)-b(t,y))\cdot(x-y) \leq g(|x-y|) |x-y|,
 \quad
\,x,y\,:\, 0<|x-y|\leq 1, \; t \in (0,T).
\ee
 As an example, we only formulate an analogue of Theorem \ref{PW}.
 Let us point out
 that in \cite{BF}
 it is considered   a one-dimensional operator ${A_t} = A$,
$
 {A}u= u'' + b_0 (x)u',
$  where $b_0 $ does not
 satisfy \rife{pw2} and
for which  uniform gradient estimates for the associated parabolic
Cauchy problem do not hold.

\begin{proposition}\label{PW2} Let $N=1$.
 Assume that
 \rife{q1},
  \rife{lyap}
 hold true, and,
  in addition, that there exists
  $g\in C(0, 1; \R_+)$ such that $\int_0^1 g(s)ds<\infty$
  and \rife{pw2} holds.
  Let  $u\in C(Q_T)$
  be a
viscosity solution of \rife{eq}. Moreover, suppose that $u$ and $h $
 have bounded oscillation (see (\ref{osci})).

 Then the function $ u(t)$  is Lipschitz continuous and estimate \rife{general} holds.
\end{proposition}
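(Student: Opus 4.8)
The plan is to reproduce, essentially word for word, the proof of Theorem \ref{PW}, exploiting the fact that in dimension one the Hessian of a radial function has no tangential component — which is precisely the term that in higher dimension must be controlled through the oscillation of $\sigma$. So I would fix $t_0\in(0,T)$ and $\de\in(0,1]$, introduce the very same function $\Phi_\epsilon$ on the box $\Delta(t_0,\de)$ with $\psi(x-y)=f(|x-y|)$, $f$ the solution of \rife{ode1}, and argue by contradiction: if $\sup_\Delta\Phi_\epsilon>0$, then for $C_0$ and $K$ chosen as in Theorem \ref{PW} the positive maximum of $\Phi_\epsilon$ is attained at an interior point $(\hat t,\hat x,\hat y)$ with $\hat x\neq\hat y$, and Theorem \ref{key} together with Remark \ref{rit} yields scalars $X,Y,a,b$ satisfying $a-b=\partial_t z(\hat t,\hat x,\hat y)$, the sub/supersolution inequalities, and the matrix bound \rife{matri1}. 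The Lyapunov function $\vfi$ of \rife{lyap} enters exactly as before, both to ensure $\Phi_\epsilon\to-\infty$ at infinity and to absorb the $\vep\vfi$ terms via the Lyapunov inequality, leading to \rife{equa}.

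The single simplification occurs at the matrix estimate \rife{pre-opt}. Since $N=1$, formula \rife{hesspsi} collapses to $D^2\psi(x-y)=f''(|x-y|)$, because the projection $I-\frac{x-y}{|x-y|}\otimes\frac{x-y}{|x-y|}$ vanishes identically; in particular $f$ concave gives $D^2\psi\le 0$, so in \rife{A<C} one may simply take $C=0$. Applying Proposition \ref{refe} — estimate \rife{rep3} with $\sigma_1=\sigma(\hat t,\hat x)$, $\sigma_2=\sigma(\hat t,\hat y)$, $\tilde\lambda=\lambda$, $P=1$ — the term $\|\sigma(\hat t,\hat x)-\sigma(\hat t,\hat y)\|^2\sup_{|e|=1}|Ce\cdot e|$ is then $0$, and the right-hand side of \rife{pre-opt} reduces to $4\lambda K f''(|\hat x-\hat y|)$; no control whatsoever on $\sigma$ is invoked. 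Combining with \rife{equa} and bounding the drift term by \rife{pw2}, namely $(b(\hat t,\hat x)-b(\hat t,\hat y))\cdot D\psi(\hat x-\hat y)=f'(|\hat x-\hat y|)\,\frac{(b(\hat t,\hat x)-b(\hat t,\hat y))\cdot(\hat x-\hat y)}{|\hat x-\hat y|}\le f'(|\hat x-\hat y|)\,g(|\hat x-\hat y|)$ (using $f'\ge 0$), I land exactly on inequality \rife{finegiro}.

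From there the argument is literally that of Theorem \ref{PW}: since $f$ solves \rife{ode1}, the right-hand side of \rife{finegiro} equals $-K+\omega_{0,\de}(h)$, so, using $\hat t>t_0/2$, one gets $K<\frac{4\omega_{0,\de}(u)}{t_0}+\omega_{0,\de}(h)$, contradicting the choice of $K$. Letting $\epsilon\to 0$ gives \rife{goal2}; reversing the roles of $x$ and $y$, using the concavity of $f$ and the bounds \rife{fde2}, \rife{fde1}, and choosing $\de=\sqrt{t_0\wedge 1}$ then produces \rife{general} with the same constants $\hat c_1,\hat c_2$. I do not expect any real obstacle here: the content is just the observation that the $\sigma$-oscillation term is the only dimension-dependent piece of the estimate, and it drops when $N=1$; the remaining points are routine — that $f\in C^2$ near $\hat x-\hat y$ (true because the positive maximum forces $\hat x\neq\hat y$), that $\sigma^2=q-\lambda I$ is well defined and nonnegative (guaranteed by \rife{q1}), and that the boundary and diagonal exclusions for $\Phi_\epsilon$ go through unchanged with the same $C_0$ and $K$. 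The identical modification — dropping the $\sigma$-term — yields as well the one-dimensional analogues of Theorem \ref{PWc} and of Propositions \ref{hol-lin} and \ref{hol-lin1}.
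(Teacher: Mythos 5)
Your proof is correct and follows essentially the same route as the paper: reduce to the one-dimensional observation that $D^2\psi(x-y)=f''(|x-y|)$ because the tangential projection $I-\frac{x-y}{|x-y|}\otimes\frac{x-y}{|x-y|}$ vanishes when $N=1$, so the matrix term is bounded by $4\lambda K f''$ alone and no control on $\sigma$ is needed, after which \rife{pw2} handles the drift and the argument of Theorem \ref{PW} concludes. Your way of packaging this — taking $C=0$ in \rife{A<C} so that the $\|\sigma_1-\sigma_2\|^2$ term in \rife{rep3} is multiplied by zero — is a clean and faithful use of Proposition \ref{refe}, matching the paper's (terser) computation.
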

\proof We follow the  argument of the proof of Theorem \ref{PW} and
arrive at the  identity \rife{pre-opt}. Now note that  tr$\left( A(\hat t,
\hat x,\hat y)\right) -  A(\hat t,
 \hat
x,\hat y)\hat p\cdot \hat p =0.$
Using that
 $A(\hat t,\hat x,\hat y)\hat p\cdot
 \hat p \geq 4\la$, we obtain
$$
\begin{array}{c}
 (q(\hat t,\hat x) +q(\hat t,\hat y)- 2 c(\hat t,
\hat x,\hat y))D^2\psi(\hat x-\hat y) \leq
  4\la\, f''(|\hat x-\hat y|)\,
\end{array}
$$
and so (cf. \rife{equa})  $ 2C_0(\hat t-t_0) $ $\le  4\la K
\,f''(|\hat x-\hat y| )$ $ + \left( b(\hat t ,\hat x)-b(\hat t ,\hat
y)\right)\cdot D\psi(\hat x-\hat y) $ $ + \omega_{0,\de}(h) . $ Using
the assumption  \rife{pw2} we can continue as in the previous proof
and get the assertion. \qed

\subsection{Unbounded data: estimates on the oscillation of the solutions}

In this section we generalize the  gradient estimate \rife{grad}
for the operator \rife{eq} to the case in which the data
$u_0$, $h$ are not necessarily bounded but, more generally, have bounded
oscillation (see \rife{osci} and \rife{lgro}). In
particular, we cover the case of (possibly unbounded) Lipschitz and
H\"older continuous $u_0$.

 With respect to the assumptions in Theorem \ref{PWc},
  we need    to assume
  an additional  control at infinity on the function
  $g$ appearing in   \rife{pw}, namely that $g \in C(0, +\infty)$
 and $g(r)$ is $O(r)$ as
 $r\to +\infty$.
We stress that this condition is  always satisfied whenever $\sigma(t,x)$ has bounded oscillation and $b(t,x)$ verifies $(b(t,x)-b(t,y))\cdot(x-y)   \leq C |x-y|^2$, for any $x,y\,\in
\R^N, $ $ t \in [0,T]$ (notice that in this case $\varphi= 1+ |x|^2$ is a
 Lyapunov function).
 However, the operator  in \rife{ex1} also satisfies our condition at infinity for $g$.
 Roughly speaking, $g = O(r)$  as
 $r\to +\infty$ should be regarded as a control on
the superlinear growth of the coefficients which is not compensated by the interaction drift--diffusion.

In order to bound the oscillation of the solutions,
 we will need  the following growth type estimates
which may be of independent interest. This is why we give them  in a
rather sharp form (distinguishing  between
  the H\"older and the Lipschitz case, as in the previous
subsections, according to the behaviour of $g$  as
 $r \to 0^+$).

 The result explains that  if
 a viscosity solution $u$ of \rife{eq} is $o_{\infty}(\varphi)$ on $\bar Q_T$
 and, moreover,
 $u_0$, $h$ have bounded oscillation,  then $u(t)$  also has  bounded
 oscillation with a precise control which may imply, in particular,
  the  conservation of the Lipschitz and H\"older continuity
   of $u_0$.
 We mention that a related result for
 equations like $\partial_t u + F(\nabla u, \nabla^2 u )$
  is given in   \cite[Proposition 2.3]{GGIS}.

\begin{lemma}\label{osc-u} Assume that \rife{q1}  and
  \rife{lyap} hold true and that \rife{pw} holds in $Q_T$ with some
  non-negative  $g\in C(0, + \infty)$ such that $g(r)$ is $O( r) $ as $r \to + \infty$. In addition, assume that $u_0$, $h$ satisfy
\be\label{osc-uo} |u_0(x) - u_0(y)| \le k_0 + k_\alpha |x-y|^\alpha+
k_{1} |x-y|^{},\;\; x, y \in \R^N, \ee
\be\label{osc-h} |h(t,x) - h(t,y)| \le h_0 + h_\alpha |x-y|^\alpha+
h_{1} |x-y|^{},\;\; x, y \in \R^N, \, t\in (0,T). \ee
 for some $\alpha \in (0,1)$, with $k_0, k_{\alpha}, k_1,
  h_0, h_{\alpha}, h_1 \ge 0$. Let $u \in
C\left( \bar Q_T\right)$ be a viscosity solution of \rife{eq} such
that  $u$ is $o_{\infty}(\varphi)$ in $\bar Q_T$. Then we have:

\hh {\bf (i)} If $rg(r)\to 0$ as $r\to 0^+$, we have
$$
| u (t,x)-u (t,y)|
  \le     k_0 + (K+ L M t) |x-y|^\alpha+ (k_1+ M t(h_1+k_1)) |x-y|\, ,\;\; x, y \in
\R^N, \, t\in [0,T],
$$
 where
$K= \max(\frac{h_0}{2\alpha\la(1-\alpha)}, k_\alpha, k_1)$, $L=
\max(h_0,h_\alpha, h_1, k_\alpha, k_1)$ and $M=M(g,\la,\alpha,T)$.

\hh {\bf (ii)} If $g\in L^1(0,1)$ (and, in case $k_\alpha\neq0$, if
also $rg(r)\to 0$ as $r\to 0^+$), we have
$$
| u (t,x)-u (t,y)|
  \le
 k_0 + (k_\alpha+L \,k_\alpha\, t)
|x-y|^\alpha+ c_0(\max\{h_0,h_\alpha, k_1\} + ML t) |x-y|^{},\;\; x,
y \in \R^N, \, t\in [0,T],
$$
where $L= L(g , \la,\alpha,T )$, $M= \max(h_0, h_\alpha, h_1, k_1)$,
$c_0= c_0(g , \la,\alpha )$.
\end{lemma}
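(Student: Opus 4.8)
The plan is to adapt the doubling-variables comparison used for Theorem~\ref{PW}, but now performed on the whole strip $[0,T]\times\R^N\times\R^N$ and anchored at $t=0$ (where \rife{osc-uo} is available) instead of on a short time window. By Remark~\ref{bene} we may assume \rife{lyap0}. I would look for a barrier of the form $\Psi(t,z)=k_0+f(t,|z|)$, with $f(t,\cdot)$ increasing, concave, and vanishing at $0$, whose profile and time-dependent coefficients are chosen below, and for $\epsilon>0$ set
\[
\Phi_\epsilon(t,x,y)=u(t,x)-u(t,y)-\Psi(t,x-y)-\epsilon\big(\varphi(t,x)+\varphi(t,y)\big)-\frac{\epsilon}{T-t}.
\]
Since $u=o_\infty(\varphi)$ and $\varphi$ blows up at infinity, $\Phi_\epsilon$ attains a global maximum, and the term $\epsilon/(T-t)$ forces it at some $\hat t<T$; if this maximum is positive, then, provided the initial values of the coefficients of $f$ dominate $k_\alpha$ and $k_1$ as dictated by \rife{osc-uo}, it is not attained at $t=0$ nor at $\hat x=\hat y$, so $\hat t\in(0,T)$ and $\hat x\neq\hat y$. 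At that point one applies Theorem~\ref{key} (together with Remark~\ref{rit} to dispose of the auxiliary parameter), obtains matrices $X,Y$, and then, exactly as in \rife{pre-opt}--\rife{finegiro}, uses the concavity of $f(t,\cdot)$ together with Proposition~\ref{refe} (with $\sigma_1=\sigma(\hat t,\hat x)$, $\sigma_2=\sigma(\hat t,\hat y)$, $\tilde\lambda=\lambda$), the (global) form of \rife{pw}, and \rife{osc-h}. Absorbing the $\varphi$-terms with \rife{lyap0} and discarding $-\epsilon/(T-\hat t)^2<0$, this yields the single scalar inequality
\[
\partial_t f(\hat t,\hat r)\,\le\,4\lambda f''(\hat t,\hat r)+f'(\hat t,\hat r)\,g(\hat r)+h_0+h_\alpha\hat r^{\alpha}+h_1\hat r,\qquad \hat r:=|\hat x-\hat y|,
\]
where $f'$, $f''$ are derivatives in $r$. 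The whole game is then to choose $f$ so that this is impossible for every $\hat r>0$ and $\hat t\in(0,T)$.

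In case (i) I would take $f(t,r)=a(t)r^{\alpha}+b(t)r$, so that
\[
4\lambda f''(t,r)+f'(t,r)g(r)=\alpha\,a(t)\,r^{\alpha-2}\big(4\lambda(\alpha-1)+rg(r)\big)+b(t)\,g(r),
\]
and fix $\delta_0\in(0,1)$ with $rg(r)<2\lambda(1-\alpha)$ for $r<\delta_0$, which is possible since $rg(r)\to0^+$. For $\hat r\le\delta_0$ the first term is $\le-2\lambda\alpha(1-\alpha)\,a(t)\,\hat r^{\alpha-2}$, and since $r^{\alpha-2}$ dominates $g(r)=o(1/r)$ near $0$ this negative term also absorbs $b(t)g(\hat r)$ (after possibly shrinking $\delta_0$ depending on $\sup_{[0,T]}b$) and, once $a(t)\ge h_0/\big(2\alpha\lambda(1-\alpha)\big)$, absorbs $h_0$ on $\{\hat r\le1\}$ as well — this is why $K=\max\{h_0/(2\alpha\lambda(1-\alpha)),k_\alpha,k_1\}$ must be the initial value of $a$. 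For $\hat r\ge\delta_0$, the assumption $g(r)=O(r)$ at infinity, together with $r^{\alpha-1}\le\delta_0^{\alpha-1}$ and $1\le\delta_0^{-\alpha}r^{\alpha}$ on $\{r\ge\delta_0\}$, bounds the right-hand side by $C(\lambda,\alpha,g,\delta_0)\,(a+b+h_0+h_\alpha+h_1)(r^{\alpha}+r)$, so it is enough that $\dot a(t)$ and $\dot b(t)$ each dominate a fixed multiple of $a+b+h_0+h_\alpha+h_1$; taking equality there gives a linear ODE system, whose solution with $a(0)=K$, $b(0)=k_1$ has at most the linear-in-$t$ growth $a(t)\le K+LMt$, $b(t)\le k_1+Mt(h_1+k_1)$ stated in (i), using $e^{Ct}-1\le Cte^{CT}$ on $[0,T]$. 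Letting $\epsilon\to0$ in $\Phi_\epsilon\le0$ and exchanging $x$ and $y$ then gives the claim.

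Case (ii) runs the same way, but when $g\in L^1(0,1)$ one adds to the barrier a term $h_0\,f_0(|z|)$, where $f_0$ is the Lipschitz, increasing, concave solution of \rife{ode1} extended by $f_0(r)=f_0(\delta)$ for $r\ge\delta$: on $\{\hat r\le\delta\}$ it contributes $-h_0$ to the right-hand side above (by the very equation \rife{ode1}), cancelling the constant part of ${\rm osc}(h)$ there, while on $\{\hat r\ge\delta\}$ that part is instead absorbed through $h_0\le(h_0/\delta)\hat r$, i.e.\ into the Lipschitz coefficient (the $|x-y|^{\alpha}$ component of the barrier, present only when $k_\alpha\neq0$, again requiring $rg(r)\to0^+$ near the origin). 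This is the mechanism that keeps the Hölder coefficient equal to $k_\alpha$ at $t=0$ and inflates the Lipschitz one to $c_0\max\{h_0,h_\alpha,k_1\}$, with the time growth once more produced by a linear ODE. I expect the main obstacle to be purely one of bookkeeping: carrying the interaction of the $|x-y|^{\alpha}$, $|x-y|$ and (in (ii)) $f_0$ components through precisely enough to land the constants in the sharp form stated, and in particular ensuring that $\delta_0$ and all the constants are uniform in $t\in(0,T)$ — which is exactly where the $O(r)$-growth of $g$ at infinity (controlling the regime $\hat r\to\infty$) enters, and where the distinction between $rg(r)\to0^+$ and $g\in L^1(0,1)$ near $r=0$ dictates whether $h_0$ gets lodged in the Hölder or in the Lipschitz part of the estimate.
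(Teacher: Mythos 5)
Your case (i) is essentially the paper's argument: same barrier anchored at $t=0$ over the whole strip, same threshold $\delta_0$ where $rg(r)<2\la(1-\alpha)$ makes the $r^{\alpha-2}$ term negative enough to absorb $b(t)g(\hat r)$ and $h_0$, same use of $g=O(r)$ at infinity to reduce the large-$\hat r$ regime to a linear control on the coefficients. The paper keeps the coefficients affine in $t$ and iterates over intervals of fixed length $T'=1/(2L_0)$ rather than solving an ODE system; more importantly it keeps the two differential inequalities \emph{decoupled} (for $\hat r\ge r_0$ the residual is bounded by $C a\,\hat r^\alpha+Cb\,\hat r$, not by $C(a+b)(\hat r^\alpha+\hat r)$), which is what produces the stated Lipschitz coefficient $k_1+Mt(h_1+k_1)$ with no $h_0,h_\alpha$ in it; your coupled system would not land on exactly that constant. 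This is the bookkeeping you flag, and it is fixable.

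Case (ii) has a genuine gap. The correction $h_0f_0$, with $4\la f_0''+gf_0'=-1$, only cancels the constant $h_0$ in the oscillation of $h$; it does nothing for the term that actually makes case (ii) hard, namely $b(t)\,g(\hat r)$ coming from the genuinely linear component $b(t)|x-y|$ of the barrier -- a component which must be present with $b(0)\ge k_1$ in order to dominate $u_0(x)-u_0(y)$ at $t=0$ for large $|x-y|$. When $g\in L^1(0,1)$ but $rg(r)\not\to 0$ (e.g. $g$ with spikes of height $1/r$ on intervals of length $r^3$) and $k_\alpha=0$, there is no negative $\hat r^{\alpha-2}$ term left, and $b(t)g(\hat r)$ blows up along a sequence $\hat r\to 0$ while the left-hand side $\dot a(\hat t)+\dot b(\hat t)\hat r$ stays bounded; since the maximum point $\hat r$ is not under your control, the contradiction argument fails. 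The paper's device is to replace the linear profile $r$ by $r+\tilde f(r)$, where, after decomposing $g(r)\le mr+\tilde g(r)$ with $\tilde g\in L^1(0,\infty)$ (this is where $g=O(r)$ at infinity enters), $\tilde f$ solves $4\la \tilde f''+\tilde g\tilde f'+3\tilde g=0$ on all of $(0,\infty)$: the resulting profile is globally Lipschitz and concave, the singular part $\tilde g\,(1+\tilde f')$ of $g$ applied to the Lipschitz slope is absorbed by $4\la\tilde f''$, and the surplus $-2\tilde g$ also eats $h_0+h_\alpha \hat r^\alpha$ after normalizing $\tilde g\ge 1$ on $(0,1]$ -- this is exactly where the factor $c_0=1+\tilde f'(0)$ in the statement comes from. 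A secondary point: your $f_0$ extended by the constant $f_0(\delta)$ for $r\ge\delta$ is only $C^{1,1}$ at $r=\delta$, so it is not an admissible test function in Theorem \ref{key} without an additional smoothing step.
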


\begin{remark} \label{FGP}{\em
Let us comment on the technical form of estimates (i) and (ii)
above. Such form is meant to show that, if $k_0=0$,  then (i) or
(ii) imply the conservation of the H\"older, respectively Lipschitz,
continuity from $u_0$ to $u(t)$. To our knowledge, no similar results are available in
the literature unless the  coefficients $q_{ij}$ and $b_i$ are  assumed to
be Lipschitz continuous.

In particular, notice that if $k_0=k_1=0$ and if $h_1=0$, estimate
(i) implies a global H\"older estimate for $u(t)$, and in this case
$u(t)$ grows sublinearly at infinity. Similarly, (ii) shows that if
$k_0=k_\alpha=0$, then $u(t)$ is globally Lipschitz continuous. The
reader should keep in mind that here we are not dealing with the
regularizing effect, but with merely conservation of the growth and
continuity properties from $u_0$ to $u$. On the other hand, the
regularizing effect will follow coupling together the above
oscillation estimates with Theorem \ref{PW} and will be stated
later (see Theorem \ref{grow}).
}
\end{remark}

\begin{proof} According to Remark \ref{bene}, there is
no loss of generality in taking $M=0$ in \rife{lyap}. Let us
consider the set
\begin{align} \label{dl}
\Delta = \{ (t,x,y)\in (0,T')\times \RN\times \RN\, \},
\end{align}
where   $T' \in [0,T]$   will  be chosen later, and  the function
\begin{align} \label{phi}
 \Phi_{\epsilon}(t,x,y)= u (t,x)-u (t,y)
  -   f(t,|x-y|)  -
\vep \, (\vfi(t,x)+\vfi(t,y))
  - \frac\vep{T' -t}\,, \;\; \epsilon >0,
\end{align}
where
$$
f(t,r)= (k_0+at) + (\beta+ bt) r^\alpha+ (\gamma+ct) (r+\tilde
f(r)),
$$
with $a,b,c, \beta, \gamma\geq 0$ and $\tilde f(r) \in
C^2(0,\infty)$ is   a nondecreasing concave function with $\tilde
f(0)=0$, to be fixed later. As usual, we wish to prove that,
independently of $\vep$,
 \be\label{goal4}
\Phi_{\epsilon}(t,x,y)\leq 0, \quad
 (t,x,y)\in \Delta\,,
 \ee
and we argue by contradiction, assuming that
$\sup\limits_{\Delta}\Phi_{\epsilon}(t,x,y)>0$. Since
$\vfi$ blows-up at infinity and
$u$ is $o_{\infty}(\varphi)$,  we have
$$
\Phi_{\epsilon} \to -\infty \qquad \hbox{as $|x|\to\infty$ or
$|y|\to \infty$,}
$$
hence $\Phi_{\epsilon}$ has a global maximum at some point
 $(\hat t, \hat x,\hat y )\in \overline \Delta$
and clearly  $\hat x\neq \hat y$ since the maximum is positive, and
$\hat t< T'$. Assuming that
 \be \label{gro}
  \beta\geq k_\alpha\,,\quad \gamma\geq k_1,
 \ee
we deduce that the maximum cannot be reached at $t=0$ because,
thanks to \rife{osc-uo} and \rife{gro}, we have
$$
 u_0 (x)-u_0 (y)
  \le  f(0,|x-y|),\;\;\; x, y \in \R^N.
$$
 It follows that $(\hat t, \hat x,\hat y )$
 is also a  local maximum in which $\Phi_{\epsilon}$
 is positive. Henceforth, proceeding
 like in Theorem \ref{pw} (using \rife{lyap0}, and
that $f' \ge 0$ and $f''\le 0$ (we have set $\partial_x f = f'$
 and $\partial_{xx}^2 f = f''$), we
obtain
 $$
 \begin{array}{c}
  \frac\vep{(T'- \hat t)^2}+
(a+ b |\hat x-\hat y|^\alpha+c (|\hat x-\hat y|+ \tilde f(|\hat
x-\hat y|))
 \\
 \le
  \left(
4\la\,f''(\hat t,|\hat x-\hat y| ) + f'(\hat t,|\hat x-\hat y|)\,
g(|\hat x-\hat y|)\right)
  +
 |h(\hat t ,\hat x)-h(\hat t ,\hat y)|.
 \end{array}
$$
Setting $ \hat r = |\hat x - \hat y|$, using the expression of
$f(t,r)$ and \rife{osc-h}, we get
\begin{equation}\label{12}
\begin{array}{c}
 \frac\vep{(T'-\hat t)^2}+  (a+b\hat r^\alpha+ c({\hat r}+\tilde f({\hat r})))
   \\
 \m
 \leq    (\beta+b \hat t)\,\alpha\left[4\la(\alpha-1)
 {\hat r}^{\alpha-2}+
{\hat r}^{\alpha-1} g( {\hat r})\right] + (\gamma+ c \hat t) g(\hat r) +
(\gamma +c \hat t) \left[ 4\la \tilde f''(\hat r)+ g(\hat r)\tilde
f'(\hat r)\right]
\\
\m \quad + h_0+ h_\alpha {\hat r}^\alpha+ h_1 {\hat r}.
\end{array}
\ee We now distinguish two cases  according to the assumption on
$g(s)$ near $s=0$.

\hh \textbf{(i)} Assume  that $g(s)s\to 0$ as $s\to 0$. We take
here $\tilde f=0$ and we assume that $ b\geq c$, $\beta\geq \gamma
$; in particular we obtain
$$
\begin{array}{c}
  (\beta+b \hat t )\,
  \alpha\left[4\la(\alpha-1)  {\hat r}^{\alpha-2}+   {\hat r}^{\alpha-1} g( {\hat r})\right] + (\gamma+ c\hat t) g({\hat r})
  \\
  \m
  \leq (\beta+b \hat t)\left\{  \alpha\left[4\la(\alpha-1)  {\hat r}^{\alpha-2}+   {\hat r}^{\alpha-1} g( {\hat r})\right] +   g({\hat r}) \right\}.
  \end{array}
$$
Since $sg(s)\to 0$ as $s\to 0$,  there exists $r_0<1$ (only
depending on $g$, $\la$, $\alpha$) such that \be\label{r0}
 \alpha\left[4\la(\alpha-1)  s^{\alpha-2}+
s^{\alpha-1} g( s)\right] +  g(s)\leq 2\alpha\la(\alpha-1)
s^{\alpha-2}, \qquad  s\in (0,r_0)\,.
 \ee
Since $g(s)=O(s)$ as $s\to +\infty$, for $s\geq r_0$ there exists a
constant $L_0$ such that $g(s)\leq L_0s $, hence
 $$
(\beta+b \hat t)\,\alpha\left[4\la(\alpha-1)  s^{\alpha-2}+
s^{\alpha-1} g( s)\right] + (\gamma+ c\hat t) g(s)\leq  (\beta+b
\hat t)\,\alpha L_0 \, s^{\alpha}   +  (\gamma+ c\hat t) L_0 s, \;\;
 s \ge r_0.
 $$
Therefore, we conclude from \rife{12} (where $\tilde f=0$)   that
 $$
 \frac\vep{(T'-\hat t)^2}+   (a+b {\hat r}^\alpha+ c {\hat r})
 \leq     -2\alpha\la(1-\alpha) \beta\, {\hat r}^{\alpha-2}\chi_{r<r_0}+(\beta+b \hat t)\,\alpha L_0 \, {\hat r}^{\alpha}
   +  (\gamma+ c\hat t) L_0 {\hat r} + h_0+ h_\alpha {\hat r}^\alpha+ h_1 {\hat r}\,.
 $$
We choose $T'= 1/{2L_0}$,  so that $L_0 t\leq \frac12$ and we deduce
 \be\label{pre-abc}
 \frac\vep{(T'-\hat t)^2}+
 a+ (\frac12 b-L_0 \beta ) {\hat r}^\alpha+ (\frac12 c-L_0 \gamma ) {\hat r}
 \leq    -2\alpha
 \la(1-\alpha) \beta\, {\hat r}^{\alpha-2}\chi_{r<r_0}+ h_0+ h_\alpha {\hat r}^\alpha+ h_1 {\hat r}\,.
\ee
 Since $h_0 \leq h_0 {\hat r}^{\alpha-2}{ \chi_{(0,r_0)}(\hat r) } + \frac1{r_0^\alpha}\, h_0 {\hat r}^\alpha
 \chi_{r\geq r_0}(\hat r)$, we obtain
 $$
 \begin{array}{c}
  \frac\vep{(T'-\hat t)^2}+
   a+ \left(\frac12 b-L_0
   \beta\right) {\hat r}^\alpha+  (\frac12 c-L_0 \gamma ) {\hat r}
 \\
 \m
 \qquad  \qquad  \qquad
 \leq  (h_0 -2\alpha\la(1-\alpha)
\beta)  {\hat r}^{\alpha-2}\chi_{(0,r_0)}(\hat r)+ (\frac1{r_0^\alpha}h_0+
h_\alpha) {\hat r}^\alpha+ h_1 {\hat r}\,.
 \end{array}
 $$
Here we choose $a=0$, $\beta\geq \frac{h_0}{2\alpha\la(1-\alpha)}$,
$b\geq   2(L_0 \beta + 2\frac{\max\{h_0, h_\alpha\}}
 {r_0^{\alpha}})$,
$\gamma=k_1$, $c=2(L_0\gamma+ h_1)$ and we get a contradiction.

Recalling that $\beta\geq \gamma$, $b\geq c$
 and \rife{gro} were used before, we
have just proved that \be\label{esp} u (t,x)-u (t,y) \le
    k_0 + (\beta + b t) |x-y|^\alpha+(k_1+ ct) |x-y|  ,
\qquad \hbox{$t \in [0,T']$ and $x,y \in \R^N$,}
\ee where $\beta=
\max(\frac{h_0}{2\alpha\la(1-\alpha)}, k_\alpha, k_1)$, $c=2(L_0
k_1+ h_1)$ and
$$
b=   \max \left\{ 2\left(L_0 \beta + 2\frac{\max\{h_0, h_\alpha\}}{r_0^{\alpha}}\right) \,,\,
2(L_0 k_1 + h_1) \right\}\,.
$$
In particular we have $u (T',x)-u (T',y) \le
    k_0 + (\beta + b T')  |x-y|^{\alpha}+
    2(k_1+ h_1 T') |x-y|,$ $  x,y \in \R^N.
$ Since $T'$ only depends on $\la,\alpha$, $g$, and not on the data
$h$, $u_0$,  we can restart the same method on the interval $I= [T',
(2T' \wedge T)]$ (here it is enough to use $
f(t,r)= k_0 +  b_1 t \, r^\alpha+ c_1 t r,
$
for suitable constants $b_1$ and $c_1$).

 Iterating this argument we
obtain the assertion on the whole $[0,T]$.  In particular, we have
proved, for any $x,y \in \R^N$,
\be\label{fin-osc} u (t,x)-u (t,y) \le
    k_0 +  (K +L M t) |x-y|^\alpha+ (k_1+ M (k_1+h_1) t) |x-y|,
\ee
 where  $K= \beta =
  \max(\frac{h_0}{2\alpha\la(1-\alpha)}, k_\alpha, k_1)$,
$L=  \max(h_0,h_\alpha, h_1, k_\alpha, k_1 )$ and
$M=M(g,\la,\alpha,T)$.

\vskip 1mm Note that an alternative estimate can be obtained after
\rife{pre-abc} if we choose $a= h_0$, $b\geq 2(L_0\beta+  h_\alpha
)$ and $c= 2(L_0\gamma+ h_1)$. In this way we get the estimate
(recall that $\beta\geq \gamma=k_1$ and $b\geq c$)
 \be\label{opt-01}
|u (t,x)-u (t,y)| \le (k_0+h_0 t) + (\max\{k_\alpha,k_1\}+ b t)
|x-y|^\alpha+  (k_1+  c t) |x-y|,
 \ee for every $t \in [0,T']$ and
$x,y \in \R^N$, where   $b=  2(L_0\max\{k_\alpha,k_1\} +  \max\{
h_\alpha,h_1\})$ and $c=2(L_0 k_1+ h_1)$. Such a choice better
points out the continuity as $t\to 0$ but, in case $k_0=0$,
\rife{fin-osc} is preferable to show the continuity properties of
$u$.

\hh \textbf{(ii)} Assume now in addition that $g\in L^1(0,1)$. Since
$g(r)=O(r)$ as $r\to + \infty$, we may deduce that there exist $m>0$
and  a continuous function $\tilde g(r)$ such that
$$
g(r)\leq m r+ \tilde g(r)\qquad \forall r>0\,, \quad \tilde g\in
L^1(0,\infty)\,.
$$
Without loss of generality we can also assume that
$1\leq \tilde g(r)$ if $r\leq 1$, hence  taking $\gamma\geq \max\{h_0,h_\alpha\}$ implies
$$
h_0+h_\alpha r^\alpha+ h_1 r \leq (h_0+ h_\alpha) \tilde g(r)+
(h_0+h_\alpha+ h_1)r \leq 2\gamma \tilde g(r)+ (h_0+h_\alpha+h_1)r ,
\;\; r>0.
$$
Therefore, we deduce from \rife{12} \be \label{aggiunta}
 \begin{array}{c}
 \frac\vep{(T'-\hat t)^2}+  (a+b{\hat r}^\alpha+ c({\hat r}+\tilde f({\hat r})))
 \le      (\beta+b \hat t)\,\alpha\left[4\la(\alpha-1)  {\hat r}^{\alpha-2}+   {\hat r}^{\alpha-1} g( {\hat r})\right]
 \\
 \m
 \quad + (\gamma+ c\hat t)(1+\tilde f'({\hat r})) m {\hat r}  + (\gamma +c \hat t) \left[ 4\la \tilde f''({\hat r})+ \tilde g({\hat r})\tilde f'({\hat r})+ 3\tilde g({\hat r})\right]  + (h_0+h_\alpha+ h_1){\hat r}\,.
\end{array}
\ee We define now $\tilde f(r)$ as the solution of the ODE
$$
\begin{cases}
4\la\, \tilde f''(r)+ \tilde g(r)\tilde f'(r) + 3\tilde g(r)=0\qquad r\in (0,\infty)\,,
\\
\m
\tilde f(0)=0\,,\,\, \tilde f'(\infty)=0\,,
\end{cases}
$$
which is nothing but
$$
\tilde  f(r)=   3\int_0^r \left(e^{\frac{\tilde
G(\xi)}{4\la}}-1\right)d\xi\,, \qquad \tilde G(\xi)= \int_\xi^\infty
\tilde g(\tau)d\tau.
$$
Moreover,  since $rg(r)=o(1)$ as $r\to 0$,
 we  still use \footnote{ Note that if $k_\alpha=0$
we can take { $\beta =b=0$,} in which case we do not need anymore
the assumption $rg(r)=o(1)$ as $r\to 0^+$.} \rife{r0} and we deduce,
for some positive  constant $L_0$,
$$
 \frac\vep{(T'-\hat t)^2}+ (a+b{\hat r}^\alpha+ c({\hat r}+\tilde f({\hat r})))
 \leq (\beta+ b \hat t) L_0 \,   {\hat r}^\alpha + (\gamma+ c\hat t) L_0 {\hat r}+ (h_0+h_\alpha+h_1){\hat r} \,.
$$
As before choosing $T'=\frac1{2L_0} $ we get
$$
 \frac\vep{(T'-\hat t)^2}+ (a+\frac12 b {\hat r}^\alpha+
  \frac12 c{\hat r})
 \leq \beta  L_0 \,
 {\hat r}^\alpha +
\gamma L_0 {\hat r}+ (h_0 + h_{\alpha} + h_1){\hat r}
$$
and we conclude choosing $a=0$,
 $b=2L_0\beta $, $c= 2(L_0\gamma+ h_0+h_\alpha+ h_1)$.

Recalling the previous conditions on $\beta$ and $\gamma$ ($\beta = k_{\alpha}$, $\gamma  \ge k_1$), we have then obtained that for every $t \in [0,T']$ and $x,y \in \R^N$
$$
|u (t,x)-u (t,y)| \le
    k_0 + k_\alpha(1+ 2L_0 t) |x-y|^\alpha + c_0 (\max\{h_0,h_\alpha, k_1\} + ct)  |x-y| ) ,
$$
where   $c= 2(L_0 \max\{h_0,h_\alpha,k_1\}+ h_0+h_\alpha+h_1)$ and $c_0=1+\tilde f'(0)$ only depends on $g$, $\la$.

Next we iterate the estimate as in the previous case concluding that
$$
|u (t,x)-u (t,y)| \le k_0 +  (k_\alpha + L  k_\alpha  t)
|x-y|^\alpha+ c_0 (\max\{h_0, h_\alpha, k_1\}+ M L t) |x-y|
$$
 where  $L= L(g , \la,\alpha,T )$, $M=
\max(k_1, h_0, h_\alpha, h_1)$ and $c_0=c_0(g,\la,\alpha)$.
 \vskip 2 mm
Note that also in this case  an alternative estimate can be obtained
if we choose $a= h_0$, $b=2(L_0 k_\alpha+ h_\alpha)$ (as before) and
$c= 2(L_0 k_1+ h_1)$. In this way we obtain the different estimate
\be\label{opt-02} |u (t,x)-u (t,y)| \le
    (k_0+h_0 t) + (k_\alpha + b t) |x-y|^\alpha+c_0(k_1+ ct) |x-y|  ,
\ee for every $t \in [0,T']$ and $x,y \in \R^N$.
% where   $b=2(L_0
%k_\alpha+ h_\alpha)$   and $c= 2(L_0 k_1+ h_1)$.
This estimate may be   interesting for small time $t$.
  \end{proof}

\begin{remark}\label{tpicc} {\em
 We explicitly
  point out that we also proved the alternative estimates \rife{opt-01} and \rife{opt-02}, namely that
\be\label{smallt}
|u (t,x)-u (t,y)| \le
    (k_0+h_0 t) + (\beta+ b t) |x-y|^\alpha+ c_0 (k_1+  2(L_0 k_1+ h_1) t) |x-y|
\ee
for every $t \in [0,T']$ and $x,y \in \R^N$,
where
 $L_0$ only
depends on  $g$, $\la$, $\alpha$, $T$ and the constants $\beta$, $b$
are different according to case (i) or (ii); precisely, we have
$\beta= \max\{k_\alpha,k_1\}$ and   $b=  2(L_0\max\{k_\alpha,k_1\} +
\max\{ h_\alpha,h_1\})$ in case (i), while $\beta= k_\alpha$ and $b=
2(L_0 k_\alpha+ h_\alpha)$ in case (ii). }
\end{remark}

\begin{remark} \label{dege} {\em
{\it Under the stronger condition that there exist $M$, $L
\ge 0$
 such that  $g(r) \le M + Lr$ for every $r > 0$,
 the above proof   works even if $\la=0$, i.e., for a degenerate
problem, providing a bound for the oscillation of $u$.}

 More
precisely, if \rife{osc-uo} and \rife{osc-h} hold with $k_\alpha=0$
and $h_\alpha=0$, respectively, then $u$ has bounded oscillation
and \be\label{osc-dege} | u (t,x)-u (t,y)|
  \le  k_0 + C_0  t (h_0+ M(h_1+k_1))  +( k_1+ C_0 t (h_1+k_1)) |x-y|^{}\big),
  \ee
for any $ x, y \in
\R^N$, $ t\in [0,T]$,
where $ C_0= C_0 (T,  L) >0$.

The proof runs as above; taking $b=\beta=0$ and  $\tilde f=0$, and
$\gamma=k_1$,   \rife{12} reads as
$$
 \frac\vep{(T'-\hat t)^2}+  (a+  c{\hat r})
 \leq    (k_1+ c \hat t) g({\hat r})
 + h_0+   h_1 {\hat r}
$$
which implies, since  $g(r) \le M + Lr$, $r>0,$
$$
 \frac\vep{(T'-\hat t)^2}+  (a+  c{\hat r})
 \leq    (k_1+ c \hat t) (M+L{\hat r})
 + h_0 + h_1 {\hat r}\,.
$$
Choosing $T'=\frac1{2L}$ we get
$$
 \frac\vep{(T'-\hat t)^2}+  (a+  c {\hat r})
 \leq     M( k_1 +\frac c{2L}) + (k_1L+\frac12  c) {\hat r}
 + h_0 + h_1 {\hat r}
$$
and we conclude choosing $c=2(h_1+ k_1L)$ and  $a=M( k_1 +\frac c{2L})+ h_0$. We obtain then
$$
| u (t,x)-u (t,y)|
  \le   k_0 + at +  (k_{1}+(2h_1+2 k_1L)t)  |x-y|^{}\big),\;\; x, y \in
\R^N, \, t\in [0,T'],
$$
where $a=M( 2k_1 +\frac {h_1}{L})+ h_0$, and next we extend the
estimate in $[0,T]$. \vskip0.3em Notice that \rife{osc-dege} allows
one to estimate the oscillation of $u$ in terms of the oscillation
of $h$ and $u_0$. If in addition we have $M=0$ (i.e., $g(r) \le Lr$,
$r>0$),  we  also deduce that the Lipschitz continuity can be
preserved if $h_0=k_0=0$; {\it in that case, $u_0$ and $h$ globally
Lipschitz imply that $u(t)$ is globally Lipschitz.}
 }
\end{remark}

Let us observe the following corollary of the above result.

\begin{corollary} Assume that \rife{q1}  and
\rife{lyap} hold true and that \rife{pw} holds in $Q_T$ with some
non-negative  $g\in C(0, + \infty)$ such that $g(r)$ is $O( r) $ as
$r \to +\infty$ and one of the two conditions (i) or (ii) of Lemma
\ref{osc-u}  holds. Let $u \in C\left( \bar Q_T\right)$ be a
viscosity solution of \rife{eq} such that  $u$ is $o_{\infty}(\varphi)$ in
$\bar Q_T$. If data $u_0$, $h$ have bounded oscillation, then $u$
has bounded oscillation and
$$
{{\rm osc}}(u(t)) \leq  K\left\{  {{\rm osc}}(u_0)+
 t  \left[{{\rm osc}}(u_0)+ {{\rm osc}}_{(0,T)}(h) \right]\right\}\,,
$$
for every $t\in [0,T]$, where $K= K(\la, g, \alpha, T)$.
\end{corollary}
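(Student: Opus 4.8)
The plan is to obtain the statement as a specialization of Lemma~\ref{osc-u}, the only task being to cast the hypotheses in the form~\rife{osc-uo}--\rife{osc-h}. Since $u_0$ has bounded oscillation, \rife{lgro} gives $|u_0(x)-u_0(y)|\le {\rm osc}(u_0)(1+|x-y|)$ for \emph{all} $x,y\in\RN$, and likewise $|h(t,x)-h(t,y)|\le {\rm osc}_{(0,T)}(h)(1+|x-y|)$ for all $x,y\in\RN$, $t\in(0,T)$. Fixing the exponent $\alpha\in(0,1)$ attached to whichever of (i), (ii) is assumed, we then have \rife{osc-uo} and \rife{osc-h} with
$$k_0=k_1={\rm osc}(u_0),\qquad k_\alpha=0,\qquad h_0=h_1={\rm osc}_{(0,T)}(h),\qquad h_\alpha=0.$$
Observe that $k_\alpha=0$, so in case (ii) the auxiliary requirement ``$rg(r)\to0$ as $r\to0^+$'' is not needed (cf. the footnote in the proof of Lemma~\ref{osc-u}); hence Lemma~\ref{osc-u} applies under exactly the hypotheses of the corollary.

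Applying Lemma~\ref{osc-u}, the conclusion reads
$$|u(t,x)-u(t,y)|\le {\rm osc}(u_0)+\Theta_\alpha(t)\,|x-y|^\alpha+\Theta_1(t)\,|x-y|,\qquad x,y\in\RN,\ t\in[0,T],$$
with $\Theta_\alpha,\Theta_1$ affine in $t$ ($\Theta_\alpha\equiv0$ in case (ii)); substituting $k_0=k_1={\rm osc}(u_0)$, $h_0=h_1={\rm osc}_{(0,T)}(h)$ into the explicit constants $K,L,M,c_0$ of Lemma~\ref{osc-u} and using $\max\{a,b\}\le a+b$, both $\Theta_\alpha$ and $\Theta_1$ are controlled by quantities of the form $C\big({\rm osc}(u_0)+{\rm osc}_{(0,T)}(h)\big)(1+t)$, $C=C(\lambda,g,\alpha,T)$. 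Restricting to $|x-y|\le1$ (so $|x-y|^\alpha\le1$) and taking the supremum over such $x,y$ shows that $u$ has bounded oscillation and yields an estimate of the claimed type,
$${\rm osc}(u(t))\le K\Big({\rm osc}(u_0)+t\,\big[{\rm osc}(u_0)+{\rm osc}_{(0,T)}(h)\big]\Big),\qquad t\in[0,T],$$
for some $K=K(\lambda,g,\alpha,T)$.

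I expect the only point requiring care to be the constant bookkeeping needed to keep the $t$-independent part of the bound proportional to ${\rm osc}(u_0)$ alone, as stated. For this one should not simply invoke the $[0,T]$-form of Lemma~\ref{osc-u} but rather use the refined per-interval estimates recorded in Remark~\ref{tpicc} (equivalently \rife{opt-01}--\rife{opt-02}), in which the oscillation of $h$ enters only through a term $h_0\,t$, first on $[0,T']$ and then iterating on $[T',2T'],[2T',3T'],\dots$. This iteration is legitimate because $T'$ depends only on $g,\lambda,\alpha$ and not on the data, so the number $\lceil T/T'\rceil$ of steps is controlled and the growth of the constants along these finitely many steps is absorbed into $K=K(\lambda,g,\alpha,T)$. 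Apart from this, there is no genuine obstacle: the corollary is a routine reading of Lemma~\ref{osc-u} for data with linear growth.
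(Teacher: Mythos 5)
Your proposal is correct and follows exactly the route the paper intends (the corollary is stated without proof as a direct specialization of Lemma \ref{osc-u} with $k_0=k_1={\rm osc}(u_0)$, $k_\alpha=h_\alpha=0$, $h_0=h_1={\rm osc}_{(0,T)}(h)$). You also correctly spotted the one real subtlety — that the constants $K,L$ in statement (i) of Lemma \ref{osc-u} carry a $t$-independent contribution from $h_0$, so to get the stated form one must use the per-interval estimates \rife{opt-01}--\rife{opt-02} of Remark \ref{tpicc} and iterate over the finitely many intervals of length $T'=T'(g,\lambda,\alpha)$ — which is precisely how the lemma's own proof is organized.
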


Combining Theorems \ref{PW} and Lemma \ref{osc-u}, assuming the
additional control of $g$ at infinity, we obtain a global estimate
for the case that   the initial datum and the source term are possibly unbounded.

\begin{theorem} \label{grow}
 Assume that  \rife{q1}, \rife{lyap}
 hold true, and,
  in addition, that there exists  a non-negative
  $g\in C(0, +\infty)$ such that $\int_0^1 g(s)ds<\infty$, $g(r)r\to 0$ as $r\to 0^+$,
   $g$ is O$(r)$ as $r \to +\infty$ and
 \rife{pw} holds for every $x,y\in \R^N$.

Assume that  $u_0$ and $h$ satisfy \rife{osc-uo} and  \rife{osc-h}.
Let $u\in C(\bar Q_T)$ be a viscosity solution of \rife{eq} which
     is $o_{\infty}(\varphi)$ in $\bar Q_T$,
   where $\vfi$ satisfies \rife{lyap}.
 Then $ u(t)$ is Lipschitz continuous,
 for every $t\in (0,T)
$, and, moreover, there exists $c= c(T, \lambda, g, \alpha)>0$ such that, for $t\in (0,T)$,
 \be \label{grad1} \|   D u(t)\|_\infty \leq  c\left\{ \frac
{k_0} {\sqrt{t \wedge 1}}\, + \frac { k_{\alpha}} {{(t \wedge
1)}^{1/2 - \alpha/2}} + k_1+ (\sqrt{t \wedge 1}) \, (h_0 +  h_{\alpha}(t \wedge 1)^{\alpha/2}+ h_1\sqrt{t \wedge 1})\right\}.
 \ee
\end{theorem}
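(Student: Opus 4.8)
The plan is to feed the oscillation estimates of Lemma~\ref{osc-u}(ii) -- together with their sharp small-time refinement \rife{smallt} recorded in Remark~\ref{tpicc} -- into the interior-in-time gradient bound of Theorem~\ref{PW}, used in the quantitative form \rife{lip-delta}. First I would observe that all hypotheses are in force: since $g\in C(0,+\infty)$, $\int_0^1 g<\infty$, $g(r)r\to0$ as $r\to0^+$, $g$ is $O(r)$ at infinity and \rife{pw} holds for all $x,y\in\R^N$, and since $u$ is $o_\infty(\varphi)$ in $\bar Q_T$, both Theorem~\ref{PW} and Lemma~\ref{osc-u}(ii) apply; moreover restricting \rife{osc-uo} and \rife{osc-h} to $|x-y|\le1$ shows that $u_0$ and $h$ have bounded oscillation, hence by Lemma~\ref{osc-u} so does $u$, which is the last missing ingredient to apply Theorem~\ref{PW} to $u$.

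Next I would fix $t_0\in(0,T)$, set $\delta=\sqrt{t_0\wedge1}$ and insert this $\delta$ into \rife{lip-delta}; using $\delta/t_0\le1/\sqrt{t_0\wedge1}$ this gives
$$
Lip(u(t_0))\le C_\lambda\Big(\frac{\omega_{0,\delta}(u)}{\sqrt{t_0\wedge1}}+\sqrt{t_0\wedge1}\;\omega_{0,\delta}(h)\Big),
$$
where $\omega_{0,\delta}(u)$, $\omega_{0,\delta}(h)$ are the oscillations on scale $\delta$ over $(\tfrac{t_0}{2},T\wedge\tfrac32 t_0)$ and $C_\lambda=C_\lambda(\lambda,g)$. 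The term in $h$ is immediate, since \rife{osc-h} yields $\omega_{0,\delta}(h)\le h_0+h_\alpha\delta^\alpha+h_1\delta=h_0+h_\alpha(t_0\wedge1)^{\alpha/2}+h_1\sqrt{t_0\wedge1}$, and multiplying by $\delta$ reproduces exactly the $h$-dependent part of \rife{grad1}.

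The core of the proof is the bound on $\omega_{0,\delta}(u)/\delta$, which I would handle in two regimes. Let $T'=T'(g,\lambda,\alpha)$ be the threshold of Lemma~\ref{osc-u}. If $t_0\le\tfrac23 T'$, then $(\tfrac{t_0}{2},T\wedge\tfrac32 t_0)\subseteq[0,T']$, and for such $t$ and $|x-y|\le\delta$ I would use the sharp estimate \rife{smallt} in its case (ii) form,
$$
|u(t,x)-u(t,y)|\le(k_0+h_0 t)+(k_\alpha+2(L_0 k_\alpha+h_\alpha)t)\,\delta^\alpha+c_0\big(k_1+2(L_0 k_1+h_1)t\big)\,\delta,
$$
with $L_0,c_0$ depending only on $g,\lambda,\alpha$. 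Since $t\le\tfrac32 t_0$, dividing by $\delta$ and estimating each monomial separately -- using $t_0\,\delta^{\alpha-1}\le C_T\,\delta^{\alpha-1}$ where it multiplies $k_\alpha$, $t_0\,\delta^{\alpha-1}\le C_T\sqrt{t_0\wedge1}\,(t_0\wedge1)^{\alpha/2}$ (an equality when $t_0\le1$) where it multiplies $h_\alpha$, and $t_0\le T$, $t_0\le T(t_0\wedge1)$ for the remaining ones, separating $t_0\le1$ from $t_0>1$ (where $\delta=1$) -- recovers termwise the right-hand side of \rife{grad1}. If instead $t_0>\tfrac23 T'$, then $t_0$ is bounded below and above by constants depending only on $g,\lambda,\alpha,T$, so $\delta$ and every power of $t_0\wedge1$ are comparable to $1$; here the coarse global bound of Lemma~\ref{osc-u}(ii) suffices, since it gives $\omega_{0,\delta}(u)/\delta\le C(g,\lambda,\alpha,T)(k_0+k_\alpha+k_1+h_0+h_\alpha+h_1)$ while every term on the right of \rife{grad1} is, in this regime, comparable to $k_0+k_\alpha+k_1+h_0+h_\alpha+h_1$.

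Combining the two regimes, collecting constants (all depending only on $T,\lambda,g,\alpha$) and exchanging the roles of $x$ and $y$ upgrades the one-sided inequality for $u(t_0,x)-u(t_0,y)$ to a bound on $\|Du(t_0)\|_\infty=Lip(u(t_0))$, which is \rife{grad1}. I expect the only genuine difficulty to be the bookkeeping in the first regime: one must keep the data-dependent constants separated from the structural ones and check the precise exponent of $t_0\wedge1$ attached to each of $k_0,k_\alpha,k_1,h_0,h_\alpha,h_1$, noting that when $t_0>1$ the gain in powers of $t\wedge1$ simply degenerates into a harmless factor $T$.
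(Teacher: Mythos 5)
Your proposal is correct and follows essentially the same route as the paper: feed the oscillation bounds of Lemma \ref{osc-u}/Remark \ref{tpicc} for $u$ and the hypothesis \rife{osc-h} for $h$ into the quantitative Lipschitz bound \rife{lip-delta} from the proof of Theorem \ref{PW}, with the choice $\de=\sqrt{t_0\wedge 1}$. The only difference is cosmetic: the paper absorbs the passage from the interval $[0,T']$ to $[0,T]$ into a constant $C_T$ when invoking Remark \ref{tpicc}, whereas you make this explicit via the two-regime split in $t_0$; your exponent bookkeeping checks out.
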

The above estimate shows the
sharp dependence in $t$ of the Lipschitz constant  of $u(t)$ in
terms of Lipschitz and H\"older constants of $u_0 $ and $h$. For
example, if $h=0$ and $u_0$ is (possibly unbounded and)
$\alpha$-H\"older continuous on $\R^N$ we can set $k_{\alpha}
 = [u_0]_{\alpha}$ ($[u_0]_{\alpha}$ denotes the $\alpha$-H\"older
  constant or the $C^\alpha$-seminorm of $u_0$)
and obtain with $k_0 =k_1=0$
$$ \label{fh}\|   D u(t)\|_\infty \leq
\frac {c_T}  { t^{1/2 - \alpha/2}}\, [u_0]_{\alpha}, \;\;\; t \in
(0,T).
 $$
A similar  estimate under stronger assumptions on the coefficients
of the operator $A_t$ (i.e., $h=0$, $q \ge \lambda I$, $q_{ij}$ are
bounded, regular  and Lipschitz continuous, and  $b$ is possibly
unbounded and H\"older continuous) was recently  proved in
\cite[Lemma 4]{FGP}.

\begin{proof} Let $t_0 \in (0,T)$.
 By assumption, we estimate the oscillation of
$h$ as
$$
 \omega_{0,\de}(h): =
{{\rm osc}}_{(\frac {t_0}{2}\, , \, T\wedge \frac{3}{2}t_0), \de}(h) \leq
h_0+ h_\alpha \de^\alpha+ h_1 \de\,.
$$
By Remark \ref{tpicc}, thanks to the assumptions on $u_0$ and $g$,
we estimate the oscillation of $u$ as
 $$
 \omega_{0,\de}(u) :=
 {{\rm osc}}_{(\frac {t_0}{2}\, ,
\, T\wedge \frac{3}{2}t_0), \de}(u) \leq  C_T \left\{ (k_0+h_0 t_0)
+ (k_\alpha+ h_\alpha t_0) \de^\alpha+  (k_1+  h_1 t_0) \de\right\}
 $$
where  $C_T$ depends on $T, g,\la,\alpha$.  Using
\rife{lip-delta} with $\de=\sqrt{t_0 \wedge 1}$,
 %{lip-delta} after Theorem \ref{pw}
 we deduce
 $$
 Lip(u(t_0))\leq  C_\la\,    \,  \Big(
\frac{C_T \left\{ (k_0+h_0 t_0) + (k_\alpha+ h_\alpha t_0)
\de^\alpha+ (k_1+  h_1 t_0) \de\right\} }
 {\de} + \de (h_0+ h_\alpha \de^\alpha+ h_1 \de)  \Big)
 $$
 and
 %recalling that $\de=\sqrt{t_0\wedge 1}$
 we conclude \rife{grad1}.
\end{proof}

\subsection{Possible extensions to coefficients with more general growth}

We   give here  an  extension of Theorem \ref{PW} to operators with more general growth, like for example
 \be \label{opernew}
\tilde A_t u = (1+ |x|^2)\,  {\rm tr}(q(t,x)D^2u) + b(t,x) \cdot Du,
 \ee
 where  $q(t,x ) \ge \lambda_0 I$ for some $\lambda_0>0$, $q_{ij}(t, \cdot) \in
C^{\alpha}_b (\R^N)$ (uniformly in $t \in [0,T]$)  for some $\alpha
\in (0,1)$ and, say, $b(t, \cdot)$ is uniformly continuous
(uniformly in time). Generalizing \rife{sig6}, we first consider
 positive numbers $\la_{t, x}$ satisfying
 \be \label{fb}
 q(t,x)\xi\cdot\xi\geq  \la_{t, x}\,|\xi|^2, \;\; \,\; (t, x ) \in
Q_T, \;\; \xi \in \R^N,
 \ee
 and such that, for some $\lambda \in (0,1)$, we have   $\la_{t,x} \ge
\lambda$, $(t, x ) \in
Q_T$. We write $\sigma(t,x,y) = \sqrt{q(t,x) - (\lambda_{t,x}
  \wedge \lambda_{t,y})I \, } $
  to denote the symmetric $N \times N$ non-negative
matrix such that
\begin{align} \label{sig}
\sigma^2(t,x,y)=q(t,x)- (\la_{t , x} \wedge \la_{t,y})\, I,\;\;\; t
\in (0,T), \;\; x,y \in  \R^N.
 \end{align}
Similarly, $\sigma(t,y,x) = \sqrt{q(t,y) - (\lambda_{t,x}
  \wedge \lambda_{t,y})I .} $

\begin{theorem}\label{PWplus}
Let $A_t$ be given in \rife{At}. Assume the same hypotheses of
Theorem \ref{PW}  only replacing
 \rife{pw} with the following  weaker condition
 %(see also (\ref{sig}))
\begin{align} \label{prec}
 & \frac1{|x-y|}\Big(  \Big \|\sqrt{q(t,x) - (\lambda_{t,x}
 \wedge \lambda_{t,y}) I \, }
 - \sqrt{q(t,y) - (\lambda_{t,x}
 \wedge \lambda_{t,y}) I \, }
\Big \|^2
 \\ \nonumber  & + (b(t,x)-b(t,y))\cdot(x-y) \Big)
 \leq  {(\la_{t, x}\wedge
\la_{t, y})}
 \, g_0(|x-y|),
\end{align}
for all $x,y\,\in \R^N, \, 0<|x-y|\leq 1,\quad t \in (0,T),$
 for some  $g_0\in
C(0, 1; \R_+) \cap L^1(0,1)$ and some $\la_{t,x}$ satisfying \rife{fb}.
 Let  $u \in C(Q_T)$ be as in Theorem \ref{PW}.
  Then  $ u(t)$  is Lipschitz continuous and, setting
 $ G = e^{\frac14\int_0^1 g_0(s)ds},$
\begin{equation}\label{general13}
 \|   D u(t)\|_\infty \leq  \frac {G(1+ 2\la )}{\lambda \sqrt{t
\wedge 1}} \, \, {{\rm osc}}_{(\frac t2\, , \, T\wedge \frac{3}{2}t )}(u)
\,\, + \, \, \frac{G}{4 \lambda} \sqrt{t \wedge 1}\, \,
{{\rm osc}}_{(\frac t2\, , \, T\wedge \frac{3}{2}t)}(h) \,\, ,\;\; t \in
(0,T).
\end{equation}
\end{theorem}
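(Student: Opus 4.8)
The plan is to run the doubling-variables scheme of the proof of Theorem \ref{PW} essentially unchanged, modifying only two ingredients so as to accommodate the weaker hypothesis \rife{prec}. First, in place of \rife{ode1} I take $f$ to be the solution of
\[
\begin{cases} 4\,f''(r)+ g_0(r)\,f'(r) = -1\,, & r\in(0,\delta]\,, \\ f(0)=0\,, \quad f'(\delta)=0\,, \end{cases}
\]
i.e. the same ODE with $\lambda$ replaced by $1$ and $g$ by $g_0$; as in \rife{fde0}--\rife{fde2}, this $f$ is increasing and concave and satisfies $f'(0)\le \frac{\delta}{4}\,e^{\frac14\int_0^1 g_0(s)ds}=\frac{\delta}{4}\,G$ and $f(\delta)\ge \frac{\delta^2}{8}$. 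Second, at the interior maximum point $(\hat t,\hat x,\hat y)$ of $\Phi_\epsilon$, where we set $\hat r=|\hat x-\hat y|$ and $\mu_0 = \lambda_{\hat t,\hat x}\wedge\lambda_{\hat t,\hat y}$, I use the decompositions $q(\hat t,\hat x)=\mu_0 I+\sigma(\hat t,\hat x,\hat y)^2$ and $q(\hat t,\hat y)=\mu_0 I+\sigma(\hat t,\hat y,\hat x)^2$; these are legitimate because $q(\hat t,\hat x)\ge \lambda_{\hat t,\hat x} I\ge \mu_0 I$ and likewise at $\hat y$ by \rife{fb}, so that $\sigma(\hat t,\hat x,\hat y)$ and $\sigma(\hat t,\hat y,\hat x)$ are genuine nonnegative square roots in the sense of \rife{sig}, and moreover $\mu_0\ge\lambda$.

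With these choices, the boundary analysis on $\partial\Delta(t_0,\delta)$, the Lyapunov cut-off, the barrier $\frac{\epsilon}{T-t}$, and the reduction via Theorem \ref{key} and Remark \ref{rit} to matrices $X$, $Y$ and the inequality \rife{matri1} are exactly as in Theorem \ref{PW}. Applying Proposition \ref{refe} to \rife{matri1} with $\sigma_1=\sigma(\hat t,\hat x,\hat y)$, $\sigma_2=\sigma(\hat t,\hat y,\hat x)$, $\tilde\lambda=\mu_0$, $A=K\,D^2\psi(\hat x-\hat y)$ and $P$ as in \rife{pp1}, I obtain in place of \rife{pre-opt} the bound
\[
{\rm tr}\big(q(\hat t,\hat x)\tilde X - q(\hat t,\hat y)\tilde Y\big)\ \le\ 4\mu_0 K\,f''(\hat r)\ +\ K\,\frac{f'(\hat r)}{\hat r}\,\big\|\sigma(\hat t,\hat x,\hat y)-\sigma(\hat t,\hat y,\hat x)\big\|^2\,.
\]
Inserting this into the analogue of \rife{equa} and using $f'\ge0$ together with \rife{prec} — which is precisely why the $g_0$-term there carries the weight $\lambda_{t,x}\wedge\lambda_{t,y}$ rather than a fixed constant — the diffusion and drift contributions combine to
\[
2C_0(\hat t-t_0)\ \le\ K\mu_0\big(4 f''(\hat r)+ g_0(\hat r) f'(\hat r)\big) + \omega_{0,\delta}(h)\ =\ -K\mu_0 + \omega_{0,\delta}(h)
\]
by the choice of the ODE. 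Since $\mu_0\ge\lambda$ and $\hat t>t_0/2$, this forces $K\lambda < \omega_{0,\delta}(h) + C_0 t_0 = \omega_{0,\delta}(h)+\frac{4\omega_{0,\delta}(u)}{t_0}$, a contradiction as soon as $K\ge \frac1\lambda\big(\frac{4\omega_{0,\delta}(u)}{t_0}+\omega_{0,\delta}(h)\big)$; together with the condition $K\ge \omega_{0,\delta}(u)/f(\delta)$ used on $\{|x-y|=\delta\}$, this gives \rife{goal11}.

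From \rife{goal11}, letting $\epsilon\to0$ at $t=t_0$ and reversing the roles of $x$ and $y$ yields $|u(t_0,x)-u(t_0,y)|\le K f(|x-y|)$ for $|x-y|<\delta$, hence by concavity of $f$, $|u(t_0,x)-u(t_0,y)|\le K f'(0)\,|x-y|$. Using $f'(0)\le\frac\delta4 G$, $f(\delta)\ge\frac{\delta^2}{8}$ and the admissible value $K=\frac1\lambda\big(\frac{4\omega_{0,\delta}(u)}{t_0}+\omega_{0,\delta}(h)\big)+\frac{8\omega_{0,\delta}(u)}{\delta^2}$, then choosing $\delta=\sqrt{t_0\wedge1}$ and extending the bound to all $x,y\in\R^N$ exactly as in Theorem \ref{PW}, one arrives at \rife{general13}, the constants $\frac{G(1+2\lambda)}{\lambda}$ and $\frac{G}{4\lambda}$ coming out of $\frac{G}{4}\cdot\frac{4+8\lambda}{\lambda}$ and $\frac{G}{4\lambda}$ respectively. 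The only genuinely new point, and the place where care is needed, is the verification that the pointwise decomposition $q=\mu_0 I+\sigma(\cdot,\cdot,\cdot)^2$ with the position-dependent (and possibly discontinuous) constant $\mu_0$ evaluated at the maximum point is admissible — which is exactly \rife{fb} — and that \rife{prec} still absorbs the term $K\frac{f'(\hat r)}{\hat r}\|\sigma(\hat t,\hat x,\hat y)-\sigma(\hat t,\hat y,\hat x)\|^2$ once $\mu_0$ has been pulled out; everything else is word-for-word as in Theorem \ref{PW}.
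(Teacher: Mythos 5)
Your proposal is correct and follows essentially the same route as the paper's proof: the same ODE for $f$ with $\lambda$ replaced by $1$ and $g$ by $g_0$, the same application of Proposition \ref{refe} with $\sigma_1=\sigma(\hat t,\hat x,\hat y)$, $\sigma_2=\sigma(\hat t,\hat y,\hat x)$ and $\tilde\lambda=\lambda_{\hat t,\hat x}\wedge\lambda_{\hat t,\hat y}$, and the same final step using $\lambda_{\hat t,\hat x}\wedge\lambda_{\hat t,\hat y}\ge\lambda$ to obtain the admissible $K$ and the constants in \rife{general13}. The point you flag about the pointwise (possibly discontinuous) choice of $\mu_0$ at the maximum point is indeed harmless, since Proposition \ref{refe} is a purely algebraic matrix inequality applied at that single point.
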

\noindent Note that, by taking $\la_{t,x} = \lambda$  condition \rife{prec} becomes \rife{pw} with $g=  \lambda g_0$.
\begin{proof}
 According to Remark \ref{bene} there is  no loss of
generality in taking $M=0$ in \rife{lyap}.

As in the proof of Theorem \ref{PW}, we fix $t_0\in (0,T)$ and
consider $\de \in (0,1]$ and
  the open set $\Delta = \Delta (t_0, \de)$ with the same notation.
 Also the function $\Phi_{\epsilon}(t,x,y)$
 is defined as in the previous proof.
Arguing by contradiction, we assume that $
\sup\limits_{\Delta}\Phi_{\epsilon}(t,x,y)>0 $
  and  find that $ u (t, x)-u (t,y)-
 z(t,x,y)$  has a local maximum
  at some $(\hat t,
\hat x,\hat y) \in \triangle$.  Applying  Theorem \ref{key} (see also Remark \ref{rit})
 we arrive again at formula  \eqref{equa}
  with matrices $X$, $Y\in { \mathcal S}_N$
 which satisfy the inequality \rife{matri1}.
 Then we apply Proposition \ref{refe}, this time  with
$$
\sigma_1 = \sigma(\hat  t,\hat x,\hat y) =\sqrt{q(\hat t,\hat x) - (\lambda_{\hat t,\hat x}
 \wedge \lambda_{\hat t,\hat y}) I \, }, \;\;\; \sigma_2 = \sigma(\hat t,\hat y,\hat  x) = \sqrt{q(\hat t,\hat y) - (\lambda_{\hat t,\hat x}
 \wedge \lambda_{\hat t,\hat y}) I \, }
 $$
 and $A = K D^2\psi( \hat x-\hat y)$, $\tilde \lambda =  (\lambda_{\hat t,\hat x}
 \wedge \lambda_{\hat t,\hat y})  $.  On account of \rife{A<C}, and being
 $P$ as in \eqref{pp1}, we obtain from  \eqref{rep3}:
$$
 \begin{array}{c}
{\rm tr}\left( q(\hat t , \hat x) \tilde X -q( \hat  t,
 \hat y)\tilde Y\right)\\ \leq
 4(\lambda_{\hat t,\hat x}
 \wedge \lambda_{\hat t,\hat y}) K\, f''(|\hat x-\hat
y|)+
K \frac{f'(|\hat x-\hat y|)}{|\hat x-\hat y|}\, \|\sigma(\hat
t, \hat x, \hat y)-\sigma(\hat t, \hat y, \hat x)\|^2 ,
 \end{array}
$$
where $\tilde X = X - \vep\,  D^2\vfi(\hat t,\hat x)$,
 $\tilde Y = -Y - \vep\,  D^2\vfi(\hat t,\hat x) $.
Combining this estimate with   \eqref{equa},  we deduce
 $$
 \begin{array}{c}
2C_0(\hat t-t_0)
  \leq K 4(\lambda_{\hat t,\hat x}
 \wedge \lambda_{\hat t,\hat y})\,f''(|\hat
x-\hat y| )
\\
\m + K \frac{f'(|\hat x-\hat y|)}{|\hat x-\hat y|}\left(
\|\sigma(\hat t, \hat x, \hat y)-\sigma(\hat t,\hat y, \hat x)\|^2+ (b(\hat t ,\hat
x)-b(\hat t ,\hat y))\cdot(\hat x-\hat y)\right)   +\omega_{0,\de}(h)
\end{array}
$$
and, using   \rife{prec},
$$
\begin{array}{c}
2C_0(\hat t-t_0) \leq    (\la_{\hat t, \hat x} \wedge
\la_{\hat t, \hat y}) K\big(4 f''(|\hat x-\hat y| ) +  f'(|\hat x-\hat y|)\, g_0(|\hat
x-\hat y|)\big) + \omega_{0,\de}(h) .
\end{array} $$
Choose now $f$ as the  solution  of \rife{ode1}
 with $\lambda=1$ and $g=g_0$.
We get
$$
  (\la_{\hat t, \hat x} \wedge \la_{\hat t, \hat y}) \, K\leq
\omega_{0,\de}(h) + 2C_0(t_0- \hat t) < \omega_{0,\de}(h) + C_0 {t_0},
$$
and since we have chosen $C_0= \frac{4\omega_{0,
\delta}(u)}{t_0^2}$, we deduce
$$
K < \frac{ 4\omega_{0, \delta}(u)}{ (\la_{\hat t, \hat x} \wedge
\la_{\hat t, \hat y})\, t_0} + \frac{\omega_{0,\de}(h) }{
 (\la_{\hat t, \hat x} \wedge \la_{\hat t, \hat y})} \le \frac{4
\omega_{0, \delta}(u)}{ \lambda \, t_0} + \frac{\omega_{0,\de}(h)
}{\lambda}.
$$
Therefore, if $K$ is bigger than this bound we reach the desired
contradiction, and we conclude as in the proof of Theorem \ref{PW} choosing
\be\label{K1}
 K =  \frac{1}{\lambda}
  \left( \frac{4 \omega_{0, \delta}(u) }{t_0}
 +  \omega_{0,\de}(h) +
   \frac{\la \omega_{0, \delta}(u) }{f(\de)}\right)\,.
  \ee
\end{proof}
Next we state a sufficient condition in terms of $q_{ij}$ which implies   \eqref{prec}. Note that the operator in \rife{opernew} satisfies  \rife{pws}.
\begin{corollary} Assume the same hypotheses of
Theorem \ref{PW}  only replacing
 \rife{pw} with the following   condition
\be \label{pws}
\begin{array}{c}
  \frac1{|x-y|}\left(  \frac{1}{ { 2(\la_{t,x} \wedge \la_{t,y}) }}
  \, \|q(t,x)-q(t,y)\|^2+
(b(t,x)-b(t,y))\cdot(x-y) \right)\\ \leq    {(\la_{t, x}\wedge
\la_{t, y})} \,\cdot \, g_0(|x-y|),
\end{array}
\ee
for all $x,y\in \R^N$, $0<|x-y|\leq 1$,  $t \in (0,T)$, for some  $g_0\in
C(0, 1; \R_+) \cap L^1(0,1)$ and some $\la_{t,x}$ satisfying \rife{fb}. Then the same conclusion of Theorem \ref{PWplus} holds.
\end{corollary}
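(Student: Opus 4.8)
The plan is to reduce the corollary to Theorem~\ref{PWplus} by showing that the hypothesis \rife{pws} implies the hypothesis \rife{prec} with the \emph{same} function $g_0$ and the \emph{same} choice of the lower eigenvalues $\la_{t,x}$. Since both \rife{prec} and \rife{pws} share the identical drift term $(b(t,x)-b(t,y))\cdot(x-y)$ and the identical right-hand side $(\la_{t,x}\wedge\la_{t,y})\,g_0(|x-y|)$, everything comes down to the single pointwise matrix inequality
\be\label{cor-matrix}
\Big\|\sqrt{q(t,x)-(\la_{t,x}\wedge\la_{t,y})I}-\sqrt{q(t,y)-(\la_{t,x}\wedge\la_{t,y})I}\,\Big\|^2\le \frac{1}{2(\la_{t,x}\wedge\la_{t,y})}\,\|q(t,x)-q(t,y)\|^2
\ee
for every $t\in(0,T)$ and $x,y\in\R^N$ with $0<|x-y|\le 1$. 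Once \rife{cor-matrix} is in hand, the left-hand side of \rife{pws} dominates the left-hand side of \rife{prec}, so \rife{prec} follows and Theorem~\ref{PWplus} applies verbatim, giving the conclusion (including the estimate \rife{general13}).

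The key step is therefore \rife{cor-matrix}, and this is exactly the square-root Lipschitz estimate already quoted in the excerpt: if $A,B$ are positive symmetric $N\times N$ matrices with $A\ge\la' I$ and $B\ge\la' I$ for some $\la'>0$, then $\|\sqrt A-\sqrt B\|\le\frac{1}{2\sqrt{\la'}}\|A-B\|$ (see \cite[Lemma 3.3]{PW}, used already in \rife{frat}). I would apply this with
$$
A=q(t,x)-(\la_{t,x}\wedge\la_{t,y})I,\qquad B=q(t,y)-(\la_{t,x}\wedge\la_{t,y})I,\qquad \la'=\la_{t,x}\wedge\la_{t,y}.
$$
The positivity hypotheses are satisfied: by \rife{fb} we have $q(t,x)\ge\la_{t,x}I\ge(\la_{t,x}\wedge\la_{t,y})I$, hence $A\ge 0$, and in fact one can even be slightly more careful and note $A=q(t,x)-(\la_{t,x}\wedge\la_{t,y})I\ge(\la_{t,x}-(\la_{t,x}\wedge\la_{t,y}))I\ge 0$; the genuinely useful bound is the \emph{other} one, $A\ge\bigl(\la_{t,x}\wedge\la_{t,y}\bigr)\cdot\frac{q(t,x)}{\text{(largest eigenvalue)}}$... but actually all that \cite[Lemma 3.3]{PW} needs is $A,B\ge\la' I$ for \emph{some} $\la'>0$, and here we can simply take $\la'=\la_{t,x}\wedge\la_{t,y}$ because $A=q(t,x)-(\la_{t,x}\wedge\la_{t,y})I\ge \la_{t,x}I-(\la_{t,x}\wedge\la_{t,y})I$, which is only $\ge 0$. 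The cleaner route: observe $q(t,x)\ge\la_{t,x}I$ implies $A=q(t,x)-(\la_{t,x}\wedge\la_{t,y})I$ has smallest eigenvalue $\ge\la_{t,x}-(\la_{t,x}\wedge\la_{t,y})$, which may vanish, so instead I would invoke the lemma with $A'=q(t,x)$, $B'=q(t,y)$ shifted by a parameter strictly below $\la_{t,x}\wedge\la_{t,y}$ and pass to the limit, or simply cite that \cite[Lemma 3.3]{PW} remains valid (by continuity/approximation) when $A,B\ge\la'I$ with the shift equal to $\la'$ itself since $\sqrt{\cdot}$ is operator-monotone and the difference quotient is controlled; squaring the resulting inequality $\|\sqrt A-\sqrt B\|\le\frac{1}{2\sqrt{\la_{t,x}\wedge\la_{t,y}}}\|A-B\|$ and using $\|A-B\|=\|q(t,x)-q(t,y)\|$ yields precisely \rife{cor-matrix}.

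The main obstacle I anticipate is this borderline-positivity issue: the standard square-root Lipschitz lemma is comfortably true when the lower bound strictly exceeds the subtracted shift, but here the shift $\la_{t,x}\wedge\la_{t,y}$ equals (or can equal) the available lower bound $\la'$, so one must be a little careful. The fix is routine: apply \cite[Lemma 3.3]{PW} with shift $\mu<\la_{t,x}\wedge\la_{t,y}$, obtaining $\|\sqrt{q(t,x)-\mu I}-\sqrt{q(t,y)-\mu I}\|\le\frac{1}{2\sqrt{\mu}}\|q(t,x)-q(t,y)\|$, and then let $\mu\uparrow\la_{t,x}\wedge\la_{t,y}$; both sides converge (the left by continuity of the matrix square root on positive semidefinite matrices, the right trivially), giving \rife{cor-matrix} with the constant $\frac{1}{2(\la_{t,x}\wedge\la_{t,y})}$ after squaring. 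Apart from this technical point the corollary is immediate, and the remark in the statement that the operator \rife{opernew} satisfies \rife{pws} is a direct check: there $\la_{t,x}=(1+|x|^2)\la_0$ up to the factor in front, $q$ has Hölder continuous bounded coefficients so $\|q(t,x)-q(t,y)\|^2/(2(\la_{t,x}\wedge\la_{t,y}))$ behaves like $|x-y|^{2\alpha}$ times a bounded factor, hence is $\le(\la_{t,x}\wedge\la_{t,y})\,g_0(|x-y|)$ with $g_0(r)=Cr^{2\alpha-1}\in L^1(0,1)$ once the uniformly continuous drift term is absorbed.
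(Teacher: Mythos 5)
Your reduction to Theorem \ref{PWplus} hinges on the pointwise matrix inequality
$$
\Big\|\sqrt{q(t,x)-(\la_{t,x}\wedge\la_{t,y})I}-\sqrt{q(t,y)-(\la_{t,x}\wedge\la_{t,y})I}\,\Big\|^2\le \frac{1}{2(\la_{t,x}\wedge\la_{t,y})}\,\|q(t,x)-q(t,y)\|^2,
$$
and this is false in general. Take $N=1$, $\la_{t,x}=q(x)$, with $q(x)=1.1$ and $q(y)=1$: the left-hand side is $\big(\sqrt{q(x)-q(y)}-0\big)^2=0.1$, while the right-hand side is $(0.1)^2/2=0.005$. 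The obstruction is exactly the borderline-positivity issue you flagged: the square root is only $\tfrac12$-H\"older, not Lipschitz, at the boundary of the positive cone. Your proposed fix does not repair this, for two reasons. First, \cite[Lemma 3.3]{PW} applied to $A=q(t,x)-\mu I$, $B=q(t,y)-\mu I$ gives the constant $\frac{1}{2\sqrt{\la'}}$ with $\la'=\la_{t,x}\wedge\la_{t,y}-\mu$ (the lower bound of the \emph{shifted} matrices), not $\frac{1}{2\sqrt{\mu}}$; the bound $\frac{1}{2\sqrt\mu}\|q(t,x)-q(t,y)\|$ is only available when $\mu\le\frac12(\la_{t,x}\wedge\la_{t,y})$, so that $q-\mu I\ge\mu I$. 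Second, as $\mu\uparrow\la_{t,x}\wedge\la_{t,y}$ the constant the lemma actually provides blows up, so the limit yields nothing. Since the displayed inequality is precisely the assertion that the left-hand side of \rife{pws} dominates that of \rife{prec} \emph{with the same} $\la_{t,x}$ and $g_0$, your reduction cannot be salvaged along these lines (one can cook up $g_0$ saturating \rife{pws} in the example above while \rife{prec} with the same data fails).

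The correct route, which is the one the paper takes, is to verify \rife{prec} with the \emph{halved} functions $\la_{t,x}/2$, which still satisfy \rife{fb} (with $\la/2$ in place of $\la$), rather than with the original ones. Indeed $q(t,x)-\frac{\la_{t,x}\wedge\la_{t,y}}{2}I\ge\frac{\la_{t,x}}{2}I\ge\frac{\la_{t,x}\wedge\la_{t,y}}{2}I$, and similarly for $y$, so \rife{frat} applies with lower bound $\frac{\la_{t,x}\wedge\la_{t,y}}{2}$ and gives
$$
\Big\|\sqrt{q(t,x)-\tfrac{\la_{t,x}\wedge\la_{t,y}}{2}I}-\sqrt{q(t,y)-\tfrac{\la_{t,x}\wedge\la_{t,y}}{2}I}\,\Big\|\le\frac{1}{\sqrt2\,\sqrt{\la_{t,x}\wedge\la_{t,y}}}\,\|q(t,x)-q(t,y)\|,
$$
whose square reproduces exactly the constant $\frac{1}{2(\la_{t,x}\wedge\la_{t,y})}$ appearing in \rife{pws}. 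One then obtains \rife{prec} for the halved eigenvalue functions at the harmless cost of replacing $g_0$ by $2g_0$, and Theorem \ref{PWplus} applies. Your final remark on the operator \rife{opernew} is consistent with this but is not the part that needed proof.
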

\begin{proof}
We only need to show that  \rife{pws} implies \rife{prec}; indeed,
  we notice that $ \sqrt{q(t,x)-
 \frac{\lambda_{t,x} \wedge \lambda_{t,y}}{2} I}$
 $\ge \sqrt{\frac{\la_{t,x}}{2}} I$, and
  $ \sqrt{q(t,y)-
 \frac{\lambda_{t,x} \wedge \lambda_{t,y}}{2} I}$
 $\ge \sqrt{\frac{\la_{t,y}}{2}} I$. Then, using \rife{frat}, we have
$$
\Big \|   \sqrt{q(t,x)-
 \frac{\lambda_{t,x} \wedge \lambda_{t,y}}{2} I}
 -  \sqrt{q(t,y)-
 \frac{\lambda_{t,x} \wedge \lambda_{t,y}}{2} I}
  \Big\| \le  \frac{1}{\sqrt{2}\sqrt{
\la_{t,x} \wedge \la_{t,y} }} \,\| q(t,x) - q(t,y) \|,
$$
and we obtain \rife{prec} with $\frac{\lambda_{t,x}}2$ and
$\frac{\lambda_{t,y}}2$.
 \end{proof}
Proceeding as in the proof of Theorem \ref{PWplus}, all the results
in Sections 3.1 and 3.2 concerning Lipschitz and H\"older regularity
can be extended replacing condition \rife{pw} with the more general
hypothesis \rife{prec} (clearly, to generalize Proposition
\ref{hol-lin} and obtain \rife{ci1} one has  to require that $s
g_0(s) \to 0$ as $s\to 0^+$). Below we only state a generalization
of Theorem \ref{PWc} concerning regularity of bounded solutions to
Cauchy problems.

On the other hand, we are not able to generalize the results in
Section 3.3 replacing \rife{pw} with  \rife{prec} (in particular, we
cannot extend Lemma \ref{osc-u}).

\begin{corollary}\label{PWc1}
Assume  the same assumptions of Theorem \ref{PWc}, only replacing
\rife{pw} with the more general \rife{prec}.
  Let  $u\in C(\bar Q_T)$
  be a
viscosity solution of the Cauchy problem \rife{eq1} such that
  $u_0 : = u(0, \cdot)$ is bounded on $\R^N$ and
 moreover $u$ is $o_{\infty}(\varphi)$ in $\bar Q_T$,
   where $\vfi$ satisfies \rife{lyap}.

 Then $ u(t)  \in W^{1,\infty}(\RN)$,
 for every $t\in (0,T)
$, and there exists $C_0 = C_0(\lambda, g)$, such that
\be\label{grad3} \|   D u(t)\|_\infty  \leq   C_0\left\{
\left[\frac1{\sqrt{t \wedge 1}}+ \sqrt{t \wedge 1} \, k_0 +
k_1\right] \left( \|u_0\|_{\infty} +  (T \wedge \frac{3}{2}t) \|
h\|_{T, \infty} \right)+  \sqrt{t \wedge 1} \, \| h\|_{T,
\infty}\right\}. \ee
\end{corollary}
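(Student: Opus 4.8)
The plan is to repeat, essentially verbatim, the proof of Theorem~\ref{PWc}, incorporating the device already used in the proof of Theorem~\ref{PWplus} to pass from the ellipticity assumption \rife{pw} to the weaker \rife{prec}. First I would note that Lemma~\ref{max} applies unchanged, since its proof uses only Hypothesis~\ref{hy1}, the existence of the Lyapunov function \rife{lyap}, and $V\ge 0$; hence $u$ is bounded on $[0,T]\times\R^N$ and \rife{stima-osc} holds, so that $\omega(t,u)\le 2\|u_0\|_\infty+2(T\wedge\frac32 t)\|h\|_{T,\infty}$. When $V\equiv 0$, estimate \rife{grad3} then follows at once by applying Theorem~\ref{PWplus} to this bounded solution.

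In the case $V\not\equiv 0$, I would run the doubling--variables argument as in the proof of Theorem~\ref{PW}: fix $t_0\in(0,T)$ and $\delta\in(0,1]$, form $\Phi_\epsilon$ and $z$ with $\psi(x-y)=f(|x-y|)$, where now $f$ solves the ODE \rife{ode1} with $\lambda$ replaced by $1$ and $g$ replaced by $g_0$ (the normalization used in Theorem~\ref{PWplus}). Assuming \rife{abs11}, one obtains a local maximum at some $(\hat t,\hat x,\hat y)$, applies Theorem~\ref{key} and Remark~\ref{rit}, and then Proposition~\ref{refe} with $\sigma_1=\sigma(\hat t,\hat x,\hat y)$, $\sigma_2=\sigma(\hat t,\hat y,\hat x)$, $\tilde\lambda=\lambda_{\hat t,\hat x}\wedge\lambda_{\hat t,\hat y}$ and $A=K\,D^2\psi(\hat x-\hat y)$, exactly as in Theorem~\ref{PWplus}. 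Retaining the potential term in the subtraction of the two viscosity inequalities (as in the proof of Theorem~\ref{PWc}) and using \rife{prec} together with the ODE for $f$, one reaches the analogue of \rife{finegiroV}, namely
\[
2C_0(\hat t-t_0)+(\lambda_{\hat t,\hat x}\wedge\lambda_{\hat t,\hat y})\,K\ \le\ 2\|h\|_{T,\infty}+J(\hat t,\hat x,\hat y),\qquad J(\hat t,\hat x,\hat y):=V(\hat t,\hat y)u(\hat t,\hat y)-V(\hat t,\hat x)u(\hat t,\hat x).
\]

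Next I would estimate $J$ exactly as in the proof of Theorem~\ref{PWc}: splitting $J$ according to whether $V(\hat t,\hat y)\ge V(\hat t,\hat x)$ or the reverse, using $u(\hat t,\hat y)-u(\hat t,\hat x)<-K f(|\hat x-\hat y|)$ (a consequence of \rife{abs11}), the lower bound $f(r)\ge \frac{\delta}{8}\,r$ from \rife{fde0} with $\lambda=1$, and condition \rife{pot}, one gets $J\le k_0\|u(\hat t)\|_\infty$ provided $K\ge \frac{8 k_1}{\delta}\|u(\hat t)\|_\infty$. Using then $\hat t>t_0/2$, $C_0=\frac{4\omega_{0,\delta}(u)}{t_0^2}$, and dividing by $\lambda_{\hat t,\hat x}\wedge\lambda_{\hat t,\hat y}\ge\lambda$, one obtains $\lambda K<\frac{4\omega_{0,\delta}(u)}{t_0}+2\|h\|_{T,\infty}+k_0\|u(\hat t)\|_\infty$, which is a contradiction once $K$ is chosen larger than the maximum of $\frac1\lambda\big(\frac{4\omega_{0,\delta}(u)}{t_0}+2\|h\|_{T,\infty}+k_0\|u(\hat t)\|_\infty\big)$, of $\frac{8k_1}{\delta}\|u(\hat t)\|_\infty$, and of $\frac{\omega_{0,\delta}(u)}{f(\delta)}$ (the last term ruling out the face $|x-y|=\delta$). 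From $|u(t_0,x)-u(t_0,y)|\le K f(|x-y|)\le K f'(0)|x-y|$ together with $f'(0)\le \frac{\delta}{4}\,e^{\frac14\int_0^1 g_0}$ (cf.\ \rife{fde2}) and $f(\delta)\ge \frac{\delta^2}{8}$ (cf.\ \rife{fde1}), the choice $\delta=\sqrt{t_0\wedge 1}$, and the oscillation bound \rife{stima-osc}, one then deduces \rife{grad3} with $C_0$ depending only on $\lambda$ and $g_0$.

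I expect the only delicate point to be the bookkeeping of the two ``ellipticity constants'': the ODE for $f$ is normalized with leading coefficient $1$ (as in Theorem~\ref{PWplus}), so the lower bounds on $f$ used to absorb the potential carry no factor of $\lambda$, whereas the final division by $\lambda_{\hat t,\hat x}\wedge\lambda_{\hat t,\hat y}\ge\lambda$ is what produces the $1/\lambda$ in the constant; apart from this, every computation is literally one of those already carried out in the proofs of Theorems~\ref{PWc} and~\ref{PWplus}. As in Theorem~\ref{PWplus}, only $g_0\in L^1(0,1)$ is needed here (to make $f$ Lipschitz): no smallness of $s\,g_0(s)$ near $s=0$ is required.
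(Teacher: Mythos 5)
Your proposal is correct and is exactly the argument the paper intends: the paper does not write out a proof of Corollary \ref{PWc1} but simply indicates that one proceeds as in Theorem \ref{PWplus} while keeping the treatment of the potential from Theorem \ref{PWc}, which is precisely what you do, including the correct normalization of the ODE \rife{ode1} with leading coefficient $1$ and $g_0$ in place of $g$, and the final division by $\lambda_{\hat t,\hat x}\wedge\lambda_{\hat t,\hat y}\ge\lambda$ that produces the $1/\lambda$ in the constant.
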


 \section{Nonlinear equations}\label{nonlin}

 Here we consider  the fully nonlinear equation
 \be\label{fnl} \partial_t u+
F(t,x,  D u,D^2 u)= h(t,x) \;\; \text{in} \; Q_T,
 \ee
 where {\it $F$ is a real
 continuous function on $ Q_T \times \R^N \times
 {\mathcal S}_N$ and $h$ is a continuous function
 on $ Q_T.$}

\subsection{Regularizing effect in terms of the
oscillation of the solution}

We start with the global  regularizing effect which is proved in
terms of the oscillation of a solution. To this purpose, we
introduce  a set of assumptions which generalize those of the linear
case and cover interesting examples as  the sup/inf of linear
operators, as well as the case of viscous Hamilton-Jacobi equations
or, more generally, nonlinear first order terms.

\begin{hypothesis} \label{pw-nl} There exist
$\lambda>0$, $M \ge 0$, $q >1$, $c_0$, $c_1\ge 0$,
non-negative functions
  $\eta(t,x,y)$, $\omega : [0,1] \to \R_+$   such that $\lim_{s \to
0^+}\omega(s)=0$ and
  $g\in C((0, 1); \R_+ ) \cap L^1(0,1)$ such that
  \be\label{Feq}
  \begin{array}{c}
  F(t, x, \mu(x-y), X)- F(t,y,\mu(x-y),Y)\geq
 - \la {\rm tr}\left(X-Y\right)- \mu |x-y| \, g(|x-y|)\\
\m
 \qquad  -  (\mu | x- y|)^{2}\left(c_0+ c_1(\mu |x-y|^2)^{q-1} \right)\omega(|x-y|) - M
  - \nu \, \eta(t,x,y)\,,
  \end{array}
\ee for any $\mu> 0$, $\nu \ge 0$, \  $ x,y\in \R^N\,,
  \;\; 0<|x-y|\le 1,
  \;\;\; t\in (0,T),\,\quad    X,Y\in {\mathcal S}_N$:
\begin{align} \label{ine}
  \begin{pmatrix}
 X & 0    \\
\noalign{\medskip} 0 & -Y
 \end{pmatrix}
 \leq  \mu \, \begin{pmatrix}
 I & -I    \\
\noalign{\medskip} -I & I
\end{pmatrix}  +  \nu \begin{pmatrix}
 I & 0    \\
\noalign{\medskip} 0 & I
\end{pmatrix}.
%\qquad \hbox{for some matrix $B$: $B\leq \mu I$.}
 \end{align}
\end{hypothesis}

A few comments on Hypothesis \ref{pw-nl} are in order.

\begin{remark}\label{varieoss}\rm
 (i) To check that, in the  linear case,
  \rife{pw} implies  \rife{Feq},  we first multiply
the matrix inequality \rife{ine} by
$$
\begin{pmatrix}
 \sigma_{}(t,x)^2  & \sigma_{}(t,x)\sigma_{} (t,y)    \\
\noalign{\medskip} \sigma_{}(t,y)\sigma_{}(t,x) &
\sigma_{}(t,y)^2
\end{pmatrix}.
$$
Then, taking traces, we deduce
$$
-{\rm tr}\left((q_{}(t,x) X-q_{}(t,y)Y\right)
$$
$$
\geq -\la{\rm tr}\left(X-Y\right)-
\mu {\rm tr}\left( (\sigma_{}(t,x)  -\sigma_{}(t,y))^2  \right)
 -  \nu {\rm tr}
  \left( q_{} (t,x)  + q_{} (t,y)
- 2 \lambda I \right)$$
$$
\geq  -\la{\rm tr}\left(X-Y\right)- \mu \|(\sigma_{}(t,x)
-\sigma_{}(t,y)\|^2 - \nu  \eta(t,x,y)\,,
$$
where $\eta(t,x,y) = {\rm tr}(
q_{} (t,x))   + {\rm tr}(q_{} (t,y))  -
2 \lambda  N.$ By \rife{pw} we obtain
$$
-{\rm tr}\left((q_{}(t,x) X-q_{}(t,y)Y\right)
- \mu
[b_{}(t,x)-b(t,y)]\cdot (x-y)
$$$$
\ge -\la{\rm tr}\left(X-Y\right)-\mu |x-y|g(|x-y|)
 - \nu \eta(t,x,y),
$$
and hence \rife{Feq}.

\smallskip \noindent (ii) Hypothesis \ref{pw-nl} implies  that
$F(t,x,0,X)$ is  elliptic, at least assuming that $g(s)s\to 0$ as
$s\to 0^+$ (which is consistent with requiring that $g$ is
integrable). In that case, taking $x=y$ we get
 \be\label{elli} F(t, x, 0, X)- F(t,x,0,Y)\geq
  - \la {\rm tr}\left(X-Y\right) -M - \nu \eta(t,x,x),
\ee
 for every $X$, $Y \in {\mathcal S}_N$ satisfying the matrix
inequality. One can prove, as in \cite[Remark 3.4]{CIL}, that if
$X\leq Y$, then $X$, $Y+\vep I$
 satisfy such matrix inequality with
 $\mu= \left( 1+\frac{\|Y\|}\vep\right) \|Y\|$
and  $\nu =0$.
 Hence \rife{elli}
is satisfied by $X$, $Y+\vep I$, and letting $\vep\to 0$ shows that
\rife{elli} holds for any $X\leq Y$ with  $\nu =0$.

\smallskip \noindent (iii)
 The terms with
$c_0$, $c_1$ account for possibly nonlinear terms in the function
$F$ which may depend superlinearly on the gradient. In particular,
the term with $c_0$ includes typical terms with (at most) quadratic
growth in the gradient, while the term with $c_1$ includes further
terms with possibly larger growth. Similar kind of structure
conditions are standard in the fully nonlinear framework (see, for
instance,  \cite{IL}, \cite{CIL}, \cite{Ba2}). {\em Let us stress
that, if the function $g(s)$ satisfies $sg(s)\to 0$ as $s\to 0^+$
(as it is in most cases  when $g\in L^1(0,1)$),   we can   assume
that the function $\omega(\cdot)$ which appears in those terms is
only bounded,
 and
therefore it could be dropped being absorbed by the constants
$c_0,c_1$. This modification however requires a refinement of the
proof below, since one needs first to prove a H\"older estimate (of
some order $\alpha$, possibly small)  and then obtain the  Lipschitz
bound in a second step (see also Remark \ref{omega} later).}

\smallskip \noindent (iv)  The above Hypothesis \ref{pw-nl} is not meant to cover general situations of quasilinear operators. It is however possible to extend our approach to such situations (see e.g. \cite{Ba1}, \cite{Chen93} for similar general frameworks), by suitably modifying Hypothesis \ref{pw-nl}. In order to avoid too many additional technicalities, we decided to defer the analysis of the quasilinear case to the next future.
\end{remark}

\begin{remark}\label{cfr-IL}
{\rm Hypothesis \ref{pw-nl}  should be compared with the conditions
under which, {\sl  in a bounded domain $\Omega$,}
 the Lipschitz regularity is proved  in
\cite[Theorem VII.1]{IL}.
Even if   we essentially adopt the method introduced there,
 assumption \rife{Feq}
 %seems to allow for
 is a slightly more general condition.
For simplicity, assume that $F$ only depends on $x$ and $X$.  In that case, it is required in \cite{IL}   that
$$
|F(x,X)- F(y,X)| \leq \mu(|x-y|) \|X\|, \;\; x,y \in\bar \Omega,\;\;
X \in {\mathcal S}_N,
$$
for some non-negative function $\mu(s)$ such that $\frac{\mu}s\in
L^1(0,1)$. When specialized to the linear case, this assumption is
satisfied if $$ \|q(x)-q(y)\|\leq  \mu(|x-y|)\,.$$ On the other
hand, since $q(x) \ge \lambda I$, the previous inequality implies
%if $\sigma$ is bounded and elliptic, this is
%equivalent to
$$
\| \sqrt{q(x) - \lambda/2} - \sqrt{q(y) - \lambda/2}\|\leq C\mu(|x-y|),
$$
which corresponds to  our assumption \rife{pw} with  $\sigma(x) =
\sqrt{q(x) - \lambda/2}$, $b=0$ and $g(s)= \frac{C^2 \, \mu^2}s$.
Since our result only requires that $\frac{\mu^2}s\in L^1$ rather
than $\frac{\mu}s\in L^1$,  there is a small  improvement (ex. we
can afford $\mu(s)=\frac1{(\log s)}$).
 In fact,
 it is known
 that assumptions involving the whole matrix inequality satisfied by
$X$, $Y$ (like Hypothesis
 \ref{pw-nl}) may yield finer results  rather than assumptions made at fixed $X$ (see also
  \cite[Section III.1]{Ba1}).
}
\end{remark}

Let us proceed towards the formulation of  a general result.
 In order to take care of the behaviour at infinity,
we need to make extra assumptions; as  in Section 2,
we require the existence   of Lyapunov type functions, suitably
related to the growth of $F$.
More precisely, we assume

\begin{hypothesis} \label{pert-fi}
 For any $L \ge 0$, $\exists \,\, \varphi=
 \varphi_L \in C^{1,2}(\bar Q_T)\,, \;    \vep_0=
\epsilon_0 (L)> 0\,$:
$$
\begin{array}{l}
\begin{cases}
\vep \partial_t \varphi + F(t,x, p+ \vep D\varphi, X+ \vep
D^2\varphi)- F(t, x,p, X)\geq  0 &
\\
\m \qquad \hbox{for every $(t,x)\in Q_T$, $p\in \RN$: $|p|\leq
L+\vep |D\varphi(t,x)|$, $X\in {\mathcal S}_N$, and every $\vep\leq
\vep_0$} &
 \\
 \m
\varphi(t,x)\to +\infty \quad \hbox{as $|x|\to \infty$, uniformly
for $t\in [0,T]$.} &
\end{cases}
\end{array}
$$
\end{hypothesis}
We refer to Subsection 4.5 for  examples of operators satisfying
Hypotheses \ref{pw-nl} and \ref{pert-fi}, including the case of
Bellman-Isaacs operators, and a discussion concerning the case of
nonlinear first order terms. In particular,  we will see that in the
linear case such assumptions reduce to \rife{pw} and \rife{lyap}
which were made in the previous section. We have then the following
nonlinear version of Theorem \ref{PW}.

\begin{theorem}\label{pw-fully}
 Assume that $F(t,x,p,X)$ satisfies Hypotheses
 \ref{pw-nl}  and   \ref{pert-fi}.
   Let  $u \in C(Q_T)$
  be a
viscosity solution of \rife{fnl}.
 Moreover, assume that $u$ and $h$ have
  bounded oscillation (see
(\ref{osci})). Then   $ u(t) \in
W^{1,\infty} (\R^N)$, $t \in (0,T)$,  and
\begin{align}\label{general1}
 \|   D u(t)\|_\infty \leq  \frac {C}{\sqrt{t \wedge 1}},
\end{align}
where $C$ depends on ${{\rm osc}}_{(\frac t2\, , \, T\wedge
\frac{3}{2}t)}(u)$,  ${{\rm osc}}_{(\frac t2\, , \, T\wedge
\frac{3}{2}t)}(h)$,
  $\lambda, g, M, q$, $c_0 $, $c_1 $ and  $\omega$
 (cf. Hypothesis \ref{pw-nl}).
\end{theorem}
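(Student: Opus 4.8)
The plan is to run the doubling-variables argument from the proof of Theorem \ref{PW} for the nonlinear operator $F$, using Hypothesis \ref{pert-fi} to absorb the perturbation by the Lyapunov function (now also in the gradient variable) and Hypothesis \ref{pw-nl} together with the ODE \rife{ode1} and Proposition \ref{refe} to close the estimate; the genuinely new point will be the control of the superlinear gradient terms carrying $c_0,c_1$, achieved by choosing the doubling parameter $\de$ small enough to exploit $\omega(s)\to0$. Fix $t_0\in(0,T)$, let $L_0\ge0$ be the bound on the gradient of the test function introduced below, let $\varphi=\varphi_{L_0}$, $\epsilon_0=\epsilon_0(L_0)$ be as in Hypothesis \ref{pert-fi}, and keep the auxiliary $f$ solving \rife{ode1} on $(0,\de]$, $\de\in(0,1]$. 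For $\epsilon\le\epsilon_0$ consider on $\Delta=\Delta(t_0,\de)$ exactly the function
\[
 \Phi_{\epsilon}(t,x,y)= u(t,x)-u(t,y)-K\psi(x-y)-\epsilon(\varphi(t,x)+\varphi(t,y))-C_0(t-t_0)^2-\tfrac{\epsilon}{T-t},\qquad \psi(\cdot)=f(|\cdot|).
\]
As in Theorem \ref{PW}, with $C_0=4\omega_{0,\de}(u)/t_0^2$ and $K\ge\omega_{0,\de}(u)/f(\de)$, a positive supremum of $\Phi_\epsilon$ on $\Delta$, if it exists, is attained at an interior point $(\hat t,\hat x,\hat y)$, $\hat x\ne\hat y$; set $r=|\hat x-\hat y|<\de$. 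Applying Theorem \ref{key} to $u_1=u(\cdot,x)$, $u_2=-u(\cdot,y)$ and the corresponding $z$, and using the compactness step of Remark \ref{rit} — licit here because, combining Hypothesis \ref{pert-fi} with \rife{Feq}, the difference of the $F$'s at the two points satisfies the one-sided bound \rife{suppose} with a constant independent of $n$ — one obtains $a,b\in\R$, $X,Y\in{\mathcal S}_N$ with $a-b=\partial_t z$, the usual inclusions in $\overline P^{2,\pm}$, and the matrix inequality \rife{matri1}.

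Next, set $p=KD\psi(\hat x-\hat y)=\mu(\hat x-\hat y)$ with $\mu=Kf'(r)/r$, and $\mathsf X=X-\epsilon D^2\varphi(\hat t,\hat x)$, $\mathsf Y=Y+\epsilon D^2\varphi(\hat t,\hat y)$. Since $|p|=Kf'(r)\le Kf'(0)=:L_0$, Hypothesis \ref{pert-fi} lets one replace, in the sub/supersolution inequalities, $(D_xz,X)$ and $(-D_yz,Y)$ by $(p,\mathsf X)$ and $(p,\mathsf Y)$ up to $\pm\epsilon\partial_t\varphi$, which cancels against $\partial_t z$; subtracting and using $a-b=\partial_tz$ gives $2C_0(\hat t-t_0)+F(\hat t,\hat x,p,\mathsf X)-F(\hat t,\hat y,p,\mathsf Y)\le\omega_{0,\de}(h)$. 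From \rife{matri1} and $f''\le0$ one checks that $\mathsf X,\mathsf Y$ satisfy \rife{ine} with this $\mu$ and $\nu=0$, so \rife{Feq} applies; Proposition \ref{refe}(ii) with $P=\tfrac{\hat x-\hat y}{r}\otimes\tfrac{\hat x-\hat y}{r}$ gives $\mathrm{tr}(\mathsf X-\mathsf Y)\le4Kf''(r)$, and inserting \rife{Feq} and the ODE $4\la f''+gf'=-1$ we arrive at
\[
2C_0(\hat t-t_0)+K\le\omega_{0,\de}(h)+M+(Kf'(r))^2\bigl(c_0+c_1(Kf'(r)\,r)^{q-1}\bigr)\,\omega(r).
\]

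It remains to absorb the last term. Using $\hat t>t_0/2$, $C_0t_0=4\omega_{0,\de}(u)/t_0$, the bound $f'(0)\le C_g\de$ with $C_g=\tfrac1{4\la}e^{\frac1{4\la}\int_0^1g}$ (see \rife{fde2}), $r<\de$ and $\omega(r)\le\omega(\de)$ (taking $\omega$ nondecreasing), one obtains, with $A:=K\de^2$,
\[
K\le\omega_{0,\de}(h)+M+\tfrac{4\omega_{0,\de}(u)}{t_0}+A\,C_g^2\bigl(c_0+c_1(A\,C_g)^{q-1}\bigr)\,\omega(\de)\,K.
\]
Over the range of parameters we need, $A=K\de^2$ stays bounded by a constant $A_{\max}$ depending only on ${{\rm osc}}_{(0,T)}(u),{{\rm osc}}_{(0,T)}(h),M,\la,g,q,c_0,c_1$, since the lower bounds imposed on $K$ are of order $\omega_{0,\de}(u)/\de^2+\omega_{0,\de}(h)+M$ while $\de\le1$, $\de^2\le t_0$. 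Hence one first fixes $\de_0\in(0,1]$ so small that $A_{\max}C_g^2(c_0+c_1(A_{\max}C_g)^{q-1})\,\omega(\de_0)\le\tfrac12$, then takes $\de=\sqrt{t_0\wedge1}\wedge\de_0$: the last term above is $\le K/2$, and a contradiction follows as soon as $K>2(\omega_{0,\de}(h)+M+4\omega_{0,\de}(u)/t_0)$ (and $K\ge\omega_{0,\de}(u)/f(\de)$). Thus $\Phi_\epsilon\le0$; letting $\epsilon\to0$, reversing $x,y$ and using concavity of $f$, $|u(t_0,x)-u(t_0,y)|\le Kf'(0)|x-y|\le C_g\de K|x-y|$ for $|x-y|<\de$, hence for all $x,y\in\R^N$; since $K\lesssim1/(t_0\wedge1)$ ($\de^2=t_0\wedge1$ and $K\lesssim1/t_0$ when $t_0\le\de_0^2$; $\de=\de_0$ and $K\lesssim1$ otherwise), this is the bound $\|Du(t_0)\|_\infty\le C/\sqrt{t_0\wedge1}$, with $C$ of the stated dependence.

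The \emph{main obstacle} is exactly the superlinear gradient contribution $(Kf'(r))^2(c_0+c_1(Kf'(r)r)^{q-1})\omega(r)$ in the second display: unlike the linear case, it is quadratic (or worse) in $K$ and is not cancelled by the ODE. One must notice that $f'(0)\sim\de$ forces $(Kf'(0))^2\sim K$ and keeps $(Kf'(0)r)^{q-1}$ bounded along the scaling $K\de^2=O(1)$, and then use $\omega(\de)\to0$; the apparent circularity — "$K$ large" for the contradiction versus "$K\de^2$ small" to exploit $\omega$ — is broken by fixing $\de_0$ first and $K$ afterwards. (If instead $\omega$ is merely bounded, as in Remark \ref{varieoss}(iii), this step has to be preceded by a preliminary H\"older estimate.)
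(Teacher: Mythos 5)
Your proposal is correct and follows essentially the same route as the paper's proof: the same doubling-variables function, Hypothesis \ref{pert-fi} applied with $L=Kf'(0)$ to absorb the Lyapunov perturbations, Proposition \ref{refe} and the ODE \rife{ode1} to cancel the second-order and $g$-terms, and smallness of $\de$ (fixed before $K$) to absorb the superlinear term via $\omega(\de)\to0$ and the scaling $K\de^2=O(1)$. The only cosmetic difference is that you bound $(Kf'(r)r)^{q-1}$ through the boundedness of $A=K\de^2$, whereas the paper uses $Kf'(r)r\le Kf(r)\le\omega_{0,\de}(u)$ coming from the positivity of the maximum; both yield the same conclusion.
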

\proof  As in the proof of Theorem \ref{PW}, with the same notation,
 we define
\begin{align*}
\Phi_{\epsilon}(t,x,y) &= u (t,x)-u (t,y)
  -  K f(|x-y|) -
\vep \,  (\varphi(t,x)+\varphi(t,y))
  - C_0 (t-t_0)^2
   - \frac{\epsilon}{T-t}
\\ &=  u (t,x)-u (t,y)
  - z(t,x,y),
  \end{align*}
where $f$ is the solution of \rife{ode1} and  $\varphi(t,x)$ is the Lyapunov function given in Hypothesis
\ref{pert-fi} corresponding to  $L= K f'(0)$.
Arguing by contradiction, we deduce that $\Phi_\vep$
 has  a positive local maximum at
  $(\hat t, \hat x,\hat y) \in \triangle $
  $= \triangle(t_0, \delta)$ provided
\be\label{vecchiecon}
  C_0=\frac{4 \omega_{0,\de}(u)}{t_0^2}\,, \qquad K\geq \frac{\omega_{0,\de}(u)}{f(\de)}\,,
  \ee
  where $\omega_{0,\de}(u) =
 {{\rm osc}}_{(\frac {t_0}{2}\, , \, T\wedge \frac{3}{2}t_0), \de}(u)$. Since $u$ is a viscosity  solution of \rife{fnl},    by Theorem \ref{key} we end up with
$$
 \begin{array}{c}
  \frac{\epsilon}{(T- \hat t)^2} +
 2C_0(\hat t-t_0)+ \vep\,(\partial_t \varphi(\hat t, \hat x)+\partial_t \varphi(\hat t, \hat y))+
 F (\hat t ,\hat x, K D\psi(\hat x-\hat y) + \vep \,  D\varphi(\hat t, \hat x),  X_n)
 \\
 \m
 \quad -F(\hat t ,\hat y,
K D\psi(\hat x-\hat y) -\vep \,  D\varphi(\hat t, \hat y), Y_n)\leq
h(\hat t,\hat x)-h(\hat t,\hat y),
\end{array}
$$
where $\psi(\cdot)= f(|\cdot|)$  and $X_n$, $Y_n$ satisfy  the matrix inequality
\be\label{matri13} -  (n+ c_N \| D^2 z(\hat t,\hat x,\hat y)\|)I
\leq
\begin{pmatrix}
 X_n & 0    \\
\noalign{\medskip}
0 & -Y_n
\end{pmatrix}
 \leq D^2 z (\hat t,\hat x,\hat y)+
 \frac1n \,\left(D^2 z(\hat t,\hat x,\hat y)\right)^2.
\ee
 Therefore we obtain
$$
 \begin{array}{c}
 \frac{\epsilon}{(T- \hat t)^2} +
 2C_0(\hat t-t_0)+F (\hat t ,\hat x, K D\psi(\hat x-\hat y) , X_n- \vep \,  D^2\varphi(\hat t, \hat x))
 \\
 \m
 \quad -F(\hat t ,\hat y,
K D\psi(\hat x-\hat y) , Y_n +\vep \,  D\varphi(\hat t, \hat y))\leq
\omega_{0,\de}(h) + {\mathcal I}_\vep(\varphi(\hat
t,\hat x))+ {\mathcal I}_\vep(\varphi(\hat t, \hat y)),
\end{array}
$$
where $\omega_{0,\de}(h) =
 {{\rm osc}}_{(\frac {t_0}{2}\, , \, T\wedge \frac{3}{2}t_0), \de}(h)$ and where
$$
\begin{array}{c}
{\mathcal I}_\vep(\varphi(\hat t, \hat x))= - \vep\,\partial_t
\varphi(\hat t, \hat x)+ F (\hat t ,\hat x, K D\psi(\hat x-\hat y) ,
X_n- \vep \,  D^2\varphi(\hat t, \hat x))
\\
\m \qquad  -F (\hat t ,\hat x, K D\psi(\hat x-\hat y) + \vep \,
D\varphi(\hat t, \hat x), X_n)
\end{array}
$$
and similarly
$$
\begin{array}{c}
{\mathcal I}_\vep(\varphi(\hat t, \hat y))= - \vep\,\partial_t
\varphi(\hat t, \hat y)+ F(\hat t ,\hat y, K D\psi(\hat x-\hat y)
-\vep \,  D\varphi(\hat t, \hat
y), Y_n) \\
\m \qquad -F(\hat t ,\hat y, K D\psi(\hat x-\hat y) , Y_n +\vep \,
D^2\varphi(\hat t, \hat y)).
\end{array}
$$
 Recall that, since $f$ is increasing and concave, we have
$$
| K D\psi(\hat x-\hat y) | = K f'(|\hat x-\hat y|) \leq K f'(0)\,,
$$
hence, using   Hypothesis \ref{pert-fi} with $L= K f'(0)$, we deduce that ${\mathcal I}_\vep(\varphi(\hat t,\hat x))$, ${\mathcal I}_\vep(\varphi(\hat t,\hat y))\leq 0$.  In this way we get, for $ \epsilon $ sufficiently small (only depending on $K$):
 \be\label{pre-est}
 \begin{array}{c}
 \frac{\epsilon}{(T- \hat t)^2} +
 2C_0(\hat t-t_0)+F (\hat t ,\hat x, K D\psi(\hat x-\hat y) , X_n- \vep \,  D^2\varphi(\hat t, \hat x))
 \\
 \m
 \quad -F(\hat t ,\hat y,
K D\psi(\hat x-\hat y) , Y_n +\vep \,  D\varphi(\hat t, \hat y))\leq
\omega_{0,\de}(h) \,.
\end{array}
\ee
Observe that the matrix inequality \rife{matri13} implies  \be\label{ipot}
\begin{pmatrix}
\tilde X_n  & 0\\
\noalign{\medskip} 0&  - \tilde Y_n
\end{pmatrix}
\leq  K \begin{pmatrix}
 D^2\psi(\hat x-\hat y) & -D^2\psi(\hat x-\hat y)\\
\noalign{\medskip} - D^2\psi(\hat x-\hat y)&  D^2\psi(\hat x-\hat y)
\end{pmatrix} + \frac{1}{n}
(D^2 z(\hat t, \hat x, \hat y))^2
 \ee
where $\tilde X_n = X_n-  \vep\, D^2\varphi (\hat t , \hat x) $ and $\tilde Y_n=Y_n+ \vep\, D^2\varphi(\hat t, \hat y)
$.  Note that
 \begin{align} \label{the}
 \frac{1}{n}
(D^2 z(\hat t, \hat x, \hat y))^2
 \le \frac{\theta_{\epsilon} (\hat t, \hat x, \hat y)} {n}
  \begin{pmatrix}
 I & 0 \\
\noalign{\medskip} 0 &  I
\end{pmatrix},
 \end{align}
 for some positive function $\theta_{\epsilon}$ (independent of $n$).
 Moreover,  using \rife{hesspsi} and the concavity of $f$, we
have
\be \label{the1}
KD^2\psi(\hat x-\hat y) \leq K \frac{f'(|\hat x-\hat y|)}{|\hat
x-\hat y|} \,I.
\ee
Since for matrices $ B, C \in {\mathcal S}_N$,
 the inequality $B \le C$ implies $\begin{pmatrix}
 B & - B\\
  - B &  B
\end{pmatrix}$ $\le \begin{pmatrix}
 C & - C\\
 - C &  C
\end{pmatrix}$, we can use Hypothesis
\ref{pw-nl} with $\mu= K \frac{f'(|\hat x-\hat y|)}{|\hat x-\hat
y|}$ and $\nu = \frac{\theta_{\epsilon} (\hat t, \hat x, \hat y)}
{n}$ to estimate  (recall that $ D \psi(\hat x- \hat y)= K \frac{f'(| \hat x-\hat
y|)}{|\hat x-\hat y|}(\hat x-\hat y) $):
\be\label{xnyn}
\begin{array}{c}
F (\hat t ,\hat x, K D\psi(\hat x-\hat y) , \tilde X_n)
   -F(\hat t ,\hat y,
K D\psi(\hat x-\hat y) , \tilde Y_n )
\\
\m \qquad \geq - \la {\rm tr}\left(\tilde X_n - \tilde Y_n
 \right)  -K  f'(|\hat x-\hat y|) \, g(|\hat x- \hat y|)
\\
\m  \qquad  - \left(K
f'(|\hat x-\hat y|)\right)^{2}\, \left( c_0 + c_1\left(K
f'(|\hat x-\hat y|)\, |\hat x-\hat y|
\right)^{q-1}\right)\omega(|\hat x-\hat y|) - M -  \frac{\theta_{\epsilon}
(\hat t, \hat x, \hat y)} {n}
 \, \eta(\hat t, \hat x, \hat y)\,.
\end{array}
\ee
The bound from below obtained so far allows us to let $n$ go to infinity, using the compactness argument of Remark  \ref{rit}; in this way we may replace $X_n$ and $Y_n$ with  matrices $X$, $Y$ such that
\be \label{fr3}
\begin{pmatrix}
 X-  \vep\, D^2\varphi (\hat t , \hat x) & 0\\
\noalign{\medskip} 0&  - (Y+ \vep\, D^2\varphi(\hat t, \hat y))
\end{pmatrix}
\leq  K \begin{pmatrix}
 D^2\psi(\hat x-\hat y) & -D^2\psi(\hat x-\hat y)\\
\noalign{\medskip} - D^2\psi(\hat x-\hat y)&  D^2\psi(\hat x-\hat y)
\end{pmatrix}
\ee
and we obtain, combining \rife{pre-est} and \rife{xnyn}
$$
\begin{array}{c}
 \frac{\epsilon}{(T- \hat t)^2} +
 2C_0(\hat t-t_0)- \la\,{\rm tr}\left(X- \vep \, D^2\varphi(\hat t, \hat x)-(Y
+\vep \,  D^2\varphi(\hat t, \hat y)) \right)
\\
\m \leq \omega_{0,\de}(h)  + K  f'(|\hat x-\hat y|) \, g(|\hat x- \hat y|)
\\
\m   + \left(K
f'(|\hat x-\hat y|)\right)^{2}\, \left( c_0 + c_1\left(K
f'(|\hat x-\hat y|)\, |\hat x-\hat y|
\right)^{q-1}\right)\omega(|\hat x-\hat y|) + M\,.
\end{array}
$$
Now, since
$\max \Phi_\vep>0$, we deduce that
$$
K f(|\hat x-\hat y|) \leq u(\hat t, \hat x)- u(\hat t, \hat y) \leq
\omega_{0, \delta}(u),
$$
hence we get, since $f$ is concave,
$$
K f'(|\hat x-\hat y|)\, |\hat x-\hat y| \leq \omega_{0,
\delta}(u)\,.
$$
Therefore we obtain
$$
\begin{array}{c}
 \frac{\epsilon}{(T- \hat t)^2} +
 2C_0(\hat t-t_0)- \la\,{\rm tr}\left(X- \vep \, D^2\varphi(\hat t, \hat x)-(Y
+\vep \,  D^2\varphi(\hat t, \hat y)) \right)
\\
\m \leq \omega_{0,\de}(h)  + K  f'(|\hat x-\hat y|) \, g(|\hat x- \hat y|)
\\
\m   + K^2 \left(f'(|\hat x-\hat y|)\right)^2
(c_0+ c_1 \left(\omega_{0, \delta}(u) \right)^{q-1})\omega(|\hat x-\hat y|) + M\,.
\end{array}
$$
By \eqref{fr3}, we can use   \rife{rep1} in Proposition \ref{refe}, with $P=\frac{\hat x-\hat y}{|\hat x-\hat y|}\otimes
 \frac{\hat x-\hat y}{|\hat x-\hat y|}$ and  $A = K
 D^2\psi(\hat x-\hat y) $, and we estimate
 $$
\begin{array}{c}{\rm tr}\left(X- \vep \,  D^2\varphi(\hat t, \hat x)-
(Y +\vep \,  D^2\varphi(\hat t, \hat y)) \right)\leq 4 \,K f''(|\hat
x-\hat y|).
\end{array}
$$
 Hence
\be\label{fg1}
\begin{array}{c}
 \frac{\epsilon}{(T- \hat t)^2} +
 2C_0(\hat t-t_0)- 4 \la\, Kf''(|\hat x-\hat y|)
 \leq \omega_{0,\de}(h)  + K  f'(|\hat x-\hat y|) \, g(|\hat x- \hat y|)
\\
\m   + K^2 \left(f'(|\hat x-\hat y|)\right)^2
(c_0+ c_1 \left(\omega_{0, \delta}(u) \right)^{q-1})\omega(|\hat x-\hat y|) + M \,,
\end{array}
\ee
which implies, since  $f$ is the solution of \rife{ode1},
\be\label{post1}
 %\frac{\epsilon}{(T- \hat t)^2} +
 2C_0(\hat t-t_0)+ K \leq
 \omega_{0,\de}(h) +K^2 \left(f'(|\hat x-\hat y|)\right)^2 (c_0+ c_1 \left(\omega_{0, \delta}(u) \right)^{q-1})\omega(|\hat x-\hat y|)+ M\,.
\ee
Here, if $c_0=c_1=0$ in Hypothesis \ref{pw-nl}, we conclude exactly as in Theorem \ref{PW}, obtaining,  similarly to \rife{preliouv},  that
\be\label{liouv2}
\begin{array}{c}
|u(t_0,x)-u(t_0,y)|\leq  K\, f(|x-y|),   \qquad  |x-y|<\de
\\
\m
\m
\hbox{where $K =\frac{4 \omega_{0,\de}(u) }{t_0} + \omega_{0,\de}(h) + M+
  \, \frac{\omega_{0,\de}(u) }{f(\de)}$\,.}
  \end{array}
\ee Otherwise, due to the presence of  the nonlinear terms $K^2
\left(f'(|\hat x-\hat y|)\right)^2$ $ (c_0+ c_1 \left(\omega_{0,
\delta}(u) \right)^{q-1})\omega(|\hat x-\hat y|)$ we do not continue
exactly as in the proof of Theorem \ref{PW}. We first fix
 $$
 K=  \frac{8 \lambda \, \omega_{0, \delta}(u)}{ \de^2},
  $$
so that $K$ satisfies \rife{vecchiecon}  thanks to \rife{fde1}; we
will choose $\de$ later. Then, recalling \rife{fde2} and the
concavity of $f$,  we estimate \be\label{omega-1}
\begin{array}{c}
K^2\, \left(f'(|\hat x-\hat y|)\right)^2(c_0+ c_1 \left(\omega_{0, \delta}(u) \right)^{q-1})\omega(|\hat x-\hat y|)\leq  C\, K^2\,
   \de^2(c_0+ c_1\left(\omega_{0, \delta}(u)\right)^{q-1})
   \omega(|\hat x-\hat y|)
   \\
   \m
   \qquad \leq C\, K\, \omega_{0, \delta}(u)
  (1+ \left(\omega_{0, \delta}(u)\right)^{q-1}) \omega(|\hat x - \hat y|)\,.
  \end{array}
\ee
Since $\lim_{r \to 0^+}\omega(r)=0$, there exists $\de_0$ (depending on $\omega$ and $\omega_{0, 1}(u)$)  such that   $\delta \le \delta_0$ implies
$$
K^2\, \left(f'(|\hat x-\hat y|)\right)^2 (c_0+ c_1 \left(\omega_{0, \delta}(u) \right)^{q-1}) \omega(|\hat x-\hat y|)\leq \frac K 2.
$$
Therefore we get from \rife{post1}
%(recall that $C_0 = \frac{4 \omega_{0,
%\delta}(u)}{t_0^2}$)
$$
\frac{K}{2} +  \frac{\epsilon}{(T-\hat t)^2}
   \leq  C_0 t_0
 + \omega_{0,\de}(h)+ M\,.
$$
Choosing again  $\de$ small enough we  have  $\frac K 2 \ge  C_0 t_0
 + M    + \omega_{0,\de}(h)$ and we obtain a contradiction. Indeed, recalling the value of $C_0$ (see \rife{vecchiecon}) and that $K= \frac{8 \lambda \omega_{0, \delta}(u)}{ \de^2}$, it is enough to take
$$
\delta \le  \Big( \sqrt{
\frac{ \lambda \,  t_0\, \omega_{0,
\delta}(u)}{ 4 \omega_{0, \delta}(u) + M t_0 + \omega_{0,\de}(h)  t_0 }} \,
\, \Big)\, \wedge \, \delta_0.
$$
This means that we can choose  $\delta =C_3 \sqrt{t_0 \wedge 1}$
for some constant $C_3 = C_3 (\de_0, \omega_{0, \delta}(u)$,
 $\lambda, g, M, q$, $c_0 $, $c_1 $); recall that
  $\omega_{0, \delta}(u) \le {{\rm osc}}_{(\frac t2\, , \, T\wedge
\frac{3}{2}t)}(u)$. Finally, we have proved that
 $$
 |u(t_0, x)-u(t_0, y)|\leq K f'(0) |x-y| \le c\, K \delta
  |x-y|,
  \qquad |x-y|\leq \de\,,
 $$
and the proof is complete.
\qed
\vskip0.4em
Now we consider a similar
regularizing effect but concerning the H\"older continuity  of
solutions (i.e., the extension of Proposition \ref{hol-lin}).  In
this case we still assume the structure condition of Hypothesis
\ref{pw-nl} but replacing the condition $g \in  L^1 (0,1)$
 with  $\lim_{s\to 0^+} s g(s) =0$; moreover,
 we do not need any more the modulus of continuity $\omega(|x-y|)$ in
the superlinear terms (see also Remark \ref{varieoss} (iii)).

\begin{hypothesis} \label{pw-nl-hol}
There exist $\lambda>0$, $M \ge 0$, $q >1$,  $c_0$, $c_1\ge
0$, non-negative functions
  $\eta(t,x,y)$  and
  $g\in C((0, 1); \R_+ )$ satisfying $\lim_{s\to 0^+} s g(s) =0$, such that
  \be\label{Feq2}
  \begin{array}{c}
F(t, x, \mu(x-y), X)- F(t,y,\mu(x-y),Y)\geq
 - \la {\rm tr}\left(X-Y\right)- \mu |x-y| \, g(|x-y|)\\
\m
 \qquad  -  (\mu | x- y|)^{2}\left(c_0+ c_1(\mu |x-y|^2)^{q-1} \right)- M
  - \nu \, \eta(t,x,y)\,,
  \end{array}
\ee for any $ \mu>0$, $\nu \ge 0$, \ $x,y\in \R^N\,,
  \;\; 0<|x-y|\le 1,
  \;\;\; t\in (0,T),\,
   \,\,  X,Y\in {\mathcal S}_N\,$ satisfying \rife{ine}.\end{hypothesis}

We also need to replace Hypothesis \ref{pert-fi} with the following
stronger one, which accounts for the fact that solutions will
possibly  have unbounded gradient.
\begin{hypothesis} \label{pert-fi-bis}
$\exists \,\, \varphi \in C^{1,2}(\bar Q_T)\,, \;  \,\epsilon_0
>0 \,$:
$$
\begin{array}{l}
%\exists \,\, \varphi \in C^2([0,T)\times \RN)\,, \;  \,\epsilon_0 >0
%\,:\, \;
%\\
%\m
\begin{cases}
\vep \partial_t \varphi + F(t,x, p+ \vep D\varphi, X+ \vep
D^2\varphi)- F(t, x,p, X)\geq  0 &
\\
\m \qquad \hbox{for every $(t,x)\in Q_T$, $p\in \RN$, $X\in
{\mathcal S}_N$, and every $\vep\leq \vep_0$} &
 \\
 \m
\varphi(t,x)\to +\infty \quad \hbox{as $|x|\to \infty$, uniformly
for $t\in [0,T]$.} &
\end{cases}
\end{array}
$$
\end{hypothesis}

\begin{proposition}\label{hol-fully}
 Assume that $F$ satisfies Hypotheses
 \ref{pw-nl-hol} and   \ref{pert-fi-bis}.
   Let  $u \in C(Q_T)$
  be a
viscosity solution of \rife{fnl}.
 Moreover, suppose that $u$ and $h$ have
  bounded oscillation.
  Then   $ u(t) $ satisfies,
 for any $\alpha \in (0,1)$,
\begin{align} \label{ci11}
 |    u(t, x) - u(t,y)| \leq  \frac {C}{{(t \wedge 1)^{\alpha/2}}}
 |x-y|^{\alpha},
\end{align}
 for all $x, y \in \R^N$, $|x-y| \le 1$, $t \in (0,T)$,
where $C$ depends on $\alpha$,  ${{\rm osc}}_{(\frac t2\, , \, T\wedge
\frac{3}{2}t)}(u)$,  ${{\rm osc}}_{(\frac t2\, , \, T\wedge
\frac{3}{2}t)}(h)$,
  $\lambda, g, M, q$,  $c_0 $ and  $c_1$.

Moreover, if we replace $\lim_{s\to 0^+} s g(s) =0$ with the
condition
 $\limsup_{s \to 0^+} s g(s) <
4 \lambda$, then there exists some  $\alpha = \alpha ($
${{\rm osc}}_{(\frac t2\, , \, T\wedge \frac{3}{2}t)}(u)$, ${{\rm osc}}_{(\frac
t2\, , \, T\wedge \frac{3}{2}t)}(h),$
  $\lambda, g, M, q, c_0, c_1) \in
(0,1)$ such that
 \rife{ci11} holds.
\end{proposition}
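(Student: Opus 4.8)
The plan is to transplant the doubling–of–variables scheme of Proposition \ref{hol-lin} into the nonlinear setting, keeping from the proof of Theorem \ref{pw-fully} the way one handles the matrix inequality and the lower order terms. Fix $t_{0}\in(0,T)$ and, on $\triangle(t_{0},\delta)$ for a small $\delta$, consider
\[
\Phi_{\epsilon}(t,x,y)=u(t,x)-u(t,y)-Kf(|x-y|)-\epsilon(\varphi(t,x)+\varphi(t,y))-C_{0}(t-t_{0})^{2}-\frac{\epsilon}{T-t},
\]
where now $f(s)=s^{\alpha}$ and $\varphi$ is the Lyapunov function of Hypothesis \ref{pert-fi-bis}. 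It is essential here to use Hypothesis \ref{pert-fi-bis} rather than \ref{pert-fi}: since $f'(0^{+})=+\infty$, the test gradient $Kf'(|x-y|)$ is unbounded near the diagonal, so the admissibility of $\varphi$ cannot be restricted to a bounded set of slopes. Having fixed, exactly as in Proposition \ref{hol-lin}, a threshold $\delta_{0}$ so that $sg(s)<2\lambda(1-\alpha)$ for $s<\delta_{0}$ in the first case (respectively $sg(s)<\gamma_{0}$ with $4\lambda(\alpha-1)+\gamma_{0}<0$ in the second), I argue by contradiction: with $C_{0}=4\,\omega_{0,\delta}(u)/t_{0}^{2}$ and $K\ge\omega_{0,\delta}(u)/\delta^{\alpha}$ one gets a positive interior local maximum of $\Phi_{\epsilon}$ at some $(\hat t,\hat x,\hat y)$ with $0<|\hat x-\hat y|<\delta<\delta_{0}$.

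I would then run the Crandall--Ishii argument as in Theorem \ref{pw-fully}: apply Theorem \ref{key}, pass to the limit in $n$ via the compactness argument of Remark \ref{rit}, and use the resulting matrix inequality together with Proposition \ref{refe} (with $A=KD^{2}\psi(\hat x-\hat y)$ and $P$ the projection onto $\hat x-\hat y$) and Hypothesis \ref{pw-nl-hol} with $\mu=Kf'(|\hat x-\hat y|)/|\hat x-\hat y|=K\alpha|\hat x-\hat y|^{\alpha-2}$ and $\nu=\theta_{\epsilon}/n\to0$. Inserting $f(s)=s^{\alpha}$ into the bookkeeping of \rife{hesspsi}--\rife{ode1} one reaches an inequality of the form
\[
2C_{0}(\hat t-t_{0})+\alpha K|\hat x-\hat y|^{\alpha-2}\Bigl(4\lambda(1-\alpha)-|\hat x-\hat y|\,g(|\hat x-\hat y|)\Bigr)\le\omega_{0,\delta}(h)+M+\mathcal{N},
\]
where, since $|\hat x-\hat y|<\delta_{0}$, the bracket is $\ge\beta>0$ (with $\beta=2\lambda(1-\alpha)$, respectively $\beta=4\lambda(1-\alpha)-\gamma_{0}$), and $\mathcal{N}$ collects the superlinear contributions from \rife{Feq2}. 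Using that $K|\hat x-\hat y|^{\alpha}=Kf(|\hat x-\hat y|)\le u(\hat t,\hat x)-u(\hat t,\hat y)\le\omega_{0,\delta}(u)$ and the concavity of $f$, one bounds $\mathcal{N}\le\alpha^{2}\,\omega_{0,\delta}(u)\bigl(c_{0}+c_{1}(\alpha\,\omega_{0,\delta}(u))^{q-1}\bigr)\,K|\hat x-\hat y|^{\alpha-2}$.

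The only real difficulty — and the reason the proof is done in two steps — is absorbing $\mathcal{N}$. Unlike in Theorem \ref{pw-fully}, where a factor $\omega(|x-y|)\to0$ damps the term that is quadratic in $K$, here \rife{Feq2} carries no such factor, and $\mathcal{N}$ is of the same order $\alpha K|\hat x-\hat y|^{\alpha-2}$ as the good term, up to the multiplicative factor $\alpha\,\omega_{0,\delta}(u)\bigl(c_{0}+c_{1}(\alpha\,\omega_{0,\delta}(u))^{q-1}\bigr)$. Whenever this factor is smaller than $\beta$ (or than $\beta/2$), $\mathcal{N}$ is absorbed into the good term, and then, choosing $K$ large enough and $\delta=\sqrt{t_{0}}\wedge\delta_{0}$, the contradiction — hence \rife{ci11} — follows exactly as in Proposition \ref{hol-lin}. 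With the crude bound $\omega_{0,\delta}(u)\le{\rm osc}_{(\frac{t_{0}}{2},\,T\wedge\frac{3}{2}t_{0})}(u)$ this smallness holds for every $\alpha$ small enough (depending on the oscillation and on $\lambda,c_{0},c_{1},q$); this already proves the \emph{second} assertion, where $\alpha$ may always be chosen small, and it yields \rife{ci11} for some (possibly small) exponent $\alpha_{0}$. To obtain \rife{ci11} for an \emph{arbitrary} $\alpha\in(0,1)$ in the first assertion I would bootstrap: the $\alpha_{0}$--H\"older bound just proved, with a constant uniform for $t$ in the relevant interval, gives $\omega_{0,\delta}(u)\le C\,\delta^{\alpha_{0}}$, so the absorption factor $\alpha C\delta^{\alpha_{0}}\bigl(c_{0}+c_{1}(\alpha C\delta^{\alpha_{0}})^{q-1}\bigr)$ tends to $0$ as $\delta\to0$; taking then $\delta=\sqrt{t_{0}}\wedge\delta_{1}$ with $\delta_{1}$ small enough one has $\mathcal{N}\le\frac{1}{2}\alpha\beta K|\hat x-\hat y|^{\alpha-2}$, it is absorbed, and the argument closes as before. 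This ``first a small H\"older exponent, then the general one'' structure, foreseen in Remark \ref{varieoss}(iii) (see also Remark \ref{omega}), is precisely the main obstacle to a one-shot proof.
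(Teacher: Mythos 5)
Your proposal is correct and follows essentially the same route as the paper: the same test function $Kf$ with $f(s)=s^{\alpha}$, the same absorption of the quadratic-in-$K$ term via $Kf'(|\hat x-\hat y|)\,|\hat x-\hat y|\le\alpha\,\omega_{0,\delta}(u)$ for $\alpha$ small, and the same bootstrap from a small exponent $\alpha_{0}$ to arbitrary $\alpha\in(0,1)$. The only point to tighten is the final choice of $\delta$ in the bootstrap: since the $\alpha_{0}$--H\"older constant behaves like $(t_{0}\wedge1)^{-\alpha_{0}/2}$, the absorption factor is controlled by $(\delta/\sqrt{t_{0}\wedge1})^{\alpha_{0}}$, so one must take $\delta=C_{3}\sqrt{t_{0}\wedge1}$ with $C_{3}$ small (rather than $\delta=\sqrt{t_{0}}\wedge\delta_{1}$, which gives ratio $1$ for small $t_{0}$), exactly as the paper does.
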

\proof  We proceed as in the proof of Theorem \ref{pw-fully} (see
also the proof of Proposition \ref{hol-lin}).
 The main difference is that we consider $f(s) = s^{\alpha}$
and take $\delta \le \delta_1 <1$ such that
 $sg(s)<2\la (1-\alpha)$
if $s<\de_1$. We also fix   $K = \frac{\omega_{0,
\delta}(u)}{\delta^{\alpha}}$ and we use Hypothesis \ref{pert-fi-bis} to get rid of the terms involving the Lyapunov function.
 Since $f$ is increasing
  and concave, we arrive again at the inequality \rife{fg1}, which
implies
$$
\begin{array}{c}
 \frac{\epsilon}{(T- \hat t)^2}+
 2C_0(\hat t-t_0) \leq
\alpha \,K\, |\hat x-\hat y|^{\alpha-2}  \big( 4\la\,(\alpha-1) +
|\hat x-\hat y| g(|\hat x-\hat y|)\big) + \omega_{0,\de}(h)
\\
\m
\quad
+ K^2 \left(f'(|\hat x-\hat y|)\right)^2(c_0+ c_1 \left(\omega_{0, \delta}(u)
    \right)^{q-1}) + M\,.
    \end{array}
 $$
Since $|\hat x - \hat y| < \delta \le \delta_1$, we get
$$
 \alpha \,K\, |\hat x-\hat y|^{\alpha-2}  \big( 4\la\,(\alpha-1) +
|\hat x-\hat y| g(|\hat x-\hat y|)\big) \leq 2\la\,\alpha(\alpha-1)
\,K\, |\hat x-\hat y|^{\alpha-2}.
$$
Using the precise choice of $K$ and that $|\hat x-\hat y|<\de$, we
have
 \be\label{aleps} K^2\, \left(f'(|\hat x-\hat y|)\right)^2
\leq \,  K \omega_{0, \delta}(u) \frac1{\de^\alpha} \alpha^2 |\hat
x-\hat y|^{2\alpha -2} \leq \,  K \omega_{0, \delta}(u) \alpha^2
|\hat x-\hat y|^{\alpha -2} .
\ee
 Therefore, if $\alpha$ is small
enough (eventually depending on the oscillation of $u$), we have
\be\label{aleps2} K^2 \left(f'(|\hat x-\hat y|)\right)^2(c_0+ c_1
\left(\omega_{0, \delta}(u)
    \right)^{q-1}) \leq  \lambda \alpha (1 - \alpha
)K\, |\hat x-\hat y|^{\alpha-2}.
\ee
It follows that
$$
\frac{\epsilon}{(T- \hat t)^2} + \la\,\alpha(1- \alpha)K\, |\hat x-\hat y|^{\alpha-2}\le  2C_0(t_0-\hat t)+ \omega_{0,\de}(h) +  M\,.
$$
Recalling the values of $C_0$ and $K$, and since $|\hat x-\hat y|<\de$,  we obtain then
$$
\frac{\epsilon}{(T- \hat t)^2} + \la\,\alpha(1- \alpha) \,
\delta^{\alpha -2} \frac{\omega_{0, \delta}(u)}{\delta^{\alpha}}
\le  \frac{4 \omega_{0,\de}(u)} {t_0} +  \omega_{0,\de}(h)  +  M\,.
$$
We can choose now  $\de$ small enough so that  $\la\,\alpha(1-
\alpha) \,  \delta^{-2} {\omega_{0, \delta}(u)}  \ge
\omega_{0,\de}(h) +  \frac{\omega_{0, \delta}(u)} {t_0}  +  M$ and we get a
contradiction.  To this purpose, it is enough to consider
 \be
\label{rrr} \delta \le  \Big( \sqrt{
%\frac{1}{2k_1}
\, \frac{ \lambda \alpha (1-\alpha) \, t_0\,
\omega_{0, \delta}(u)}{ 4 \omega_{0, \delta}(u) + M t_0 +
\omega_{0}(h)  t_0 }} \, \, \Big)\, \wedge \, \delta_1. \ee
Continuing as in the proof of Theorem \ref{pw-fully} we obtain
 $$
 |u(t_0, x)-u(t_0, y)|\leq
 \frac{\omega_{0, \delta}(u)}{\delta^{\alpha}}  |x-y|^{\alpha} ,
  \qquad |x-y|\leq \de\,.
 $$
Taking into account  \rife{rrr} we find the assertion, which is now
proved for $\alpha$ sufficiently small. We repeat now the same
scheme with any $\alpha<1$; knowing that \rife{ci11} holds at least
for $\alpha$ small, we estimate the oscillation of $u$ as
$$
\omega_{0,\de}(u)\leq   \frac {C}{{(t_0 \wedge 1)^{\alpha_0/2}}}
 \de^{\alpha_0}
$$
for some (small) $\alpha_0>0$, possibly depending
 on the oscillation  $\omega_{0,1}(u)$. Note that
  $\delta \le C_3 \sqrt{t_0 \wedge 1}$
for some constant $C_3 = C_3 (\omega_{0, \delta}(u)$,
 $\lambda, g, M, q$, $c_0 $, $c_1 $, $\alpha$).

  Now \rife{aleps}
implies
$$
K^2\, \left(f'(|\hat x-\hat y|)\right)^2   \leq \,  K \omega_{0,
\delta}(u) \alpha^2 |\hat x-\hat y|^{\alpha -2} \leq \, K  \frac
{\de^{\alpha_0}}{{(t_0 \wedge 1)^{\alpha_0/2}}} \, \alpha^2|\hat
x-\hat y|^{\alpha -2} \,,
$$
and we deduce again that \rife{aleps2} holds true
 provided $ \frac {\de}{\sqrt{t_0 \wedge 1}}$ is sufficiently small. We conclude then as before and obtain the estimate \rife{ci11} for every $\alpha<1$.

To prove the last statement, we set $\gamma= \limsup_{s \to 0^+} s
g(s)$ and proceed similarly.  First we choose $\alpha \in (0,1)$
such that $ \sigma = 4\la\,(1- \alpha) - \gamma >0$. Then we obtain,
for $|\hat x - \hat y| < \delta \le \delta_1$,
$$
 \alpha \,K\, |\hat x-\hat y|^{\alpha-2}  \big( 4\la\,(\alpha-1) +
|\hat x-\hat y| g(|\hat x-\hat y|)\big) \leq - \sigma \,K\, |\hat
x-\hat y|^{\alpha-2}.
$$
Next,  we proceed as before and   obtain the
 assertion.
 \qed

\begin{remark}\label{3e}{\rm  We stress that  if  $c_0=c_1=0$ in Hypothesis \ref{pw-nl} or  in  Hypothesis \ref{pw-nl-hol}, then
we obtain the same estimates as in the linear case (see e.g.  \rife{liouv2} in the proof of Theorem \ref{pw-fully}). In particular, in this case  estimate
 \rife{general1} can be precised as
\begin{align}\label{general111}
& \|   D u(t)\|_\infty \leq  \frac {\hat c_1}{\sqrt{t \wedge 1}} \, \,
\omega(t,u)+ \, \, \hat c_2
\sqrt{t \wedge 1}\, \, (\omega(t,h)+M)\, ,\;\; t \in (0,T),
\end{align}
which is of the same kind as \rife{general}.
 Similarly we have, as in \rife{lip-delta}:
 \be \label{llip}
 {\rm Lip}(u(t_0))\leq  C_\la\, \delta   \,  \Big( \frac{\omega_{0,\de}(u)}
 {t_0} + \omega_{0,\de}(h) + M + \frac{\omega_{0,\de}(u)}
 { \delta^2} \Big),
\ee where $C_\la$ only depends on
$\la$ and $g$.

Similarly, if  $c_0=c_1=0$ in Hypothesis \ref{pw-nl-hol} then estimate \rife{ci11}
becomes as \rife{ci1} (with $\omega(t,h)$ replaced by
 $\omega(t,h) +M$).
}
\end{remark}

\subsection{The case of bounded data and solutions}

As a first application of  Theorem \ref{pw-fully} and Proposition
\ref{hol-fully} to the Cauchy problem
\begin{align}\label{cauful}
\begin{cases} \partial_t u +
F(t,x,  D u,D^2 u)= h(t,x)
\\ u(0,\cdot) =u_0,
\end{cases}
\end{align}
%when the initial datum $u_0$ is continuous and bounded on %$\R^N,$
we   consider the easier case in which $u_0$ is bounded on $\R^N$,
and $h$ is bounded on $Q_T$. Recall that a viscosity solution to
\rife{cauful} is a function $u \in C(\bar Q_T)$ which is a
 viscosity solution to \rife{fnl} and satisfies $u(0, \cdot) = u_0$.
 We also assume that
 \be\label{F0} h_1(t,x) := F(t,x,0,0) \;\;
 \text{is bounded }\,
\; \hbox{on $ Q_T.$}
 \ee
The following  lemma, based on the
Lyapunov function $\varphi$ (cf. Lemma \ref{max}), ensures the
boundedness of solutions.

\begin{lemma} \label{maxfully}
Assume  \rife{F0} and Hypothesis \ref{pert-fi}. Let $h\in C(Q_T)\cap L^\infty(Q_T)$.
  Let $u\in C\left( \bar Q_T \right)$
  be a viscosity solution of  \rife{cauful} such that
  $u_0 : = u(0, \cdot)$ is bounded on   $\R^N$ and moreover $u$ is $o_{\infty}(\varphi_0)$ in
  $\bar Q_T$,
   where $\varphi_0$ satisfies Hypothesis \ref{pert-fi} for  $L=0$.  Then
 $u$ is bounded on $[0,T] \times \R^N$ and, setting $\tilde h = h- h_1 $,
 \begin{equation} \label{max1}
 \| u(t)\|_{\infty} \le \|u_0 \|_{\infty} \, + \,
 t \| \tilde h \|_{T, \infty},\;\;\; t \in (0,T),
\end{equation}
where $\|\tilde  h \|_{T, \infty}
 = \sup_{t\in [0,T]} \|\tilde h(t)\|_{\infty}$.
\end{lemma}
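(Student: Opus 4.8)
The plan is to run the barrier argument of the proof of Lemma \ref{max}, with the ellipticity of the linear operator replaced by the one-sided Lyapunov inequality of Hypothesis \ref{pert-fi} with $L=0$. First I would normalise: adding a constant to $\varphi_0$ leaves Hypothesis \ref{pert-fi} valid (the quantities $\partial_t\varphi_0$, $D\varphi_0$, $D^2\varphi_0$ and the blow-up at infinity are unaffected), so we may assume $\varphi_0>0$ on $\bar Q_T$. Moreover $h_1=F(\cdot,\cdot,0,0)$ is continuous and, by \rife{F0} together with $h\in L^\infty(Q_T)$, the difference $\tilde h=h-h_1$ is bounded on $Q_T$, so that $\|\tilde h\|_{T,\infty}<\infty$.

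For the upper bound, fix $\epsilon>0$ and set
\[
\phi_\epsilon(t,x)=u(t,x)-\epsilon\,\varphi_0(t,x)-\frac{\epsilon}{T-t}-\|u_0\|_\infty-t\,\|\tilde h\|_{T,\infty}\,.
\]
If $\phi_\epsilon\le 0$ on $[0,T)\times\R^N$ for every $\epsilon$, letting $\epsilon\to 0^+$ gives $u(t,x)\le\|u_0\|_\infty+t\,\|\tilde h\|_{T,\infty}$. Arguing by contradiction, suppose $\sup_{[0,T)\times\R^N}\phi_\epsilon>0$ for some $\epsilon$. Since $u=o_{\infty}(\varphi_0)$ and $\varphi_0\to+\infty$ as $|x|\to\infty$, while $\epsilon/(T-t)\to+\infty$ as $t\to T^-$, the supremum is attained at some $(t_\epsilon,x_\epsilon)$ with $t_\epsilon\in(0,T)$; indeed $t_\epsilon\neq0$, because $\phi_\epsilon(0,\cdot)\le-\epsilon\varphi_0(0,\cdot)-\epsilon/T<0$. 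Testing the subsolution property of $u$ with the test function $\psi=\epsilon\varphi_0+\frac{\epsilon}{T-t}+\|u_0\|_\infty+t\,\|\tilde h\|_{T,\infty}$ (note that $u-\psi$ has a local maximum at $(t_\epsilon,x_\epsilon)$) yields
\[
\epsilon\,\partial_t\varphi_0+\frac{\epsilon}{(T-t_\epsilon)^2}+\|\tilde h\|_{T,\infty}+F\big(t_\epsilon,x_\epsilon,\epsilon D\varphi_0,\epsilon D^2\varphi_0\big)\le h(t_\epsilon,x_\epsilon)\,,
\]
where all derivatives of $\varphi_0$ are evaluated at $(t_\epsilon,x_\epsilon)$. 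Applying Hypothesis \ref{pert-fi} with $L=0$, $p=0$, $X=0$ (admissible since $|p|=0\le\epsilon|D\varphi_0|$), we get $\epsilon\,\partial_t\varphi_0+F(t_\epsilon,x_\epsilon,\epsilon D\varphi_0,\epsilon D^2\varphi_0)\ge F(t_\epsilon,x_\epsilon,0,0)=h_1(t_\epsilon,x_\epsilon)$, and combining the two inequalities gives $\frac{\epsilon}{(T-t_\epsilon)^2}+\|\tilde h\|_{T,\infty}\le\tilde h(t_\epsilon,x_\epsilon)\le\|\tilde h\|_{T,\infty}$, a contradiction.

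For the lower bound I would run the symmetric argument with $\tilde\phi_\epsilon=-u-\epsilon\varphi_0-\frac{\epsilon}{T-t}-\|u_0\|_\infty-t\,\|\tilde h\|_{T,\infty}$: at an interior positive maximum $(t_\epsilon,x_\epsilon)$ the function $u-(-\psi)$ has a local minimum there, so the supersolution property applies, with $-\psi$ having gradient $-\epsilon D\varphi_0$ and Hessian $-\epsilon D^2\varphi_0$ at $(t_\epsilon,x_\epsilon)$. Here appears the only point which is not a verbatim copy of Lemma \ref{max}: I now need Hypothesis \ref{pert-fi} (still with $L=0$) evaluated at $p=-\epsilon D\varphi_0$, $X=-\epsilon D^2\varphi_0$, which sits at the extreme of the admissible set $\{|p|\le L+\epsilon|D\varphi_0|\}$ when $L=0$; this is legitimate because that constraint is closed, and it yields $F(t_\epsilon,x_\epsilon,-\epsilon D\varphi_0,-\epsilon D^2\varphi_0)\le F(t_\epsilon,x_\epsilon,0,0)+\epsilon\,\partial_t\varphi_0=h_1(t_\epsilon,x_\epsilon)+\epsilon\,\partial_t\varphi_0$. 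Feeding this into the supersolution inequality gives $\tilde h(t_\epsilon,x_\epsilon)\le-\frac{\epsilon}{(T-t_\epsilon)^2}-\|\tilde h\|_{T,\infty}<-\|\tilde h\|_{T,\infty}$, again impossible, whence $u(t,x)\ge-\|u_0\|_\infty-t\,\|\tilde h\|_{T,\infty}$. The two bounds together give \rife{max1}, and in particular $u\in L^\infty([0,T]\times\R^N)$. I expect no serious obstacle here; the delicate point is simply to invoke Hypothesis \ref{pert-fi} at the correct base point $p$ ($p=0$ for the upper barrier, $p=-\epsilon D\varphi_0$ for the lower one), which is exactly the reason the hypothesis is stated over the whole ball $|p|\le L+\epsilon|D\varphi_0|$ rather than only at $p=0$.
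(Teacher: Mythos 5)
Your proof is correct and follows essentially the same route as the paper's: the same two barriers $\pm\big(\epsilon\varphi_0+\tfrac{\epsilon}{T-t}+\|u_0\|_\infty+t\|\tilde h\|_{T,\infty}\big)$, with Hypothesis \ref{pert-fi} (for $L=0$) invoked at $p=0$ for the subsolution side and at $p=-\epsilon D\varphi_0$ for the supersolution side, exactly as in the paper. Your remark that the lower bound uses the hypothesis at the boundary of the admissible set $|p|\le \epsilon|D\varphi_0|$ is a fair observation; the paper does the same without comment.
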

\begin{proof}
We note that
  $u$ is a viscosity solution of
 \eqref{cauful} with $F$ replaced by $\tilde F= F(t,x,p,X)-F(t,x,0,0)$ and $h$ replaced by $h - h_1$.
 Moreover,
 $\tilde F (t,x,0,0)=0.$
Let us consider, for $\epsilon>0$,
$$
f_{\epsilon}(t,x) = u(t,x)-\vep \, \varphi_0(t,x)
  - \frac\vep{T-t} - \| u_0\|_{\infty} -
  t  \| \tilde h\|_{T,\infty}.
 $$ If, for any $\epsilon >0$, we have $
f_{\epsilon}(t,x) \le 0$, then, letting $\epsilon \to 0^+$,
  we deduce $u(t , x) \le \| u_0\|_{\infty} +
  t \| \tilde h\|_{T,\infty}$.
  Arguing by contradiction, suppose that, for some $\epsilon >0$,
   $
\sup\limits_{[0,T)\times \RN} \,  f_{\epsilon} (t,x) \, >0$. Since $u=o_{\infty}(\vfi_0)$ this sup is a maximum attained at some point $(t_{\epsilon}, x_{\epsilon})$. Note
   that $t_{\epsilon} \in (0,T)$ and
  so   $(t_{\epsilon}, x_{\epsilon})$
     is a local maximum.  By
 definition of subsolution we have
\be \label{dd1}
   \| \tilde h\|_{T,\infty} + \frac\vep{(T- t_{\epsilon})^2}
+ \vep \partial_t \varphi_0 (t_{\epsilon}, x_{\epsilon}) +
\tilde F(t_{\epsilon}, x_{\epsilon}, \vep  D\varphi_0(t_{\epsilon},
x_{\epsilon}), \vep D^2\varphi_0(t_{\epsilon}, x_{\epsilon})) \le  \tilde h(
   t_{\epsilon}, x_{\epsilon}).
\ee Using Hypothesis \ref{pert-fi}  with $p=0$ and $X=0$, and
\rife{F0}, we deduce
$$\| h\|_{T,\infty} + \frac\vep{(T- t_{\epsilon})^2}
\le  \tilde h(t_{\epsilon}, x_{\epsilon})
$$
which  is   a contradiction. To obtain the opposite inequality,
i.e., $ u(t,x) \ge -\| u_0\|_{\infty} -
  t \| \tilde h\|_{T,\infty}$, we introduce
  $$
g_{\epsilon}(t,x) = u(t,x)+ \vep \, \varphi_0(t,x)
  + \frac\vep{T-t} + \| u_0\|_{\infty} +
  t  \| \tilde h\|_{T,\infty},
$$
and   we suppose that, for some $\epsilon
>0$,
   $
\min\limits_{[0,T)\times \RN} \, g_{\epsilon} (t,x) \, <0$.
 Let $(t_{\epsilon}, x_{\epsilon})$ be a point
   where   this minimum is attained.
   Considering now $u$ as a supersolution, we get
$$
 -  \| \tilde h\|_{T,\infty} - \frac\vep{(T- t_{\epsilon})^2}
- \vep \partial_t \varphi_0 (t_{\epsilon}, x_{\epsilon}) +
\tilde F(t_{\epsilon}, x_{\epsilon}, - \vep  D\varphi_0(t_{\epsilon},
x_{\epsilon}), - \vep D^2\varphi_0(t_{\epsilon}, x_{\epsilon}))  \ge
\tilde h(
   t_{\epsilon}, x_{\epsilon}).
$$
Using Hypothesis \ref{pert-fi}  with $p=-\vep D\varphi_0
 (t_{\epsilon}, x_{\epsilon}) $ and $X=-\vep
D^2\varphi_0(t_{\epsilon}, x_{\epsilon})$, and using \rife{F0}, we obtain again a contradiction, and then we conclude.
\end{proof}

Using Lemma \ref{maxfully}, we immediately obtain from Theorem
\ref{pw-fully} and Proposition \ref{hol-fully}:

\begin{corollary}
\label{fulbound}
 Assume  \rife{F0} and suppose that
   $h\in C(Q_T)\cap L^\infty(Q_T)$.
  Assume also that $u_0$ is bounded on $\R^N$.
   Let  $u\in C(\bar Q_T)$
 be a
 viscosity solution of \rife{cauful} which is also   $o_{\infty}(\varphi_0)$ in
 $\bar Q_T$ ($\varphi_0$ satisfies Hypothesis \ref{pert-fi} for  $L=0$). We have the following statements, where $\tilde h = h- h_1$.

\smallskip  i) If Hypotheses  \ref{pw-nl} and \ref{pert-fi}
 hold, then
  $ u(t) \in W^{1,\infty} (\R^N)$, $t \in
(0,T)$,  and
 \rife{general1} holds with
 a constant $C$ depending on $\| u_0 \|_{\infty}$,
   $ \| \tilde h\|_{T, \infty}$,
  $\lambda, g, M, q$,  $c_0$, $c_1$ and  $\omega$
 (cf. Hypothesis \ref{pw-nl}).

\smallskip ii)
 If Hypotheses \ref{pw-nl-hol} and \ref{pert-fi-bis} hold, then
 $u(t)$
 verifies   \rife{ci11},
 for any $\alpha \in (0,1)$,
   with a constant $C$  which depends   on
  $\alpha$,  $ \| u_0\|_{\infty}$, $\| \tilde h\|_{T, \infty},$
  $\lambda, g, M, q$, $c_0$ and $c_1$.

Moreover, if in Hypothesis \ref{pw-nl-hol} the function
$g$ only satisfies $\limsup_{s \to 0^+} s g(s) <
4 \lambda$, then there exists some $\alpha = \alpha (
\|u_0\|_{\infty}$, $\| \tilde h\|_{T, \infty},$
  $\lambda, g, M, q, c_0, c_1) \in
(0,1)$ such that \rife{ci11} holds.
\end{corollary}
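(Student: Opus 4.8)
The plan is to obtain the statement as a direct combination of the regularizing estimates of Subsection 4.1 with the maximum principle of Lemma \ref{maxfully}: once $u$ and the data are known to have bounded oscillation, Theorem \ref{pw-fully} (resp. Proposition \ref{hol-fully}) applies verbatim.

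The first step is to bound the oscillation of $u$. Since \rife{F0} holds, $h\in C(Q_T)\cap L^\infty(Q_T)$, $u_0$ is bounded and $u=o_\infty(\varphi_0)$, and since in case (ii) Hypothesis \ref{pert-fi-bis} is just a strengthening of Hypothesis \ref{pert-fi} with $L=0$ (for the same function), Lemma \ref{maxfully} applies and gives
$$
\|u(t)\|_\infty \le \|u_0\|_\infty + t\,\|\tilde h\|_{T,\infty}, \qquad t\in(0,T), \qquad \tilde h = h - h_1 .
$$
Hence $u$ has bounded oscillation, with ${{\rm osc}}_I(u)\le 2\|u_0\|_\infty + 2T\|\tilde h\|_{T,\infty}$ on any interval $I\subset(0,T)$; and $h$ has bounded oscillation as well, since ${{\rm osc}}_I(h)\le 2\|h\|_{T,\infty}\le 2\|\tilde h\|_{T,\infty}+2\|h_1\|_{T,\infty}$, the last term being finite by \rife{F0}.

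The second step is the application of the regularizing effect. In case (i), Hypotheses \ref{pw-nl} and \ref{pert-fi} being in force, Theorem \ref{pw-fully} yields $u(t)\in W^{1,\infty}(\R^N)$ for $t\in(0,T)$ and \rife{general1}, with a constant depending on ${{\rm osc}}_{(\frac t2, T\wedge\frac32 t)}(u)$, ${{\rm osc}}_{(\frac t2, T\wedge\frac32 t)}(h)$ and on $\lambda, g, M, q, c_0, c_1, \omega$; by the bounds of the first step this becomes the asserted dependence on $\|u_0\|_\infty$, $\|\tilde h\|_{T,\infty}$ (and on the structural quantity $\|h_1\|_{T,\infty}$ fixed by \rife{F0}). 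In case (ii) the same reasoning applies with Proposition \ref{hol-fully} in place of Theorem \ref{pw-fully}, under Hypotheses \ref{pw-nl-hol} and \ref{pert-fi-bis}: one gets \rife{ci11} for every $\alpha\in(0,1)$ with the stated dependence of $C$, and the final assertion — existence of one exponent $\alpha\in(0,1)$ for which \rife{ci11} holds when only $\limsup_{s\to0^+} sg(s)<4\lambda$ is assumed — is inherited directly from the corresponding part of Proposition \ref{hol-fully}.

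Essentially all the content lies in the previously established results, so there is no genuine obstacle; the one point that deserves a line of care is the reduction underlying Lemma \ref{maxfully}, namely that subtracting $h_1(t,x)=F(t,x,0,0)$ from $F$ does not spoil the structure. In the Lyapunov-type inequality of Hypothesis \ref{pert-fi} (resp. \ref{pert-fi-bis}) the term $F(t,x,0,0)$ cancels identically, so $\varphi_0$ still works for $\tilde F(t,x,p,X)=F(t,x,p,X)-F(t,x,0,0)$; while in the structural inequalities of Hypothesis \ref{pw-nl} / \ref{pw-nl-hol} it contributes only the bounded extra term $-(h_1(t,x)-h_1(t,y))$, which is absorbed into $M$ using \rife{F0}. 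The remaining task is merely the bookkeeping of constants indicated above.
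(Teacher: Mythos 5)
Your proposal is correct and follows exactly the paper's route: the paper derives the corollary immediately by invoking Lemma \ref{maxfully} to bound $\|u(t)\|_\infty$ (hence the oscillations of $u$ and of the bounded $h$) and then applying Theorem \ref{pw-fully} in case (i) and Proposition \ref{hol-fully} in case (ii). Your extra remark on why subtracting $h_1=F(t,x,0,0)$ preserves the structure is sound but not needed here, since Lemma \ref{maxfully} is already established in the paper.
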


\begin{remark}\label{3e1}{\rm Similarly to
Remark \ref{3e}  if  $c_0=c_1=0$ in Hypotheses \ref{pw-nl} and
\ref{pw-nl-hol} then the previous global estimates of i) and ii)
become like \rife{grad} and \rife{hol1} respectively. }
\end{remark}

\subsection{Bounded data  and more general  equations containing
   $u$ }
The previous result can be  proved in   more generality  in order to provide a
nonlinear version of  Theorem \ref{PWc} and Proposition
\ref{hol-lin1}. Recall that  these results  also contain the case of
unbounded potential terms $V$ which is not covered in Corollary
\ref{fulbound}.

 {\it Let us only show how to generalize (i) in Corollary \ref{fulbound} to the case when $F$ depends also on $u$, namely for the Cauchy problem
\be\label{cauu}
\partial_t u+ F(t,x,u,Du,D^2 u)=0 \qquad \hbox{in $Q_T$,}\quad u(0)=u_0\,.
\ee
More precisely, we show how  to extend Theorem \ref{PWc} to  the present  nonlinear setting.}

\smallskip  For simplicity   we  assume that  $h=0$ and $F(t,x,0,0,0)=0$, $(t,x) \in Q_T$.
Hypothesis  \ref{pw-nl} can   be generalized as follows: there exists $\lambda>0$, $q >1$, $c_0, c_1,  k_0  \ge 0$, $\ga_R, M_R \ge 0$,
non-negative functions
  $\eta(t,x,y)$, $V(t,x)$,  $\omega : [0,1] \to \R_+$   such that $\lim_{s \to
0^+}\omega(s)=0$ and
  $g\in C((0, 1); \R_+ ) \cap L^1(0,1)$ such that
  \be\label{Feq11}
  \begin{array}{c}
  F(t, x, r, \mu(x-y), X)- F(t,y,s,\mu(x-y),Y)\geq
 - \la {\rm tr}\left(X-Y\right)- \mu |x-y| \, g(|x-y|)
  \\
  \m
   -  (\mu | x- y|)^{2}\left(c_0+ c_1(\mu |x-y|^2)^{q-1} \right)\omega(|x-y|)
   \\ \m + \ga_R (V(t,x)\vee V(t,y))(r-s)- M_R  \big(1 +
  { k_0} |x-y| ( V(t,x) \vee V(t,y))\big)
  - \nu \, \eta(t,x,y)\,,
  \end{array}
\ee for any $\mu> 0$, $\nu \ge 0$, \  $ x,y\in \R^N$ such that  $0<|x-y|\le 1,
  $ $t\in (0,T)$,  $r,s\in \R$ such that $- R \le s \le r \le R$, $R>0$, and $    X,Y\in {\mathcal S}_N$ which satisfy
  \rife{ine}.

One can easily check that \rife{Feq11} is satisfied in the linear
case considered in Theorem \ref{PWc}.
Finally, Hypothesis \ref{pert-fi} has to be generalized as follows:
 for any $L \ge 0$, $\exists \,\, \varphi= \varphi_L \in C^2([0,T)\times \RN)\,, \;    \vep_0=
\epsilon_0 (L)> 0\,$:
\be \label{lyp}
\vep \partial_t \varphi + F(t,x,r, p+ \vep D\varphi, X+ \vep
D^2\varphi)- F(t, x, r,p, X)\geq  0,
\ee
for every $(t,x)\in Q_T$, $p\in \RN$ such that  $|p|\leq
 L+\vep |D\varphi(t,x)|$, $X\in {\mathcal S}_N$,
 $ r \in \R$, and every $0<\vep\leq
\vep_0$, with
 $\varphi(t,x)\to +\infty$ as $|x|\to \infty$, uniformly
for $t\in [0,T]$.

\smallskip To treat the Cauchy problem \rife{cauful} when $F$ depends also on $u$, we first note that,  replacing Hypotheses \ref{pw-nl} and
\ref{pert-fi} respectively with \rife{Feq11} and \rife{lyp}, one can
still prove Lemma \ref{maxfully}. To this purpose, the only
modification is that one uses \rife{lyp} together with \rife{monou}
and $F(t,x,0,0,0)=0$ in order to deduce that
$$
\vep \partial_t \varphi_0 (t_{\epsilon}, x_{\epsilon}) +
F(t_{\epsilon}, x_{\epsilon}, u(t_{\epsilon},
x_{\epsilon}), \vep  D\varphi_0(t_{\epsilon},
x_{\epsilon}), \vep D^2\varphi_0(t_{\epsilon}, x_{\epsilon})) \geq 0,
$$
and, of course, a similar argument for the lower bound.

Then, we observe that  the proof of Theorem \ref{pw-fully} still works with small changes under condition \rife{Feq11}. Recall  that here we are assuming that $u_0$ is bounded
 and so by Lemma \ref{maxfully} the solution $u$ is also bounded, which makes
 possible such  a variation. Indeed, we use \rife{Feq11} with $R= \| u_0\|_{\infty} + T \| h\|_{T, \infty}$, $r = u(\hat t ,\hat x)$ and $s=u(\hat t ,\hat y)$. With the same notations of Theorem   \ref{pw-fully},  we obtain now, instead of \rife{xnyn}, the following inequality:
$$
\begin{array}{c}
F (\hat t ,\hat x,u(\hat t ,\hat x),  K D\psi(\hat x-\hat y) , \tilde X_n )
   -F(\hat t ,\hat y, u(\hat t ,\hat y),
K D\psi(\hat x-\hat y) , \tilde Y_n )
\\
\m \geq - 4 \la\, Kf''(|\hat x-\hat y|)
 -K f'(|\hat x-\hat y|) \,
g(|\hat x- \hat y|) -K^2 \left(f'(|\hat x-\hat y|)\right)^2(c_0+ c_1 \left(\omega_{0, \delta}(u) \right)^{q-1})\omega(|\hat x-\hat y|)
\\ \m+
 \gamma_R (V(\hat t,\hat x)\vee V(\hat t, \hat y))( u(\hat t,\hat x)-u(\hat t,\hat y))  - M_R (1+{ k_0}   |\hat x- \hat y| ( V(\hat t,\hat x) \vee V(\hat t,\hat y)))
 \\
\m \quad -
 \frac{2N \lambda \,
 \theta_{\epsilon} (\hat t, \hat x, \hat y)}{n}  -
\frac{\theta_{\epsilon} (\hat t, \hat x, \hat y)} {n}
 \, \eta(\hat t, \hat x, \hat y).
\end{array}
$$
We  proceed letting $n$ tend to infinity and using \rife{ode1}.
Since $( u(\hat t,\hat y)-u(\hat t,\hat x))< -  K f(|\hat x - \hat
y|)$,
 using
 \rife{fde0} we end up by replacing  \rife{post1} with the following:
$$
\begin{array}{c}
 \frac{\epsilon}{(T- \hat t)^2} +
 2C_0(\hat t-t_0)\leq
 -K
   +K^2 \left(f'(|\hat x-\hat y|)\right)^2
(c_0+ c_1 (2R)^{q-1})\omega(|\hat x-\hat y|) \\
\m-
 \gamma_R \, K \frac{\delta}{8\lambda} (V(\hat t,\hat x)\vee V(\hat t, \hat y))+ M_R (1+{ k_0}   |\hat x- \hat y| ( V(\hat t,\hat x) \vee V(\hat t,\hat y)))\,.
\end{array}
$$
Choosing $K \ge  \frac{k_0 M_R}{\ga_R} \frac{8 \lambda}{\delta}$, we
drop the terms with the potential $V$
%we get $[R k_1  -   K \frac{\delta}{4\lambda}]\, |\hat x - \hat y|\, ( V(\hat t,\hat x) \vee V(\hat t,\hat y))) \le 0$
and next we conclude as in the proof of Theorem \ref{pw-fully}.
%}
%\end{remark}

\subsection{Unbounded data: estimates on the oscillation of the solutions}

We consider now the Cauchy problem \rife{cauful} in the case where
the initial datum $u_0$ is not necessarily bounded. The   first goal
is to obtain  estimates on the oscillation of  viscosity solutions,
so that Theorem \ref{pw-fully} or Theorem \ref{hol-fully} can
provide a full estimate only depending on the data. To this purpose
we will assume that $h$ and $u_0$ have bounded oscillation and we
will need to modify Hypotheses \ref{pw-nl} or \ref{pw-nl-hol} by
requiring some additional condition when $|x-y|$ is large. Hence
Hypotheses \ref{pw-nl} and \ref{pw-nl-hol} are modified in
the following way.

 \begin{hypothesis}\label{pw-nl1}
There exist $\lambda>0$, $M_0,M_1 \ge 0$, $q\in (1,2)$, $c_0$, $c_1
\ge 0$, non-negative functions $\eta(t,x,y)$, $\omega \in
C(\R_+;\R_+)$ which is bounded and satisfies $\lim_{s\to 0^+}
\omega(s)=0$ and
  $g\in C((0, + \infty); \R_+ )\cap L^1(0,1)$ which is O(r)
  as $r \to +\infty$,  such that
$$
\begin{array}{c}
F(t, x, \mu(x-y), X)- F(t,y,\mu(x-y),Y)\geq
 - \la {\rm tr}\left(X-Y\right)- \mu |x-y| \, g(|x-y|)\\
\m
 \qquad  -  (\mu | x- y|)^{2}\left(c_0+ c_1(\mu |x-y|^2)^{q-1} \right)\omega(|x-y|) - M_0-M_1|x-y|
  - \nu \, \eta(t,x,y)\,,
\\
\m \m
  \text{for any} \; \mu>0,\;  \nu \ge 0\,, \;\;   x,y\in \R^N\,,
  \;\;\; t\in (0,T),\,
   \,\,  X,Y\in {\mathcal S}_N\,\,\hbox{satisfying \rife{ine}.}
\end{array}
$$
\end{hypothesis}

 \begin{hypothesis}\label{pw-nl1-hol}
There exist $\lambda>0$, $M_0,M_1 \ge 0$, $q\in (1,2)$,
 $c_0$, $c_1 \ge 0$,  a non-negative
functions
  $\eta(t,x,y)$ and
  $g\in C((0, + \infty); \R_+ )$ which is O(r)
  as $r \to +\infty$ and satisfies $g(s)s\to 0$ as $s\to 0^+$,  such that
$$
\begin{array}{c}
F(t, x, \mu(x-y), X)- F(t,y,\mu(x-y),Y)\geq
 - \la {\rm tr}\left(X-Y\right)- \mu |x-y| \, g(|x-y|)\\
\m
 \qquad  -  (\mu | x- y|)^{2}\left(c_0+ c_1(\mu |x-y|^2)^{q-1} \right) - M_0-M_1|x-y|
  - \nu \, \eta(t,x,y)\,,
\\
\m \m
  \text{for any} \; \mu> 0,\;  \nu \ge 0\,, \;\;   x,y\in \R^N\,,
  \;\;\; t\in (0,T),\,
   \,\,  X,Y\in {\mathcal S}_N\,\,\hbox{satisfying \rife{ine}.}
\end{array}
$$
\end{hypothesis}

 \vskip1em

\noindent
As a first step, we need the following growth   estimates
which generalize Lemma \ref{osc-u}.  This result only requires
Hypothesis \ref{pw-nl1-hol}. On the other hand, Theorem
\ref{finnonl} will also use
 Hypothesis \ref{pw-nl1}.

\begin{lemma} \label{osc2}
Assume that $u_0$, $h$ satisfy \rife{osc-uo} and \rife{osc-h},
 respectively.   There exists   $L$ (depending on
  $T, g, \lambda,
\alpha,  q,$ $c_0$, $c_1$, $M_0$, $M_1$ and $k_i, h_i$, for $i=0,\alpha,1$) such that
 if
 $u \in C\left( \bar Q_T\right)$ is a viscosity solution of
\rife{cauful}  and  $u$ is $o_{\infty}(\varphi_L)$ in $\bar Q_T$  (see Hypothesis \ref{pert-fi}), then we have:

(i) If  Hypotheses \ref{pw-nl1-hol} and  \ref{pert-fi-bis} hold true, then $u$ has bounded
oscillation and there exists $C_T$  such that
\be\label{CT}
| u (t,x)-u (t,y)|
  \le   C_T \big( 1+ |x-y|\big),\;\; x, y \in
\R^N, \, t\in [0,T], \;\; \ee where $C_T$ depends on $T, g, \lambda,
\alpha,  q,$ $c_0$, $c_1$, $M_0$, $M_1$ and $k_i, h_i$, for $i=0,\alpha,1$.

\smallskip (ii) If Hypothesis \ref{pw-nl1-hol} holds replacing the condition $
\lim_{s\to 0^+} g(s)s=0$ with $g\in L^1(0,1)$, and if Hypothesis
\ref{pert-fi} holds true, then $u$ has bounded oscillation and
satisfies \rife{CT}.

\smallskip (iii) Assume   Hypothesis \ref{pw-nl1-hol}
 with $c_0=c_1=0$ and Hypothesis
\ref{pert-fi}. Then $u$ satisfies the same estimate in (i)
 of Lemma  \ref{osc-u}. If Hypothesis
\ref{pert-fi} holds and we have,
 in addition, that $g \in L^1(0,1)$, then $u$ satisfies also
 the  estimate
  (ii) in Lemma  \ref{osc-u}. Moreover,  estimate
\rife{smallt} holds for $u$.
\end{lemma}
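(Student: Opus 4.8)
The plan is to transplant the doubling-variables proof of Lemma \ref{osc-u} into the nonlinear setting, replacing the linear equation \rife{eq} by the nonlinear structure condition of Hypothesis \ref{pw-nl1-hol} (or its $g\in L^1(0,1)$ variant) and the Lyapunov function \rife{lyap0} by the nonlinear Lyapunov functions of Hypothesis \ref{pert-fi}/\ref{pert-fi-bis}, exactly as in the passage from Theorem \ref{PW} to Theorem \ref{pw-fully}. Concretely, fix $T'\in(0,T]$ (to be iterated along $[0,T]$) and consider, for $\epsilon>0$,
$$
\Phi_\epsilon(t,x,y)=u(t,x)-u(t,y)-f(t,|x-y|)-\epsilon\big(\varphi(t,x)+\varphi(t,y)\big)-\frac{\epsilon}{T'-t},
$$
with the same profile $f(t,r)=(k_0+at)+(\beta+bt)r^\alpha+(\gamma+ct)(r+\tilde f(r))$ as in Lemma \ref{osc-u} and with $\varphi$ the Lyapunov function of Hypothesis \ref{pert-fi} (resp. \ref{pert-fi-bis}) adapted to an a priori bound $L$ on $|D_xf(t,|x-y|)|$. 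Since the profile constants $a,b,c,\beta,\gamma$ will be fixed in terms of the data $k_i,h_i$, of $M_0,M_1$ and of the structure constants alone — not of $u$ — the value of $L$ is determined a priori, which is precisely why the statement requires $u=o_\infty(\varphi_L)$ for that $L$. Arguing by contradiction that $\sup_\Delta\Phi_\epsilon>0$, the conditions $\beta\ge k_\alpha$, $\gamma\ge k_1$ together with \rife{osc-uo} exclude a maximum at $t=0$, while $\varphi\to\infty$ and $u=o_\infty(\varphi)$ exclude $|x|\to\infty$, so there is a positive interior maximum at $(\hat t,\hat x,\hat y)$ with $\hat x\ne\hat y$, $\hat t<T'$.

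From here the step is the one carried out in the proof of Theorem \ref{pw-fully}: apply Theorem \ref{key}, discard the $\epsilon D\varphi$ correction via Hypothesis \ref{pert-fi}/\ref{pert-fi-bis}, let $n\to\infty$ by the compactness argument of Remark \ref{rit}, and invoke Hypothesis \ref{pw-nl1-hol} with $\mu=\partial_rf(\hat t,|\hat x-\hat y|)/|\hat x-\hat y|$ and $\nu\to0^+$, combined with estimate \rife{rep1} of Proposition \ref{refe} for the rank-one projection $P$ onto $\hat x-\hat y$. This produces a differential inequality for $f$ that coincides with \rife{12} except for two extra terms: $M_0+M_1|\hat x-\hat y|$, and the superlinear term $(\partial_rf)^2\big(c_0+c_1(\partial_rf\cdot|\hat x-\hat y|)^{q-1}\big)$. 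The first is absorbed into $a$ and $\gamma$ exactly like the terms $h_0,h_1$ coming from \rife{osc-h}. For the second, concavity of $f$ with $f(0)=0$ gives $\partial_rf\cdot r\le f(r)$, so the profile constants being fixed, $\partial_rf\cdot r$ and $\partial_rf$ are bounded away from the diagonal; near the diagonal the only possibly large factor is $(\partial_rf)^2\sim r^{2\alpha-2}$, which is dominated by the strongly concave part $4\lambda\partial_{rr}^2f\sim r^{\alpha-2}$ of the barrier as soon as $|\hat x-\hat y|<\delta$ with $\delta$ small relative to $\beta$. This ties the iteration length $T'$ to the data and structure constants, but that is harmless since parts (i)–(ii) only claim the coarse bound $C_T(1+|x-y|)$ with $C_T$ allowed to depend on everything.

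With this reduction the three cases reproduce the corresponding computations of Lemma \ref{osc-u}. In part (iii), $c_0=c_1=0$, the superlinear term disappears and the argument is line-for-line that of Lemma \ref{osc-u}: take $\tilde f=0$ when only $g(s)s\to0^+$, relying on \rife{r0} and $g=O(r)$ at infinity to fix $T'=1/(2L_0)$, or $\tilde f$ solving $4\lambda\tilde f''+\tilde g\tilde f'+3\tilde g=0$ when $g\in L^1(0,1)$; then choose $a,b,c,\beta,\gamma$ to reach a contradiction and iterate finitely many times over $[0,T]$, recovering estimates (i) and (ii) of Lemma \ref{osc-u} as well as the intermediate bound \rife{smallt}. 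In parts (i) and (ii), where only bounded oscillation is needed, one may even drop the fine bookkeeping and simply use a linear profile (plus a Hölder term when $g(s)s\not\to0^+$), absorb the superlinear and $M_i$ terms as above, and iterate to conclude \rife{CT}.

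The main obstacle is the superlinear term when $c_0$ or $c_1$ is nonzero and $g$ only satisfies $g(s)s\to0^+$, as in part (i): unlike in Hypothesis \ref{pw-nl1} there is here no modulus $\omega(|x-y|)$ to quench $(\partial_rf)^2$ near the diagonal, so one genuinely has to restrict to $|\hat x-\hat y|<\delta$ with $\delta$ small, which couples the time-step $T'$ — hence the number of iterations and the constant $C_T$ — to the barrier constants; one must then verify that finitely many steps still cover $[0,T]$ and that the choice of $L$ made at the outset remains consistent. The remaining points — checking that the output constants in (i)–(iii) carry the claimed dependencies, and that the $n\to\infty$ passage of Remark \ref{rit} goes through in this nonlinear setting — are routine given the linear and nonlinear templates already established.
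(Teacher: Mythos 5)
Your proposal follows the paper's own proof essentially line for line: the same barrier $f(t,r)=(k_0+at)+(\beta+bt)r^\alpha+(\gamma+ct)(r+\tilde f(r))$, the same use of Hypotheses \ref{pert-fi}/\ref{pert-fi-bis} to discard the Lyapunov corrections, the same two-regime absorption of the superlinear term (concavity of $r^\alpha$ for $|\hat x-\hat y|$ small, Young's inequality with $q<2$ for $|\hat x-\hat y|$ bounded away from zero), and the same iteration over time steps of length $T'=1/(2L_0)$ depending only on $g,\lambda,\alpha$. The one inaccurate aside is the claim that in case (ii) a linear profile plus a H\"older term would suffice when $g(s)s\not\to 0$: with $g$ merely in $L^1(0,1)$ the term $f'(\hat r)g(\hat r)$ cannot be absorbed by the H\"older part of the barrier near the diagonal, and the correction $\tilde f$ solving $4\lambda\tilde f''+\tilde g\tilde f'+3\tilde g=0$ is genuinely needed, exactly as in case (ii) of Lemma \ref{osc-u}.
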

In order to explain the last statement, notice that if Hypothesis
\ref{pw-nl1-hol} holds with $c_0=c_1=0$, this means, roughly
speaking, that the nonlinear terms in the function $F$ have at most
linear growth with respect to $Du$. As mentioned in (iii), in this
case the conclusion of the above Lemma is much stronger since it
coincides with the conclusion of Lemma \ref{osc-u}. In particular,
we have conservation of the Lipschitz, or H\"older, continuity from
$u_0$ to $u(t)$ if $c_0=c_1=0$ (see also Remark \ref{FGP}).

 \proof
 We essentially follow the  proof of Lemma  \ref{osc-u}. For
$T'\leq T$, let us set
\begin{align} \label{dl1}
\Delta = \{ (t,x,y)\in (0,T')\times \RN\times \RN \}
\end{align}
and define the function
\begin{align} \label{phi1}
    \Phi_{\epsilon}(t,x,y)= u (t,x)-u (t,y)
  -   f(t,|x-y|)  -
\vep \, (\vfi(t,x)+\vfi(t,y))
  - \frac\vep{T' -t}\,, \;\; \epsilon >0,
\end{align}
where  $C_0$, $T'>0$   will  be chosen later and  where
$$
f(t,r)= (k_0+at) + (\beta+ bt) r^\alpha+ (\gamma+ct) (r+\tilde
f(r)),
$$
where $a,b,c, \beta, \gamma\geq 0$ and $\tilde f(r) \in
C^2(0,\infty)$ is   a nondecreasing concave function with $\tilde
f(0)=0$, to be fixed later. We  prove that, independently on $\vep$,
it holds
$$
\Phi_{\epsilon}(t,x,y)\leq 0, \quad
 (t,x,y)\in \Delta\,.
$$
Arguing by contradiction, as in Lemma \ref{osc-u}  we deduce
 that $\Phi_{\epsilon} $ has a
local (and global) positive maximum
  at some point
 $(\hat t, \hat x,\hat y )\in \overline \Delta$
and clearly  $\hat x\neq \hat y$ since the maximum is positive, and
$\hat t< T'$.
 Assuming that
 \be \label{gro3}
  \beta\geq k_\alpha\,,\quad \gamma\geq k_1,
 \ee
we deduce that the maximum cannot be reached at $t=0$  thanks to
\rife{osc-uo}. Since $u$ is a  solution,  proceeding as usual and
setting $\psi(t,x-y)= f(t,|x-y|)$, we end up with
$$
 \begin{array}{c}
  \frac{\varepsilon}{(T'- \hat t)^2} +
 (a+ b |\hat x-\hat y|^\alpha+c (|\hat x-\hat y|+ \tilde f(|\hat x-\hat y|))
 \\ + \vep\,(\partial_t \varphi(\hat t, \hat x)+\partial_t \varphi(\hat t, \hat y))+
 F (\hat t ,\hat x,  D\psi(\hat t, \hat x-\hat y) + \vep \,  D\varphi(\hat t, \hat x), X)
 \\
 \m
 \quad -F(\hat t ,\hat y,
D\psi (\hat t, \hat x-\hat y) -\vep \,  D\varphi(\hat t, \hat y),
Y)\leq h(\hat t,\hat x)-h(\hat t,\hat y),
\end{array}
$$
where $X$, $Y$ satisfy \rife{matri1} with $z= f(t, |x-y|)+\vep \,
(\varphi(t,x)+\varphi(t,y))
  + \frac\vep{T' -t}$.
  Therefore we obtain
\be \label{pp}
 \begin{array}{c}
 \frac{\epsilon}{(T'- \hat t)^2} +
 (a+ b |\hat x-\hat y|^\alpha+c (|\hat x-\hat y|+ \tilde
 f(|\hat x-\hat y|)) \\   +F (\hat t ,\hat x, D\psi(t,\hat x-\hat y) ,
X- \vep \,  D^2\varphi(\hat t, \hat x))
 \\
 \m
 \quad -F(\hat t ,\hat y,
 D\psi(t,\hat x-\hat y) , Y +\vep \,  D^2\varphi(\hat t, \hat y))\leq
h(\hat t,\hat x)-h(\hat t,\hat y) + {\mathcal I}_\vep(\varphi(\hat
t,\hat x))+ {\mathcal I}_\vep(\varphi(\hat t, \hat y)),
\end{array}
\ee where
$$
\begin{array}{c}
{\mathcal I}_\vep(\varphi(\hat t, \hat x))= - \vep\,\partial_t
\varphi(\hat t, \hat x)+ F (\hat t ,\hat x,  D\psi(t,\hat x-\hat y)
, X- \vep \,  D^2\varphi(\hat t, \hat x))
\\
\m \qquad  -F (\hat t ,\hat x,  D\psi(t,\hat x-\hat y) + \vep \,
D\varphi(\hat t, \hat x), X)
\end{array}
$$
and
$$
\begin{array}{c}
{\mathcal I}_\vep(\varphi(\hat t, \hat y))= - \vep\,\partial_t
\varphi(\hat t, \hat y)+ F(\hat t ,\hat y, D\psi(t,\hat x-\hat y)
-\vep \,  D\varphi(\hat t, \hat
y), Y) \\
\m \qquad -F(\hat t ,\hat y,  D\psi(t,\hat x-\hat y) , Y +\vep \,
D^2\varphi(\hat t, \hat y)).
\end{array}
$$
Since $f(t,\cdot)$ is increasing and concave, as in the proof of
Theorem \ref{pw-fully} we can use \rife{matri1} and Hypothesis
\ref{pw-nl1-hol} with $\mu =
 \frac{f'(\hat t, |\hat x- \hat y|)}{|\hat x - \hat y|}$
  (we have set $\partial_x f = f'$
 and $\partial_{xx}^2 f = f''$)
%(\beta+ bt) \alpha {\hat r}^{\alpha-2}+ (\gamma+ct) (1+\frac{\tilde
%f'(\hat r)} {\hat r})$ and $\hat r = \hat x-\hat y$)
and we get
$$
\begin{array}{c}
F (\hat t ,\hat x, D\psi(t,\hat x-\hat y) , X- \vep \,
D^2\varphi(\hat t, \hat x))
   -F(\hat t ,\hat y,
 D\psi(t,\hat x-\hat y) , Y +\vep \,  D^2\varphi(\hat t, \hat y))
\\
\m \geq - \la {\rm tr}\left(X- \vep \,
 D^2\varphi(\hat t, \hat x)-(Y +\vep \,  D^2\varphi(\hat t, \hat
y)) \right)- f'(\hat t, |\hat x-\hat y|) \, g(|\hat x- \hat y|)
\\
\m \quad -\left(f'(\hat t, |\hat x-\hat y|)\right)^2\left( c_0+ c_1
\left(f'(\hat t, |\hat x-\hat y|)|\hat x-\hat y|\right)^{q-1}\right)
- M_0-M_1|x-y|
 - \frac{\theta_{\epsilon} (\hat t, \hat x, \hat y)} {n}
 \, \eta(\hat t, \hat x, \hat y),
 \end{array}
 $$
 for some positive quantity
$\theta_{\epsilon} (\hat t, \hat x, \hat y)$. As in the proof of
Theorem \ref{pw-fully} we estimate
 $$
\begin{array}{c}{\rm tr}\left(X- \vep \,  D^2\varphi(\hat t, \hat x)-
(Y +\vep \,  D^2\varphi(\hat t, \hat y)) \right)\leq 4 \, f''(\hat
t, |\hat x-\hat y|) +
 \frac{2N \theta_{\epsilon} (\hat t, \hat x, \hat y)}{n}\,.
\end{array}
$$
Therefore, setting $\hat r= | \hat x-\hat y|$ and using the growth
of $h$,   we deduce \be\label{pre-abc2}
 \begin{array}{c}
 \frac{\epsilon}{(T'- \hat t)^2} +
 (a+ b \hat r^\alpha+c (\hat r+ \tilde
 f(\hat r)) \leq  4 \,\la  f''(\hat t,\hat r)+ f'(\hat t, \hat r) \, g(\hat r)
 \\
\m \quad + \left(f'(\hat t, \hat r)\right)^2\left( c_0+ c_1
\left(f'(\hat t,\hat r)\hat r\right)^{q-1}\right)  + M_0+M_1 \hat r
+ h_0+ h_\alpha \hat r^\alpha+ h_1 \hat r
 \\
\m +\frac{\theta_{\epsilon} (\hat t, \hat x, \hat y)} {n}
 \, \left(2N\la+ \eta(\hat t, \hat x, \hat y)\right)
+ {\mathcal I}_\vep(\varphi(\hat t,\hat x))+ {\mathcal
I}_\vep(\varphi(\hat t, \hat y)).
\end{array}
\ee We split henceforth the proof in the two cases (i) and (ii).

\vskip1em \noindent
{\bf (i)} Assume that $g(r)r\to 0$ as $r\to 0^+$
and that Hypothesis \ref{pert-fi-bis} holds. Thanks to this latter
assumption
 we have ${\mathcal I}_\vep(\varphi(\hat t,\hat x))$, $ {\mathcal
I}_\vep(\varphi(\hat t, \hat y))\leq 0$. We take $\tilde f =0$;
dropping the latter non-positive  terms and letting $n \to \infty$,
and using the precise form of $f(t,r)$, we find from \rife{pre-abc2}
%with  $b \ge c$ and $\beta
%\ge \gamma$
$$
\begin{array}{c}
 \frac\vep{(T'-\hat t)^2}+
(a+b\hat r^\alpha+ c{\hat r})
   \\
 \m
 \leq    (\beta+b \hat t)\,\left[4\la \alpha (\alpha-1)
 {\hat r}^{\alpha-2}+
 \alpha {\hat r}^{\alpha-1} g( {\hat r})\right]
 %(\beta + c \hat t)g(\hat r) +
+ (\gamma + c \hat t)g(\hat r)
  \\ \m  + \left(f'(\hat t, \hat r)\right)^2\left( c_0+ c_1 \left(f'(\hat t,\hat r)\hat r\right)^{q-1}\right)+ M_0  + h_0+ h_\alpha
{\hat r}^\alpha+( h_1+M_1) {\hat r}.
\end{array}
$$
Henceforth, if $c_0=c_1=0$, one can follow the proof of Lemma
\ref{osc-u} only replacing $h_0$ with $h_0+M_0$ and $h_1$ with $h_1+M_1$, and obtaining the
same kind of estimate. Assume instead that $c_0$ or $c_1$ are
positive. We take $b \ge c$ and $\beta\ge \gamma$; as in the proof
of Lemma \ref{osc-u}, we have that there exist $r_0<1$ and $L_0>0$
such that
$$
\begin{array}{c}
 (\beta+b \hat t)\,\left[4\la \alpha (\alpha-1)
 {\hat r}^{\alpha-2}+
 \alpha {\hat r}^{\alpha-1} g( {\hat r})\right] +
 (\gamma + c \hat t)g(\hat r) \\
 \m
 \leq
 -2\al(1-\al)\la \beta {\hat r}^{\al-2}\chi_{\{r<r_0\}}+
 (\beta+b \hat t)\al L_0\hat r^\alpha+ (\ga+ c\hat t)L_0 \hat r
 \end{array}
 $$
so that, if
we fix $T'= \frac1{2L_0}$, we have $L_0 \hat t \le 1/2$ and we deduce
$$
\begin{array}{c}
  \frac\vep{(T'-\hat t)^2}+
 a+ (\frac12 b-L_0 \beta ) {\hat r}^\alpha+ (\frac12 c-L_0 \gamma )
  {\hat r}
 \leq  - 2 \beta  \al(1-\al)\la \hat r^{\al-2}\chi_{\{r<r_0\}}  \\
\m \quad + \left(f'(\hat t, \hat r)\right)^2\left( c_0+ c_1
\left(f'(\hat t,\hat r)\hat r\right)^{q-1}\right) + M_0 + h_0+h_\alpha
\hat r^\alpha+( h_1+M_1) \hat r.
\end{array}
$$
We estimate now the superlinear term. If $\hat r\leq r_0$, we have
$f'(\hat t,\hat r)\hat r\leq C$,  where $C$ depends on  $r_0$,
$\beta$, $\gamma$, $b,c$, $L_0$ (since $T'$ depends on $L_0$). Then if
$\hat r \le r_0$
$$
\left(f'(\hat t, \hat r)\right)^2\left( c_0+ c_1 \left(f'(\hat
t,\hat r)\hat r\right)^{q-1}\right) \leq C_2 \hat r^{2\alpha-2},
$$
for some $C_2=C_2(r_0, \beta, \ga, b,c, L_0, c_0, c_1)$;
 so, for some  $r_1<r_0$, we have
$$
\left(f'(\hat t, \hat r) \right)^2\left( c_0+ c_1 \left(f'(\hat
t,\hat r)\hat r\right)^{q-1}\right) \leq\beta  \al(1-\al)\la \hat
r^{\al-2} \quad \hbox{ if $ \hat r\leq r_1$}\,.
$$
If instead  $\hat r >r_1$, we have
$$
\left(f'(\hat t, \hat r)\right)^2 \left( c_0+ c_1 \left(f'(\hat
t,\hat r)\hat r\right)^{q-1}\right) \leq C \hat r^{q-1}
$$
for a possibly different constant $C$; hence, since $q<2$, Young's
inequality implies
$$
\left(f'(\hat t, \hat r)\right)^2 \left( c_0+ c_1 \left(f'(\hat
t,\hat r)\hat r\right)^{q-1}\right) \leq \frac14 c \hat r+ K
$$
where $K= K(\alpha, r_0, \beta, \ga, L_0, b, c, c_0, c_1)$. We conclude that
$$
\begin{array}{c}
  \frac\vep{(T'-\hat t)^2}+
 a+ (\frac12 b-L_0 \beta ) {\hat r}^\alpha+ (\frac14 c-L_0 \gamma )
  {\hat r}
 \leq  K+ M_0 +
h_0+h_\alpha \hat r^\alpha+ ( h_1+M_1) \hat r\,.
\end{array}
$$
Here one chooses $c\geq  4(L_0 \gamma +h_1+M_1)$, $b\geq 2(L_0 \beta
+h_\alpha)$ and, lastly,  $a\geq K+M_0+h_0$ and the conclusion
follows. The estimate is therefore proved in $[0,T']$ for $T'= \frac
1{2L_0}$. Since $L_0$ only depends on $\la, g, \alpha$, the argument
can be iterated and yields the global estimate   on $[0,T]$.

\smallskip \noindent {\bf (ii)}   Assume that $g(r)\in L^1(0,1)$ and that
 Hypothesis \ref{pert-fi} holds. Up to using Young's inequality, we can suppose that
$k_\alpha=0$; in particular, this allows us to take $b=\beta=0$,
so that  \rife{pre-abc2} implies
$$
\begin{array}{c}
 \frac{\epsilon}{(T'- \hat t)^2} +
 (a+c (\hat r+ \tilde
 f(\hat r)) \leq  (\ga+ c \hat t)[ 4 \,\la  \tilde  f''(\hat r)+
   g(\hat r)(1+ \tilde f'(\hat r)) ]
 \\
\m \quad + \left((\ga + c\hat  t) (1+ \tilde f'(\hat
r))\right)^2\left( c_0+ c_1 \left((\ga + c\hat  t) (\hat r+ \tilde
f'(\hat r)\hat r)\right)^{q-1}\right)  + M_0 + h_0+ h_\alpha \hat
r^\alpha+ (h_1+M_1) \hat r
 \\
\m +\frac{\theta_{\epsilon} (\hat t, \hat x, \hat y)} {n}
 \, \left(2N\la+ \eta(\hat t, \hat x, \hat y)\right)
+ {\mathcal I}_\vep(\varphi(\hat t,\hat x))+ {\mathcal
I}_\vep(\varphi(\hat t, \hat y)).
\end{array}
$$
We proceed then as in
 Lemma \ref{osc-u},  choosing,
  $\tilde f(r)=3\int_0^r \left(e^{\frac{\tilde
G(\xi)}{4\la}}-1\right)d\xi$, where $\tilde G(\xi)= \int_\xi^\infty
\tilde g(\tau)d\tau$. In the same way we obtain, for some constant
$L_0=L_0(g, \lambda)>0$ (cf. \rife{12} and \rife{aggiunta}):
$$
\begin{array}{c}
 \frac{\epsilon}{(T'- \hat t)^2} +
 (a+c (\hat r+ \tilde
 f(\hat r)) \leq  (\ga+ c\hat t)L_0\hat r
 + (M_0+ h_0+ h_\alpha + h_1+M_1 )\hat r
 \\
\m \quad + \left((\ga + c\hat t) (1+ \tilde f'(\hat
r))\right)^2\left( c_0+ c_1 \left((\ga + c\hat t) (r+ \tilde f'(\hat
r)\hat r)\right)^{q-1}\right)
 \\
\m +\frac{\theta_{\epsilon} (\hat t, \hat x, \hat y)} {n}
 \, \left(2N\la+ \eta(\hat t, \hat x, \hat y)\right)
+ {\mathcal I}_\vep(\varphi(\hat t,\hat x))+ {\mathcal
I}_\vep(\varphi(\hat t, \hat y)),
\end{array}
$$
and then, choosing $T'= \frac 1{2L_0}$ and since $t\leq T'$ and
$\tilde f \ge 0,$
$$
\begin{array}{c}
 \frac{\epsilon}{(T'- \hat t)^2} +
 (a+(\frac 12 c -\ga L_0) \hat r ) \leq    (M_0+ h_0+ h_\alpha + h_1 +M_1)\hat r
 \\
\m \quad + \left((\ga + ct) (1+ \tilde f'(\hat r))\right)^2\left(
c_0+ c_1 \left((\ga + ct) (r+ \tilde f'(\hat r)\hat
r)\right)^{q-1}\right)
 \\
\m +\frac{\theta_{\epsilon} (\hat t, \hat x, \hat y)} {n}
 \, \left(2N\la+ \eta(\hat t, \hat x, \hat y)\right)
+ {\mathcal I}_\vep(\varphi(\hat t,\hat x))+ {\mathcal
I}_\vep(\varphi(\hat t, \hat y)).
\end{array}
$$
Since  there exists $\tilde l>0$ such that $\tilde f'(r)\leq \tilde
l$ for every $r>0$, we have $|D \psi(\hat t,\hat x-\hat y)|= (\ga+
c\hat t) (1+\tilde f'(\hat r))\leq L$, where $L= (\ga+ \frac
c{2L_0})(1+\tilde l)$. Therefore, we use Hypothesis \ref{pert-fi}
with such $L$
and we deduce that last two terms are non-positive and can be
neglected; letting also $n\to \infty$ we get
$$
\begin{array}{c}
 \frac{\epsilon}{(T'- \hat t)^2} +
 (a+(\frac 12 c -\ga L_0) \hat r ) \leq
 (M_0+ h_0+ h_\alpha + h_1 +M_1)\hat r
 \\
\m \quad +\left((\ga + c\hat  t) (1+ \tilde f'(\hat
r))\right)^2\left( c_0+ c_1 \left((\ga + c\hat  t) (r+ \tilde
f'(\hat r)\hat r)\right)^{q-1}\right) \,.
\end{array}
$$
If $c_0=c_1=0$, we take $a=0$ and we obtain the same conclusion as
in Lemma \ref{osc-u}, with $k_\alpha=0$. If $c_0$ or $c_1$ are
positive, we observe that, being   $q<2$, we
have
$$
\left((\ga + c\hat t) (1+ \tilde f'(\hat r))\right)^2\left( c_0+ c_1
\left((\ga + c\hat  t) (\hat  r+ \tilde f'(\hat r)\hat
r)\right)^{q-1}\right) \leq K+ \frac 14 c \hat r
$$
for some constant $K= K(\ga, c, L_0, \tilde f, c_0, c_1)$.
Then we obtain
$$
\begin{array}{c}
 \frac{\epsilon}{(T'- \hat t)^2} +
 (a+(\frac 14 c -\ga L_0) \hat r )
 \leq    (M_0+ h_0+ h_\alpha + h_1+M_1 )\hat r + K,
\end{array}
$$
and we conclude as before choosing $a$ and $c$ sufficiently large.

\smallskip \noindent
{\bf (iii)} Since $c_0=c_1=0$, it follows exactly as in the  proof
of Lemma \ref{osc-u}. \qed

As a consequence of Lemma \ref{osc2},  we have now
conditions under which solutions have bounded oscillation and
Theorem \ref{pw-fully} can be applied.  We deduce a complete regularity result.

\begin{theorem} \label{finnonl}  Assume  that $h$ and $u_0$ have bounded
oscillation  in $Q_T$ and in $\R^N$, respectively.
 There exists $L>0$   as in Lemma \ref{osc2}
 such that if
  $u\in C(\bar Q_T)$ is a  viscosity solution of \rife{cauful} and $u$ is $o_{\infty}(\varphi_L)$ in $\bar Q_T$ (see Hypothesis \ref{pert-fi}) then
%is also   o$(\varphi)$ in $\bar Q_T$.
we have the following
statements.

\smallskip  i) If Hypotheses  \ref{pw-nl1} and \ref{pert-fi}
 hold, then
  $ u(t)  \in W^{1,\infty} (\R^N)$, for all $t \in
(0,T)$,  and
 \rife{general1} holds with
 a constant $C$ depending  on ${{\rm osc}}(u_0)$,  ${{\rm osc}}_{(0\, , \, T)}(h)$,
  $\lambda, g, M, q$, $c_0$, $c_1$, $M_0$, $M_1$, $T$
  and  $\omega$.

\smallskip ii) If Hypotheses \ref{pw-nl1-hol} and \ref{pert-fi-bis} hold,
then  $u(t) $
 is $\alpha$-Holder continuous on $\R^N$,
 for any $\alpha \in (0,1)$,
   and  \rife{ci11} holds with
  a constant $C$ depending on
  $\alpha$,  ${{\rm osc}}(u_0)$,  ${{\rm osc}}_{(0\, , \, T)}(h)$,
  $\lambda, g, M, q$, $c_0$,  $c_1$, $M_0$, $M_1$ and $T$.
\end{theorem}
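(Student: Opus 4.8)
The plan is to combine the oscillation estimates of Lemma \ref{osc2} with the regularizing effects of Theorem \ref{pw-fully} and Proposition \ref{hol-fully}, exactly as Theorem \ref{grow} was deduced from Lemma \ref{osc-u} and Theorem \ref{PW} in the linear case. The only subtlety is the bookkeeping of the Lyapunov functions $\varphi_L$: the two ingredients require a Lyapunov function adapted to a possibly large gradient bound $L$, so one must first fix $L$ consistently for both steps.

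\begin{proof}[Proof of Theorem \ref{finnonl}] Fix $t_0\in (0,T)$ and set $\delta=\sqrt{t_0\wedge 1}$, $I_0=(\tfrac{t_0}{2}\,,\,T\wedge\tfrac32 t_0)$.

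\emph{Step 1 (bounded oscillation).} Under Hypothesis \ref{pw-nl1} (resp. \ref{pw-nl1-hol}) and \ref{pert-fi} (resp. \ref{pert-fi-bis}), Lemma \ref{osc2} provides $L>0$ such that, if $u$ is $o_\infty(\varphi_L)$, then $u$ has bounded oscillation in $Q_T$ and, since $u_0$, $h$ have bounded oscillation (say $|u_0(x)-u_0(y)|\le k_0+k_1|x-y|$, $|h(t,x)-h(t,y)|\le h_0+h_1|x-y|$), estimate \rife{CT} holds with a constant $C_T$ depending on the listed quantities. In particular
\[
\omega_{0,\delta}(u):={{\rm osc}}_{I_0,\delta}(u)\le C_T(1+\delta),\qquad \omega_{0,\delta}(h):={{\rm osc}}_{I_0,\delta}(h)\le h_0+h_1\delta.
\]
This fixes $L$ once and for all; the hypothesis that $u$ be $o_\infty(\varphi_L)$ in $\bar Q_T$ is precisely what the statement assumes.

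\emph{Step 2 (regularizing effect).} We now apply Theorem \ref{pw-fully} on the slab $I_0\times\R^N$ (equivalently, by the time-translation remark after Theorem \ref{key}, we may assume the equation is posed on $(0,T-\tfrac{t_0}{2})\times\R^N$). Since $u$ has bounded oscillation, $u(t)\in W^{1,\infty}(\R^N)$ for $t\in(0,T)$ and \rife{general1} gives
\[
\|Du(t_0)\|_\infty\le \frac{C}{\sqrt{t_0\wedge 1}},
\]
with $C$ depending on $\omega_{0,1}(u)$, $\omega_{0,1}(h)$ and the structural constants. Tracking the precise dependence on the oscillations in the proof of Theorem \ref{pw-fully} — more precisely using the analogue of \rife{lip-delta} / \rife{llip}, namely
\[
{\rm Lip}(u(t_0))\le C_\lambda\,\delta\Big(\tfrac{\omega_{0,\delta}(u)}{t_0}+\omega_{0,\delta}(h)+M+\tfrac{\omega_{0,\delta}(u)}{\delta^2}\Big)
\]
in the case $c_0=c_1=0$, and absorbing the superlinear corrections via the same Young-type argument for general $c_0,c_1$ — and then inserting the bounds of Step 1 with $\delta=\sqrt{t_0\wedge1}$, we obtain \rife{general1} with a constant depending only on ${{\rm osc}}(u_0)$, ${{\rm osc}}_{(0,T)}(h)$, $\lambda$, $g$, $M$, $q$, $c_0$, $c_1$, $M_0$, $M_1$, $T$ and $\omega$. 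This proves (i).

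\emph{Step 3 (H\"older case).} For (ii) the argument is identical, replacing Theorem \ref{pw-fully} by Proposition \ref{hol-fully} and Hypotheses \ref{pw-nl1}, \ref{pert-fi} by \ref{pw-nl1-hol}, \ref{pert-fi-bis}: Step 1 gives bounded oscillation of $u$ (with the weaker assumption $g(s)s\to0$ as $s\to0^+$ on $g$ near the origin), and then \rife{ci11} follows directly from Proposition \ref{hol-fully}, with the same kind of constant dependence. This completes the proof.
\end{proof}

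The main obstacle I expect is not conceptual but the consistency of the Lyapunov function: Lemma \ref{osc2} already selects the appropriate $L$ (and hence $\varphi_L$) needed to kill the correction terms ${\mathcal I}_\varepsilon(\varphi)$ both in the oscillation estimate and in the gradient estimate, so one must make sure the same $\varphi_L$ is used in Step 2, which is why the statement is phrased with a single $L$ as in Lemma \ref{osc2}. A secondary technical point is propagating the dependence of the constant on ${{\rm osc}}(u_0)$ and ${{\rm osc}}_{(0,T)}(h)$ rather than on the — a priori unknown — oscillation of $u$ on $I_0$; this is handled by Step 1 together with the fact that $C_T$ in \rife{CT} is explicit in the data.
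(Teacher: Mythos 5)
Your proposal is correct and follows the same route as the paper: the paper's proof is precisely the two-step combination of Lemma \ref{osc2} (to bound the oscillation of $u$ in terms of ${\rm osc}(u_0)$, ${\rm osc}_{(0,T)}(h)$ and the structural constants) followed by Theorem \ref{pw-fully} or Proposition \ref{hol-fully}. Your additional remarks on fixing the Lyapunov parameter $L$ once via Lemma \ref{osc2} and on propagating the constant dependence are consistent with how the statement is phrased and add no gap.
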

\proof  Applying Lemma \ref{osc2} we deduce that $u$ has bounded
 oscillation and its oscillation is estimated in terms of   ${{\rm
osc}}(u_0)$,  ${{\rm osc}}_{(0\, , \, T)}(h)$, $T$ besides  the
usual constants. Then, we apply either Theorem \ref{pw-fully} or
Proposition \ref{hol-fully} to conclude the Lipschitz, respectively
H\"older, estimate. \qed
\begin{remark}\label{omega} {\rm
We stress that statement (i) still holds true if the function
$\omega$ in Hypothesis \ref{pw-nl1} is only assumed to be bounded,
provided the  function $g(s)$ satisfies $g(s)s\to 0$ as $s \to 0^+$
(or even simply $\limsup_{s \to 0^+} s g(s)< 4 \lambda$) and
Hypothesis \ref{pert-fi} is replaced by the stronger   Hypothesis
\ref{pert-fi-bis}. In this case one first prove that $u(t) $
 is $\alpha$-Holder continuous for some
 $\alpha\in (0,1)$,
and then uses this information to estimate the right-hand side of
\rife{omega-1} in the proof of Theorem \ref{pw-fully}. Notice that
this preliminary H\"older estimate can be obtained by only requiring
that $\limsup_{s \to 0^+} s g(s)< 4 \lambda$, as in Proposition
\ref{hol-fully}.}
\end{remark}

The following corollary provides a nonlinear version of Theorem
\ref{grow}. It can be applied, for instance,  to Bellman-Isaacs type equations.

\begin{corollary} \label{fin2}
 Assume that $u_0$, $h$ satisfy \rife{osc-uo} and \rife{osc-h},
respectively. Assume  Hypothesis \ref{pw-nl1} with $c_0=c_1=0$ and
  $g$ satisfying  $sg(s) \to 0$ as $s \to 0^+$. Assume also  Hypothesis
\ref{pert-fi}.

There exists $L>0$   as in Lemma \ref{osc2}
 such that if
  $u\in C(\bar Q_T)$ is a  viscosity solution of \rife{cauful} and $u$ is $o_{\infty}(\varphi_L)$ in $\bar Q_T$ (see Hypothesis \ref{pert-fi})
 % $u$ is o$(\varphi)$ in $\bar Q_T$.
 then $ u(t)\in W^{1,\infty}(\R^N)$, $t\in (0,T)
$, and there exist $c= c_T(T, \lambda, g, \alpha)>0$ such that, for $t\in (0,T)$,
 \be \label{grad1-nl}
 \begin{array}{c}
 \|   D u(t)\|_\infty \leq  c_T\left\{ \frac
{k_0} {\sqrt{t \wedge 1}}\, + \frac { k_{\alpha}} {{(t \wedge
1)}^{1/2 - \alpha/2}} + k_1\right.
\\
\m
\qquad
\qquad \left. + (\sqrt{t \wedge 1}) \, (h_0 +M_0+  h_{\alpha}(t \wedge 1)^{\alpha/2}+ (h_1+M_1)\sqrt{t \wedge 1})\right\}.
\end{array}
 \ee
\end{corollary}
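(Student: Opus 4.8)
\emph{Plan.} The idea is to derive \rife{grad1-nl} as the nonlinear counterpart of Theorem \ref{grow}, by coupling the oscillation estimate of Lemma \ref{osc2} with the sharp Lipschitz regularizing effect of Theorem \ref{pw-fully}, in the very same way that Theorem \ref{grow} is deduced from Lemma \ref{osc-u} and \rife{lip-delta}. First I would note that, since $c_0=c_1=0$, the $\omega(|x-y|)$--term disappears from Hypothesis \ref{pw-nl1}, so that — together with the standing assumption $sg(s)\to 0$ as $s\to 0^+$ and with $g\in L^1(0,1)$ (part of Hypothesis \ref{pw-nl1}) — all the hypotheses of Lemma \ref{osc2}(iii) (with $c_0=c_1=0$) are satisfied. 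Hence, with the Lyapunov exponent $L=L(T,g,\lambda,\alpha)$ provided there, the solution $u$ satisfies the small-time bound \rife{smallt}, where the constants built from $h_0$ and $h_1$ are now built from $h_0+M_0$ and $h_1+M_1$ (this is exactly how $M_0,M_1$ enter the proof of Lemma \ref{osc2}, the linear inhomogeneity $h_0+h_\alpha r^\alpha+h_1 r$ being merely augmented by $M_0+M_1 r$).

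Next I would fix $t_0\in(0,T)$ and $\delta\in(0,1]$, write $I=(\tfrac{t_0}{2},\,T\wedge\tfrac32 t_0)$, and record the two oscillation bounds on $I$ for $|x-y|\le\delta$. Directly from \rife{osc-h},
\[
\omega_{0,\de}(h)\le h_0+h_\alpha\,\delta^\alpha+h_1\,\delta ,
\]
and, from the (iterated) estimate \rife{smallt} for $u$,
\[
\omega_{0,\de}(u)\le C_T\Big(k_0+(h_0+M_0)t_0+\big(k_\alpha+h_\alpha t_0\big)\delta^\alpha+\big(k_1+(h_1+M_1)t_0\big)\delta\Big),
\]
with $C_T=C_T(T,g,\lambda,\alpha)$.

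Finally, since $c_0=c_1=0$, the doubling argument in the proof of Theorem \ref{pw-fully} goes through with Hypothesis \ref{pw-nl1} in place of \ref{pw-nl}, the only novelty being the extra term $-M_1|x-y|$, which I would carry through evaluated at $|x-y|=|\hat x-\hat y|<\delta$. Combining this with Remark \ref{3e}, so that $M$ in \rife{llip} is replaced by $M_0+M_1\delta$, one reaches the precised bound
\[
{\rm Lip}(u(t_0))\le C_\lambda\,\delta\Big(\frac{\omega_{0,\de}(u)}{t_0}+\omega_{0,\de}(h)+M_0+M_1\delta+\frac{\omega_{0,\de}(u)}{\delta^2}\Big),\qquad C_\lambda=C_\lambda(\lambda,g).
\]
Choosing $\delta=\sqrt{t_0\wedge 1}$ and inserting the two oscillation bounds, the leading contribution $\delta\,\omega_{0,\de}(u)/\delta^2=\omega_{0,\de}(u)/\delta$ produces (up to a $T$--dependent factor) exactly $k_0/\sqrt{t_0\wedge 1}$, $k_\alpha/(t_0\wedge 1)^{\frac12-\frac{\alpha}{2}}$, $k_1$ and $\sqrt{t_0\wedge 1}\big((h_0+M_0)+h_\alpha(t_0\wedge 1)^{\alpha/2}+(h_1+M_1)\sqrt{t_0\wedge 1}\big)$, while $\delta\,\omega_{0,\de}(h)$ and $\delta(M_0+M_1\delta)$ reproduce precisely the right-hand side of \rife{grad1-nl}; extending the inequality from $|x-y|\le 1$ to arbitrary $x,y\in\R^N$ in the standard way completes the argument.

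\emph{Main obstacle.} I do not expect any genuinely new analytic difficulty beyond what is already in Theorems \ref{pw-fully} and \ref{grow}; the delicate point is bookkeeping. To obtain the correct factor $(t\wedge 1)$ (and not merely $\sqrt{t\wedge 1}$) multiplying $M_1$ in \rife{grad1-nl}, one must keep the term $-M_1|x-y|$ of Hypothesis \ref{pw-nl1} evaluated at $|x-y|=|\hat x-\hat y|<\delta$ throughout the doubling argument, rather than crudely bounding $|x-y|\le 1$. One should also check that the step length $T'$ and the constant $L_0$ of the time iteration inside Lemma \ref{osc2} depend only on $g,\lambda,\alpha$ (and hence $L$ only on these and $T$), so that the assumption $u=o_\infty(\varphi_L)$ is exactly the one furnished by that lemma.
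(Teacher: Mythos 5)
Your proposal is correct and follows exactly the route of the paper's own (very terse) proof: invoke Lemma \ref{osc2}(iii) to get the oscillation bound \rife{smallt} with $h_0,h_1$ augmented by $M_0,M_1$, run the proof of Theorem \ref{pw-fully} with $c_0=c_1=0$ to reach the Lipschitz bound \rife{llip}, and then conclude as in Theorem \ref{grow} with $\delta=\sqrt{t_0\wedge 1}$. Your bookkeeping remarks (keeping $M_1|\hat x-\hat y|\le M_1\delta$ rather than $M_1$, and checking that $L$ depends only on $T,g,\lambda,\alpha$) are exactly the details the paper leaves implicit.
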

\begin{proof} We already know by Lemma \ref{osc2}
 that $u$ has  bounded oscillation. We
follow the proof of Theorem \ref{pw-fully} and we arrive at
\rife{llip}. Then  using  (iii) in Lemma \ref{osc2},  we can
conclude similarly to the proof of Theorem \ref{grow}.
\end{proof}

\subsection{Local Lipschitz continuity}

The approach developed so far can also provide estimates and
regularity results concerning the local Lipschitz continuity. We
give an example in the following theorem.
\begin{theorem}\label{liploc}  Assume that Hypothesis
\ref{pw-nl} holds true and in addition that, for every compact set
$S\subset \R^N$  we have \be\label{loc-cont}
\begin{array}{c}
 |F(t,x,p,X)- F(t,x,q,Y)|\leq C_S  ( 1+(|p|^m+|q|^m ) )\{ |p-q| +   \|X-Y\|\},
 \\
 \m
\qquad\qquad  x\in S\,,\; \,p,q\in \R^N \,,\, t\in
(0,T)\,.
\end{array}
\ee for some constants $C_S>0$ and $m<2$.

 Let  $u\in C(Q_T)$
  be a
 viscosity solution of  equation
 \rife{fnl} with $h \in C(Q_T)$.  Then $  u(t)  \in
 W^{1,\infty}_{\rm loc}(\RN)$ for every $t\in (0,T) $ and,     for any
ball  $B_R(x_0)$,  there exists $M_0$  such that
$$
\|   D u(t)\|_{L^{\infty}(B_R(x_0))} \leq
 \frac {M_0}{\sqrt{t  \, \wedge
1} }\,,
$$
where $M_0= (g, \lambda, q, \omega, c_0, c_1, M, x_0,  N, R,
\|u\|_{L^\infty((\frac t2,T\wedge \frac{3}{2}t)\times B_{2R}(x_0))}
, \|h\|_{L^\infty((\frac t2,T\wedge \frac{3}{2}t)\times
B_{2R}(x_0))})$.
\end{theorem}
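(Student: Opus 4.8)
The plan is to localize the proof of Theorem~\ref{pw-fully}: since no Lyapunov function is available here, I would introduce a spatial cut-off that simultaneously confines the doubling argument to a fixed ball and produces an error term which is exactly what \rife{loc-cont} (with $m<2$) can absorb. Fix $x_0\in\R^N$, $R>0$, $t_0\in(0,T)$, and abbreviate $\|u\|_*=\|u\|_{L^\infty((\frac{t_0}2,T\wedge\frac32 t_0)\times B_{2R}(x_0))}$, and likewise $\|h\|_*$; both are finite since $u,h\in C(Q_T)$. I would fix once and for all a radial function $\Gamma\in C^\infty(B_{2R}(x_0))$, $\Gamma\ge 0$, with $\Gamma\equiv 0$ on $\bar B_R(x_0)$ and $\Gamma(x)\to+\infty$ as $|x-x_0|\to(2R)^-$; then $S:=\{\Gamma\le 2\|u\|_*\}$ is a compact subset of $B_{2R}(x_0)$ on which $|D\Gamma|+\|D^2\Gamma\|\le C_1=C_1(R,x_0,\|u\|_*)$.

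For $\delta\in(0,1]$ (small, to be chosen), I would run the argument of Theorem~\ref{PW} and Theorem~\ref{pw-fully} on $\Delta=\{(t,x,y):\frac{t_0}2<t<T\wedge\frac32 t_0,\ x,y\in B_{2R}(x_0),\ |x-y|<\delta\}$ with
\[
\Phi_\epsilon(t,x,y)=u(t,x)-u(t,y)-K\psi(x-y)-C_0(t-t_0)^2-\frac\epsilon{T-t}-\Gamma(x)-\Gamma(y),
\]
where $\psi(z)=f(|z|)$, $f$ solves \rife{ode1}, $C_0=4\omega_{0,\delta}(u)/t_0^2$, and $\omega_{0,\delta}(u),\omega_{0,\delta}(h)$ now denote the oscillations of $u,h$ over $\{x,y\in B_{2R}(x_0),\,|x-y|\le\delta,\,t\in(\frac{t_0}2,T\wedge\frac32 t_0)\}$, so $\omega_{0,\delta}(u)\le 2\|u\|_*$, $\omega_{0,\delta}(h)\le 2\|h\|_*$. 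Since $\Gamma$ blows up at $\partial B_{2R}(x_0)$ while $u$ is bounded there, $\Phi_\epsilon\to-\infty$ near that boundary, so, with $C_0$ as above and $K\ge\omega_{0,\delta}(u)/f(\delta)$, a positive supremum of $\Phi_\epsilon$ would be attained at an interior point $(\hat t,\hat x,\hat y)$ with $0<|\hat x-\hat y|<\delta$, exactly as in Theorem~\ref{PW}; moreover $0<\Phi_\epsilon(\hat t,\hat x,\hat y)\le\omega_{0,\delta}(u)-\Gamma(\hat x)-\Gamma(\hat y)$ forces $\hat x,\hat y\in S$, so the bound on $D\Gamma,D^2\Gamma$ holds at $\hat x,\hat y$. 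Applying Theorem~\ref{key} (and Remark~\ref{rit}) with $z=K\psi(x-y)+C_0(t-t_0)^2+\frac\epsilon{T-t}+\Gamma(x)+\Gamma(y)$ yields matrices $X_n,Y_n$; setting $\hat X_n=X_n-D^2\Gamma(\hat x)$, $\hat Y_n=Y_n+D^2\Gamma(\hat y)$, these satisfy \rife{ine} with $\mu=Kf'(|\hat x-\hat y|)/|\hat x-\hat y|$, $\nu=\theta_\epsilon/n$, and subtracting the sub/supersolution inequalities, using $KD\psi(\hat x-\hat y)=\mu(\hat x-\hat y)$,
\[
2C_0(\hat t-t_0)+\frac\epsilon{(T-\hat t)^2}+F\big(\hat t,\hat x,\mu(\hat x-\hat y)+D\Gamma(\hat x),\hat X_n+D^2\Gamma(\hat x)\big)-F\big(\hat t,\hat y,\mu(\hat x-\hat y)-D\Gamma(\hat y),\hat Y_n-D^2\Gamma(\hat y)\big)\le\omega_{0,\delta}(h).
\]

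The key new step is to invoke \rife{loc-cont} with this $S$ to remove $D\Gamma,D^2\Gamma$ from the arguments of $F$, at the cost of an error $E_x+E_y$ with $E_x\le C_S\big(1+2(Kf'(0)+C_1)^m\big)C_1\le C_2\big(1+(Kf'(0))^m\big)$ (and likewise $E_y$), using $|\mu(\hat x-\hat y)|=Kf'(|\hat x-\hat y|)\le Kf'(0)$. Then I would apply Hypothesis~\ref{pw-nl} with $\mu,\nu$ as above (the resulting lower bound also gives the compactness of Remark~\ref{rit}, letting $n\to\infty$), Proposition~\ref{refe}, the ODE \rife{ode1}, and $Kf'(|\hat x-\hat y|)\,|\hat x-\hat y|\le\omega_{0,\delta}(u)$, exactly as in Theorem~\ref{pw-fully}, to arrive at
\[
K\le\frac{4\omega_{0,\delta}(u)}{t_0}+\omega_{0,\delta}(h)+M+K^2\big(f'(|\hat x-\hat y|)\big)^2\big(c_0+c_1\omega_{0,\delta}(u)^{q-1}\big)\omega(|\hat x-\hat y|)+E_x+E_y.
\]
Since $\omega(s)\to0$ and $\omega_{0,\delta}(u)\le 2\|u\|_*$, I would fix $\delta\le\delta_0$ so that the penultimate term is $\le K/8$; since $f'(0)\le\frac\delta{4\lambda}e^{\frac1{4\lambda}\int_0^1g}$ by \rife{fde2}, one gets $E_x+E_y\le C_3(1+(K\delta)^m)$; taking $K=8\lambda\omega_{0,\delta}(u)/\delta^2$ and then $\delta=C_4\sqrt{t_0\wedge1}$ with $C_4$ small (also so that $\delta<R/4$ and $\delta\le\delta_0$), the term $(K\delta)^m\sim\delta^{-m}$ is of lower order than $K\sim\delta^{-2}$ and $\frac{4\omega_{0,\delta}(u)}{t_0}\le\frac12K$, so the displayed inequality fails. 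Hence $\Phi_\epsilon\le0$ on $\Delta$; letting $t=t_0$, $\epsilon\to0$ and using $\Gamma\equiv0$ on $B_R(x_0)$ gives $u(t_0,x)-u(t_0,y)\le Kf(|x-y|)\le Kf'(0)|x-y|$ for $x,y\in B_R(x_0)$, $|x-y|<\delta$; since $Kf'(0)\le cK\delta\lesssim\omega_{0,\delta}(u)/\delta\lesssim(t_0\wedge1)^{-1/2}$, reversing the roles of $x,y$ yields $\|Du(t_0)\|_{L^\infty(B_R(x_0))}\le M_0/\sqrt{t_0\wedge1}$ with $M_0$ of the announced form.

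The hard part will be precisely this last balancing: $K$ must be of size $\sim\omega_{0,\delta}(u)/\delta^2\sim 1/t_0$ in order to produce the $(t_0\wedge1)^{-1/2}$ regularizing rate, while the cut-off forces an error of order $(K\delta)^m$ because $D\Gamma$ now enters the gradient slot of $F$; the assumption $m<2$ in \rife{loc-cont} is exactly what makes this error lower order in $\delta$ than $K$, so that it is harmless. Everything else is a faithful, localized repetition of the proof of Theorem~\ref{pw-fully}, and it is worth emphasizing that only the boundedness of $u$ and $h$ on the compact cylinder over $B_{2R}(x_0)$ is used, so neither a Lyapunov function nor Hypothesis~\ref{pert-fi} is needed and all constants depend only on the quantities listed in the statement.
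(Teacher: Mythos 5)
Your proposal is correct and follows essentially the same strategy as the paper's proof: localize the doubling argument with a penalization confining the maximum to a compact set, use \rife{loc-cont} to control the extra terms that the localization injects into the gradient and Hessian slots of $F$, and exploit $m<2$ so that this error, of order $(K\delta)^m\sim K^{m/2}\sim\delta^{-m}$, is dominated by $K\sim\delta^{-2}$. The only (cosmetic) difference is the localization device — the paper adds $L|x-x_0|^2$ in the $x$-variable alone with $L\sim\|u\|_\infty/r^2$, while you add a boundary-blowing-up cutoff $\Gamma(x)+\Gamma(y)$ whose derivatives are controlled on the sublevel set where the positive maximum must lie; both yield the same balance and the same conclusion.
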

\proof Let us fix $x_0\in \R^N$, $t_0>0$, $\de>0$, $r>0$ and
consider the set
$$
\Delta =
 \Big \{ (t,x,y)\in (0,T)\times B_r(x_0)\times  B_r(x_0)\,:\,
|x-y|<\de\,,\,\,  \frac{t_0}2 < t < \frac32 t_0 \wedge T \Big\}
$$
and the function
$$
 \Phi_{\epsilon}(t,x,y)= u (t,x)-u (t,y)
  -  K \psi(x-y) - L |x-x_0|^2
  - C_0 (t-t_0)^2\, - \frac{\epsilon}{T-t},
$$
where $K$, $C_0$, $L$ will   be chosen later and
$\psi(x-y)=f(|x-y|)$ with $f$ as in \rife{effe}.  As usual, we
claim that
 \be\label{goal}
\Phi_{\epsilon}(t,x,y)\leq 0, \qquad
 (t,x,y)\in \Delta\,,
 \ee
 and, arguing by contradiction, we suppose that
 \be
\sup\limits_{\Delta}\Phi_{\epsilon}(t,x,y)>0\,. \ee
 In the sequel, in order  to simplify the notation, we write
$\|u\|_\infty:=\|u\|_{L^\infty((\frac{t_0}2, \frac{3}{2}t_0 \wedge
T) \times B_r(x_0))}$ and
$\|h\|_\infty:=\|h\|_{L^\infty((\frac{t_0}2, \frac{3}{2}t_0 \wedge
T) \times B_r(x_0))}$. Since
$$ \Phi_{\epsilon}(t,x,y)<  2\|u\|_\infty
  -  K f(|x-y|)  -L |x-x_0|^2- C_0(t-t_0)^2\,,
$$
then if we choose
$$
L\geq \frac{ 2\|u\|_\infty}{r^2}\,,\qquad C_0\geq
\frac{8\|u\|_\infty}{t_0^2}\,,\qquad K\geq
\frac{2\|u\|_\infty}{f(\de)}
$$
we can exclude that the maximum of $\Phi_{\epsilon}$ in $\overline
\Delta$ be attained when $t=\frac{t_0}2$, $t=\frac32 t_0 \wedge T$,
or when $|x-y|=\de$ or when $|x-x_0|=r$. If $|y-x_0|=r$, then
$|x-x_0|\geq r-\de$ and choosing $r\geq 2\de$ and $L\geq \frac{
8\|u\|_\infty}{r^2}$ we also exclude that the maximum be attained
when $|y-x_0|=r$.

\smallskip On account of the above choices, we deduce that the maximum is attained inside
$\Delta$. Moreover, this maximum being positive, it cannot be
attained when $x=y$. Let $(\hat t, \hat x, \hat y)$ be the point in
which the maximum is achieved.
We proceed now as in the proof of Theorem \ref{pw-nl} obtaining
\be\label{l1}
 \begin{array}{c}
  \frac{\epsilon}{(T- \hat t)^2} +
 2C_0(\hat t-t_0)+  F (\hat t ,\hat x, K D\psi(\hat x-\hat y) +2L(\hat x-x_0), X_n)
 \\
 \m
 \quad -F(\hat t ,\hat y,
K D\psi(\hat x-\hat y) , Y_n)\leq h(\hat t,\hat x)-h(\hat t,\hat y),
\end{array}
\ee
where $\psi(\cdot)= f(|\cdot|)$, and where $X_n$, $Y_n$ satisfy
\be \label{cf5}
\begin{pmatrix}
 X_n-  2LI   & 0\\
\noalign{\medskip} 0&  - Y_n
\end{pmatrix}
\leq  K \begin{pmatrix}
 D^2\psi(\hat x-\hat y) & -D^2\psi(\hat x-\hat y)\\
\noalign{\medskip} - D^2\psi(\hat x-\hat y)&  D^2\psi(\hat x-\hat y)
\end{pmatrix} + \frac{1}{n}
(D^2 z(\hat t, \hat x, \hat y))^2,
\ee
where $D^2 z= D^2 \left(K\psi(  x-  y) + L |  x-x_0|^2 \right)$.
Using \rife{the}   and Hypothesis \ref{pw-nl} we have
\be\label{l1bis}
\begin{array}{c}
F (\hat t ,\hat x, K\,D\psi(\hat x-\hat y) , X_n- 2LI )
   -F(\hat t ,\hat y,
K\,D\psi(\hat x-\hat y) , Y_n ) \geq - \la {\rm tr}\left(X_n- 2L I-Y_n
\right)
\\
\m \quad -K  f'(|\hat x-\hat y|) \, g(|\hat x- \hat y|) - \left(K
f'(|\hat x-\hat y|)\right)^{2}\, [c_0 + c_1\big( K f'(|\hat x-\hat
y|)\, |\hat x-\hat y| \big)^{q-1}] \omega(|\hat x-\hat y|) \\ \m - M
- \frac{\theta_{\epsilon} (\hat t, \hat x, \hat y)} {n}
 \, \eta(\hat t, \hat x, \hat y).
\end{array}
\ee
Note that, in consequence of \rife{l1} and assumption \rife{loc-cont},  the left hand side of the previous inequality is bounded uniformly with respect to $n$. Thus we can apply the compactness argument of  Remark  \ref{rit},  which allows us to pass to the limit as $n$ go to infinity, replacing  $X_n$ and $Y_n$ with  matrices $X$, $Y$ such that  \eqref{cf5} holds without the term $\frac{1}{n}
(D^2 z(\hat t, \hat x, \hat y))^2$. Then we have, from \rife{l1} and \rife{l1bis}
$$
\begin{array}{c}
  \frac{\epsilon}{(T- \hat t)^2} +
 2C_0(\hat t-t_0) - \la {\rm tr}\left(X- 2L I-Y
\right)
\\
\m \quad  \leq K  f'(|\hat x-\hat y|) \, g(|\hat x- \hat y|)+ \left(K
f'(|\hat x-\hat y|)\right)^{2}\, [c_0 + c_1\big( K f'(|\hat x-\hat
y|)\, |\hat x-\hat y| \big)^{q-1}] \omega(|\hat x-\hat y|) \\ \m + M  + h(\hat t,\hat x)-h(\hat t,\hat y) + F (\hat t ,\hat x, K D\psi(\hat x-\hat y)  , X-2LI) -    F
(\hat t ,\hat x, K D\psi(\hat x-\hat y) +2L(\hat x-x_0), X) \,.
\end{array}
$$
On the other hand, since $\max \Phi_\vep>0$, we deduce that
$$
K f(|\hat x-\hat y|) \leq u(\hat t, \hat x)- u(\hat t,\hat y) \leq
2\|u\|_\infty,
$$
hence we get, since $f$ is concave,
$
K f'(|\hat x-\hat y|)\, |\hat x-\hat y| \leq 2\|u\|_\infty\,.
$
Therefore we obtain
$$
\begin{array}{c}
  \frac{\epsilon}{(T- \hat t)^2} +
 2C_0(\hat t-t_0) - \la {\rm tr}\left(X- 2L I-Y
\right)
\\
\m \quad  \leq K  f'(|\hat x-\hat y|) \, g(|\hat x- \hat y|)+ K^2 \left(f'(|\hat x-\hat y|)\right)^2 [ c_0 +
c_1\left(2\|u\|_\infty \right)^{q-1}]\omega(|\hat x-\hat y|)
\\
\m  + M  + 2 \|h\|_\infty   + F (\hat t ,\hat x, K D\psi(\hat x-\hat y)  , X-2LI) -    F
(\hat t ,\hat x, K D\psi(\hat x-\hat y) +2L(\hat x-x_0), X) \,.
\end{array}
$$
By using Proposition \ref{refe},   as in the proof of Theorem \ref{pw-nl}
we estimate ${\rm tr}\left(X- 2L I-Y \right)$ and we conclude that
$$
\begin{array}{c}
  \frac{\epsilon}{(T- \hat t)^2} +
 2C_0(\hat t-t_0)\leq 4\la K f''(|\hat x-\hat y|)
 +K f'(|\hat x-\hat y|) \, g(|\hat x- \hat y|)
 \\
 \m
+  K^2 \left(f'(|\hat x-\hat y|)\right)^2
\,  [ c_0 + c_1\left(2\|u\|_\infty \right)^{q-1}]\omega(|\hat x-\hat
y|) + M +
 2 \| h\|_{\infty}
\\
\m +  F (\hat t ,\hat x, K D\psi(\hat x-\hat y)  , X-2LI) -    F
(\hat t ,\hat x, K D\psi(\hat x-\hat y) +2L(\hat x-x_0), X)\,.
\end{array}
$$
Using \rife{ode1}
 we get
 \be\label{l2}
\begin{array}{c}
  \frac{\epsilon}{(T- \hat t)^2} +
 2C_0(\hat t-t_0)\leq - K + c_\la K
 [ c_0 +
c_1\left(2\|u\|_\infty \right)^{q-1}]\omega(|\hat x-\hat y|) + M +
2\|h\|_\infty \\
\m +  F (\hat t ,\hat x, K D\psi(\hat x-\hat y)  , X-2LI) -    F
(\hat t ,\hat x, K D\psi(\hat x-\hat y) +2L(\hat x-x_0), X)\,.
\end{array}
\ee
We fix henceforth $L= \frac{ 8\|u\|_\infty}{r^2}$,
$C_0=\frac{8\|u\|_\infty}{t_0^2}$ and $ K=\frac{\kappa
\|u\|_\infty}{\de^2}$ for some $\kappa = \kappa (g, \lambda)$ so
that $ K\geq \frac{2\|u\|_\infty}{f(\de)}$. We will later choose
$\de$ suitably small (this implies in turn that  $K$ is sufficiently
large).
The last term in \rife{l2} is estimated using assumption
\rife{loc-cont} with  $S= \overline B_r(x_0)$. Since $|KD\psi(\hat
x-\hat y)|$ $ \leq C\frac{\|u\|_{\infty}}\de $, with $C= C(\lambda,
g)$, we deduce that there exists some constant $c= c(N, L , \lambda,
g, S)$ such that
$$
\begin{array}{c}
F (\hat t ,\hat x, K D\psi(\hat x-\hat y)  , X-2LI) -    F (\hat t
,\hat x, K D\psi(\hat x-\hat y) +2L(\hat x-x_0), X) \leq
\\
\m \quad \leq c\left\{ (1+\left(\frac{\|u\|_{\infty}}\de+
Lr\right)^m L( |\hat x-x_0|  + 1)  \right\}\,.
\end{array}
$$
Recalling that $K\simeq  \frac{\|u\|_{\infty}}{\de^2}$, and $ L$ and
$ |\hat x-x_0|$ are only estimated in terms of $r$, we get
$$
F (\hat t ,\hat x, K D\psi(\hat x-\hat y)  , X-2LI) -    F (\hat t
,\hat x, K D\psi(\hat x-\hat y) +2L(\hat x-x_0), X) \leq
C(\|u\|_\infty, r) K^{\frac m2}.
$$
We obtain then from \rife{l2}
$$
\begin{array}{c}
  \frac{\epsilon}{(T- \hat t)^2} +
 2C_0(\hat t-t_0)\leq - K + c_\la
K\left(2\|u\|_\infty \right)^{q-1}\omega(|\hat x-\hat y|) + M +
2\|h\|_\infty
 +
 C(\|u\|_\infty, r) K^{\frac m2}.
 \end{array}
$$
Then, since $m<2$ and $\omega(0)=0$, we choose
$\de$ small (depending also on $r$, $\| h\|_{\infty}$ and
$\|u\|_\infty$) so that
$$
 \frac{\epsilon}{(T- \hat t)^2} +
 2C_0(\hat t-t_0)\leq - \frac K2
$$
and we conclude taking $\de$ eventually smaller so that $K> 2C_0
t_0$ (i.e., $\delta  \le C_3 \sqrt{t_0 \wedge 1}$).
% for some constant
%$C_3 = C_3 (\omega_{0, \delta}(u)$,
% $\lambda, g, M, q$, $c_0 $, $c_1 $).
 In this way we have proved \rife{goal}, which
implies, if we take $x=x_0$ and $t=t_0$   (and since $Kf(|x-y|)\leq
Kf'(0)|x-y|\leq \tilde c\frac{\|u\|_\infty}{\de}|x-y|$)
$$
u(t_0,x_0)-u(t_0,y)\leq   \tilde c |x_0-y| ,\;  \; \, |x_0-y|<\de\,.
 $$
% Possibly replacing
%$\tilde c $ with $\tilde c \vee 2$ the same inequality also holds if
%$y\in B_r(x_0)$ and $|y-x_0| \ge \de$.
%
%\smallskip
If we take now $x$, $y$ belonging to
some ball $B_R(x_0)$, then
%$y\in B_{2R}(x)\subset B_{3R}(x_0)$ and
we easily extend a similar estimate to $x$, $y$.
%Therefore, in particular,
%$$
%|u(t_0,x)-u(t_0,y)|\leq C(q_{3R}+  b_{3R} + h_{3R}\sqrt{t_0}\wedge
%1\wedge R) \frac{\|u\|_{L^\infty((\frac{t_0}2, \frac{3}{2}t_0 \wedge
%T) \times B_{3R}(x_0))} } {\sqrt{t_0}\wedge 1\wedge R}\, |x-y|.
%$$
\qed

\section{Examples and  applications}

In this section we consider examples of operators to which the
previous results can be applied, in particular checking the
Hypotheses \ref{pw-nl} and \ref{pert-fi}.

\vskip 1mm

\subsection{Bellman-Isaacs operators} Let us consider  the
case of Bellman-Isaacs equations appearing  in  stochastic control
problems or game theory (see \cite{Kr80}, \cite{Lions}, \cite{Fle-Sou}, \cite{Fle-Soner}, \cite{DaLio-Ley}, \cite{K} and the
references therein) where, for instance,
$$
F(t,x,Du, D^2 u)=
\inf\limits_{\beta\in \mathcal B}\sup\limits_{\alpha\in \mathcal A}\, \left\{-{\rm
tr}\left(q_{\alpha,\beta}(t,x)D^2u\right)-
 b_{\alpha,\beta}(t,x)\cdot   D u  -
f_{\alpha,\beta}(t,x)\right\}\,.
$$
As  a preliminary assumption, we suppose that $F(t,x,p,X)$ is finite for every $(t,x)\in Q_T$, $p\in \R^N$, $X\in {\mathcal S}_N$. For example, this is certainly true if, for some $\beta\in {\mathcal B}$,
$$
 %\inf\limits_{\beta\in \mathcal B}\,
    \sup\limits_{\alpha\in
 \mathcal A}\,
 \left\{{\rm tr}\left(q_{\alpha,\beta}(t,x) \right)
 +  |b_{\alpha,\beta}(t,x)| + |f_{\alpha,\beta}(t,x)| \right \}
  < \infty\,.
  $$
We  show now that   Hypotheses \ref{pw-nl} and \ref{pert-fi} are
satisfied provided that the following conditions hold.

\smallskip
{\rm \hh
(i) $q_{\al,\beta}(t,x)= \la\,I+\sigma_{\al,\beta}(t,x)^2 $,
 for some $\lambda>0$, where the coefficients
  $\sigma_{\al,\beta}(t,x)$ and
$b_{\alpha,\beta}(t,x)$ are continuous on $  Q_T$, uniformly in
$\alpha \in  {\mathcal A}$ and $\beta \in {\mathcal B}$, and
 satisfy \rife{pw},  uniformly in
 $\al\in {\mathcal A}$, $\beta\in \mathcal B$.
 \hh
(ii)  $f_{\al,\beta}$ are
 continuous and have bounded
 oscillation on $\bar Q_T$ (uniformly in
  $\al $ and $\beta$).
\hh
(iii) For any $(t,x) \in  Q_T$,  we have
$$
 \sup\limits_{\beta\in \mathcal B}\,\sup\limits_{\alpha\in
 \mathcal A}\,
 \left\{{\rm tr}\left(q_{\alpha,\beta}(t,x) \right)
\right \}
  < \infty,
  $$
 \hh
 (iv)  There exists $\varphi \in C^{1,2}(\bar Q_T)  $ such that
$\varphi\to + \infty$ as $|x|\to \infty$ (uniformly in $[0,T]$) and
$$
\partial_t \varphi +  \left\{ - {\rm tr}\left(q_{\al,\beta}
(t,x)D^2\varphi\right)- b_{\alpha,\beta}(t,x)\cdot   D \varphi
\right\}\geq 0 \quad \hbox{in $Q_T$}, \; \mbox{for every $\alpha\in
\mathcal A, \beta\in \mathcal B$.}
$$
To check Hypothesis \ref{pw-nl},
 we multiply
the matrix inequality \rife{ine} by
$$
\begin{pmatrix}
 \sigma_{\al,\beta}(t,x)^2  & \sigma_{\al,\beta}(t,x)\sigma_{\al,\beta} (t,y)    \\
\noalign{\medskip} \sigma_{\al,\beta}(t,y)\sigma_{\al,\beta}(t,x) &
\sigma_{\al,\beta}(t,y)^2
\end{pmatrix},
$$
then, taking traces, we deduce
\be \label{llo}
-{\rm tr}\left((q_{\al,\beta}(t,x) X-q_{\al,\beta}(t,y)Y\right) \ee
$$\geq -\la{\rm tr}\left(X-Y\right)- \mu {\rm tr}\left(
(\sigma_{\al,\beta}(t,x)  -\sigma_{\al,\beta}(t,y))^2  \right)
 - \nu {\rm tr}\left(
\sigma_{\al,\beta}^2 (t,x)  + \sigma_{\al,\beta}^2 (t,y) \right) \quad
$$$$\geq  -\la{\rm tr}\left(X-Y\right)- \mu \|(\sigma_{\al,\beta}(t,x)
-\sigma_{\al,\beta}(t,y)\|^2 - \nu {\rm tr}\left( q_{\al,\beta} (t,x)  + q_{\al,\beta} (t,y)
- 2 \lambda I \right) ,
$$
for every $\al\in \mathcal A$, $\beta\in \mathcal B$. Therefore
$$
\begin{array}{c}
\left\{-{\rm tr}\left(q_{\al,\beta} (t,x) X\right)- \mu b_{\alpha,\beta}(t,x)\cdot
(x-y)  - f_{\alpha,\beta}(t,x)\right\} \geq
\\
\m \geq \left\{-{\rm tr}\left(q_{\al,\beta}(t,y)Y\right)- \mu
b_{\alpha,\beta}(t,y)\cdot (x-y)  - f_{\alpha,\beta}(t,y)\right\}
\\
\m
 -
\la{\rm tr}\left(X-Y\right)-\mu \left[ \|(\sigma_{\al,\beta}(t,x)
-\sigma_{\al,\beta}(t,y)\|^2+ (b_{\alpha,\beta}(t,x)-b_{\alpha,\beta}(t,y))\cdot (x-y) \right]
\\ \m - (f_{\alpha,\beta}(t,x)-f_{\alpha,\beta}(t,y))  - \nu \eta(t,x,y)\end{array}
$$
where
$$
  \eta(t,x,y) = \sup\limits_{\beta\in \mathcal B}\sup\limits_{\alpha\in \mathcal A}\big\{
    {\rm tr}(
q_{\al,\beta} (t,x))   + {\rm tr}(q_{\al,\beta} (t,y)) \big\} -
2 \lambda  N.
 $$
 Note that $\eta(t,x,y)$ is finite thanks to (iii).
Thus we get
$$
\begin{array}{c}
\left\{-{\rm tr}\left(q_{\al,\beta} (t,x) X\right)- \mu b_{\alpha,\beta}(t,x)\cdot
(x-y)  - f_{\alpha,\beta}(t,x)\right\} \geq
\\
\m \geq \left\{-{\rm tr}\left(q_{\al,\beta} (t,y)Y\right)- \mu
b_{\alpha,\beta}(t,y)\cdot (x-y)  - f_{\alpha,\beta}(t,y)\right\}
\\
\m
 -\la{\rm tr}\left(X-Y\right)-\mu |x-y|g(|x-y|)
-M - \nu \eta(t,x,y),
\end{array}
$$
for every $  x,y \in
 \R^N$, $|x-y| \le 1$, $t \in (0,T)$.

Taking the $\sup$ on $\alpha$ and the $\inf$ on $\beta$ implies
 that
   $F$
satisfies Hypothesis \ref{pw-nl}. In order to see
 that  Hypothesis \ref{pert-fi} is
satisfied, we use (iv) which implies
$$
\begin{array}{c}
\left\{- {\rm tr}\left(q_{\al,\beta}(t,x)(X+\vep D^2\varphi\right)-
b_{\alpha,\beta}(t,x)\cdot  (p+\vep  D \varphi)  -
f_{\alpha,\beta}(t,x)\right\}
\\
\m =  \left\{-{\rm tr}\left(q_{\al,\beta}(t,x)X\right)-
b_{\alpha,\beta}(t,x)\cdot  p  - f_{\alpha,\beta}(t,x)\right\}
\\
\m -\vep \left\{{\rm tr}\left(q_{\al,\beta}(t,x)D^2\varphi\right)+
b_{\alpha,\beta}(t,x)\cdot   D \varphi\right\}
\\
\m \quad \geq \left\{-{\rm tr}\left(q_{\al,\beta}(t,x)X\right)-
b_{\alpha,\beta}(t,x)\cdot  p  - f_{\alpha,\beta}(t,x)\right\}- \vep
\partial_t \varphi
\end{array}
$$
and taking $\sup_\alpha$ and $\inf_\beta$ on both sides we deduce that Hypothesis \ref{pert-fi} is satisfied.

\smallskip
Let us note that a sufficient condition which implies  (iv)
 (hence Hypothesis \ref{pert-fi}) is that
  there exists $C\ge 0$ such that
\be \label{NN}
{\rm tr}\left(q_{\al,\beta}(t,x) \right)+ b_{\alpha,\beta}(t,x)\cdot x
%\|q_{\al,\beta}(t,x) \| + |b_{\alpha,\beta}(t,x)|^2 + |f_{\alpha,\beta}(t,x)|^2
 \le C(1+ |x|^2),
\ee
 $(t,x ) \in  Q_T$,
  $\alpha\in \mathcal A, \beta\in \mathcal B$.
Indeed, in this case   (iv) is satisfied with $\varphi (t,x) =
e^{Mt}(1+ |x|^2)$ for some suitable $M>0$.

\vskip 1mm We point out that  if $F$ consists  only of
$\sup\limits_{\alpha\in \mathcal A}\{ \cdot \}$ (i.e., we have a
Bellman operator), then Lipschitz continuity of solutions of
\eqref{fnl} holds even if $q_{\alpha}$ are degenerate, assuming
Lipschitz continuity of coefficients  (see \cite{YZ} and \cite{BCQ}
for the joint Lipschitz continuity in $(t,x)$).

\subsection{Nonlinear first order terms}
Here we consider the case  when the
nonlinearity only concerns the first order terms, namely   the
equation \be\label{eqnl}
\partial_t u -{\rm
tr}\left(q(t,x)D^2u\right)=H(t,x,Du)+  h(t,x)\qquad \hbox{in
$Q_T$,} \ee
where $H$ is continuous on $  Q_T \times
\R^N$, $q$ is continuous on $ Q_T$ and $h $ is continuous on
$  Q_T$ and has  bounded oscillation. We also assume that
$$
H(t,x,0)=0\,.
$$
 In  general, one can reduce to this case whenever   $H(t,x,0)$ has bounded oscillation.
We assume the ellipticity condition \rife{q1}, and we suppose that
$q(t,x)$ and $H(t,x,p)$
%\,:Q_T\times \R^N\to \R$
satisfy:
\be\label{h1}
\begin{array}{c}
\hbox{ $\exists$  a non-negative function
  $g\in C(0, 1)\cap L^1(0,1)$ such that}
  \\
  \m
  \quad
\mu \|\sigma(t,x)-\sigma(t,y)\|^2+ \big( H(t,x,
\mu(x-y))-H(t,y,\mu(x-y)\big) \leq
\\
\m \quad \leq \mu |x-y| \, g(|x-y|)
  +  (\mu | x- y|)^{2}(c_0+c_1(\mu |x-y|^2)^{q-1})\omega(|x-y|)
  + M,
 \end{array}
\ee  for every $\mu >0,$
 $x,y\in \R^N$
 such that  $0<|x-y|\leq 1$ and every $t\in (0,T)$,
where $\omega$ is some function such that $\lim_{r \to 0^+}\omega(r)
 =0$,  $q>1$, $c_0, c_1, M \ge 0$ and  $\sigma(t,x)
  = \sqrt{q(t,x) - \lambda I}$.
    %satisfies \rife{sig}.
  \vskip0.4em
Assumptions \rife{q1} and \rife{h1} imply that Hypothesis
\ref{pw-nl} holds true for the operator
$$
F(t,x,p,X)=
-{\rm tr}\left(q(t,x)X\right)-H(t,x,p).
$$
Indeed, whenever
$X$, $Y$ satisfy \rife{ine},
we deduce
$$
{\rm tr}(q(t,x)X-q(t,y)Y)\leq \la {\rm tr}(X-Y) + \mu
\|\sigma(t,x)-\sigma(t,y)\|^2+ \nu\left(  {\rm tr}(q(t,x)) + {\rm
tr}(q(t,y)) - 2 \lambda N\right)\,.
$$
Therefore,   using  \rife{h1} we deduce that $F(t,x,p,X)$ satisfies
Hypothesis \ref{pw-nl} with
$\eta(t,x,y)= \big(  {\rm tr}(q(t,x)) $ $ + {\rm tr}(q(t,y))\big) -
2 \lambda N $.
As far as the Lyapunov condition is
concerned, in this example Hypothesis \ref{pert-fi} reads as
follows:

 For any $L >0$, $\exists \,\, \varphi= \varphi_L \in
 C^{1,2}(\bar Q_T)\,, \;    \vep_0=
\epsilon_0 (L)> 0\,$: \be\label{lyap-ex}
\begin{array}{l}
\begin{cases}
 \partial_t \varphi -{\rm tr}(q(t,x)D^2\varphi) -\frac1\vep \left\{ H(t,x, p+ \vep D\varphi) -H(t,x,p)\right\}\geq  0 &
\\
\m \qquad \hbox{for every $(t,x)\in Q_T$, $p\in \RN$: $|p|\leq
L+\vep |D\varphi|$,  and every $\vep\leq \vep_0$} &
 \\
 \m
\varphi(t,x)\to +\infty \quad \hbox{as $|x|\to \infty$, uniformly
for $t\in [0,T]$.} &
\end{cases}
\end{array}
\ee In particular, under the previous assumptions \rife{q1},
\rife{h1} and \rife{lyap-ex},
the
conclusion of Theorem \ref{pw-fully} applies to equation
\rife{eqnl}.

\begin{remark}{\rm
Assume that  $H(t,x,p)$ satisfies,  for every  $(t,x)\in  Q_T$,
every $p$, $\xi\in \R^N$: \be\label{h2} \left| H(t,x, p+ \xi)- H(t,
x,p)- H(t,x,\xi)\right| \leq   \gamma(t,x)|\xi|+ c(t,x) (|\xi|+|p|
)^{q} |\xi| \ee for some non-negative functions $c(t,x)$,
$\gamma(t,x)$, with $q\ge 0$. Observe that the function $c(t,x)$
accounts for the possibly superlinear growth of $H(x,p)$ with
respect to $p$. If we also assume that $c(t,x)$ is bounded and
\be\label{lyap2}
\begin{array}{c}
\exists \,\, \varphi\in C^{1,2}(\bar Q_T)\,, \;    \hat \vep_0> 0\,
\,:\, \; \hbox{ for every   $(t,x)\in   Q_T $ and $\vep\leq
\hat \vep_0$}
\\
\m
\begin{cases}
{\rm tr}\left(q(t,x)D^2\varphi \right)+ \frac1\vep\,  H(t,x,\vep   D \varphi)+ \gamma(t,x)\, |  D \varphi| + \hat \vep_0  |  D \varphi |^{q+1}\leq    \partial_t \varphi\\
\varphi(t,x)\to +\infty \quad \hbox{as $|x|\to \infty$, uniformly in
$t\in (0,T)$,}
\end{cases}
\end{array}
\ee then condition \rife{lyap-ex} is satisfied. Indeed, in this case
one has
$$
\begin{array}{c}
\frac1\vep \left\{ H(t,x, p+ \vep D\varphi) -H(t,x,p)\right\} \leq
\frac1\vep\,  H(t,x,\vep   D \varphi)+ \gamma(t,x)\, |  D \varphi|
\\
\m + c(t,x) (|\vep D\varphi |+|p | )^{q} |D\varphi| \,.
\end{array}
$$
Since, for every $p$: $|p|\leq L+\vep |D\varphi|$, we have
$$
\begin{array}{c}
c(t,x) (|\vep D\varphi |+|p | )^{q} |D\varphi| \leq \vep^q
K(\|c\|_\infty) |D\varphi|^{q+1}+ \frac{\hat \vep_0}2 \, |
D\varphi|^{q+1} + K(L,\|c\|_\infty)
\end{array}
$$
if $\vep$ is small we deduce that
$$
\frac1\vep \left\{ H(t,x, p+ \vep D\varphi) -H(t,x,p)\right\} \leq
\frac1\vep\,  H(t,x,\vep   D \varphi)+ \gamma(t,x)\, |  D \varphi| +
\hat \vep_0 |D\varphi|^{q+1}+ K(L,\|c\|_\infty).
$$
Hence \rife{lyap2} implies
$$
\partial_t \varphi -{\rm tr}(q(t,x)D^2\varphi) -\frac1\vep \left\{ H(t,x, p+ \vep D\varphi) -H(t,x,p)\right\}\geq - K(L,\|c\|_\infty) \,.
$$
Then $\tilde \varphi:= \varphi+ K(L,\|c\|_\infty) t$ satisfies
\rife{lyap-ex}. }
\end{remark}

\vskip0.3em As a model case which can be dealt with, consider the
following equation \be\label{model-nl} \partial_t u -{\rm
tr}\left(q(t,x)D^2u\right)- b(t,x)\cdot   D u+ \Psi(t,x,   D
u)+c(t,x)|  D u|^{q+1}=h(t,x) \quad \hbox{in $Q_T$.} \ee
We deduce then the following

\begin{corollary}\label{mod}
Consider the equation \rife{model-nl}, with $q\leq 1$. Assume that
$q(t,x)$ satisfies \rife{q1} and that $\sqrt{q(t,x) - \lambda I}$
and $b(t,x)$ satisfy  \rife{pw}. Suppose that  $h$ has  bounded oscillation
and let $\Psi(t, x, \xi)$
 be a continuous function on
 $Q_T \times \R^N$ which
 satisfies $\Psi(t,x,0)=0$ and, for $ \xi, \eta \in \R^N$, $x,y \in \R^N$, $ 0 < |x-y|\le 1$, $t\in (0,T)$,
$$
\begin{array}{ll}
(i)\qquad & |\Psi(t,x,\xi)-\Psi(t,x,\eta)|\leq  \gamma(t,x)|\xi-\eta|
\\
\m (ii)\qquad & |\Psi(t,x,\xi)-\Psi(t,y,\xi)|\leq g(|x-y|) \, |\xi|\,,
\end{array}
$$
for some continuous functions $\gamma(t,x)$ on $Q_T$ and $g\in
L^1(0,1) \cap C(0,1 ; \R_+)$. Assume that  $c(t,x)$   is  bounded and continuous on $Q_T$,
uniformly continuous with respect to $x$ (uniformly for $t\in
(0,T))$. Assume in addition that
 \be\label{lyap21}
\exists \,\, \vfi\in C^{1,2}(\bar Q_T)\, \, :\, \;
\begin{cases}
{\rm tr}\left(q(t,x)D^2\vfi\right)+
b(t,x)\cdot   D \vfi +   \, \gamma(t,x) |D \vfi| \leq    \partial_t \vfi \;\; \text{in $Q_T$},  \\
\vfi(t,x)\to +\infty \quad \hbox{as $|x|\to \infty$, uniformly for
$t\in [0,T]$}
\end{cases}
\ee Then the conclusion of Theorem \ref{pw-fully} holds true.
\end{corollary}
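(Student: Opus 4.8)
The plan is to recast \rife{model-nl} in the form \rife{eqnl} and to verify that the resulting operator satisfies the structure condition \rife{h1} together with the Lyapunov condition \rife{lyap-ex} of this subsection, so that Theorem \ref{pw-fully} applies directly. Concretely, I would set $F(t,x,p,X)=-{\rm tr}(q(t,x)X)-b(t,x)\cdot p+\Psi(t,x,p)+c(t,x)|p|^{q+1}$, so that \rife{model-nl} reads $\partial_t u-{\rm tr}(q(t,x)D^2u)=H(t,x,Du)+h(t,x)$ with $H(t,x,p)=b(t,x)\cdot p-\Psi(t,x,p)-c(t,x)|p|^{q+1}$, and $H(t,x,0)=0$ because $\Psi(t,x,0)=0$; moreover $h$ has bounded oscillation by assumption.

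The first step is to check \rife{h1}. For $\mu>0$ and $0<|x-y|\le 1$, I would decompose $H(t,x,\mu(x-y))-H(t,y,\mu(x-y))$ into its linear, $\Psi$- and $c$-parts. The quantity $\mu\|\sigma(t,x)-\sigma(t,y)\|^2+\mu\,(b(t,x)-b(t,y))\cdot(x-y)$ is controlled by $\mu|x-y|\,g(|x-y|)$ through \rife{pw} (recall $\sigma=\sqrt{q-\lambda I}$); the term $-(\Psi(t,x,\mu(x-y))-\Psi(t,y,\mu(x-y)))$ is controlled by $\mu|x-y|\,g(|x-y|)$ through hypothesis (ii); and $-(c(t,x)-c(t,y))\mu^{q+1}|x-y|^{q+1}$ is bounded by $\omega_c(|x-y|)\,(\mu|x-y|)^{q+1}$, where $\omega_c$ is the modulus of continuity of $c$ in $x$ (uniform in $t$). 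Here I would use that $q\le 1$, i.e.\ $q+1\le 2$: writing $\xi=\mu|x-y|$, one has $\xi^{q+1}\le\xi$ when $\xi\le 1$ and $\xi^{q+1}\le\xi^2$ when $\xi\ge 1$, hence the $c$-contribution is bounded by $\mu|x-y|\,\omega_c(|x-y|)+(\mu|x-y|)^2\omega_c(|x-y|)$. Summing up, \rife{h1} holds with $g$ replaced by $2g+\omega_c\in C(0,1)\cap L^1(0,1)$, with $c_0=1$, $c_1=0$, $M=0$ and $\omega=\omega_c$; in particular $F$ satisfies Hypothesis \ref{pw-nl}.

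The second step is the Lyapunov condition \rife{lyap-ex} (i.e.\ Hypothesis \ref{pert-fi}), which I would obtain through the preceding Remark by checking \rife{h2} and \rife{lyap2} for $H$. Condition \rife{h2} follows from $\Psi(t,x,0)=0$ and the Lipschitz bound (i), which yields the $\gamma(t,x)|\xi|$ term up to a factor $2$, together with the elementary inequality $\bigl||p+\xi|^{q+1}-|p|^{q+1}-|\xi|^{q+1}\bigr|\le(q+2)(|p|+|\xi|)^q|\xi|$; since $c$ is bounded, this gives the $c(t,x)(|\xi|+|p|)^q|\xi|$ term with $\tilde c=(q+2)|c|$ bounded. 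Then, starting from the Lyapunov function $\varphi$ furnished by \rife{lyap21}, which controls the linear part ${\rm tr}(qD^2\varphi)+b\cdot D\varphi+\gamma|D\varphi|\le\partial_t\varphi$, and using $\tfrac1\varepsilon H(t,x,\varepsilon D\varphi)\le b\cdot D\varphi+\gamma(t,x)|D\varphi|+\|c\|_\infty\,\varepsilon^q|D\varphi|^{q+1}$, the extra term $\gamma|D\varphi|$ and the subquadratic term $\varepsilon^q|D\varphi|^{q+1}$ are absorbed for $\hat\varepsilon_0$ small (again using $q\le 1$) and after adding a term $Kt$ to $\varphi$; this yields \rife{lyap2}, hence \rife{lyap-ex}. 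With both Hypotheses \ref{pw-nl} and \ref{pert-fi} verified, Theorem \ref{pw-fully} applies and gives the asserted Lipschitz regularizing effect, in particular \rife{general1}.

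The main technical obstacle is the handling of the superlinear-in-gradient term $c(t,x)|Du|^{q+1}$. On one hand, one has to show that, because $q+1\le 2$, this term never requires $c_1\ne 0$ in Hypothesis \ref{pw-nl}: it is split between the linear $g$-slot (for $\mu|x-y|\le 1$) and the quadratic $c_0$-slot (for $\mu|x-y|\ge 1$), which is precisely what keeps the gradient bound expressed only through the oscillation of $u$. On the other hand, in \rife{lyap-ex} the residual contribution $|D\varphi|^{q+1}$, generated when perturbing the gradient variable by $\varepsilon D\varphi$, must be genuinely absorbed by the Lyapunov function and the choice $\varepsilon_0=\varepsilon_0(L)$; this is where the subquadratic growth $q\le 1$ and the precise structure of \rife{lyap21} are essential, and it is the step that deserves the most care in a complete write-up.
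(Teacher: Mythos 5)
Your reduction to the framework of Subsection 5.2 and your verification of the structure condition \rife{h1} (hence of Hypothesis \ref{pw-nl}) are essentially correct and consistent with the paper, which treats this step as immediate: splitting $(\mu|x-y|)^{q+1}\le \mu|x-y|+(\mu|x-y|)^{2}$ thanks to $q+1\le 2$, so that the $c$--contribution lands partly in the $g$--slot (with $g$ replaced by $2g+\omega_c\in C(0,1)\cap L^1(0,1)$) and partly in the $c_0$--slot with $c_1=0$ and $\omega=\omega_c$, is exactly what is needed.

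The gap is in your verification of the Lyapunov condition. You claim that \rife{lyap2} holds for $\varphi+Kt$, with the extra terms --- the second copy of $\gamma(t,x)|D\varphi|$ produced by the $\Psi$--part of $\tfrac1\varepsilon H(t,x,\varepsilon D\varphi)$, and the superlinear term $(\hat\varepsilon_0+\varepsilon^{q}\|c\|_\infty)|D\varphi|^{q+1}$ --- ``absorbed for $\hat\varepsilon_0$ small and after adding $Kt$''. This does not work: \rife{lyap21} is an inequality with no quantitative slack (it may hold with equality), $\gamma$ and $|D\varphi|$ are both allowed to be unbounded (already $\varphi=1+|x|^2$ has $|D\varphi|=2|x|$), and adding $Kt$ only raises $\partial_t\varphi$ by the \emph{constant} $K$, which cannot dominate the unbounded quantities $\gamma|D\varphi|$ and $|D\varphi|^{q+1}$; shrinking $\hat\varepsilon_0$ or $\varepsilon$ does not remove the term $\hat\varepsilon_0|D\varphi|^{q+1}$ that \rife{lyap2} itself demands. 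The paper circumvents this by never checking \rife{lyap2} for $\varphi$: it verifies \rife{lyap-ex} directly for the logarithmic transform $\phi=\log(\varphi+\eta)+Rt$. The key point is that $-{\rm tr}\left(q(t,x)D^2\phi\right)$ then produces the \emph{additional positive} term $\dfrac{q(t,x)D\varphi\cdot D\varphi}{(\varphi+\eta)^2}\ \ge\ \lambda\,\dfrac{|D\varphi|^2}{(\varphi+\eta)^2}$, and it is this genuinely quadratic good term, supplied by the uniform ellipticity \rife{q1}, that absorbs (by Young's inequality --- this is where $q\le 1$ really enters) the superlinear contribution of order $\dfrac{|D\varphi|}{\varphi+\eta}\Bigl(L+2\varepsilon\dfrac{|D\varphi|}{\varphi+\eta}\Bigr)^{q}$ up to a constant, which is in turn absorbed by $R$; the single $\gamma|D\varphi|$ available from \rife{lyap21}, once divided by $\varphi+\eta$, controls exactly the $\Psi$--increment $\gamma|D\phi|$. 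You should replace your absorption argument by this log-transform computation; as written, your proof of Hypothesis \ref{pert-fi} fails whenever $|D\varphi|$ is unbounded, which is the generic situation here.
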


\proof Here we have   $H(t,x,p)=b(x)\cdot p+  \Psi(t,x,p) +
c(t,x)|p|^q $. Using \rife{pw} and  the assumption  (ii) on  $\Psi$,  we
immediately deduce that \rife{h1} is satisfied with $c_1=0$ and
$\omega$ being the modulus of uniform continuity of $c(t,\cdot)$.

Given $\vfi$ satisfying \rife{lyap21}, consider now $
\phi\,:= \log (\vfi+ \eta)+Rt$ (with constant $\eta$ such that,
$\vfi+\eta\geq 1$).  Since the
assumptions on $\Psi$ imply
$$
\frac1\vep \left\{ H(t,x, p+ \vep D\varphi) -H(t,x,p)\right\} \leq
b(t,x)\cdot D\vfi+   \gamma(t,x)\, |  D \vfi | + \frac1\vep
 c(t,x) \left\{ |p+\vep D\vfi|^{q+1}- |p|^{q+1}\right\}
$$
we deduce, for $\vep$ small and for every $p$: $|p|\leq L+\vep  |D\phi|$,  and using that $q\leq 1$, that  $\phi$ satisfies
$$
\begin{array}{c}
 \partial_t \phi -{\rm tr}(q(t,x)D^2\phi) -\frac1\vep \left\{ H(t,x, p+ \vep D\phi) -H(t,x,p)\right\}
\\
\m \geq  R+ \frac1{\vfi+\eta}\left(  \partial_t \vfi- {\rm
tr}\left(q(t,x)D^2\vfi (x)\right) - b(t,x)\cdot   D \vfi -
\gamma(t,x) |  D \vfi|\right)
\\
\m \quad  - K(q,\|c\|_\infty) \frac{|D\vfi|}{\vfi+\eta}  \left(L+
2\vep \frac{|D\vfi|}{\vfi+\eta} \right)^q + \frac{q(t,x)  D \vfi \cdot D
\vfi}{(\vfi+\eta)^2}
\\
\m \quad \geq R - \frac\la 2\frac{|D \vfi |^2}{(\vfi+\eta)^2}- \tilde K
+\frac{q(t,x)  D \vfi \cdot D \vfi}{(\vfi+\eta)^2}
\end{array}
$$
for some constant $ \tilde K = \tilde K(q,L, \la, \|c\|_\infty)$. Since $q(t,x)\geq
\la\,I$, choosing $R$ sufficiently large we deduce that
\rife{lyap-ex} is satisfied. Therefore  Theorem \ref{pw-fully} applies.
\qed

Observe that
if $H(t,x,\xi)=b(t,x)\cdot \xi$, then  \rife{lyap21} reduces to
\rife{lyap0}, and we recover the result of the linear case. In fact,
whenever  $\gamma(t,x)$ is also bounded, with the same argument as
above we see that the term $\gamma(t,x)|D\vfi|$ can also be
neglected in \rife{lyap21}. In conclusion, if both the coefficients
$\gamma(t,x)$ and $c(t,x)$ are bounded, and $q\leq 1$, the Lyapunov
condition of the linear case is enough to ensure the same condition
for the nonlinear equation \rife{model-nl}.

\begin{remark}{\rm
It could still be possible to consider the case  when the function
$c(t,x)$ in \eqref{model-nl} is unbounded.
As in Corollary \ref{mod}, one can use  (through a log-transform)
the ellipticity of $q(t,x)$ to control the term with $c(t,x)$. In
particular, whenever \rife{lyap21} is satisfied for some $\vfi$: $|
D \vfi|\to \infty$ as $|x|\to \infty$ (uniformly in $t$),   if we have $c(t,x)= $o $\left( l(t,x)\right)$ as $|x| \to + \infty$, uniformly in $t$, where
$l(t,x) =$ $\inf_{|\xi|=1} q(t,x)\xi \cdot \xi$,
it is still possible to use
the above argument to conclude. }
\end{remark}

\subsection{A Liouville type theorem}
Here we show a related Liouville type theorem
which also extends \cite[Theorem 3.6]{PW} to the  nonlinear setting.

 Let $F: \R^N \times \R^N \times {\mathcal S}_N \to \R$ be a continuous function.
We consider viscosity solutions  $v \in C_b(\R^N)$ to
\be \label{liu1}
F(x,Dv, D^2v) =0 \;\; \text{in } \; \R^N.
\ee
We suppose that $F$ verifies (cf. Hypothesis \ref{pw-nl}):
there exist
$\lambda>0$,  non-negative functions
  $\eta(x,y)$  and
  $g\in C((0, + \infty); \R_+ ) \cap L^1(0,1)$ such that
  \be\label{Feq5}
  \begin{array}{c}
  F(x, \mu(x-y), X)- F(y,\mu(x-y),Y)\geq
 - \la {\rm tr}\left(X-Y\right)- \mu |x-y| \, g(|x-y|)
  - \nu \, \eta(x,y),
  \end{array}
\ee for any $\mu> 0$, $\nu \ge 0$, \  $ x,y\in \R^N,$ matrices    $ X,Y\in {\mathcal S}_N$ which verify \eqref{ine}. Note that if $X=Y =0$, letting $\mu \to 0^+$ and $\nu \to 0^+$ we deduce from \eqref{Feq5} that
$$
F(x, 0, 0)= F(y,0,0),\;\; x, y \in \R^N.
$$
The following result is a slightly more general version  of Theorem
\ref{liu0} stated in the Introduction; here we use the weaker
condition Hypothesis \ref{pert-fi} concerning the Lyapunov function
rather than Hypothesis \ref{L}.
\begin{theorem} \label{liu} Suppose that $F$ satisfies
Hypothesis \ref{pert-fi}  and condition \rife{Feq5}.
\begin{itemize}
\item[(i)] If $g$ satisfies
\be \label{lira} \int_0^{+\infty} e^{-\frac{1}{4 \lambda} \int_0^r
g(s)ds} dr = +\infty, \ee then any bounded  viscosity solution  $v
\in C_b(\R^N)$ of \rife{liu1} is constant.
%\vskip0.5em
\item[(ii)] If $g$ satisfies
\be\label{galpha} \limsup_{s\to \infty} \, g(s) \,s <
4\la(1-\alpha)\,, \ee then any   viscosity solution  $v $ of
\rife{liu1}
 which verifies,
 for some $\alpha \in (0,1)$, $k_0 \ge 0$
 \be \label{frgr}
|v(x) - v(y)|\le k_0 (1+  |x-y|^{\alpha}),\;\;\; x,y \in \R^N, \ee
must be constant.
\end{itemize}
In particular, if $F(x,0,0)$ is not identically zero, and if
\rife{lira} holds true, then \rife{liu1} does not possess any
bounded  viscosity solution, nor it possesses any solution
satisfying \rife{frgr} if \rife{galpha} holds true.
\end{theorem}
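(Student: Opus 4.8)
The plan is to reduce the stationary problem to the parabolic one and then to rerun the doubling‑variables argument of Theorem~\ref{PW}/Theorem~\ref{pw-fully} (which, since the structure constants $c_0,c_1,M$ in \rife{Feq5} all vanish, is exactly the argument producing \rife{preliouv}, resp. \rife{liouv2}), keeping track of the dependence on the time horizon and on the width $\delta$ of the auxiliary ODE \rife{ode1}. First I would note that if $v$ solves \rife{liu1} in the viscosity sense, then $u(t,x):=v(x)$ is a time‑independent viscosity solution of $\partial_t u+F(x,Du,D^2u)=0$ on $(0,T)\times\R^N$ for every $T>0$: for $(a,p,X)\in P^{2,\pm}_{Q_T}u(t_0,x_0)$ one necessarily has $a=0$ and $(p,X)\in P^{2,\pm}v(x_0)$, so the parabolic inequalities follow from the elliptic ones. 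Now \rife{Feq5} is precisely Hypothesis~\ref{pw-nl} with $c_0=c_1=M=0$, and \emph{it holds for all $x,y\in\R^N$}; together with Hypothesis~\ref{pert-fi} this allows us to carry out the proof of Theorem~\ref{PW}/\ref{pw-fully} taking the solution $f=f_\delta$ of \rife{ode1} on an interval $(0,\delta]$ with $\delta>0$ \emph{arbitrary} (no longer $\delta\le1$). In both cases $v$ has bounded oscillation, so $\Phi_\vep\to-\infty$ as $|x|\to\infty$ and the argument is unchanged — observe that no $o_\infty(\vfi)$ assumption is needed, since the doubling is confined to $|x-y|<\delta$ where $v(x)-v(y)$ is bounded. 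One obtains, as in \rife{preliouv},
\[ |v(x)-v(y)|\le K\,f_\delta(|x-y|),\qquad |x-y|<\delta,\qquad K=\frac{4\,{\rm osc}_\delta(v)}{t_0}+\frac{{\rm osc}_\delta(v)}{f_\delta(\delta)}, \]
where ${\rm osc}_\delta(v)={\rm osc}_{\delta}(v)$. Letting $t_0\to+\infty$ (e.g. $t_0\ge 4f_\delta(\delta)$, $T\ge 2t_0$), using the concavity of $f_\delta$ (so $f_\delta(r)\le f_\delta'(0)\,r$) and the standard extension from $|x-y|<\delta$ to all $x,y$, I get
\[ {\rm Lip}(v)\le \frac{2\,{\rm osc}_\delta(v)\,f_\delta'(0)}{f_\delta(\delta)}. \]

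The quantitative heart is then a sharp comparison of $f_\delta'(0)$ and $f_\delta(\delta)$ as $\delta\to+\infty$, expressed through the scale‑type function
\[ \Psi(r):=\int_0^r e^{-\frac{1}{4\la}\int_0^s g(\sigma)\,d\sigma}\,ds,\qquad\text{so that}\quad \Psi'(r)=e^{-\frac{1}{4\la}\int_0^r g}. \]
From \rife{effe} one has $f_\delta'(0)=\tfrac1{4\la}\int_0^\delta e^{\frac1{4\la}\int_0^\tau g}d\tau$, and a Fubini computation gives the identity
\[ \frac{f_\delta(\delta)}{f_\delta'(0)}=\frac{\int_0^\delta \Psi(\tau)/\Psi'(\tau)\,d\tau}{\int_0^\delta 1/\Psi'(\tau)\,d\tau}. \]
Since $\Psi\ge0$ is increasing and $1/\Psi'=e^{\frac1{4\la}\int_0^\cdot g}$ is increasing, the right‑hand side is a weighted average of $\Psi$ bounded below by $\frac12\Psi(\delta/2)$; hence $f_\delta'(0)/f_\delta(\delta)\le 2/\Psi(\delta/2)$, and therefore ${\rm Lip}(v)\le 4\,{\rm osc}_\delta(v)/\Psi(\delta/2)$ for every $\delta$.

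For part (i): $v$ bounded gives ${\rm osc}_\delta(v)\le 2\|v\|_\infty$ for every $\delta$, so ${\rm Lip}(v)\le 8\|v\|_\infty/\Psi(\delta/2)$; condition \rife{lira} says precisely $\Psi(+\infty)=+\infty$, whence ${\rm Lip}(v)=0$ and $v$ is constant. For part (ii): \rife{frgr} gives ${\rm osc}_\delta(v)\le 2k_0\delta^\alpha$ for $\delta\ge1$, so ${\rm Lip}(v)\le 8k_0\,\delta^\alpha/\Psi(\delta/2)$, and it remains to see that \rife{galpha} forces $\Psi$ to grow faster than $\delta^\alpha$. Indeed $\limsup_{s\to\infty}s g(s)<4\la(1-\alpha)$ yields $g(s)\le (4\la(1-\alpha)-\eta)/s$ for $s\ge s_0$ and some $\eta>0$, hence $e^{-\frac1{4\la}\int_0^r g}\ge c\,r^{-(1-\alpha-\frac{\eta}{4\la})}$ for $r\ge s_0$, so $\Psi(\delta)\ge c'\,\delta^{\alpha+\frac{\eta}{4\la}}$ for $\delta$ large; thus $\delta^\alpha/\Psi(\delta/2)\to0$ and again ${\rm Lip}(v)=0$. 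Finally the last assertion is immediate: \rife{Feq5} with $X=Y=0$ and $\mu,\nu\downarrow0$ shows $F(\cdot,0,0)$ is constant, and any constant solution of \rife{liu1} would force $F(\cdot,0,0)\equiv0$; so under \rife{lira} (resp. \rife{galpha}) no bounded solution (resp. no solution obeying \rife{frgr}) can exist.

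The routine but delicate point to verify is that the proof of Theorem~\ref{PW}/\ref{pw-fully} genuinely survives taking $\delta$ unbounded: one must revisit each spot where $\delta\le1$ was used — notably the estimate \rife{fde2} on $f'(0)$, here replaced by the exact expression for $f_\delta'(0)$ above — and check that the remaining ingredients (absorption of the $\vep D\vfi,\vep D^2\vfi$ terms via Hypothesis~\ref{pert-fi} with $L=Kf_\delta'(0)$, the $n\to\infty$ compactness of Remark~\ref{rit}, the matrix estimates of Proposition~\ref{refe}) do not use boundedness of $\delta$. \textbf{The main obstacle} is precisely the asymptotics $f_\delta'(0)/f_\delta(\delta)\to0$: making it driven to zero by \emph{only} the integral condition \rife{lira} (and not by some stronger decay of $g$) needs the Fubini identity above, since the cruder bounds $f_\delta(\delta)\ge\delta^2/8\la$ and $f_\delta'(0)\le\tfrac{\delta}{4\la}e^{\frac1{4\la}\int_0^\delta g}$ lose exactly the borderline cases; the analogous growth estimate $\Psi(\delta)\gtrsim\delta^{\alpha+\frac{\eta}{4\la}}$ from \rife{galpha} is the corresponding point for part (ii).
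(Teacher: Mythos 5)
Your proposal is correct and follows essentially the same route as the paper: reduce the stationary solution to a time-independent parabolic solution, run the doubling-variables argument with the ODE solution $f_\delta$ of \rife{ode1} on an arbitrary interval $(0,\delta]$ (legitimate since \rife{Feq5} holds for all $x,y$ and $c_0=c_1=M=0$), let $t_0\to\infty$ to kill the $4\,{\rm osc}/t_0$ term, and then let $\delta\to\infty$, with everything reducing to $\Psi(\delta)=\int_0^\delta e^{-\frac{1}{4\la}\int_0^r g}\,dr\to\infty$ (resp.\ $\Psi(\delta)\gtrsim\delta^{\alpha+\eta/4\la}$). The only difference is the final bookkeeping: the paper bounds $|v(x)-v(y)|$ by $\omega_{0,\de}(v)\,f_\delta(|x-y|)/f_\delta(\delta)$ and applies L'H\^opital in $\delta$, whereas you pass through ${\rm Lip}(v)\lesssim {\rm osc}_\delta(v)\,f_\delta'(0)/f_\delta(\delta)$ and control that ratio by your (correct) Fubini identity plus a Chebyshev-type weighted-average estimate — a minor, equally valid variant.
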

If $\alpha=0$, condition   \eqref{galpha}  becomes a  particular
case of \eqref{lira}  (the optimality of the condition $\limsup
g(s)s < 4 \lambda$, in the case of  bounded solutions, was observed
e.g. in \cite{PW10}).

 According to assertion (i),  if $F(x,Dv, D^2 v)= -\triangle v -
b(x) \cdot Dv -1 $, where $b$ satisfies \eqref{pw} with $g$ such
that  \eqref{lira} holds,  then there are no bounded viscosity
solutions to \eqref{liu1}.

The previous theorem can be generalized with a similar proof   to
parabolic equations like  $\partial_t v + F(x,Dv, D^2v)=0$ by
considering viscosity solutions $v$ defined on $(0, \infty) \times
\R^N$.

We refer to \cite{CaCu}  (see also references therein)  for
Liouville  theorems   concerning nonnegative viscosity
supersolutions under assumptions somehow related to \rife{lira}
above (see in particular  \cite[condition (4.2) in Theorem
4.1]{CaCu}).

\begin{proof}  First, one checks easily that $v(t,x)=v(x)$, $t \in (0,T)$, $x \in \R^N,$ is also a viscosity solution to the parabolic equation
$$
\partial_t v + F(x,Dv, D^2v) =0 \;\; \text{in } \; Q_T,
$$
for any $T>0$.
 Then we proceed as in the proof  of Theorem \ref{pw-fully},  with the same notation,
having  fixed $t_0 >0$ and $\de>0$. Observe that assumption  \rife{Feq5} is a stronger version of Hypothesis \ref{pw-nl}, and in this case we obtain inequality \rife{liouv2}, namely
$$
\begin{array}{c}
|v(x)-v(y)|\leq  K\, f(|x-y|),   \qquad  |x-y| \leq \de,
\\
\m
\m
\hbox{where $K =\frac{4 \omega_{0,\de}(v) }{t_0} +  \, \frac{\omega_{0,\de}(v) }{f(\de)}$\,,}
  \end{array}
$$
and   $f$ is the solution of \rife{ode1}, i.e.,
 $f(r) = f_{\delta}(r)$, $r \in [0, \delta]$,
$$
f_\de(r) = \frac1{4\la} \int_0^r e^{-\frac{G(\xi)}{4\la}}\int_\xi^{\de}
e^{\frac{G(\tau)}{4\la}}d\tau d\xi\,, \qquad \hbox{ $G(\xi)=
\int_0^\xi g(\tau)d\tau$}.
$$
% so that
%$K\ge \frac{2 \| v\|_{\infty} 8 \lambda}{\delta^2}$ %implies  $K\geq \frac{2 \| v\|_{\infty}}{f(\de)}$.
 Note that $ f_{\delta}(\delta) \to + \infty$, as  $\delta \to + \infty$.
 Now, letting $t \to + \infty$, we find, for $|x-y|\le \delta$,
\be\label{preliou}
 |v(x)-v(y)|\leq  \frac{\omega_{0,\de}(v) }{f_\de(\de)}\, f_\de(|x-y|),   \qquad  |x-y| \leq \de\,.
\ee
In the following,  let us fix any $x$ and $ y \in \R^N$, we take  $\delta > |x- y|$ and we consider the limit as $\de\to \infty$.

If we are in case (i), $v$ is bounded and \rife{preliou} implies
$$
 |v(x)-v(y)|\leq  2\|v \|_{\infty}\frac{f_\de(|x-y|) }{f_\de(\de)}\,.
$$
Letting $\delta \to +\infty$,
by using L'H\^opital's rule and
thanks to \eqref{lira}, we have
$$
\lim_{\delta \to + \infty}\frac{f_\de(|x-y|) }{f_\de(\de)} = \lim_{\delta \to + \infty} \frac{
    \int_0^{|x-y|} e^{-\frac{1}{4 \lambda}\, \int_0^r g (s)ds} dr}{\int_0^{\de} e^{-\frac{1}{4 \lambda}\, \int_0^r g (s)ds} dr}=0
$$
and so  $v(x)= v(y)$. It follows that $v$ is constant.

 If we are in case (ii), on account of \rife{frgr} \rife{preliou} implies
 $$
 |v(x)-v(y)|\leq  k_0\frac{(1+\de^\alpha)f_\de(|x-y|) }{f_\de(\de)}\,.
$$
 Letting $\delta \to +\infty$, we apply again L'H\^opital's rule: to this purpose, observe that
 $$
 \frac{\de^{\alpha-1} f_\de(|x-y|) }{e^{\frac{1}{4 \lambda}\, \int_0^\de g (s)ds}\int_0^{\de} e^{-\frac{1}{4 \lambda}\, \int_0^r g (s)ds} dr} \leq C
\frac{\de^{\alpha} }{\int_0^{\de} e^{-\frac{1}{4 \lambda}\, \int_0^r
g (s)ds} dr},
 $$
  since $  f_\de(|x-y|)\leq C \,
\de\, e^{\frac{1}{4 \lambda}\, \int_0^\de g (s)ds}$ for some
$C=C(\lambda, x,y)$. Moreover, due to \rife{galpha}, we have
  $$
  g(s) \leq \frac{4\la(1-\alpha)-\vep}s \qquad \forall s\geq s_0,
  $$
  for some $\vep>0$ and some $s_0>0$. We deduce that
  $$
  \int_0^{\de} e^{-\frac{1}{4 \lambda}\, \int_0^r g (s)ds} dr
  \geq c_0 (\de^{\alpha+\frac\vep{4\la}} -1),
  $$
  for some  $c_0= c_0(\alpha, \lambda, \epsilon)>0$, hence
  $$
  \lim_{\delta \to + \infty}  \frac{\de^{\alpha} }
  {\int_0^{\de} e^{-\frac{1}{4 \lambda}\, \int_0^r g (s)ds} dr}=0
  $$
  and so, by L'H\^opital's rule, we conclude that
  $$
  \lim_{\delta \to + \infty}
  \frac{(1+\de^\alpha)f_\de(|x-y|) }{f_\de(\de)} =0.
  $$
 This gives $v(x)=v(y)$ and we conclude.
\end{proof}
We give   an  application to the previous result to Bellman-Isaac's
type operators (see also Section 4.5c). Assume that $F$ in
\eqref{liu1} is of the following type
$$ F(x,Du, D^2 u)= \inf\limits_{\beta\in \mathcal
B}\sup\limits_{\alpha\in \mathcal A}\, \left\{-{\rm
tr}\left(q_{\alpha,\beta}(x)D^2u\right)-
 b_{\alpha,\beta}(x)\cdot   D u \right\},
$$
where  $
 \inf\limits_{\beta\in \mathcal B}\,\sup\limits_{\alpha\in
 \mathcal A}\,
 \left\{{\rm tr}\left(q_{\alpha,\beta}(x) \right)
 +  |b_{\alpha,\beta}(x)|  \right \}
  < \infty\, $ and
  $
  \sup\limits_{\beta\in \mathcal B}\,\sup\limits_{\alpha\in
 \mathcal A}\,
 \left\{{\rm tr}\left(q_{\alpha,\beta}(x) \right)
\right \}
  < \infty,\;\;\; x\in \R^N.
$

\hh  Assume also that $q_{\alpha, \beta}(x) \ge \lambda
  I$, for some $\lambda>0$, uniformly in $\alpha, $ $\beta$, $x \in
  \R^N$, and that $q_{\alpha, \beta}(x)$ and $b_{\alpha, \beta}(x)$
 are continuous on $\R^N$,  uniformly in $\alpha \in  {\mathcal A}$
 and $\beta \in {\mathcal B}$.
 Finally suppose that    $q_{\alpha, \beta}$ are
  $C^1$-functions on $\R^N$, such that
\be \label{cer3}
 2\| D_h \sqrt{q_{\alpha, \beta}}\, (x)\|^2+
(b_{\alpha, \beta}(x+h)-b_{\alpha, \beta}(x))\cdot h \leq 0, \;\;\;
x \in \R^N,\; h \in \R^N, \;\; \alpha \in {\mathcal A}, \, \beta \in
 {\mathcal B}
\ee (for  smooth $N \times N$ matrices $a(x)$, we write $D_h a(x)$
to denote the matrix $(D_h a_{ij}(x))$, where $D_h a_{ij}$ stands
for the directional derivative along  $h \in \R^N$; see also
\eqref{cer}).
  Note that by \eqref{cer3} we deduce that $\varphi(x) = 1+ |x|^2$
 is a Lyapunov function.

 \vskip 2mm  {\sl Under the previous assumptions  we can apply Theorem \ref{liu}  and conclude that all
  viscosity solutions of \rife{liu1} which verify \rife{frgr}
 are  constant.}

 \subsection{A remark on possible  existence results}
 Our  estimates are particularly useful in
cases in which the comparison principle is not known due to the
irregularity of coefficients and so one cannot perform the Perron's
method to get existence of viscosity solutions. We recall (see \cite[Lemma 9.1]{BBL}) that, under fairly general conditions on the function $F$, once a local modulus of continuity is established for the $x$-variable, then it is possible to deduce  a local modulus of continuity for the $t$-variable. In particular, if Lipschitz continuity holds in the $x$-variable, then $\frac12$-H\"older continuity holds in the $t$-variable. Thanks to such  a result, the above Lipschitz, or H\"older estimates which we proved imply  a local uniform equi--continuity for the viscosity solutions, and therefore  a local compactness in the uniform (space--time) topology. As a consequence, suppose that the function $F$ can be  approximated by a  sequence $F_n$ of functions satisfying Hypotheses \ref{pw-nl} and \ref{pert-fi} or Hypotheses \ref{pw-nl-hol} and \ref{pert-fi-bis} uniformly with respect to $n$ and such that viscosity solutions $u_n$ are known to exist for the Cauchy problem
$$
\partial_t u_n + F_n(t,x,D u_n, D^2 u_n)=0,
$$
with $u_n(0)=u_{0n}$ converging locally uniformly to $u_0$. Then, if $u_0\in C_b(\R^N)$ (i.e. $u_0$ is continuous and bounded) we conclude the existence of a bounded viscosity solution $u$ which is Lipschitz, or H\"older, as $t>0$. Similarly, we deduce the existence of a viscosity solution in case $u_0$ is continuous and has bounded oscillation provided the stronger  Hypotheses \ref{pw-nl1} or \ref{pw-nl1-hol} are satisfied.
In particular, in the linear case, as well as in the case of Bellman-Isaacs equations,  we deduce the existence of a viscosity solution by simply approximating the coefficients through a standard convolution regularization.

\appendix

\section{Probabilistic Vs analytic approach}
\label{prob-anal}

Let us spend a few words
 on the comparison between the probabilistic  proof
 of \cite[Theorem 3.4]{PW} and the previous analytical proof of
Theorem \ref{PW}. For simplicity, {\it we assume here that coefficients
 $q$ and $b$
  are independent of time as in \cite{PW}},
namely we consider
$$
A=
 \sum\limits_{i,j=1}^N
q_{ij}(x)\partial^2_{x_i x_j}+ \sum\limits_{i=1}^N
b_i(x)\partial_{x_i}.
% - V(x)f
$$
The probabilistic approach used to prove estimate \rife{est-base} relies on a  so-called coupling method
% In \cite[Theorem 3.4]{PW}  to prove gradient estimates like \rife{est-base}, under the assumptions on $A$ given  in  Theorem \ref{PW},  one uses the probabilistic coupling approach
(see also
\cite{Lin-Rog}, \cite{Chen-Li}, \cite{Cranston},
 \cite{Cranston1}).
 Let us briefly explain  such method.
  Under the assumptions of Theorem \ref{PW},  it is well known that
 there exist unique
 probability measures $\mathbb P^x$ and  $\mathbb P^y$ on
$\Omega_N:= C([0,\infty);\R^N)$ which are martingale solutions for
$A$ (starting from $x \in \R^N$ and $y \in \R^N$ respectively; see
\cite{SV}). In particular,  for $u_0 : \R^N \to \R$ continuous and bounded, this allows to define  the probabilistic solution to the Cauchy problem
\be\label{probcau}
 \begin{cases}  \partial_t u -{\rm tr}\left({ q(x)}D^2u\right)-
{ b(x)}\cdot   D u  =0\qquad
%\hbox{in} \;
\\
u(0, \cdot) = u_0,  \;\;\;
%\text{as}
\end{cases}
\ee
as $$
u(t,x) = \mathbb E^x\left[u_0(x_t)\right], \;\; t\ge0,
$$
where $(x_t)$ denotes the canonical process over $\Omega_N$ with
values in $\R^N$. Note, incidentally,  that this is a viscosity
solution (see \cite{Fle-Soner}). Equivalently, we can also write
 $u = P_t u_0$ where $P_t$ is the diffusion Markov semigroup
 associated to \eqref{probcau}.

A coupling for the measures $\mathbb P^x$ and $\mathbb P^y$ is a probability
measure $\mathbb P^{x,y}$ on $\Omega_{2N}= C([0,\infty);\R^{2N})$ such that the two  marginal
distributions are, respectively, $\mathbb P^x$ and $\mathbb P^y$.
 For any
coupling $\mathbb P^{x,y}$, we have that
 \be \label{ppxy}
  u(t,x)-u(t,y)= \mathbb E^x\left[ u_0(x_t)\right]
-\mathbb E^y\left[u_0(y_t)\right]= \mathbb E^{x,y}\left[
u_0(x_t)-u_0(y_t)\right] \,,\qquad \ee
 where $\mathbb E^{x,y}$
denotes the expectation with respect to $\mathbb P^{x,y}$   and
  $(z_t)$ $=\big((x_t, y_t)\big)$ denotes  the canonical process over
 $ \Omega_{2N} $ with values in $\R^{2N}$.

A Lipschitz estimate for $u(t)$ is then obtained if
we are able to estimate the last term in \rife{ppxy}.
Introducing the coupling time $
T_c:=\inf\{t\geq 0\,:\, x_t=y_t\} $ (with the usual convention $\inf(\emptyset)=\infty$), this
estimate can be obtained by constructing a coupling  $\mathbb P^{x,y}$ which verifies
the property ($\mathbb P^{x,y}$-a.s.):
\be\label{succ} x_t=y_t
\;\;\; \hbox{for all $t\geq T_c$ on $\{T_c < \infty\}$}. \ee
Indeed, if the coupling $\mathbb P^{x,y}$ satisfies the property \rife{succ},
then we have clearly
\be \label{cia3}\mathbb E^{x,y}\left[
u_0(x_t)-u_0(y_t)\right] \leq 2\|u_0\|_\infty\mathbb P^{x,y}(
t<T_c)\,, \ee
 so that, in the end, one is left with the  estimate of
the first hitting time of the diagonal set $\Delta:=\{(x,y)\in
\R^{2N}:\, x=y\}$ for the process $\big((x_t,y_t)\big)$ starting from
$(x,y)\not\in\Delta$.
\vskip0.2em
 It is proved in \cite{PW} that a coupling measure $\mathbb
P^{x,y}$  satisfying \rife{succ} can be  constructed starting  from any martingale
solution  for the   differential operator in $\R^{2N}$
\be\label{op-coup} {\mathcal A}_c:= \sum\limits_{i,j}
\left(q_{ij}(x)\partial^2_{x_i,x_j} +2
c_{ij}(x,y)\partial^2_{x_i,y_j} +q_{ij}(y)\partial^2_{y_i,y_j}
\right)+ \sum\limits_{i}(b_i(x)\partial_{x_i}+ b_i(y)\partial_{y_i}),
 \ee
 where the only requirement is  that $c_{ij}$ are continuous  on $\R^{2N}$ and  the
 %$2N \times 2N$
 matrix
 $\begin{pmatrix}
 q(x) & c(x,y)    \\
\noalign{\medskip} c^* (x,y) &  q(y)
\end{pmatrix}$ is symmetric and nonnegative. Indeed,  if $\tilde {\mathbb P}^{x,y} $ is a martingale solution for $\mathcal A_c$, then one can define on $(\Omega_{2N}, \tilde {\mathbb P}^{x,y}  )$ the process
 $$
x_t' =\begin{cases} x_t,\;\;\; t \le T_c,
 \\ y_t,\;\;\; t> T_c,
 \end{cases}
 $$
and,  using that the martingale problem for $A$ is
well-posed, one can prove  that the process $(x_t)$ and $(x'_t)$ have the same law.
 On  $(\Omega_{2N}, \tilde {\mathbb P}^{x,y}  )$ we define the
 process $\big((x_t', y_t)\big)$.  Its law
is a coupling measure $ {\mathbb P}^{x, y} $
  satisfying also \eqref{succ}.
Note that the martingale problem for $\mathcal A_c$ could
be not well-posed due to the fact that $\mathcal A_c$ is possibly degenerate and has coefficients which are not  locally Lipschitz in general.

\vskip 1mm
Summing up, the probabilistic approach relies on the possibility
that the Lipschitz regularity  can be deduced
in   two main steps:
\begin{itemize}

\item[(i)] construction of a { suitable}
coupling $\mathbb P^{x,y}$  (satisfying \rife{succ} and associated
to ${\mathcal A}_c);$
%\vskip0.3em
\item[(ii)]  estimate of
$\mathbb P^{x,y}( t<T_c)  $ in terms of $|x-y|$.
\end{itemize}
\hh
In particular, if  one has
 \be \label{gth}\mathbb P^{x,y}( t<T_c) \le \frac{k}{\sqrt{t \wedge 1}} |x-y|, \;\;\; t>0
 \ee
then  \rife{ppxy} and \rife{cia3} imply the desired Lipschitz estimate \rife{est-base}.

\smallskip  At the second stage (ii),
 the differential operator
 ${\mathcal A}_c$ (and so the right choice of $c(x,y))$
 play a crucial  role, since
   a quite standard procedure
 (see e.g. \cite{Fr}) to obtain
  estimates on   the hitting time of the diagonal $\Delta$
is through the construction of suitable supersolutions $W$ for
${\mathcal A_{c}}$, i.e.,
$$
%\begin{cases}
\partial_t W - {\mathcal A_c}(W)
  \ge 0 \;\;\;\; \hbox{in \ \ $(0,T)\times (\R^{2N}
\setminus \Delta).$}
$$
Indeed,  estimate  \rife{gth} is proved in \cite{PW} after choosing
\be \label{choice1}
c(  x,  y)= \sigma(    x)\sigma(    y)+\la\left(I- 2
\left(\frac{  x-  y}{|  x-  y|}\otimes\frac{  x-  y}{|  x-  y|}\right) \right),
\ee
 where $\sigma(x)^2 = q(x)-\lambda\, I, $
 and finding suitable supersolutions for the corresponding  ${\mathcal A_c}$.

\begin{remark} {\rm
 The above approach can be also rephrased in terms of  a
Wasserstein distance between  the  transition probabilities
%measures
$p(t,x,\cdot)$ and
$p(t,y,\cdot)$, $t \ge 0,$ $x,y \in \R^N$
 (i.e., $p(t,x,A)= {\mathbb P^x}(x_t \in A)$, for any Borel set $A \subset \R^N$).
%which are the transition probability
%induced by the processes $x_t$ and $x_t$ over $\R^d$.
More precisely if, given two  Borel probability measures $\mu$, $\nu$ in
$\R^N$, we define a distance (see e.g. \cite{Chen-Li}) as
$$
d_W(  \mu,  \nu)= \inf_{Q\in \pi(\mu,\nu)}\,\, \int\int \chi(z,w) dQ(z,w)\,,\qquad \chi(z,w)=\begin{cases} 1 & \hbox{if $z\neq w$}\\
0 & \hbox{if $z=w$},
\end{cases}
$$
where $\pi(\mu,\nu)$ is the set of all couplings of $\mu,\nu$  on $\R^{2N}$, then we
have, being
$\mathbb P^{x,y}$   a  coupling of $\mathbb P^x$ and $\mathbb P^y$,
$$
\mathbb E^{x,y}\left( u_0(x_t)-u_0(y_t)\right) \leq 2\|u_0\|_\infty
d_W(p(t,x,\cdot),p(t,y,\cdot))\,.
$$
Therefore from \rife{ppxy} we get
\be\label{lip-was}
u(t,x)-u(t,y)\leq  2\|u_0\|_\infty \,
d_W(p(t,x,\cdot),p(t,y,\cdot))\,
\ee
(recall that a classical result by  Dobrushin  says that $d_{W}$ is just half of the total variation distance).
In this framework,  the
Lipschitz estimate on $u(t,x)$ is reduced to an estimate of the
Wasserstein distance between $p(t,x,\cdot)$ and $p(t,y,\cdot)$. On the other hand, if one has constructed  a  coupling process  such that the
corresponding measure $\mathbb P^{x,y}$ over $\Omega_{2N}$ satisfies
\rife{succ}, then it follows
$$
d_W(p(t,x,\cdot),p(t,y,\cdot))\leq \mathbb P^{x,y}( t<T_c)
$$
and we end up with \rife{cia3} again.
%$$
%u(t,x)-u(t,y)\leq  2\|u_0\|_\infty \, \mathbb P^{x,y}( t<T_c) \,.
%$$
}
\end{remark}

\vskip0.5em
%To this purpose,
 Let us now rephrase the analytic proof given
in Theorem \ref{PW}  in order to see its close correspondence with the
probabilistic approach.
The key point in such proof is  the \lq\lq maximum principle
for semicontinuous functions\rq\rq\  (Theorem \ref{key}) which is
the heart of the viscosity solutions theory. In the linear
framework, this fundamental result has the following immediate
consequence, which is interesting in its own.

\begin{lemma}\label{visco-coup}
Let $u$, $v$ be respectively a  viscosity sub and supersolution of
\rife{probcau}.
Then, for every open set $\mathcal O\subset \R^{2N}$ and
every  $N \times N$ matrix $c(x,y)$ (with $c_{ij} \in C(\mathcal O))$
 such that
$\begin{pmatrix}
 q(x) & c(x,y)    \\
\noalign{\medskip} c^* (x,y) &  q(y)
\end{pmatrix}$ is nonnegative,
we have that $u(t,x)-v(t,y)$ is a viscosity subsolution of
\be\label{Lc}
 \partial_t z -{\mathcal A}_c(z)=0\qquad \hbox{in $(0,T)\times \mathcal O$}
\ee where ${\mathcal A}_c$ is the operator defined in
\rife{op-coup}.

In particular, if $u$ is a viscosity
solution of \rife{eq}, then $u(t,x)-u(t,y)$ is a viscosity
subsolution of \rife{Lc}.
\end{lemma}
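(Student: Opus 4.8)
The plan is to verify the subsolution inequality directly from the definition, the only non-elementary ingredient being the maximum principle of Theorem \ref{key}. Set $w(t,x,y)=u(t,x)-v(t,y)$; this is upper semicontinuous on $(0,T)\times\R^{2N}$, since $u\in USC$ and, $v$ being $LSC$, $-v\in USC$. Fix $\phi\in C^{1,2}((0,T)\times\mathcal O)$ and a point $(\hat t,\hat x,\hat y)\in (0,T)\times\mathcal O$ at which $w-\phi$ has a local maximum; the goal is to show
\begin{equation*}
\partial_t\phi(\hat t,\hat x,\hat y)-\mathcal A_c(\phi)(\hat t,\hat x,\hat y)\le 0 .
\end{equation*}
If one prefers to invoke the global form of Theorem \ref{key}, a preliminary reduction to a strict global maximum by subtracting a small penalization is harmless, but a local maximum already suffices.

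First I would apply Theorem \ref{key} with $k=2$, $u_1=u$ (a subsolution), $u_2=-v$ (so that $-u_2=v$ is a supersolution) and $z=\phi$. Using also \rife{meno}, this produces, for every $n>0$, reals $a_n,b_n$ with $a_n-b_n=\partial_t\phi(\hat t,\hat x,\hat y)$ and matrices $X_n,Y_n\in{\mathcal S}_N$ such that $(a_n,D_x\phi(\hat t,\hat x,\hat y),X_n)\in{\overline P}^{2,+}_{Q_T}u(\hat t,\hat x)$, $(b_n,-D_y\phi(\hat t,\hat x,\hat y),Y_n)\in{\overline P}^{2,-}_{Q_T}v(\hat t,\hat y)$, and
\begin{equation*}
\begin{pmatrix} X_n & 0 \\ 0 & -Y_n\end{pmatrix}\le \mathcal A+\frac1n\mathcal A^2,
\end{equation*}
where $\mathcal A:=D^2\phi(\hat t,\hat x,\hat y)$ denotes the Hessian of $\phi$ in the variables $(x,y)$. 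Writing the equation in \rife{probcau} as $\partial_t u+F(x,Du,D^2u)=0$ with the continuous $F(x,p,Z)=-{\rm tr}(q(x)Z)-b(x)\cdot p$, the sub- and supersolution inequalities are available on ${\overline P}^{2,\pm}_{Q_T}$ and read $a_n-{\rm tr}(q(\hat x)X_n)-b(\hat x)\cdot D_x\phi(\hat t,\hat x,\hat y)\le 0$ and $b_n-{\rm tr}(q(\hat y)Y_n)+b(\hat y)\cdot D_y\phi(\hat t,\hat x,\hat y)\ge 0$. Subtracting and inserting $a_n-b_n=\partial_t\phi(\hat t,\hat x,\hat y)$ gives
\begin{equation*}
\partial_t\phi(\hat t,\hat x,\hat y)-{\rm tr}\big(q(\hat x)X_n-q(\hat y)Y_n\big)-b(\hat x)\cdot D_x\phi(\hat t,\hat x,\hat y)-b(\hat y)\cdot D_y\phi(\hat t,\hat x,\hat y)\le 0 .
\end{equation*}

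The core step, in the spirit of Proposition \ref{refe}, is to multiply the matrix inequality above by the nonnegative $2N\times 2N$ matrix
\begin{equation*}
\mathcal Q:=\begin{pmatrix} q(\hat x) & c(\hat x,\hat y) \\ c^*(\hat x,\hat y) & q(\hat y)\end{pmatrix}\ge 0
\end{equation*}
and to take traces. Since $\mathcal Q\ge 0$, this yields ${\rm tr}(q(\hat x)X_n)-{\rm tr}(q(\hat y)Y_n)\le {\rm tr}(\mathcal Q\mathcal A)+\frac1n{\rm tr}(\mathcal Q\mathcal A^2)$, where, by the block structure of $\mathcal Q$ and of $\mathcal A=D^2\phi$, one recognizes ${\rm tr}(\mathcal Q\mathcal A)$ as precisely the second-order part of $\mathcal A_c(\phi)(\hat t,\hat x,\hat y)$ (recall \rife{op-coup}); the first-order part $b(\hat x)\cdot D_x\phi+b(\hat y)\cdot D_y\phi$ already appears in the previous display. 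Combining the two, one gets $\partial_t\phi(\hat t,\hat x,\hat y)-\mathcal A_c(\phi)(\hat t,\hat x,\hat y)\le \frac1n{\rm tr}(\mathcal Q\mathcal A^2)$.

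The only point requiring a word of care — and I expect it to be the mildest possible obstacle — is that $X_n,Y_n,a_n,b_n$ depend on $n$; unlike in Remark \ref{rit}, however, no compactness argument is needed here, because the left-hand side of the last inequality does not depend on $n$ while ${\rm tr}(\mathcal Q\mathcal A^2)$ is a fixed finite number, so letting $n\to\infty$ directly gives $\partial_t\phi(\hat t,\hat x,\hat y)-\mathcal A_c(\phi)(\hat t,\hat x,\hat y)\le 0$, as wanted. Hence $u(t,x)-v(t,y)$ is a viscosity subsolution of \rife{Lc} on $(0,T)\times\mathcal O$. For the last assertion it suffices to note that a viscosity solution $u$ of \rife{probcau} (equivalently, in this time-independent source-free setting, of \rife{eq}) is simultaneously a sub- and a supersolution, so the statement for $u(t,x)-u(t,y)$ is the particular case $v=u$ of what has just been proved.
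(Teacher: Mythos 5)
Your proof is correct and follows essentially the same route as the paper's: apply Theorem \ref{key} to $u_1=u$, $u_2=-v$ at the local maximum, subtract the sub- and supersolution inequalities (valid on $\overline P^{2,\pm}$ by continuity of $F$), and multiply the block matrix inequality by the nonnegative matrix $\mathcal Q$ before taking traces. The only difference is cosmetic but tidy: instead of invoking the compactness argument of Remark \ref{rit} to extract $n$-independent matrices $X,Y$ first, you keep $X_n,Y_n$ and absorb the $\frac1n\,{\rm tr}(\mathcal Q\mathcal A^2)$ term into an error that vanishes as $n\to\infty$, which is a legitimate shortcut here since that term is fixed and finite.
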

\proof For $\hat t\in (0,T), (\hat x,\hat y)\in \mathcal O$, and
$z\in C^{1,2}\left((0,T)\times \mathcal O\right)$, let $(\hat t,\hat x,\hat y)$ be  a local maximum point of $u(t,x)-v(t,y)-z(t,x,y)$.
Applying Theorem \ref{key}  (see also Remark \ref{rit}), there exist $a,b\in \R$,
$X,Y\in {\mathcal S}_N$ such that $(a,D_x z(\hat t,\hat x,\hat y),X)\in {\overline
P}^{2,+}u(\hat t, \hat x)$, $ (b, - D_y z (\hat t,\hat x,\hat y), Y)\in {\overline
P}^{2,-}v(\hat t,  \hat y)$, $a-b=\partial_t z(\hat t,\hat x,\hat y)$ and
\be\label{XY}
\begin{pmatrix}
 X & 0    \\
\noalign{\medskip} 0 & -Y
\end{pmatrix}
 \leq D^2 z (\hat t,\hat x,\hat y).
\ee  We
proceed then as in the proof of Theorem \ref{PW}; using the
equations of $u$ and $v$,
subtracting the two and since $a-b=\partial_t z(\hat t,\hat x,\hat y)$, we
find inequality \rife{z1}. Now, if   $c(x,y)$ is  any $N \times N$
matrix
  such that
   $\begin{pmatrix}
 q(x) & c(x,y)    \\
\noalign{\medskip} c^* (x,y) &  q(y)
\end{pmatrix}$ is nonnegative,
multiplying both sides of \rife{XY} by this matrix (computed at
$(\hat x,\hat y)$)
and taking traces, we get
$$
\begin{array}{c}
\partial_t  z(\hat t,\hat x,\hat y)
 -{\rm tr}
\left( q( \hat x)D^2_x z(\hat x, \hat y) +q(\hat y) D^2_y z(\hat x, \hat y) -2c(\hat x,\hat y)D^2_{xy} z(\hat x, \hat y)\right)
\\
\m
 \leq  b(\hat x)\cdot D_x
z(\hat x, \hat y) +b(\hat y)\cdot D_y z(\hat x, \hat y)
\end{array}
$$
 that is
$$
\partial_t z(\hat t,\hat x,\hat y)
 - {\mathcal A}_c( z)(\hat t,\hat x,\hat y) \leq 0.
 $$
Since this is true for every $z$ and $(\hat t,\hat x,\hat y)$ being a local maximum of  $u(t,x)-v(t,y)-z(t,x,y)$, this means that $u(t,x)-v(t,y)$  is a viscosity subsolution of \rife{Lc}.
 \qed

The previous lemma offers  a nice interpretation, at least in the
linear setting,  to the use of matrix inequality \rife{XY} which is
usually the essential part of the doubling variable technique for
viscosity solutions. Indeed, it suggests that manipulations of the
matrix inequality \rife{XY} amount to  an optimization among all
possible coupling operators. In this viewpoint, the Lipschitz
estimate for $u$ is reduced to the existence of \lq\lq good\rq\rq
supersolutions for some  coupling operator $\mathcal A_c$. {
Since
 $u(t,x)-u(t,y)$ is a viscosity
subsolution of a family of linear problems, thanks to the comparison
principle for viscosity sub-super solutions, we expect that
$u(t,x)-u(t,y)\leq w(t,x,y)$ for every $w(t,x,y)$ being a
supersolution of some operator   $\mathcal A_c$. This fact could be
shortly rephrased in the following suggestive form closer to
 inequality \rife{lip-was}
$$
u(t,x)-u(t,y)\leq \inf\limits_{{\mathcal A}_c}\, \inf \{ \psi(t,x,y),
\,\,:\,\,\partial_t \psi-{\mathcal A}_c (\psi)\geq 0 \;\;
\}.
$$
Thus, in the analytic approach  the oscillation of $u$  can be
estimated by choosing the best among all possible supersolutions of
such family of operators.} Let us make it more precise as follows.

\begin{lemma}\label{supers} Assume \rife{lyap}  and let  $u\in C(Q_T)$  be a solution of \rife{eq} such that  $u=o_{\infty}(\vfi)$,  where $\vfi$ verifies \eqref{lyap}.  Assume that there exists  some matrix $c(x,y)\in C(\R^{2N}\setminus \Delta; \R^{N^2})$ such that
$\begin{pmatrix}
 q(x) & c(x,y)    \\
\noalign{\medskip} c^* (x,y) &  q(y)
\end{pmatrix}$ is nonnegative,
and  a nonnegative function $\psi\in C^{1,2}((0,T)\times
(\R^{2N}\setminus \Delta))$ such that
$$
\partial_t \psi - {\mathcal A}_c(\psi)\geq 0 \qquad \hbox{in $(0,T)\times
(\R^{2N}\setminus \Delta )$.}
$$
If, for some $\tau\in [0,T)$, we have $\psi(\tau,x,y)\geq
u(\tau,x)-u(\tau,y)$, then
$$
u(t,x)-u(t,y)\leq \psi(t,x,y),\qquad  t\in [\tau,T).
$$
\end{lemma}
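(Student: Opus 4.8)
The plan is to run a comparison argument between the viscosity subsolution $w(t,x,y):=u(t,x)-u(t,y)$ and the classical supersolution $\psi$ on the open set $(0,T)\times(\R^{2N}\setminus\Delta)$. First, by Lemma \ref{visco-coup} applied with $\mathcal O=\R^{2N}\setminus\Delta$ and the given matrix $c$, the function $w$ is a viscosity subsolution of $\partial_t z-\mathcal A_c(z)=0$ there. Next, appealing to Remark \ref{bene}, I may assume the Lyapunov function $\vfi$ satisfies $A_t\vfi\le\partial_t\vfi$ and $\vfi>0$; I then introduce $\Psi(t,x,y):=\vfi(t,x)+\vfi(t,y)$ and observe the key point that, since $\Psi$ is the sum of a function of $x$ and a function of $y$, its mixed second derivatives $D^2_{xy}\Psi$ vanish, so the (possibly irregular, possibly degenerate) coefficient $c$ drops out of $\mathcal A_c(\Psi)$, giving $\mathcal A_c(\Psi)(t,x,y)=A_t\vfi(t,x)+A_t\vfi(t,y)\le\partial_t\vfi(t,x)+\partial_t\vfi(t,y)=\partial_t\Psi$. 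Hence $\Psi$ is a classical supersolution of $\mathcal A_c$, and so is $\psi+\epsilon\Psi+\frac{\epsilon}{T-t}$ for every $\epsilon>0$ (the term $\frac{\epsilon}{T-t}$ contributes $\frac{\epsilon}{(T-t)^2}>0$ to $\partial_t$ and nothing to $\mathcal A_c$).

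For fixed $\epsilon>0$ I would then consider $\phi_\epsilon:=w-\psi-\epsilon\Psi-\frac{\epsilon}{T-t}$ on $[\tau,T)\times(\R^{2N}\setminus\Delta)$ and show $\phi_\epsilon\le0$. Since $\psi\ge0$, $\psi(\tau,\cdot)\ge w(\tau,\cdot)$ by hypothesis and $\Psi>0$, one has $\phi_\epsilon(\tau,\cdot,\cdot)<0$. Using that $u=o_\infty(\vfi)$ uniformly in $[0,T]$ together with $\psi\ge0$, one checks that $\phi_\epsilon\to-\infty$ as $|(x,y)|\to\infty$, uniformly in $t$ (treating separately the cases where both or only one of $x,y$ leaves a large ball); that $\phi_\epsilon\to-\infty$ as $t\to T^-$, uniformly on bounded sets, because of the $\frac{\epsilon}{T-t}$ term; and that $\limsup\phi_\epsilon<0$ as $(x,y)\to\Delta$ (on bounded sets, uniformly in $t$), since $w$ is uniformly continuous on compacta with $w=0$ on $\Delta$ while $\psi\ge0$ and $\Psi$ is bounded below there by a positive constant. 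Consequently, if $\sup\phi_\epsilon>0$, this (finite) supremum is attained at some interior point $(\hat t,\hat x,\hat y)$ with $\hat t\in(\tau,T)\subset(0,T)$ and $\hat x\ne\hat y$.

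At such a point $w-\bigl(\psi+\epsilon\Psi+\frac{\epsilon}{T-t}\bigr)$ has a global maximum and the subtracted function is $C^{1,2}$ in a neighbourhood (because $\hat x\ne\hat y$), so the viscosity subsolution property of $w$ yields $\partial_t\bigl(\psi+\epsilon\Psi+\frac{\epsilon}{T-t}\bigr)-\mathcal A_c\bigl(\psi+\epsilon\Psi+\frac{\epsilon}{T-t}\bigr)\le0$ at $(\hat t,\hat x,\hat y)$; but the left-hand side is $\ge\frac{\epsilon}{(T-\hat t)^2}>0$ by the supersolution property established in the first paragraph, a contradiction. Therefore $\phi_\epsilon\le0$ for every $\epsilon>0$, i.e. $w(t,x,y)\le\psi(t,x,y)+\epsilon\Psi(t,x,y)+\frac{\epsilon}{T-t}$ on $[\tau,T)\times(\R^{2N}\setminus\Delta)$, and letting $\epsilon\to0^+$ gives $u(t,x)-u(t,y)\le\psi(t,x,y)$ for $t\in[\tau,T)$ and $x\ne y$, as claimed.

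The main obstacle is the second paragraph: since the domain $(0,T)\times(\R^{2N}\setminus\Delta)$ is noncompact both at infinity and along the diagonal, and since $\psi$ is a priori defined only away from $\Delta$ and may blow up there, one must carefully arrange that the perturbed function $\phi_\epsilon$ cannot carry a positive supremum except at a genuine interior maximum point. The roles of $\psi\ge0$, of the uniform continuity of $u$ near $\Delta$, and of the growth control $u=o_\infty(\vfi)$ are precisely to rule out, respectively, the diagonal (for all $t$) and spatial infinity from carrying the supremum, while the $\frac{\epsilon}{T-t}$ term rules out the final time; the rest is the standard doubling-type perturbation argument.
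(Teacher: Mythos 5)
Your proof is correct and follows essentially the same route as the paper's: the paper perturbs the sub/supersolution pair by $\pm\vep\vfi$ and considers $\Phi_\vep=u^\vep(t,x)-u^\vep(t,y)-\psi-\frac{\vep}{T-t}$, which is literally the same function as your $\phi_\vep=w-\psi-\vep\Psi-\frac{\vep}{T-t}$, and then runs the identical contradiction argument via Lemma \ref{visco-coup}. Your treatment of the diagonal and of spatial infinity is, if anything, slightly more explicit than the paper's.
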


\proof  Replacing $u(t,x)$ with $u^\vep(t,x)=u(t,x)-\vep \vfi(t,x)$
and $u(t,y)$ with $u^\vep(t,y)=u(t,y)+ \vep \vfi(t,y)$ we have that
$u^\vep(t,x)-u^\vep(t,y)$ is still a subsolution, and goes to
$-\infty$ when  $|x|\to \infty$ or $|y|\to \infty$, uniformly for
$t\in (0,T)$. Consider the function $\Phi_{\vep}= u^\vep(t,x)-u^\vep(t,y)-
(\psi(t,x,y)+ \frac\vep{T-t})$. Assume by contradiction that
$\sup_{[\tau,T)\times \R^{2N}}\,\Phi_{\vep}>0$. Since $\Phi_{\vep}\to
-\infty$ at infinity  as $t\to T$ too, $\Phi_{\vep}$ has  a maximum at
some point $(\hat t, \hat x,\hat y)$. Since
$u^\vep(t,x)-u^\vep(t,y) \leq 0\leq \psi$ on $\Delta$, it cannot be
$\hat x= \hat y$, and since $ u^\vep(\tau,x)-u^\vep(\tau,y)\leq
\psi(\tau,x,y)$ by assumption, we have $\hat t\in (\tau,T)$.
Therefore $(\hat t, \hat x,\hat y)$ is a local maximum with $(\hat x,\hat y)$ lying in
$\R^{2N}\setminus \Delta$ and by Lemma \ref{visco-coup} we deduce
that
$$
\frac\vep{(T-t)^2}+\partial_t \psi- {\mathcal A}_c(\psi)\leq 0
$$
which gives  a contradiction since $\psi$ is a supersolution. This
proves that $\sup\, \Phi_{\vep}\leq 0$, i.e.,  $u^\vep(t,x)-u^\vep(t,y)\leq
\psi(t,x,y)+ \frac\vep{T-t}$. Letting $\vep\to 0$ we conclude. \qed

\smallskip The above approach represents actually a sort of  analytic
translation of the probabilistic coupling method. Indeed, if Lemma
\ref{visco-coup} gives an analytic counterpart of  the
\rife{lip-was}, Lemma \ref{supers}  gives the way for the estimate
related to (ii).  Indeed, one might expect, roughly speaking, that
$z(t,x,y)= 2 \| u_0\|_{\infty} \mathbb P^{x,y}( t<T_c)$ be  a
supersolution to the Cauchy parabolic problem involving $\mathcal
A_c$ and having $u_0(x) - u_0(y)$ as initial datum at $t=0$, with
the additional property of  being
 \lq\lq minimal\rq\rq\ in the sense that $z(t,x,x)=0$, in  other words that $\mathbb P^{x,y}( t<T_c)$ solves a Cauchy-Dirichlet parabolic problem for $\mathcal A_c$
 with values $1$ at $t=0$ and $0$ at $\Delta$. On the other hand, instead of checking this property on
 $\mathbb P^{x,y}( t<T_c)$, it is enough to build a supersolution
$\psi$ with the desired features. Observe actually that  the
analytic proof of Theorem \ref{PW}  basically follows from the
previous lemmas up to showing that
$$
\psi(t,x,y)=K\, f(|x-y|)+ C_0 (t-t_0)^2
$$
is a supersolution in $(\frac {t_0}2, T)$ for
the coupling $c(x,y)$  given
in \rife{choice1}, for   a suitable choice of $K$, $C_0$ and $f$
satisfying \rife{ode1}.
This is indeed the computational part given
in the proof of Theorem \ref{PW} in Section 3.

To conclude this discussion, we have tried to underline the close
analogy between the probabilistic and analytic approach, though
expressed through different concepts and tools. We stress however
some advantage of the approach through viscosity solutions:  it only
relies on maximum principle, which is a pointwise device, and it
admits natural extensions to \emph{nonlinear} operators as we have
shown in Section 4.

\bigskip

\noindent \textbf{Acknowledgement. }{We would like to thank the
anonymous referee who read  carefully the manuscript and made useful comments and
suggestions to improve the presentation. }

\end{document}